\title{De Rham comparison and Poincar\'e duality for rigid varieties}
\author{Kai-Wen Lan, Ruochuan Liu, and Xinwen Zhu}
\address{University of Minnesota, 127 Vincent Hall, 206 Church Street SE, Minneapolis, MN 55455, USA}
\email{kwlan@math.umn.edu}
\address{Beijing International Center for Mathematical Research, Peking University, 5 Yi He Yuan Road, Beijing 100871, China}
\email{liuruochuan@math.pku.edu.cn}
\address{California Institute of Technology, 1200 East California Boulevard, Pasadena, CA 91125, USA}
\email{xzhu@caltech.edu}
\thanks{K.-W.\@\xspace Lan was partially supported by the National Science Foundation under agreement No.\@\xspace DMS-1352216, and by a Simons Fellowship in Mathematics.  R.\@\xspace Liu was partially supported by the National Natural Science Foundation of China under agreement Nos.\@\xspace NSFC-11571017 and NSFC-11725101, and by the Tencent Foundation.  X.\@\xspace Zhu was partially supported by the National Science Foundation under agreement Nos.\@\xspace DMS-1602092 and DMS-1902239, and by an Alfred P.\@\xspace Sloan Research Fellowship.  Any opinions, findings, and conclusions or recommendations expressed in this writing are those of the authors, and do not necessarily reflect the views of the funding organizations.}
\subjclass[2010]{Primary 14F40, 14G22; Secondary 14D07, 14F30, 14G35}
\newtheorem{thm}[equation]{{Theorem}}
\newtheorem{cor}[equation]{{Corollary}}
\newtheorem{lem}[equation]{{Lemma}}
\newtheorem{prop}[equation]{{Proposition}}
\newtheorem{constr}[equation]{{Construction}}
\theoremstyle{definition}
\newtheorem{defn}[equation]{Definition}
\newtheorem{rk}[equation]{{Remark}}
\newcommand{\quash}[1]{}
\newcommand{\Ainf}{A_{\inf}}
\newcommand{\Binf}{B_{\inf}}
\newcommand{\AAinf}{\bA_{\inf}}
\newcommand{\AAinfX}[1]{\bA_{\inf, {#1}}}
\newcommand{\BBinf}{\bB_{\inf}}
\newcommand{\BBinfX}[1]{\bB_{\inf, {#1}}}
\newcommand{\BdR}{B_\dR}
\newcommand{\BdRp}{B_\dR^+}
\newcommand{\BBdRpX}[1]{\bB_{\dR, {#1}}^+}
\newcommand{\BBdR}{\bB_\dR}
\newcommand{\BBdRX}[1]{\bB_{\dR, {#1}}}
\newcommand{\OBdlpX}[1]{\cO\bB_{\dR, \log, {#1}}^+}
\newcommand{\OBdl}{\cO\bB_{\dR, \log}}
\newcommand{\OBdlX}[1]{\cO\bB_{\dR, \log, {#1}}}
\newcommand{\OCl}{\cO\bC_{\log}}
\newcommand{\OClX}[1]{\cO\bC_{\log, {#1}}}
\newcommand{\DdR}{D_\dR}
\newcommand{\DdRalg}{D_\dR^\alg}
\newcommand{\Ddl}{D_{\dR, \log}}
\newcommand{\Ddlalg}{D_{\dR, \log}^\alg}
\newcommand{\ho}{\widehat{\otimes}}
\newcommand{\GrSh}[1]{\underline{#1}}
\newcommand{\bA}{\mathbb{A}}
\newcommand{\bB}{\mathbb{B}}
\newcommand{\bC}{\mathbb{C}}
\newcommand{\bD}{\mathbb{D}}
\newcommand{\bE}{\mathbb{E}}
\newcommand{\bF}{\mathbb{F}}
\newcommand{\bL}{\mathbb{L}}
\newcommand{\bQ}{\mathbb{Q}}
\newcommand{\bR}{\mathbb{R}}
\newcommand{\bT}{\mathbb{T}}
\newcommand{\bZ}{\mathbb{Z}}
\newcommand{\cB}{\mathcal{B}}
\newcommand{\cD}{\mathcal{D}}
\newcommand{\cE}{\mathcal{E}}
\newcommand{\cF}{\mathcal{F}}
\newcommand{\cL}{\mathcal{L}}
\newcommand{\cM}{\mathcal{M}}
\newcommand{\cO}{\mathcal{O}}
\newcommand{\cR}{\mathcal{R}}
\newcommand{\cU}{\mathcal{U}}
\newcommand{\cX}{\mathcal{X}}
\newcommand{\OP}[1]{\operatorname{#1}}
\newcommand{\Ext}{\OP{Ext}}
\newcommand{\Hom}{\OP{Hom}}
\newcommand{\Spec}{\OP{Spec}}
\newcommand{\Spa}{\OP{Spa}}
\newcommand{\Em}{\hookrightarrow}                   
\newcommand{\Surj}{\twoheadrightarrow}              
\newcommand{\Mi}{\stackrel{\sim}{\to}}              
\newcommand{\Mapn}[1]{\stackrel{#1}{\to}}           
\newcommand{\Emn}[1]{\stackrel{#1}{\Em}}            
\newcommand{\can}{\Utext{can.}}                     
\newcommand{\bAi}{{\bA_f}}                          
\newcommand{\Grp}[1]{\mathrm{#1}}                   
\newcommand{\Gm}[1]{\mathbf{G}_{\Utext{m}, {#1}}}   
\newcommand{\chr}{\OP{char}}                        
\newcommand{\rank}{\OP{rk}}                         
\newcommand{\Shdom}{\mathsf{X}}                     
\newcommand{\Lquot}{\backslash}                     
\newcommand{\hd}{h}                                 
\newcommand{\hc}{\mu}                               
\newcommand{\Gal}{\OP{Gal}}                         
\newcommand{\res}{\OP{res}}                          
\newcommand{\Image}{\OP{Im}}                        
\newcommand{\alg}{\Utext{alg}}                      
\newcommand{\arith}{\Utext{arith}}                  
\newcommand{\geom}{\Utext{geom}}                    
\newcommand{\gp}{\Utext{gp}}                        
\newcommand{\red}{\Utext{red}}                      
\newcommand{\Talg}[1]{\langle{#1}\rangle}           
\newcommand{\ReFl}{E}                               
\newcommand{\Sh}{\mathrm{Sh}}                       
\newcommand{\Model}{\Sh}                            
\newcommand{\levcp}{K}                              
\newcommand{\an}{\Utext{an}}                        
\newcommand{\dualsign}{{\vee}}                      
\newcommand{\dual}[1]{{#1}^\dualsign}               
\newcommand{\Ex}{\wedge}                            
\newcommand{\Tor}{\Utext{tor}}                      
\newcommand{\Min}{\Utext{min}}                      
\newcommand{\Torcpt}[1]{{#1}^\Tor}                  
\newcommand{\Mincpt}[1]{{#1}^\Min}                  
\newcommand{\NCD}{D}                                
\newcommand{\Mod}{\mathrm{Mod}}                     
\newcommand{\Rep}{\mathrm{Rep}}                     
\newcommand{\rep}{V}                                
\newcommand{\repalt}{W}                             
\newcommand{\Shv}{\mathrm{Sh}}                      
\newcommand{\wt}{\lambda}                           
\newcommand{\wtalt}{\nu}                            
\newcommand{\rt}{\alpha}                            
\newcommand{\cort}{\dual{\alpha}}                   
\newcommand{\hsum}{\rho}                            
\newcommand{\RT}{\Phi}                              
\newcommand{\WT}{\OP{X}}                            
\newcommand{\WG}{\OP{W}}                            
\newcommand{\wl}{l}                                 
\newcommand{\B}{\Utext{B}}                          
\newcommand{\dR}{\Utext{dR}}                        
\newcommand{\Hdg}{\Utext{Hodge}}                    
\newcommand{\Hi}{\Utext{Higgs}}                     
\newcommand{\coh}{\Utext{coh}}                      
\newcommand{\cpt}{\Utext{c}}                        
\newcommand{\intcoh}{\Utext{int}}                   
\newcommand{\Sc}{\Utext{$\star$-c}}
\newcommand{\Scalt}{\Utext{$\circ$-c}}
\newcommand{\Snc}{\Utext{$\star$-nc}}
\newcommand{\Sncalt}{\Utext{$\circ$-nc}}
\newcommand{\ideal}[1]{\mathfrak{#1}}               
\newcommand{\Fil}{\Utext{Fil}}                      
\newcommand{\gr}{\OP{gr}}
\newcommand{\et}{\Utext{\'et}}                      
\newcommand{\ket}{\Utext{k\'et}}                    
\newcommand{\proet}{\Utext{pro\'et}}                
\newcommand{\proket}{\Utext{prok\'et}}              
\newcommand{\BSh}[1]{{}_{\B}\GrSh{#1}}              
\newcommand{\dRSh}[1]{{}_{\dR}\GrSh{#1}}            
\newcommand{\etSh}[1]{{}_{\et}\GrSh{#1}}            
\newcommand{\cohSh}[1]{{}_\coh\GrSh{#1}}            
\newcommand{\canext}{{\Utext{can}}}                 
\newcommand{\subext}{{\Utext{sub}}}                 
\newcommand{\AC}[1]{\overline{#1}}                  
\newcommand{\ACMap}{\iota}                          
\newcommand{\BFp}{k}                                
\newcommand{\Coef}{L}                               
\newcommand{\CoefMap}{\tau}                         
\newcommand{\RH}{\mathcal{RH}}                      
\newcommand{\RHl}{\mathcal{RH}_{\log}}              
\newcommand{\DRl}{\mathit{DR}_{\log}}               
\newcommand{\Hc}{\mathcal{H}}                       
\newcommand{\Hl}{\mathcal{H}_{\log}}                
\newcommand{\Hil}{\mathit{Higgs}_{\log}}            
\newcommand{\BGGl}{\mathit{BGG}_{\log}}             
\newcommand{\IH}{\mathit{IH}}                       
\newcommand{\Utext}[1]{\text{\rm #1}}               
\newcommand{\Refenum}[1]{\Pth{\textrm{#1}}}
\newcommand{\Refeq}[1]{\Pth{#1}}
\newcommand{\Pth}[1]{{\rm (}#1{\rm )}}              
\newcommand{\Qtn}[1]{``#1''}                        
\newcommand{\parenthesis}[1]{\Pth{#1}}              
\newcommand{\apage}{p.\@\xspace}                    
\newcommand{\aCh}{Ch.\@\xspace}                     
\newcommand{\aSec}{Sec.\@\xspace}                   
\newcommand{\aSecs}{Sec.\@\xspace}                  
\newcommand{\aDef}{Def.\@\xspace}                   
\newcommand{\aLem}{Lem.\@\xspace}                   
\newcommand{\aLems}{Lem.\@\xspace}                  
\newcommand{\aProp}{Prop.\@\xspace}                 
\newcommand{\aThm}{Thm.\@\xspace}                   
\newcommand{\aThms}{Thm.\@\xspace}                  
\newcommand{\aCor}{Cor.\@\xspace}                   
\newcommand{\aCors}{Cor.\@\xspace}                  
\newcommand{\aRem}{Rem.\@\xspace}                   
\newcommand{\aEx}{Ex.\@\xspace}                     
\newcommand{\aConj}{Conj.\@\xspace}                 
\newcommand{\resp}{resp.\@\xspace}                  
\newcommand{\ie}{i.e.\@\xspace}                     
\newcommand{\eg}{e.g.\@\xspace}                     
\newcommand{\etc}{etc\xspace}                       
\newcommand{\Refcf}{cf.\@\xspace}                   
\newcommand{\logadicdeflogstr}{2.2.2} 
\newcommand{\logadicdeflogstrseven}{7}
\newcommand{\logadicdefimm}{2.2.23} 
\newcommand{\logadicexlogadicspncd}{2.3.17} 
\newcommand{\logadicexlogadicspncdstrictclimm}{2.3.18} 
\newcommand{\logadicexlogdiffsheafncd}{3.3.20} 
\newcommand{\logadicpropAbhyankar}{4.2.1} 
\newcommand{\logadiclemAbhyankarbasic}{4.2.5} 
\newcommand{\logadiclemloggeompt}{4.4.4} 
\newcommand{\logadiclemexc}{4.5.3} 
\newcommand{\logadiclemclimmketmor}{4.5.4} 
\newcommand{\logadiclemclimmOplusp}{4.5.7} 
\newcommand{\logadiclemketmorOplusp}{4.5.8} 
\newcommand{\logadicthmpurity}{4.6.1} 
\newcommand{\logadiclemkettoetconst}{4.6.2} 
\newcommand{\logadicsecproket}{5} 
\newcommand{\logadicpropproketsiteqcqs}{5.1.5} 
\newcommand{\logadicpropproketvsket}{5.1.6} 
\newcommand{\logadicpropproketvsketadj}{5.1.7} 
\newcommand{\logadiclemlogaffperfclimm}{5.3.7} 
\newcommand{\logadicproplogaffperfbasis}{5.3.12} 
\newcommand{\logadicdefproketsheaves}{5.4.1} 
\newcommand{\logadicthmalmostvanhat}{5.4.3} 
\newcommand{\logadicsectoricchart}{6.1} 
\newcommand{\logadicthmprimcomp}{6.2.1} 
\newcommand{\logadicdefketlisse}{6.3.1} 
\newcommand{\logadiccorpuritylisse}{6.3.4} 
\newcommand{\logRHthmintromain}{1.1} 
\newcommand{\logRHexlogadicspncd}{2.1.2} 
\newcommand{\logRHsecOBdl}{2.2} 
\newcommand{\logRHdefOBdl}{2.2.10} 
\newcommand{\logRHsecOBdlexplicit}{2.3} 
\newcommand{\logRHeqzeta}{2.3.1} 
\newcommand{\logRHeqchoicet}{2.3.2} 
\newcommand{\logRHpropOBdlploc}{2.3.15} 
\newcommand{\logRHcorOBdlplocgr}{2.3.17} 
\newcommand{\logRHcorOBdRplocclimm}{2.3.20} 
\newcommand{\logRHcorlogdRcplx}{2.4.2} 
\newcommand{\logRHcorlogdRcplxone}{1}
\newcommand{\logRHcorlogdRcplxtwo}{2}
\newcommand{\logRHcorlogdRcplxthree}{3}
\newcommand{\logRHeqconnWi}{2.4.4} 
\newcommand{\logRHseclogRH}{3} 
\newcommand{\logRHeqdefcX}{3.1.5} 
\newcommand{\logRHdeflogconnetc}{3.1.7} 
\newcommand{\logRHseclogRHthm}{3.2} 
\newcommand{\logRHthmlogRHgeom}{3.2.3} 
\newcommand{\logRHthmlogRHgeomres}{2} 
\newcommand{\logRHthmlogRHarith}{3.2.7} 
\newcommand{\logRHthmlogRHarithres}{2} 
\newcommand{\logRHthmlogRHarithcomp}{3} 
\newcommand{\logRHthmunipvsnilp}{3.2.12} 
\newcommand{\logRHseccoh}{3.3} 
\newcommand{\logRHpropLOCl}{3.3.3} 
\newcommand{\logRHeqdefgammaj}{3.3.6} 
\newcommand{\logRHremcompatLmzero}{3.3.14} 
\newcommand{\logRHlemLOClcoh}{3.3.15} 
\newcommand{\logRHlemLdescent}{3.3.16} 
\newcommand{\logRHeqresgeneigen}{3.4.2} 
\newcommand{\logRHlemGammageominv}{3.4.3} 
\newcommand{\logRHremcompatLmzerores}{3.4.14} 
\newcommand{\logRHpropconnisomres}{3.4.17} 
\newcommand{\logRHlemDdltoRHlfilstrict}{3.4.18} 
\newcommand{\logRHeqlemDdltoRHlfilstrict}{3.4.19} 
\newcommand{\logRHcorDdltoRHlfilstrict}{3.4.21} 
\newcommand{\logRHcorDdlgrvecbdl}{3.4.22} 
\newcommand{\logRHlemketBdR}{3.6.1} 
\newcommand{\logRHlemlogRHarithcompdR}{3.6.3} 
\newcommand{\logRHlemlogRHarithcompHT}{3.6.4} 
\newcommand{\logRHsecDdRalg}{4.1} 
\newcommand{\logRHthmHTdegencomp}{4.1.4} 
\newcommand{\logRHsecShvar}{5} 
\newcommand{\logRHseclocsystconstr}{5.2} 
\newcommand{\logRHeqmuh}{5.2.9} 
\newcommand{\logRHproplocsystinfty}{5.2.10} 
\newcommand{\logRHrempartialflag}{5.2.11} 
\newcommand{\logRHeqlocsystetcoefbasech}{5.2.12} 
\newcommand{\logRHeqcoefproj}{5.2.14} 
\newcommand{\logRHthmlocsystcomp}{5.3.1} 
\newcommand{\logRHremlocsystHodgedegen}{5.3.5} 
\begin{document}

\begin{abstract}
    Over any smooth algebraic variety over a $p$-adic local field $k$, we construct the de Rham comparison isomorphisms for the \'etale cohomology with partial compact support of de Rham $\bZ_p$-local systems, and show that they are compatible with Poincar\'e duality and with the canonical morphisms among such cohomology.  We deduce these results from their analogues for rigid analytic varieties that are Zariski open in some proper smooth rigid analytic varieties over $k$.  In particular, we prove finiteness of \'etale cohomology with partial compact support of any $\bZ_p$-local systems, and establish the Poincar\'e duality for such cohomology after inverting $p$.
\end{abstract}

\maketitle

\tableofcontents

\numberwithin{equation}{section}

\section{Introduction}\label{sec-intro}

This paper is a sequel to \cite{Diao/Lan/Liu/Zhu:lrhrv}, in which a $p$-adic Riemann--Hilbert functor was constructed as an analogue of Deligne's Riemann--Hilbert correspondence over $\bC$ \Pth{see \cite{Deligne:1970-EDR}}.  We refer to \cite{Diao/Lan/Liu/Zhu:lrhrv} for the general introduction and backgrounds.  In this paper, we further investigate the properties of the $p$-adic Riemann--Hilbert functor.  We establish the de Rham comparison isomorphisms for the cohomology with compact support under the $p$-adic Riemann--Hilbert correspondences, and show that they are compatible with duality.  In particular, we obtain the following theorem \Pth{see Theorems \ref{thm-comp-alg-dR-cpt} and \ref{thm-comp-alg-dR-int} for more complete statements}:
\begin{thm}\label{thm-intro}
    Let $U$ be a smooth algebraic variety over a $p$-adic field $k$ \Pth{see Notation and Conventions}, and let $\bL$ be a de Rham $p$-adic \'etale local system on $U$.  Then there is a canonical comparison isomorphism
    \begin{equation}\label{eq-thm-intro}
        H^i_{\et, \cpt}(U_{\AC{k}}, \bL) \otimes_{\bQ_p} \BdR \cong H^i_{\dR, \cpt}\bigl(U, \DdRalg(\bL)\bigr) \otimes_k \BdR
    \end{equation}
    compatible with the canonical filtrations and the actions of $\Gal(\AC{k} / k)$ on both sides.  Here $\DdRalg$ is the above-mentioned $p$-adic Riemann--Hilbert functor constructed in \cite{Diao/Lan/Liu/Zhu:lrhrv}, and $H^i_{\et, \cpt}$ \Pth{\resp $H^i_{\dR, \cpt}$} denotes the usual \'etale \Pth{\resp de Rham} cohomology with compact support.

    In addition, the above comparison isomorphism \Refeq{\ref{eq-thm-intro}} is compatible with the one in \cite[\aThm \logRHthmintromain]{Diao/Lan/Liu/Zhu:lrhrv} \Pth{for varying $\bL$} in the following sense:
    \begin{enumerate}
        \item\label{enum-compat-int}  The following diagram
            \[
                \xymatrix{ {H^i_{\et, \cpt}(U_{\AC{k}}, \bL) \otimes_{\bQ_p} \BdR} \ar^-\sim[r] \ar[d] & {H^i_{\dR, \cpt}\bigl(U, \DdRalg(\bL)\bigr) \otimes_k \BdR} \ar[d] \\
                {H^i_\et(U_{\AC{k}}, \bL) \otimes_{\bQ_p} \BdR} \ar^-\sim[r] & {H^i_\dR\bigl(U, \DdRalg(\bL)\bigr) \otimes_k \BdR} }
            \]
            is commutative, where the horizontal isomorphisms are the comparison isomorphisms, and where the vertical morphisms are the canonical ones.  The vertical morphisms are strictly compatible with the filtrations.

        \item\label{enum-compat-dual}  When $U$ is of pure dimension $d$, the following diagram
            \[
                \xymatrix{ {H^i_{\et, \cpt}(U_{\AC{k}}, \bL) \otimes_{\bQ_p} \BdR} \ar_-\wr[d] \ar^-\sim[r] & {H^i_{\dR, \cpt}\bigl(U, \DdRalg(\bL)\bigr) \otimes_k \BdR} \ar^-\wr[d] \\
                {\dual{\Bigl(H^{2d - i}_\et\bigl(U_{\AC{k}}, \dual{\bL}(d)\bigr) \otimes_{\bQ_p} \BdR\Bigr)}} \ar^-\sim[r] & {\dual{\Bigl(H^{2d - i}_\dR\bigl(U, \DdRalg(\dual{\bL}(d))\bigr) \otimes_k \BdR\Bigr)}} }
            \]
            is commutative, where the horizontal isomorphisms are given by the comparison isomorphisms, where the duals are with respect to the base field $\BdR$, and where the vertical isomorphisms are given by the usual Poincar\'e duality for \'etale and de Rham cohomology.
   \end{enumerate}
\end{thm}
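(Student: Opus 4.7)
The natural approach is to reduce the algebraic assertion to its rigid analytic counterpart, where the Riemann--Hilbert machinery of \cite{Diao/Lan/Liu/Zhu:lrhrv} and the pro-\'etale techniques developed in the body of this paper become available. First, by Nagata compactification and Hironaka's resolution of singularities, I would choose an open immersion $j: U \Em X$ into a smooth proper $k$-variety $X$ such that $D := X \setminus U$ is a normal crossings divisor.  Analytification yields a proper smooth rigid analytic variety $X^\an$ containing a Zariski open $U^\an$ with normal crossings complement $D^\an$; GAGA together with Huber's comparison for \'etale cohomology of $\bZ_p$-sheaves identifies both sides of \Refeq{\ref{eq-thm-intro}} with their analytic analogues, reducing the question to the rigid Theorems \ref{thm-comp-alg-dR-cpt} and \ref{thm-comp-alg-dR-int}.

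Second, I would set up the compactly supported theories on the rigid side in parallel.  On the \'etale side, $H^i_{\et, \cpt}(U^\an, \bL) = H^i_\et(X^\an, j_! \bL)$.  On the de Rham side, the Riemann--Hilbert functor produces a filtered integrable log connection $\DdRalg(\bL)$ on $(X^\an, D^\an)$, and compact support corresponds to the \emph{subcanonical} extension, \ie twisting the log de Rham complex by the ideal sheaf of $D^\an$.  The comparison itself should then flow from the geometric and arithmetic log Riemann--Hilbert theorems \Pth{\aThms \logRHthmlogRHgeom{} and \logRHthmlogRHarith{}} by applying $j_!$ \resp the subcanonical twist, using the period sheaf $\OBdl$ as the bridge; the finiteness and strictness inputs from earlier sections ensure that passing to hypercohomology on $X^\an$ actually yields the desired isomorphism of finite-dimensional filtered $\BdR$-vector spaces, equivariant for $\Gal(\AC k / k)$.

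To obtain part \ref{enum-compat-int}, I would observe that the canonical morphism from compactly supported to ordinary cohomology arises on both sides from the natural transformation $j_! \to Rj_*$ on the \'etale side, \resp the inclusion of the subcanonical into the canonical extension of the log de Rham complex; the square then commutes by functoriality of the comparison, and strictness with respect to the filtrations follows from the $E_1$-degeneration of the relevant Hodge--de Rham spectral sequence, reduced modulo the maximal ideal of $\BdR^+$ to the Hodge--Tate statement for $\DdRalg(\bL)$.  For part \ref{enum-compat-dual}, one must construct matching trace maps and verify that the Poincar\'e duality pairings are transported into each other by \Refeq{\ref{eq-thm-intro}}.  The plan is to reduce, via tensoring with $\OBdl$ and choosing local toroidal charts as in \aSec \logRHsecOBdlexplicit{}, to a local residue computation along the top-dimensional strata of $D^\an$, where both the classical \'etale and de Rham traces manifestly recover the canonical projection to the base.

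The main obstacle will be the Poincar\'e duality compatibility of \ref{enum-compat-dual}.  The comparison isomorphism itself is visibly functorial, but matching the two trace maps and verifying the cup product pairings at the level of pro-\'etale sheaves requires careful bookkeeping of Tate twists, signs, and the interplay between the canonical and subcanonical extensions of $\DdRalg(\bL)$ and $\DdRalg(\dual{\bL}(d))$.  I expect the bulk of the technical work to lie precisely in constructing a well-defined duality pairing between the canonical and subcanonical log de Rham complexes that is compatible with the Riemann--Hilbert functor on all coefficients, and in verifying that it descends through $\OBdl$ to the asserted duality of $\BdR$-vector spaces after passage to hypercohomology.
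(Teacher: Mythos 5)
Your overall skeleton---reduce the algebraic statement to a rigid analytic one via Nagata--Hironaka, GAGA, and Huber's comparison, prove a compact-support comparison there through the log Riemann--Hilbert machinery, get \Refeq{\ref{enum-compat-int}} by functoriality, and handle \Refeq{\ref{enum-compat-dual}} through trace morphisms---does match the paper's strategy. But the mechanism you propose for the rigid comparison is missing its essential new ingredient. You cannot simply "apply $\jmath_!$ and use the period sheaf $\OBdl$ as the bridge": the correct period sheaf for compact support is \emph{not} the naive $!$-extension of the period sheaf on $U_\proet$. The paper builds new period sheaves \Pth{e.g.\ $\OBdlX{X}^\Sc$ and $\BBdRX{X}^{\Sc, +}$ in Definitions \ref{def-BBdRp-cpt} and \ref{def-OBdRp-cpt}} as kernels of restriction to the boundary strata $X_J^\partial$ equipped with the \emph{pulled-back} \Pth{nontrivial} log structure, proves a primitive comparison theorem for $\jmath_{\ket, !}^\Sc(\bL)$ via the stratification resolution \Ref{Lemma \ref{lem-!-resol}, Theorems \ref{thm-prim-comp-bd} and \ref{thm-L-!-prim-comp}}---which is also what gives the finiteness needed even to state the comparison---and a Poincar\'e lemma for these sheaves \Ref{Proposition \ref{prop-Poin-lem}}.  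Moreover $R\mu'_*(\widehat{\bL} \otimes_{\widehat{\bZ}_p} \OBdl^\Sc)$ is identified not with $\RHl(\bL)(-D^\Sc)$ itself but with a modification $\RHl^\Sc(\bL)$ cut out by residue conditions \Ref{Proposition \ref{prop-RHl-Hl-Ddl-cpt}}, whose log de Rham complex is only quasi-isomorphic to that of the subcanonical twist \Ref{Lemma \ref{lem-unip-twist-qis}}.  None of this is supplied by your outline, and the naive version of your step two fails.

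For \Refeq{\ref{enum-compat-dual}} there is a second genuine gap: you treat both traces as pre-existing and propose to match them by a local residue computation along the top-dimensional strata of $D^\an$.  After your reduction to the rigid setting, the $p$-adic \'etale trace and duality do \emph{not} pre-exist---constructing them is one of the paper's main results \Ref{Theorem \ref{thm-trace-et}}: $t_\et$ is \emph{defined} from $t_\dR$ through the comparison isomorphism, and the substantive point is that $\bigl(t_\dR^{-1}(1)\bigr) \otimes 1$ lies in the rational subspace $H_{\et, \cpt}^{2d}\bigl(U_K, \bQ_p(d)\bigr)$, proved by induction on dimension via blowups at $k$-points together with the compatibility of excision and Gysin isomorphisms with the comparison isomorphisms \Ref{Propositions \ref{prop-exc-dR-comp} and \ref{prop-Gysin-dR-comp}}.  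A top-degree compactly supported \'etale class is not locally accessible, so a chart-level residue computation cannot establish this; the local toric computation enters only inside the proof of Proposition \ref{prop-Gysin-dR-comp}, to identify the boundary map of period sheaves with the adjunction map of log de Rham complexes.  Even if you insist on using only the classical algebraic dualities in \Refeq{\ref{enum-compat-dual}}, you still must identify the algebraic traces with the analytic ones \Ref{Lemma \ref{lem-comp-def-H-c-trace}}, which again proceeds by the same blowup--Gysin induction, and the perfectness and comparison on the de Rham side rest on the Higgs/Serre duality pairing for complementary support conditions \Ref{Theorems \ref{thm-Higgs-pairing} and \ref{thm-trace-dR-Hdg}}, which your proposal only gestures at.  Your argument for \Refeq{\ref{enum-compat-int}} is essentially right in spirit, but note that "functoriality" is implemented concretely by identifying both vertical maps with the map induced by the morphism between the period sheaves attached to the two support conditions \Ref{Theorem \ref{thm-int-coh-comp}}, so it too presupposes the partial-compact-support period sheaf framework you have not constructed.
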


Although it might seem that a comparison isomorphism as in \Refeq{\ref{eq-thm-intro}} could be easily constructed using the comparison isomorphism in \cite[\aThm \logRHthmintromain]{Diao/Lan/Liu/Zhu:lrhrv} and the Poincar\'e duality for the \'etale and de Rham cohomology of algebraic varieties, in which case the compatibility \Refeq{\ref{enum-compat-dual}} would be tautological, the compatibility \Refeq{\ref{enum-compat-int}} would not be clear.  Therefore, we need a different approach.  We shall first prove such a comparison theorem for \Pth{appropriately defined} cohomology with compact support in the rigid analytic setting \Pth{see Theorems \ref{thm-L-!-coh-comp} and \ref{thm-int-coh-comp}}, using the log Riemann--Hilbert correspondence introduced in \cite{Diao/Lan/Liu/Zhu:lrhrv} and further developed in this paper, and show that the comparison isomorphisms indeed satisfy the desired compatibilities \Refeq{\ref{enum-compat-int}} and \Refeq{\ref{enum-compat-dual}}.  After that, we obtain the comparison theorem in the algebraic setting by GAGA \cite{Kopf:1974-efava} and the comparison results in \cite{Huber:1996-ERA}.

Given the general theory developed in \cite{Diao/Lan/Liu/Zhu:lrhrv}, the main new ingredient is the definition of a period sheaf that works for the cohomology with compact support.  To give a flavor of what it looks like, consider the simplest situation where $U$ is a smooth rigid analytic variety that admits a smooth compactification $X$ such that $U = X - D$ for some smooth divisor $D$.  Then we equip $X$ with the log structure defined by $D$ \Pth{as in \cite[\aEx \logadicexlogadicspncd]{Diao/Lan/Liu/Zhu:lasfr}}, and equip $D$ with the pullback of the log structure of $X$ along the closed immersion $\imath: D \to X$ \Pth{as in \cite[\aEx \logadicexlogadicspncdstrictclimm]{Diao/Lan/Liu/Zhu:lasfr}}.  We emphasize that the log structure of $D$ is nontrivial, and that it is crucial to equip $D$ with such a log structure.  For this reason, we denote $D$ with this nontrivial log structure by $D^\partial$.  Then the \Qtn{correct} period sheaf for our purpose is the sheaf
\[
    \ker\bigl(\OBdlX{X} \to \imath_{\proket, *}(\OBdlX{D^\partial})\bigr)
\]
on $X_\proket$, the pro-Kummer \'etale site of $X$, where $\OBdlX{X}$ and $\OBdlX{D^\partial}$ are the period sheaves on $X_\proket$ and $D^\partial_\proket$, respectively, as in \cite[\aDef \logRHdefOBdl]{Diao/Lan/Liu/Zhu:lrhrv}.  Note that, in general, this is \emph{not} the same as the naive $!$-pushforward to $X_\proket$ of the period sheaf on $U_\proet$.  Once the period sheaf is constructed, the remaining arguments follow similar strategies as in \cite{Diao/Lan/Liu/Zhu:lrhrv}, sometimes with generalizations.

As an application of the methods developed in the proof of Theorem \ref{thm-intro}, we obtain a version of Poincar\'e duality for the \Pth{rational} $p$-adic \'etale cohomology of smooth rigid analytic varieties \Pth{see Theorem \ref{thm-trace-et} for more complete statements}:
\begin{thm}\label{thm-intro-trace-et}
    Suppose that $U$ is a smooth rigid analytic variety over $k$ of pure dimension $d$ that is of the form $U = X - Z$, where $X$ is a proper rigid analytic variety over $k$, and where $Z$ is a closed rigid analytic subvariety of $X$.  Then there is a canonical trace morphism
    \[
        t_\et: H_{\et, \cpt}^{2d}\bigl(U_{\AC{k}}, \bQ_p(d)\bigr) \to \bQ_p,
    \]
    whose formation is compatible with the excision and Gysin isomorphisms defined by complements of smooth divisors.  In addition, for each $\bZ_p$-local system $\bL$ on $U_\et$ \Pth{which is not necessarily de Rham}, with $\bL_{\bQ_p} := \bL \otimes_{\bZ_p} \bQ_p$, we have a canonical perfect pairing
    \[
        H^i_{\et, \cpt}(U_{\AC{k}}, \bL_{\bQ_p}) \otimes_{\bQ_p} H^{2d - i}_\et\bigl(U_{\AC{k}}, \dual{\bL}_{\bQ_p}(d)\bigr) \to \bQ_p,
    \]
    which we call the \emph{Poincar\'e duality pairing}, defined by pre-composing $t_\et$ with the cup product pairing $H^i_{\et, \cpt}(U_{\AC{k}}, \bL_{\bQ_p}) \otimes_{\bQ_p} H^{2d - i}_\et\bigl(U_{\AC{k}}, \dual{\bL}_{\bQ_p}(d)\bigr) \to H_{\et, \cpt}^{2d}\bigl(U_{\AC{k}}, \bQ_p(d)\bigr)$.
\end{thm}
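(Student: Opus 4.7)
The plan is to construct the trace morphism $t_\et$ using the de Rham comparison of Theorem \ref{thm-intro} applied to $\bQ_p(d)$, and then to deduce perfectness of the Poincar\'e duality pairing from Theorem \ref{thm-intro}(2) in the de Rham case, and by devissage to torsion coefficients in general.

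\textbf{Construction of $t_\et$.} Applying Theorem \ref{thm-intro} to the de Rham local system $\bQ_p(d)$ (whose $\DdRalg$ is $\cO_U$ with a Tate-twisted filtration) produces a canonical filtered Galois-equivariant isomorphism
\[
    H^{2d}_{\et, \cpt}\bigl(U_{\AC{k}}, \bQ_p(d)\bigr) \otimes_{\bQ_p} \BdR \Mi H^{2d}_{\dR, \cpt}(U) \otimes_k \BdR.
\]
Composing with the $\BdR$-linear extension of the classical de Rham trace $H^{2d}_{\dR, \cpt}(U) \to k$ (constructed by extending the smooth proper case along Gysin and excision sequences) yields a Galois-equivariant $\BdR$-linear functional on the left. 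Since the Tate twist makes the top \'etale cohomology Galois-trivial, the functional is determined by its restriction to $H^{2d}_{\et, \cpt}(U_{\AC{k}}, \bQ_p(d))$; matching the \'etale and de Rham fundamental classes forces this restriction to take values in $\bQ_p \subset \BdR$, giving $t_\et$. Its compatibility with excision and Gysin for smooth divisor complements is inherited from the analogous properties on the de Rham side together with the functoriality of the comparison.

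\textbf{Perfectness of the pairing.} If $\bL$ is a de Rham $\bZ_p$-local system, the commutative diagram of Theorem \ref{thm-intro}(2) identifies, after $\otimes_{\bQ_p} \BdR$, the candidate \'etale pairing with the classical de Rham Poincar\'e pairing for $\DdRalg(\bL)$, which is perfect by smoothness; faithfully flat descent along $\bQ_p \to \BdR$ then yields perfectness over $\bQ_p$. For an arbitrary $\bZ_p$-local system $\bL$, reduce modulo $p^n$ to obtain locally constant $\bZ/p^n$-sheaves $\bL_n$. Poincar\'e duality for \'etale cohomology of smooth rigid analytic varieties with torsion coefficients is classical (via Huber's theory), yielding perfect pairings at each finite level, and the finiteness established earlier in the paper furnishes the Mittag--Leffler condition, so passing to the inverse limit over $n$ and inverting $p$ produces the desired perfect pairing.

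\textbf{Main obstacle.} The delicate point is verifying that the trace arising from the torsion devissage agrees with $t_\et$ as constructed via the de Rham comparison. This reduces to comparing normalizations on the fundamental class of a smooth proper $X$ with trivial coefficients, where both candidate traces are canonical maps from a one-dimensional Galois-trivial $\bQ_p$-vector space to $\bQ_p$ and must agree up to a universal constant pinned down by a standard local computation on a polydisk.
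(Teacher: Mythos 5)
There is a genuine gap, in two places.

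First, your treatment of non--de Rham coefficients does not work. Huber's Poincar\'e duality for \'etale cohomology of rigid analytic (adic) spaces is only available for torsion coefficients \emph{prime to the residue characteristic}; for $\bZ / p^n$-coefficients over a $p$-adic base there is no ``classical'' duality to feed into a d\'evissage, and indeed the rational $p$-adic duality statement (even for proper smooth $X$ with trivial coefficients) is precisely the open question of Scholze that this theorem addresses. So the reduction ``mod $p^n$, apply torsion duality, pass to the limit'' fails at the first step, and with it the proposed comparison of the d\'evissage trace with the comparison trace in your last paragraph. The paper instead obtains perfectness for an arbitrary $\bZ_p$-local system $\bL$ from the perfect pairing on Higgs cohomology (Theorem \ref{thm-Higgs-pairing}), which only uses the Riemann--Hilbert functor $\Hc(\bL) = \gr^0 \RHl(\bL)$ --- defined for \emph{all} $\bZ_p$-local systems --- together with coherent Serre duality on $X_K$; combining this with the Higgs comparison isomorphism \Refeq{\ref{eq-thm-L-!-coh-comp-Hi}} (which again holds without any de Rham hypothesis) and descending from $K$ to $\bQ_p$ gives \Refeq{\ref{eq-thm-trace-et-pairing}}. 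Your argument via Theorem \ref{thm-intro}(2) and flat descent along $\bQ_p \to \BdR$ is fine, but only covers the de Rham case.

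Second, the construction of $t_\et$ is asserted rather than proved at exactly the delicate points. From the comparison one only knows that $H^{2d}_{\et, \cpt}(U_{\AC{k}}, \bQ_p(d))$ is a one-dimensional de Rham Galois representation with Hodge--Tate weight $0$; a priori it could be a nontrivial (e.g.\ unramified) character, and there is no independently defined ``\'etale fundamental class'' to match against, so Galois-equivariance alone does not force the element corresponding to $t_\dR^{-1}(1) \otimes 1$ to lie in the $\bQ_p$-rational subspace. Likewise, compatibility of the comparison isomorphism with the Gysin map is \emph{not} a formal consequence of functoriality: the Gysin map is a connecting morphism shifting degree and Tate twist, and identifying its image under the comparison with the residue/adjunction map on the de Rham side requires the explicit local period-sheaf computation of Proposition \ref{prop-Gysin-dR-comp} (and its excision analogue, Proposition \ref{prop-exc-dR-comp}). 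In the paper these two issues are resolved together: rationality of the de Rham fundamental class is proved by induction on $d$, blowing up a $k$-point and transporting the problem to the exceptional divisor via the Gysin and excision compatibilities (proof of Theorem \ref{thm-trace-et}). Your proposal presupposes the conclusions of these propositions, so as written it is circular at the crucial step.
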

We refer to Definition \ref{def-H-c} for our definition of the $p$-adic \'etale cohomology with compact support for rigid analytic varieties over $k$.  We remark that the Poincar\'e duality we obtained is, essentially by construction, compatible with all the de Rham comparison isomorphisms in \cite{Scholze:2013-phtra}, \cite{Diao/Lan/Liu/Zhu:lrhrv}, and this paper.

We note that the question of Poincar\'e duality for \emph{proper} smooth rigid analytic varieties \Pth{in which case the cohomology with compact support coincides with the usual cohomology} was raised earlier by Scholze in \cite{Scholze:2013-phtra}, and Gabber has announced a proof for such a result using a different method \Pth{see \cite[Appendix to Lecture 10, footnote 4]{Scholze/Weinstein:2020-BLG}}.  Nevertheless, even in the original proper smooth setting in \cite{Scholze:2013-phtra}, our approach makes essential use of the excision and Gysin isomorphisms defined by complements of smooth divisors, and hence crucially depends on the de Rham comparison results in the nonproper setting in \cite{Diao/Lan/Liu/Zhu:lrhrv} and this paper.

We will also study the cohomology with \emph{partial compact support}, as in \cite[\aSec 4.2]{Deligne/Illusie:1987-rdcdr}, \cite[\aSec III]{Faltings:1989-ccpgr}, and \cite{Faltings:2002-aee}; and also some \emph{generalized interior cohomology}, namely, the image of a morphism between cohomology with different partial compact support conditions; and construct de Rham comparison isomorphisms for such cohomology that are also compatible with Poincar\'e duality.

\subsection*{Outline of this paper}

Let us briefly describe the organization of this paper, and highlight the main topics in each section.

In Section \ref{sec-bd}, we work with a rigid analytic variety $U$ that is the open complement in a smooth rigid analytic variety $X$ of a normal crossings divisor $D$ whose intersections of irreducible components define a stratification of $X$ with smooth \Pth{closed} strata, and use such a stratification to study the \'etale cohomology of $U$ with partial compact support.  More specifically, in Section \ref{sec-log-str-bd}, we equip the smooth strata as above with several different log structures.  In Section \ref{sec-loc-syst-bd}, we study the pullbacks to such strata of torsion Kummer \'etale local systems on $X$, and prove the primitive comparison theorem for the Kummer \'etale cohomology of $\bF_p$-local systems of this kind \Pth{see Proposition \ref{prop-L-J-dir-im-et} and Theorem \ref{thm-prim-comp-bd}}.  In Section \ref{sec-ket-coh-cpt}, we define the Kummer \'etale cohomology of torsion local systems on $U$ with partial compact support along some subdivisor $D^\Sc$ of $D$, and prove \Pth{using results in Section \ref{sec-loc-syst-bd}} the primitive comparison theorem for such cohomology \Pth{see Theorem \ref{thm-L-!-prim-comp}}.  In Section \ref{sec-proket-coh-cpt}, we define the pro-Kummer \'etale cohomology of $\widehat{\bZ}_p$-local systems on $U$ with partial compact support along $D^\Sc$, and relate it to the Kummer \'etale cohomology.  In Section \ref{sec-period-bd}, we introduce some variants supported on the boundary strata \Pth{with log structures pulled back from $X$} of the period sheaves in \cite[\aSec \logRHsecOBdl]{Diao/Lan/Liu/Zhu:lrhrv}, and establish some variants of the Poincar\'e lemma for them.

In Section \ref{sec-dR-comp-cpt}, we generalize the results in \cite[\aSec \logRHseclogRHthm]{Diao/Lan/Liu/Zhu:lrhrv} to the \'etale, de Rham, Higgs, and Hodge cohomology with partial compact support.  More specifically, in Section \ref{sec-dR-comp-cpt-main}, we introduce the de Rham, Higgs, and Hodge cohomology with partial compact support, and state the comparison theorem for such cohomology \Pth{see Theorem \ref{thm-L-!-coh-comp}}.  In Sections \ref{sec-period-A-inf-B-inf-cpt}, \ref{sec-period-B-dR-cpt}, and \ref{sec-period-OB-dR-cpt}, we introduce more variants of the period sheaves introduced in \cite[\aSec \logRHsecOBdl]{Diao/Lan/Liu/Zhu:lrhrv}, which are useful for studying the pro-Kummer \'etale cohomology with partial compact support by taking limits and by using the primitive comparison theorem established in Section \ref{sec-ket-coh-cpt}, and prove the Poincar\'e lemma for such variants of period sheaves.  In Section \ref{sec-dR-comp-cpt-proof}, we prove the desired comparison theorem, and provide some criteria for cohomology with different partial compact support conditions to be isomorphic to each other.

In Section \ref{sec-trace}, we construct some trace morphisms for the \'etale and de Rham cohomology with compact support, and show that they define Poincar\'e duality pairings for the \'etale and de Rham cohomology with partial compact support that are compatible with the comparison isomorphisms in Section \ref{sec-dR-comp-cpt}.  More specifically, in Section \ref{sec-trace-coh}, as a foundation for later constructions, we review the trace morphisms and Serre duality for the coherent cohomology of proper smooth rigid analytic varieties.  In Section \ref{sec-trace-dR}, we establish a perfect pairing between Higgs cohomology with complementary partial compact supports \Pth{see Theorem \ref{thm-Higgs-pairing}}; and we construct some trace morphisms for de Rham \Pth{\resp Hodge} cohomology with compact support using the trace morphisms for coherent cohomology, and show that they induce perfect pairings between de Rham \Pth{\resp Hodge} cohomology with complementary partial compact supports, when the coefficients of cohomology are associated with \emph{de Rham} \'etale $\bZ_p$-local systems \Pth{see Theorem \ref{thm-trace-dR-Hdg}}.  In Section \ref{sec-exc-Gysin}, we show that the \'etale and de Rham excision and Gysin isomorphisms defined by complements of smooth divisors are compatible with the de Rham comparison isomorphisms \Pth{see Propositions \ref{prop-exc-dR-comp} and \ref{prop-Gysin-dR-comp}}.  In Section \ref{sec-trace-et}, by using the compatibility results in Section \ref{sec-exc-Gysin}, we construct some trace morphisms for \'etale cohomology with compact support using the trace morphisms for de Rham cohomology constructed in Section \ref{sec-trace-dR}, and show \Pth{by comparison with the above perfect duality for Higgs cohomology} that these trace morphisms induce perfect duality pairings between \'etale cohomology with complementary partial compact supports, when the coefficients are $\bQ_p$-base extensions of $\bZ_p$-local systems \Pth{see Theorem \ref{thm-trace-et}}, which are compatible with the above perfect duality for de Rham cohomology \Pth{via comparison isomorphisms} when the coefficients are de Rham.  In Section \ref{sec-int-coh}, we introduce the notion of generalized interior cohomology, which is the image of a morphism between cohomology with different partial compact support conditions, and deduce from the results in Sections \ref{sec-dR-comp-cpt} and \ref{sec-trace-et} the de Rham comparison and the compatibility with Poincar\'e duality for such cohomology.

In Section \ref{sec-comp-alg}, we deduce the de Rham comparison and the compatibility with Poincar\'e duality for the cohomology with partial compact support and the generalized interior cohomology similarly defined over \emph{algebraic varieties}, by showing that the various constructions are compatible with the analytification functors.

In Section \ref{sec-Sh-var}, we apply the results in Section \ref{sec-comp-alg} to Shimura varieties, in the setting of \cite[\aSec \logRHsecShvar]{Diao/Lan/Liu/Zhu:lrhrv}, and obtain the de Rham comparison and the dual Bernstein--Gelfand--Gelfand \Pth{BGG} decomposition for the cohomology with partial compact support of automorphic local systems \Pth{on the \'etale side} and automorphic bundles \Pth{on the de Rham and coherent sides} on general Shimura varieties.  As a byproduct, we can compute the Hodge--Tate weights of the \'etale cohomology with partial compact support in terms of the dual BGG decomposition.  We also obtained corresponding results for the generalized interior cohomology and, when the coefficients have regular weights, for the intersection cohomology as well.

\subsection*{Acknowledgements}

We would like to thank Shizhang Li and Yoichi Mieda for some helpful discussions, thank Wies{\l}awa Nizio{\l} for sending us a preliminary version of the paper \cite{Colmez/Dospinescu/Hauseux/Niziol:2021-pecpd}, and thank the Beijing International Center for Mathematical Research, the Morningside Center of Mathematics, the California Institute of Technology, and the Academia Sinica for their hospitality.  We would also like to thank the anonymous referee for many helpful comments and suggestions.

\subsection*{Notation and conventions}

We shall follow the notation and conventions of \cite{Diao/Lan/Liu/Zhu:lrhrv}, unless otherwise specified.  In particular, we shall denote by $k$ a nonarchimedean local field \Pth{\ie, a field complete with respect to the topology induced by a nontrivial nonarchimedean multiplicative norm $|\cdot|: k \to \bR_{\geq 0}$} with residue field $\kappa$ of characteristic $p > 0$, and by $\cO_k$ its ring of integers.  Since we will be mainly working with rigid analytic varieties, we shall work with $k^+ = \cO_k$ and regard rigid analytic varieties over $k$ as adic spaces locally topologically of finite type over $\Spa(k, \cO_k)$ \Pth{as in \cite{Huber:1996-ERA}}.  All rigid analytic varieties will be separated.  Group cohomology will always mean continuous group cohomology.  For the sake of simplicity, by a \emph{$p$-adic field}, we shall mean a complete discrete valuation field of mixed characteristic $(0, p)$ with perfect residue field.

\numberwithin{equation}{subsection}

\section{Boundary stratification and cohomology with compact support}\label{sec-bd}

In this section, let $X$ be a smooth rigid analytic variety over $k$, and $D$ a normal crossings divisor \Pth{see \cite[\aEx \logRHexlogadicspncd]{Diao/Lan/Liu/Zhu:lrhrv}} with \Pth{finitely many} irreducible components $\{ D_j \}_{j \in I}$ \Pth{\ie, the images of the connected components of the normalization of $D$, as in \cite{Conrad:1999-icrs}} satisfying the condition that all the intersections
\begin{equation*}\label{eq-XJ}
    X_J := X \cap \bigl(\cap_{j \in J} \, D_j\bigr),
\end{equation*}
where $J \subset I$, are also \emph{smooth}.  \Pth{Note that $X_\emptyset = X$.}

\subsection{Log structures on smooth boundary strata}\label{sec-log-str-bd}

Let us denote by
\[
    \imath_J: X_J \to X
\]
the canonical closed immersion of adic spaces.  Let
\[
    D_J := \cup_{J \subsetneq J' \subset I} \, X_{J'}
\]
\Pth{with its canonical reduced closed subspace structure} and
\[
    U_J := X_J - D_J,
\]
as adic spaces.  \Pth{Note that $D_\emptyset = D$ and $U_\emptyset = U$.}  Then we also have a canonical open immersion of adic spaces
\[
    \jmath_J: U_J \to X_J
\]

For any $I^\Sc \subset I$, with $I^\Snc := I - I^\Sc$, let
\[
    D^\Sc := \cup_{j \in I^\Sc} \, D_j
\]
and
\[
    D^\Snc := \cup_{j \in I^\Snc} \, D_j,
\]
\Pth{with their canonical reduced closed subspace structures}, and let $U^\Sc := X - D^\Sc$ and $U^\Snc := X - D^\Snc$.  Let $\jmath_\Sc: U \to U^\Sc$, $\jmath^\Sc: U^\Sc \to X$, $\jmath_\Snc: U \to U^\Snc$, and $\jmath^\Snc: U^\Snc \to X$ denote the canonical open immersions of adic spaces.  \Pth{In Sections \ref{sec-ket-coh-cpt} and \ref{sec-proket-coh-cpt} below, we will use $\jmath_\Sc$ and $\jmath^\Sc$ to define the Kummer \'etale and pro-Kummer \'etale cohomology of $U$ with partial compact support along $D^\Sc$.}

We shall view $X$ as a log adic space by equipping it with the log structure $\alpha_X: \cM_X \to \cO_X$ defined by $D$ as in \cite[\aEx \logRHexlogadicspncd]{Diao/Lan/Liu/Zhu:lrhrv}, together with a canonical morphism of sites
\[
    \varepsilon_\et: X_\ket \to X_\et.
\]

For each $J \subset I$,  the smooth rigid analytic variety $X_J$ can be equipped with several natural log structures:
\begin{itemize}
    \item the trivial log structure $\alpha_{X_J}^{\Utext{triv}}: \cM_{X_J}^{\Utext{triv}} = \cO_{X_{J, \et}}^\times \to \cO_{X_{J, \et}}$;

    \item the log structure $\alpha_{X_J}^{\Utext{std}}: \cM_{X_J}^{\Utext{std}} \to \cO_{X_{J, \et}}$ defined by the normal crossings divisor $D_J$ as in \cite[\aEx \logRHexlogadicspncd]{Diao/Lan/Liu/Zhu:lrhrv}; and

    \item the log structure associated with the pre-log structure $\imath_J^{-1}(\cM_X) \to \cO_{X_{J, \et}}$ induced by the composition of $\alpha_X$ and $\imath_J^\#: \cO_{X_\et} \to \imath_{J, *}(\cO_{X_{J, \et}})$, which we shall denote by $\alpha_{X_J}^\partial: \cM_{X_J}^\partial \to \cO_{X_{J, \et}}$.
\end{itemize}
By abuse of notation, we shall denote these log adic spaces by $X_J^\times$, $X_J$, and $X_J^\partial$, respectively.  For the sake of clarity, let us introduce the following:
\begin{defn}\label{def-imm}
    We say that a morphism of log adic spaces is a \emph{closed immersion} \Pth{\resp an open immersion} if it is \emph{strict} as in \cite[\aDef \logadicdeflogstr(\logadicdeflogstrseven)]{Diao/Lan/Liu/Zhu:lasfr}---\ie, if the log structure on the source space is canonically isomorphic to the pullback as above \Pth{\resp the restriction} of the one on the target space---and if the underlying morphism of adic spaces is a closed immersion \Pth{\resp an open immersion}.
\end{defn}

\begin{rk}\label{rem-cl-imm}
    Definition \ref{def-imm} is more restrictive than the one in \cite[\aDef \logadicdefimm]{Diao/Lan/Liu/Zhu:lasfr}, where closed immersions that are not necessarily strict were also introduced.  However, we do not need such a generality in this paper.
\end{rk}

As explained in \cite[\aEx \logadicexlogadicspncdstrictclimm]{Diao/Lan/Liu/Zhu:lasfr}, we have the following commutative diagram of canonical morphisms between log adic spaces
\[
    \xymatrix{ {U_J^\partial} \ar[d]_-{\varepsilon_J^\partial|_{U_J^\partial}} \ar[r]^-{\jmath_J^\partial} & {X_J^\partial} \ar[d]_-{\varepsilon_J^\partial} \ar[r]^-{\imath_J^\partial} & {X} \\
    {U_J} \ar[r]^-{\jmath_J} & {X_J} }
\]
in which $\jmath_J^\partial$ and $\jmath_J$ are open immersions, $\imath_J^\partial$ is a closed immersion, and the underlying morphisms of adic spaces of $\varepsilon_J^\partial|_{U_J^\partial}$ and $\varepsilon_J^\partial$ are isomorphisms.  Moreover, $U_J$ is equipped with the trivial log structure, while $U_J^\partial$ is equipped with the log structure pulled back from $X_J^\partial$ and hence $X$.  Note that there is no natural morphism of log adic spaces from $X_J$ to $X$, and this is the main reason to introduce $X_J^\partial$.

For each $a \geq 0$, we define the log adic space
\[
    X_{(a)}^\partial := \coprod_{J \subset I^\Sc, \, |J| = a} X_J^\partial,
\]
a disjoint union, which admits a canonical finite morphism of log adic spaces
\[
    \imath_{(a)}^\partial: X_{(a)}^\partial \to X.
\]
\Pth{Note that the definition of $X_{(a)}^\partial$ only involves the irreducible components of $D^\Sc$.}

\begin{rk}\label{rem-ket-local-geom-pattern}
    In what follows, we will sometimes use Kummer \'etale localizations $X' \to X$ to reduce the proofs of various statements for torsion local systems to the analogous statements for constant ones, and the assertions to prove will often be equivalent to assertions concerning direct images and direct images with compact support from open complements of closed subspaces of the forms $X_J$, $D_J$, or $D^J$ above.  This is justified because, by \cite[\aProp \logadicpropAbhyankar{} and \aLem \logadiclemAbhyankarbasic]{Diao/Lan/Liu/Zhu:lasfr}, locally over $X'$, the underlying reduced subspaces of the preimages of the irreducible components of $D$ still form normal crossings divisors of the same pattern.
\end{rk}

We will make use of the following notation and conventions in the remainder of this paper.  Let $Y$ be a locally noetherian fs log adic space over $k$.  Let $\imath: Z \to Y$ be a closed immersion of log adic spaces, and $\jmath: W \to Y$ an open immersion of log adic spaces, over $k$.  \Pth{By Definition \ref{def-imm}, this means that the log structures on $Z$ and $W$ are the pullbacks of the one on $Y$.}  For $? = \an$, $\et$, $\ket$, $\proet$, or $\proket$ \Pth{referring to the analytic, \'etale, Kummer \'etale, pro-\'etale, or pro-Kummer \'etale topology, respectively, on these spaces}, let $(\imath_{?, *}, \imath_?^{-1})$ and $(\jmath_{?, *}, \jmath_?^{-1})$ denote the associated morphisms of topoi.  For an abelian sheaf $\cF$ on $Y_?$, we shall sometimes denote $\imath^{-1}(\cF)$ \Pth{\resp $\jmath^{-1}(\cF)$} by $\cF|_Z$ \Pth{\resp $\cF|_W$}.  Note that $\jmath_?^{-1}$ admits a left adjoint, denoted by $\jmath_{?, !}$, which is an exact functor on the category of abelian sheaves.

\begin{lem}\label{lem-exc-ex-seq}
    In the above setting, assume moreover that $W = Y - Z$.  Then, for every abelian sheaf $\cF$ on $Y_?$, where $? = \an$, $\et$, or $\ket$, we have the excision short exact sequence $0 \to \jmath_{?, !} \, \jmath_?^{-1}(\cF) \to \cF \to \imath_{?, *} \, \imath_?^{-1}(\cF) \to 0$.  Moreover, the functor $\imath_{?, *}$ \Pth{\resp $\jmath_{?, !}$} from the category of abelian sheaves on $Z$ \Pth{\resp $W$} to the category of abelian sheaves on $Y$ is exact and fully faithful.
\end{lem}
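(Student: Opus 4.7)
The plan is to reduce everything to a pointwise verification at (log) geometric points of $Y$, using that each of the sites $Y_\an$, $Y_\et$, $Y_\ket$ has enough points: the usual analytic points in the case $? = \an$, the geometric points in the case $? = \et$, and the log geometric points \Pth{as used throughout \cite{Diao/Lan/Liu/Zhu:laske}} in the case $? = \ket$.  Under the assumption $W = Y - Z$, the underlying topological space decomposes as $|Y| = |Z| \sqcup |W|$, so every such point $\bar y$ of $Y$ lies above exactly one of $|Z|$ or $|W|$.

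First I would carry out the stalk computation.  If $\bar y$ lies above $|W|$, then, because $W$ is open in $Y$, every sufficiently fine \Pth{Kummer} \'etale neighborhood of $\bar y$ factors through $\jmath$, which yields $(\jmath_{?, !} \, \jmath_?^{-1}(F))_{\bar y} \cong F_{\bar y}$ and $(\imath_{?, *} \, \imath_?^{-1}(F))_{\bar y} = 0$.  If instead $\bar y$ lies above $|Z|$, then the strictness of $\imath$ \Pth{Definition \ref{def-imm}} identifies the pullback log structure on $Z$ with the restriction of that on $Y$, so $(\imath_{?, *} \, \imath_?^{-1}(F))_{\bar y} \cong F_{\bar y}$; while the defining presheaf for $\jmath_{?, !}$ \Pth{with value $F(V)$ when $V \to Y$ factors through $\jmath$, and zero otherwise} already vanishes at every $V$ having a lift of $\bar y$, because such a $V$ cannot factor through $\jmath$, so $(\jmath_{?, !} \, \jmath_?^{-1}(F))_{\bar y} = 0$.

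Next I would deduce the short exact sequence: the two arrows are the counit of the $(\jmath_{?, !}, \jmath_?^{-1})$ adjunction and the unit of the $(\imath_?^{-1}, \imath_{?, *})$ adjunction, whose composition is zero because $\imath_?^{-1} \, \jmath_{?, !} = 0$, and the exactness of the resulting sequence is immediate from the stalk computation above.  The exactness of $\imath_{?, *}$ is read off from the same stalk description, and its full faithfulness is equivalent \Pth{by adjunction} to the unit $G \to \imath_?^{-1} \, \imath_{?, *}(G)$ being an isomorphism for every abelian sheaf $G$ on $Z_?$, which again follows by comparing stalks.  For $\jmath_{?, !}$, right exactness follows from its being left adjoint to the exact functor $\jmath_?^{-1}$; left exactness can be read either from the explicit presheaf description together with exactness of sheafification, or from the stalk computation; and full faithfulness reduces to the counit $\jmath_?^{-1} \, \jmath_{?, !}(G) \to G$ being an isomorphism, which is essentially tautological.

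The main delicate point will be the stalk computation in the Kummer \'etale case, where Kummer \'etale neighborhoods of a log geometric point may ramify along the log divisor; however, since by Definition \ref{def-imm} both $\imath$ and $\jmath$ are \emph{strict}, the Kummer \'etale site of $Z$ \Pth{\resp $W$} is naturally identified with the localized site $Y_\ket / Z$ \Pth{\resp $Y_\ket / W$}, and Kummer \'etale neighborhoods of $\bar y$ restrict to Kummer \'etale neighborhoods of its images in $Z$ or in $W$.  Once this identification is in hand, the argument is entirely parallel to the classical \'etale case, and no additional input from the log structure is needed.
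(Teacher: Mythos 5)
The paper does not actually reprove this lemma --- its proof is a one-line citation of \cite[\aLem \logadiclemexc]{Diao/Lan/Liu/Zhu:laske} --- so your stalkwise argument is an independent route, and for $? = \an$ and $? = \et$ it is the standard one and goes through.  However, there is a genuine gap at exactly the point you yourself flag as delicate, namely the Kummer \'etale stalk computation at points lying over $|Z|$.  The justification you offer --- that \Qtn{the Kummer \'etale site of $Z$ is naturally identified with the localized site $Y_\ket / Z$} --- is not available: $\imath$ is a closed immersion, hence not Kummer \'etale, so $Z$ is not an object of $Y_\ket$ and no such localization exists (the analogous claim already fails for the classical \'etale site of a closed subscheme).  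What your computation of $(\imath_{\ket, *} \, \imath_\ket^{-1}(F))_{\bar y}$, and hence the exactness and full faithfulness of $\imath_{\ket, *}$, really needs is: (i) that $Y_\ket$ and $Z_\ket$ have enough points given by log geometric points, and (ii) that for a \emph{strict} closed immersion every Kummer \'etale morphism $V \to Z$ is, locally near a given log geometric point, the pullback of a Kummer \'etale morphism to $Y$ --- equivalently, that the pullbacks to $Z$ of Kummer \'etale neighborhoods of $\bar y$ in $Y$ are cofinal among Kummer \'etale neighborhoods of $\bar y$ in $Z$.  Statement (ii) is not formal: it uses the local structure theory of Kummer \'etale morphisms (strict \'etale locally a standard Kummer cover attached to a chart) together with the lifting of strict \'etale morphisms along closed immersions of adic spaces.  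These are precisely the foundational inputs from \cite{Diao/Lan/Liu/Zhu:laske} that the paper's citation is quoting (\Refcf{} the results around \cite[\aLem \logadiclemclimmketmor]{Diao/Lan/Liu/Zhu:laske}), so your proof as written has smuggled the hard part into a false identification.

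Two smaller remarks.  First, the existence of enough log geometric points for the Kummer \'etale site is itself a nontrivial input from the same reference and should be cited rather than assumed.  Second, you have the unit and counit interchanged: full faithfulness of $\imath_{?, *}$ (a right adjoint) corresponds to the \emph{counit} $\imath_?^{-1} \, \imath_{?, *} \to \mathrm{id}$ being an isomorphism, and full faithfulness of $\jmath_{?, !}$ (a left adjoint) to the \emph{unit} $\mathrm{id} \to \jmath_?^{-1} \, \jmath_{?, !}$ being an isomorphism; the maps you propose to check are written in the opposite directions, though the stalk verification you intend is the right one once the directions are fixed.
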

\begin{proof}
    See \cite[\aLem \logadiclemexc]{Diao/Lan/Liu/Zhu:lasfr}.
\end{proof}

\begin{lem}\label{lem-!-resol}
    Let $\cF$ be an abelian sheaf on $X_?$, where $? = \an$, $\et$, or $\ket$.  For each $a \geq 0$, let us denote $\imath_{(a), ?}^{\partial, -1}(\cF)$ by $\cF_{(a)}$, which is an abelian sheaf on $X_{(a), ?}^\partial$.  Let us choose any total order of the finite set $I^\Sc$, which induces total orders on any subset $J$ of $I^\Sc$.  Then there is an exact complex
    \[
        0 \to \jmath_{?, !}^\Sc(\cF|_{U^\Sc_?}) \to \imath_{(0), ?, *}^\partial(\cF_{(0)}) \to \imath_{(1), ?, *}^\partial(\cF_{(1)}) \to \cdots \to \imath_{(a), ?, *}^\partial(\cF_{(a)}) \to \cdots
    \]
    over $X_?$, where the morphism $\imath_{(a), ?, *}^\partial(\cF_{(a)}) \to \imath_{(a + 1), ?, *}^\partial(\cF_{(a + 1)})$, for each $a \geq 0$, is the direct sum of morphisms $\imath_{J, ?, *}^\partial(\cF|_{X_J^\partial}) \to \imath_{J', ?, *}^\partial(\cF|_{X_{J'}^\partial})$ indexed by pairs $(J, J')$ with $J \subsetneq J' \subset I^\Sc$, $|J| = a$, and $J' = J \cup \{ j_0 \}$ for some $j_0$; each being the canonical one induced by the closed immersion $X_{J'}^\partial \to X_J^\partial$ multiplied by $(-1)^{|\{ j \in J' : j < j_0 \}|}$.  \Pth{This is probably well known, but we included some details to at least set up the convention, because such complexes will appear repeatedly in our arguments.}
\end{lem}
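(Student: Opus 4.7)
The plan is to check exactness stalk-by-stalk at geometric points, after a routine verification that the displayed arrows compose to zero.  Observe first that $X_\emptyset^\partial$ coincides with $X$ \Pth{since the empty intersection is $X$ and the pullback log structure is the given one}, so $\imath_{(0)}^\partial$ is the identity and $\imath_{(0), ?, *}^\partial(F_{(0)}) = F$; hence the second arrow of the complex is the canonical morphism $F \to \imath_{(1), ?, *}^\partial(F_{(1)})$.

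For the complex condition, I would decompose the composite $\imath_{(a), ?, *}^\partial(F_{(a)}) \to \imath_{(a + 2), ?, *}^\partial(F_{(a + 2)})$ according to pairs $J \subsetneq J'' \subset I^\Sc$ with $|J| = a$ and $|J''| = a + 2$.  Writing $J'' - J = \{ j_0, j_1 \}$ with $j_0 < j_1$, there are exactly two intermediate subsets of cardinality $a + 1$, through which the two contributions to the corresponding component have opposite signs by the prescribed sign rule, and hence cancel.

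For exactness, I would fix a geometric point $\bar x$ of $X_?$ and set $J_0 := \{ j \in I^\Sc : \bar x \in D_j \}$, with $n := |J_0|$.  Using Lemma \ref{lem-exc-ex-seq} and the fact that strict closed and open immersions behave as expected on stalks for the functors $\imath_{?, *}$ and $\jmath_{?, !}$, the stalk of $\imath_{J, ?, *}^\partial(F|_{X_J^\partial})$ at $\bar x$ equals $F_{\bar x}$ if $J \subset J_0$ and $0$ otherwise, while $\jmath_{?, !}(F|_{U^\Sc_?})_{\bar x}$ equals $F_{\bar x}$ if $n = 0$ and $0$ otherwise.  If $n = 0$, the stalk complex is $0 \to F_{\bar x} \Misn{\OP{id}} F_{\bar x} \to 0 \to \cdots$, which is exact.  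If $n \geq 1$, the stalk complex becomes $F_{\bar x}$ tensored with the augmented simplicial cochain complex of the $(n - 1)$-simplex on the vertex set $J_0$, namely
\[
    0 \to F_{\bar x} \to F_{\bar x}^n \to F_{\bar x}^{\binom{n}{2}} \to \cdots \to F_{\bar x} \to 0,
\]
which is acyclic because the simplex is contractible.  Thus the complex is exact on all stalks, and hence exact as a complex of sheaves on $X_?$.

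The only subtle point is the bookkeeping of signs, both in the complex condition and in the identification of the stalk complex for $n \geq 1$ with the augmented \v{C}ech complex of a simplex; but these are routine once the conventions are pinned down as in the statement.
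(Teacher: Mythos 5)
Your argument is essentially the paper's: the paper invokes Lemma \ref{lem-exc-ex-seq} to test exactness after pulling back to the locally closed pieces $U^\Sc$ and $U_J^\partial$, where the complex becomes $0 \to F \to F \to 0$, respectively the augmented cochain complex of an $(|J|-1)$-simplex with coefficients in $F$, which is exactly the combinatorial acyclicity you verify on stalks (and your sign check for $d^2 = 0$ is the same convention). The only point to phrase carefully is the case $? = \ket$: ordinary geometric points of the underlying space do not form a conservative family for the Kummer \'etale topos, so one should either take stalks at log geometric points or, as the paper does, test exactness after pullback to the strata via Lemma \ref{lem-exc-ex-seq}; with that adjustment your computation of the stalks of $\imath_{J, ?, *}^\partial(F|_{X_J^\partial})$ and of $\jmath_{?, !}(F|_{U^\Sc_?})$ goes through verbatim.
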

\begin{proof}
    By Lemma \ref{lem-exc-ex-seq}, it suffices to check the exactness of the complex after pulling it back to $U^\Sc \subset X$ and to $U_J^\partial \subset X$, for each $J\subset I^\Sc$.  This pullback can be identified with $0 \to \cF|_{U^\Sc_?} \to \cF|_{U^\Sc_?} \to 0$ in the former case, where the morphism in the middle is the identity morphism; and with a complex
    \[
        0 \to 0 \to (\cF|_{U_{J, ?}^\partial})^{\binom{a}{0}} \to (\cF|_{U_{J, ?}^\partial})^{\binom{a}{1}} \to \cdots \to (\cF|_{U_{J, ?}^\partial})^{\binom{a}{a - 1}} \to (\cF|_{U_{J, ?}^\partial})^{\binom{a}{a}} \to 0 \to \cdots
    \]
    in the latter case, where $a = |J|$ and the exponents are the binomial coefficients.  In both cases, the sequences are exact by construction, as desired.
\end{proof}

\subsection{Kummer \'etale local systems on smooth boundary strata}\label{sec-loc-syst-bd}

Let $\bL$ be a torsion local system on $X_\ket$.  For each $J \subset I$, let
\[
    \bL_J := \imath_{J, \ket}^{\partial, -1}(\bL).
\]
We have the following primitive comparison theorem for $X_{J, \ket}^\partial$ and $\bL_J$:
\begin{thm}\label{thm-prim-comp-bd}
    Suppose that $k$ is algebraically closed of characteristic zero, $X_J$ is proper, and $\bL$ is an $\bF_p$-local system.  Then there is a natural almost isomorphism
    \begin{equation}\label{eq-thm-prim-comp-bd}
        H^i\bigl(X_{J, \ket}^\partial, \bL_J \bigr) \otimes_{\bF_p} (k^+ / p) \Mi H^i\bigl(X_{J, \ket}^\partial, \bL_J  \otimes_{\bF_p} (\cO_{X_{J, \ket}^\partial}^+ / p)\bigr)
    \end{equation}
    of almost finitely generated $k^+$-modules, for each $i \geq 0$.  Both $k^+$-modules are almost zero when $i > 2 \dim(X_J) + |J| = 2 \dim(X) - |J|$.
\end{thm}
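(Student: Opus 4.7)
The plan is to deduce this as a direct application of the primitive comparison theorem for proper log smooth fs log adic spaces, namely \cite[\aThm \logadicthmprimcomp]{Diao/Lan/Liu/Zhu:laske}, to the pair $(X_J^\partial, \bL_J)$.

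First, I would verify the hypotheses. Properness of $X_J$ is assumed. To see that $X_J^\partial$ is a log smooth fs log adic space over $k$, work locally: around a given point, choose coordinates $(t_1, \ldots, t_d)$ on $X$ such that the irreducible components of $D$ through that point are cut out by $\{ t_j = 0\}$ for $j$ in a finite subset $I_0 \subset I$, yielding the standard chart $\bN^{I_0} \to \cO_X$, $e_j \mapsto t_j$, as in \cite[\aEx \logRHexlogadicspncd]{Diao/Lan/Liu/Zhu:lrhrv}. We may assume $J \subset I_0$, since otherwise the chart does not meet $X_J$. Pulling back along $\imath_J^\partial$, we obtain the chart sending $e_j \mapsto 0$ for $j \in J$ and $e_j \mapsto t_j|_{X_J}$ for $j \in I_0 - J$, which exhibits $X_J^\partial$ as an fs log adic space log smooth over $k$. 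Moreover, $\bL_J$ is the pullback of an $\bF_p$-local system on $X_\ket$ along a morphism of log adic spaces, and is therefore an $\bF_p$-local system on $X_{J, \ket}^\partial$. The cited primitive comparison theorem then directly yields the natural almost isomorphism \Refeq{\ref{eq-thm-prim-comp-bd}} of almost finitely generated $k^+$-modules.

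For the vanishing in degrees $i > 2 \dim(X_J) + |J|$, I would use a log-rank stratification of $X_J^\partial$. On the open part $U_J = X_J - D_J$, the pullback log structure has rank exactly $|J|$ and the dimension is $\dim(X_J)$, contributing Kummer \'etale cohomological dimension $2 \dim(X_J) + |J| = 2 \dim(X) - |J|$. On each deeper stratum $X_{J'}$ with $J \subsetneq J' \subset I$, the log rank becomes $|J'|$ but the dimension drops to $\dim(X) - |J'|$, yielding a strictly smaller contribution $2 \dim(X) - |J'|$. Assembling the resulting excision long exact sequences and combining with the almost finite generation from the previous step shows that both sides of \Refeq{\ref{eq-thm-prim-comp-bd}} are almost zero in degrees $i > 2 \dim(X_J) + |J|$.

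The main obstacle will be verifying the log smoothness of $X_J^\partial$ so that the ambient primitive comparison theorem applies; this is essentially mechanical from the explicit toric chart above, but requires care to confirm that extracting the strict closed immersion $\imath_J^\partial$ does not destroy log smoothness \Pth{the point being that \Qtn{killing} some generators of the log monoid by setting their images to $0$ is still permitted within the log smooth category}. Granted that, the vanishing bound reduces to a standard cohomological dimension computation on the Kummer \'etale site via the stratification.
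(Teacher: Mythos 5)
There is a genuine gap in the first (and crucial) step: your reduction rests on the claim that $X_J^\partial$ is log smooth over $k$, and this is false.  The chart you exhibit sends the generators $e_j$, $j \in J$, to $0$, and an fs log structure with such \Qtn{phantom} components \Pth{the pullback log structure along a strict closed immersion into a log smooth space} is never log smooth over a base with trivial log structure: already the standard log point $\Spa(k, k^+)$ with log structure $\bN \to k$, $1 \mapsto 0$, is not log smooth over $k$, since any chart-induced morphism to a model $\Spa(k\Talg{P}, k^+\Talg{P})$ is a closed immersion of positive codimension rather than a strict \'etale map; \Qtn{killing generators} is exactly what leaves the log smooth category.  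Consequently \cite[\aThm \logadicthmprimcomp]{Diao/Lan/Liu/Zhu:laske} does not apply to $(X_{J, \ket}^\partial, \bL_J)$ as you propose.  The inconsistency is visible in your own statement of the vanishing bound: a proper log smooth space of dimension $\dim(X_J)$ would have $\bF_p$-Kummer \'etale cohomology vanishing above $2\dim(X_J)$, whereas the correct bound is $2\dim(X_J) + |J|$.  Concretely, if $X$ is a proper smooth curve, $D = D_{j_0}$ a single point and $J = \{j_0\}$, then $X_J^\partial$ is the standard log point and $H^1(X_{J, \ket}^\partial, \bF_p) \cong \bF_p \neq 0$ although $2\dim(X_J) = 0$; so a direct application of the primitive comparison theorem to $X_J^\partial$, were it legitimate, would even prove a false vanishing statement.

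The paper's proof instead reduces to a space that genuinely is proper log smooth, namely $X_J$ with the log structure defined by the normal crossings divisor $D_J$, via the projection $\varepsilon_J^\partial: X_J^\partial \to X_J$.  Proposition \ref{prop-L-J-dir-im-et} \Pth{whose proof uses the purity-type Lemmas \ref{lem-L-J-op} and \ref{lem-L-J-pure} and the local computation \Refeq{\ref{eq-L-J-dir-im-et-J-log-str}}} shows that each $R^b\varepsilon_{J, \ket, *}^\partial(\bL_J)$ is a torsion Kummer \'etale local system on $X_{J, \ket}$, vanishing for $b > |J|$.  One then applies \cite[\aThm \logadicthmprimcomp]{Diao/Lan/Liu/Zhu:laske} to these local systems on $X_J$ and compares the two Leray spectral sequences for $\varepsilon_J^\partial$, using \cite[\aLem \logadiclemketmorOplusp]{Diao/Lan/Liu/Zhu:laske} to identify $R^b\varepsilon_{J, \ket, *}^\partial\bigl(\bL_J \otimes_{\bF_p} (\cO_{X_{J, \ket}^\partial}^+/p)\bigr)$ with $\bigl(R^b\varepsilon_{J, \ket, *}^\partial(\bL_J)\bigr) \otimes_{\bF_p} (\cO_{X_{J, \ket}}^+/p)$; the $E_2$-range $a \leq 2\dim(X_J)$, $b \leq |J|$ then yields both the almost isomorphism \Refeq{\ref{eq-thm-prim-comp-bd}} and the bound $2\dim(X_J) + |J|$.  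Your stratification heuristic for the vanishing points in the right direction, but without first passing to a log smooth space there is no primitive comparison input at all, so some device like the Leray spectral sequence for $\varepsilon_J^\partial$ is unavoidable.
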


The remainder of this subsection will be devoted to the proof of Theorem \ref{thm-prim-comp-bd}.  Along the way, we will establish several other facts that will be needed in the remainder of this paper.  We shall temporarily drop the assumptions that $k$ is algebraically closed, that $X_J$ is proper, and that $\bL$ is $p$-torsion.  The first step is the following proposition:
\begin{prop}\label{prop-L-J-dir-im-et}
    The sheaf $R^i\varepsilon_{J, \ket, *}^\partial( \bL_J )$ is a torsion local system on $X_{J, \ket}$, for each $i \geq 0$, and it vanishes when $i > |J|$.  Moreover, for each $m \geq 1$, the canonical morphism $R^i\varepsilon_{J, \ket, *}^\partial(\bL_J) \to R^i\varepsilon_{J, \ket, *}^\partial(\bL_J / m)$ is surjective.
\end{prop}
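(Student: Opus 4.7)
The assertion is Kummer \'etale local on $X_J$, so I shall work in a suitable local chart.  By the standard toric charts for log adic spaces with normal crossings log structure and by Abhyankar's lemma \Ref{see \cite[\aProp \logadicpropAbhyankar{} and \aLem \logadiclemAbhyankarbasic]{Diao/Lan/Liu/Zhu:laske}, as well as Remark \ref{rem-ket-local-geom-pattern}}, after replacing $X$ by a strict \'etale neighborhood of a geometric point of $X_J$ and passing to a further Kummer \'etale cover extracting suitable roots of the defining equations of the $D_j$'s, I may assume that $X = \Spa(k\langle T_1, \ldots, T_n\rangle)$ is a polydisk with $D_j = V(T_j)$ for $j \in I = \{1, \ldots, s\}$, that $J = \{1, \ldots, r\}$ so that $X_J = V(T_1, \ldots, T_r)$, and that $\bL$ is constant with value some finite torsion abelian group $M$.

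In this local model, $X_J^\partial$ is identified with the product $X_J \times_{\Spa(k)} s_J$ of log adic spaces, where $X_J$ carries its standard log structure $\alpha_{X_J}^{\Utext{std}}$ from $D_J$ and where $s_J := (\Spa(k), \bN^J)$ is the rank-$|J|$ \emph{log point} \Pth{whose log monoid $\bN^J$ is generated by ghost elements $T_j$, $j \in J$, all mapping to $0 \in \cO$}.  The morphism $\varepsilon_J^\partial$ corresponds to the projection to the first factor.  A K\"unneth-type argument for Kummer \'etale cohomology then yields a local isomorphism
\[
    R^i \varepsilon^\partial_{J, \ket, *}(\bL_J) \cong \bL_J \otimes_\bZ H^i_\ket(s_J, \bZ),
\]
where $H^*_\ket(s_J, \bZ)$, computed via the Kummer \'etale fundamental group $\widehat{\bZ}(1)^J$ of $s_J$, is the exterior algebra $\bigwedge\nolimits^\bullet \bigl(\widehat{\bZ}(-1)^J\bigr)$.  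This immediately shows that $R^i \varepsilon^\partial_{J, \ket, *}(\bL_J)$ is a torsion local system on $X_{J, \ket}$ that vanishes for $i > |J|$.  For the surjectivity statement, the above formula makes $R^i \varepsilon^\partial_{J, \ket, *}$ exact as a functor of the abelian sheaf $\bL_J$ \Pth{since the exterior powers of $\widehat{\bZ}(-1)^J$ are $\bZ$-flat}, so that the surjection $\bL_J \Surj \bL_J / m$ induces a surjection after applying $R^i \varepsilon^\partial_{J, \ket, *}$.

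The principal technical point to be checked is the K\"unneth-type decomposition of the Kummer \'etale topos of the product $X_J^\partial = X_J \times s_J$, and the propagation of the local constant-coefficient computation to the case of an arbitrary torsion local system $\bL$.  For the former, one verifies that the product of fs log adic spaces corresponds to the fibered product of their Kummer \'etale sites, and then derives the cohomological Leray/K\"unneth formula for the projection.  For the latter, after trivializing $\bL$ via the Kummer \'etale cover of $X$ obtained by extracting $n$-th roots of all the $T_j$, the cover pulls back to $X_J^\partial$ compatibly as a product of Kummer \'etale covers of $X_J$ and $s_J$; the cohomology can then be computed on the cover and descended via Galois invariants, with the descent being unambiguous because the Galois actions from the two factors commute.
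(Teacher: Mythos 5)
Your strategy is viable but genuinely different from the paper's. The paper first invokes purity: Lemma \ref{lem-L-J-pure} \Pth{which itself rests on the K\"unneth-type Lemma \ref{lem-L-J-op} and \cite[\aThm \logadicthmpurity]{Diao/Lan/Liu/Zhu:laske}} lets one replace $X_J^\partial$ and $X_J$ by $U_J^\partial$ and $U_J$, so that the target Kummer \'etale site becomes the plain \'etale site of $U_J$; there the computation is quoted from the companion paper in the form \Refeq{\ref{eq-L-J-dir-im-et-J-log-str}}, $R^i(\varepsilon_J^\partial|_{U_J^\partial})_{\et, *}(\bZ / n) \cong \bigl(\Ex^i(\overline{\cM}_X^\gp / n)\bigr)(-i)|_{U_J}$, and the mod-$m$ surjectivity is read off from it.  You instead argue directly over all of $X_{J, \ket}$, including over $D_J$, via the local decomposition $X_J^\partial \cong X_J \times_{\Spa(k, k^+)} s_J$ and the group cohomology of the ghost factor $\widehat{\bZ}(1)^{|J|}$; this avoids the purity reduction, at the cost of having to establish the relative computation of $R\varepsilon_{J, \ket, *}^\partial$ over the Kummer \'etale site of $X_J$ itself, which in the end rests on the same inputs from \cite{Diao/Lan/Liu/Zhu:laske} \Pth{\aLems \logadiclemclimmketmor{} and \logadiclemkettoetconst, or equivalently the description of stalks at log geometric points} that the paper's route uses.

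Two points in your write-up need repair, though neither is fatal.  First, $H^i_\ket(s_J, \bZ)$ with discrete integral coefficients is \emph{not} the exterior algebra on $\widehat{\bZ}(-1)^{\oplus J}$ \Pth{already $H^1$ of $\widehat{\bZ}(1)$ with $\bZ$-coefficients vanishes, since there are no nonzero continuous homomorphisms $\widehat{\bZ} \to \bZ$}; you should state the fiberwise computation with torsion coefficients, namely $R^i\varepsilon_{J, \ket, *}^\partial(\bZ / n) \cong \bigl(\Ex^i\bigl((\bZ / n)^{\oplus J}\bigr)\bigr)(-i)$ and hence $R^i\varepsilon_{J, \ket, *}^\partial(\bL_J) \cong \bL_J \otimes_{\bZ / n} \bigl(\Ex^i\bigl((\bZ / n)^{\oplus J}\bigr)\bigr)(-i)$ for any $n$ killing $\bL$, from which local constancy, the vanishing for $i > |J|$, and the surjectivity of reduction mod $m$ follow just as in your argument.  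Second, the assertion that the Kummer \'etale site of a product of fs log adic spaces is the fibered product of the Kummer \'etale sites is false in general \Pth{just as for \'etale sites of products of schemes} and is not what you need; the weaker statement you actually use in your last paragraph---Cartan--Leray for a trivializing Kummer \'etale cover that splits as a product of covers of the two factors, with commuting Galois actions---is the right substitute.  In executing it, note that the cover trivializing $\bL$ need not be of the pure root-extraction form: by the Abhyankar-type results you cite it is only dominated, locally, by a standard Kummer cover composed with a finite \'etale cover, so you should record that the finite \'etale part and the Kummer part in the directions $j \notin J$ restrict to a Kummer \'etale cover of $X_J$ itself \Pth{and hence are absorbed by localizing on the target $X_{J, \ket}$}, leaving only the $J$-direction Kummer part to enter the descent step.
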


We need some preparations before presenting the proof of Proposition \ref{prop-L-J-dir-im-et}.

\begin{lem}\label{lem-L-J-op}
    Let $D^J = \cup_{j \in I - J} \, D_j$ as before, so that $D_J = X_J \cap D^J$ as subsets of $X$.  Let $\widetilde{\jmath}_J: X - D^J \to X$ denote the canonical open immersion of log adic spaces, whose pullback under $\imath_J^\partial: X_J^\partial \to X$ is $\jmath_J^\partial: U_J^\partial \to X_J^\partial$.  Let $\jmath^J: X - X_J^\partial \to X$ denote the complementary open immersion.  Then the adjunction morphism
    \begin{equation}\label{eq-lem-L-J-op}
        \jmath_{\ket, !}^J \, \jmath_\ket^{J, -1} \, R\widetilde{\jmath}_{J, \ket, *} \, \widetilde{\jmath}_{J, \ket}^{\; -1}(\bL) \to R\widetilde{\jmath}_{J, \ket, *} \, \widetilde{\jmath}_{J, \ket}^{\; -1} \, \jmath_{\ket, !}^J \, \jmath_\ket^{J, -1}(\bL),
    \end{equation}
    induced by $\widetilde{\jmath}_{J, \ket}^{\; -1} \, \jmath_{\ket, !}^J \, \jmath_\ket^{J, -1} \, R\widetilde{\jmath}_{J, \ket, *} \, \widetilde{\jmath}_{J, \ket}^{\; -1}(\bL) \cong \widetilde{\jmath}_{J, \ket}^{\; -1} \, \jmath_{\ket, !}^J \, \jmath_\ket^{J, -1}(\bL)$ is an isomorphism.
\end{lem}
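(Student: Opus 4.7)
The plan is to show the map in $\Refeq{\ref{eq-lem-L-J-op}}$ is an isomorphism by checking separately that its pullback along the strict open immersion $\jmath^J: X - X_J^\partial \to X$ is an isomorphism and that both sides vanish after pullback along the complementary strict closed immersion $\imath_J^\partial: X_J^\partial \to X$.  Applied functorially to the morphism, the excision formalism in Lemma \ref{lem-exc-ex-seq} then yields the desired isomorphism on all of $X$.

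For the pullback to $X - X_J^\partial$: applying $\jmath_\ket^{J, -1}$ to both sides and using base change for the Cartesian square of strict open immersions $\jmath^J$ and $\widetilde{\jmath}_J$ (automatic for open immersions), together with the tautological identities $\widetilde{\jmath}_{J, \ket}^{-1} \, R\widetilde{\jmath}_{J, \ket, *} = \mathrm{id}$ and $\jmath_\ket^{J, -1} \, \jmath_{\ket, !}^J = \mathrm{id}$, I expect both sides to become canonically identified with $R(\widetilde{\jmath}_J|_{X - X_J^\partial})_{\ket, *}(\bL|_{X - (D^J \cup X_J^\partial)})$, compatibly with the map---which by construction is the $R\widetilde{\jmath}_{J, \ket, *}$-adjoint of the stated isomorphism on $X - D^J$, so remains the identity after the further restriction to $X - (D^J \cup X_J^\partial)$.

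For the pullback to $X_J^\partial$: the left-hand side vanishes since $\jmath_{\ket, !}^J$ is extension by zero from $X - X_J^\partial$.  For the right-hand side, let $\alpha: U_J^\partial \to X - D^J$ denote the base change of $\imath_J^\partial$ along $\widetilde{\jmath}_J$, which is a strict closed immersion with complementary strict open immersion $\jmath_J^\partial: U_J^\partial \to X_J^\partial$.  Applying the base change identity
\[
    \imath_{J, \ket}^{\partial, -1} \, R\widetilde{\jmath}_{J, \ket, *} \cong R\jmath_{J, \ket, *}^\partial \, \alpha^{-1}
\]
for this Cartesian square of strict immersions, combined with $\alpha^{-1} \, \widetilde{\jmath}_{J, \ket}^{-1} = \jmath_{J, \ket}^{\partial, -1} \, \imath_{J, \ket}^{\partial, -1}$ (from the factorization $\widetilde{\jmath}_J \circ \alpha = \imath_J^\partial \circ \jmath_J^\partial$) and the vanishing $\imath_{J, \ket}^{\partial, -1} \, \jmath_{\ket, !}^J = 0$ for complementary strict immersions, this yields $\imath_{J, \ket}^{\partial, -1}$ of the right-hand side equal to $R\jmath_{J, \ket, *}^\partial(0) = 0$, as required.

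The main obstacle is justifying the base change identity in the previous paragraph for Cartesian squares of strict open and closed immersions in the Kummer \'etale topology of log adic spaces.  I expect this to follow by adapting the classical argument for \'etale sites of adic spaces to the Kummer \'etale setting using the explicit description of ket neighborhoods, or alternatively by first reducing to the case where $\bL$ becomes constant on a Kummer \'etale cover---using Remark \ref{rem-ket-local-geom-pattern} and Abhyankar purity \cite[\aProp \logadicpropAbhyankar]{Diao/Lan/Liu/Zhu:laske}---after which the base change can be verified by a direct local computation on standard toric charts around a point of $X_J^\partial$.
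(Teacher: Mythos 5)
Your reduction to checking the morphism after restriction to the open part $X - X_J^\partial$ and to the closed part $X_J^\partial$ is a reasonable skeleton, and the open-part check is indeed formal.  The gap is in the closed-part check: everything there hinges on the asserted identity $\imath_{J, \ket}^{\partial, -1} \, R\widetilde{\jmath}_{J, \ket, *} \cong R\jmath_{J, \ket, *}^\partial \, \alpha_\ket^{-1}$, and this is \emph{not} a formal base change property of Cartesian squares of strict open and closed immersions.  Restricting the pushforward from the complement of a divisor along a closed immersion does not commute with pushforward in general: for instance, if you replaced $X_J^\partial$ by a component $D_{j_1}$ of $D^J$ \Pth{with the pulled-back log structure} and took $\bL = \bZ / m$, the predicted right-hand side would vanish because $D_{j_1} \cap (X - D^J) = \emptyset$, whereas the left-hand side is $(\bZ / m)|_{D_{j_1}} \neq 0$ by the purity theorem \cite[\aThm \logadicthmpurity]{Diao/Lan/Liu/Zhu:laske}.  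The identity you need is true only because $X_J^\partial$ and $D^J$ are cut out by disjoint sets of irreducible components of the normal crossings divisor, \ie, because of the local product structure, and in that situation it is essentially equivalent to the lemma itself \Pth{your open-part argument shows the two differ only by formal steps}.  So the step you defer as \Qtn{the main obstacle} is the entire mathematical content; moreover, the first of your two suggested justifications is misleading, since even in the classical \'etale setting this is not a base change theorem but a statement about transversal normal crossings configurations proved by K\"unneth-type arguments, and the sheaf being pushed forward here is an extension by zero rather than a local system, so purity cannot be invoked directly.

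For comparison, the paper's proof supplies exactly this missing content: after using Remark \ref{rem-ket-local-geom-pattern} to work locally on $X_\ket$ and reduce to $\bL = \bZ / m$, it reduces the assertion to its analogue over $X_\et$, algebraizes via \cite[\aProp 2.1.4 and \aThms 3.8.1 and 5.7.2]{Huber:1996-ERA}, and concludes by a K\"unneth argument in a local product situation in which $\jmath^J$ and $\widetilde{\jmath}_J$ are pulled back from the two factors \Pth{\Refcf{} \cite[\aSec 4.2.7]{Beilinson/Bernstein/Deligne/Gabber:2018-FP(2)} and \cite[\aLem 4.3.23]{Lan/Stroh:2018-csisv}}.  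Your second suggested route \Pth{reduction to constant coefficients plus an explicit local computation on toric charts} could in principle be made to work, but as written it is a plan rather than a proof, and carrying it out would amount to redoing precisely this K\"unneth/local-product computation.
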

\begin{proof}
    As explained in Remark \ref{rem-ket-local-geom-pattern}, we may work locally on $X_\ket$, and assume that $\bL = \bZ / m$ for some integer $m \geq 1$.  By the same argument as in the proof of \cite[\aThm \logadicthmpurity]{Diao/Lan/Liu/Zhu:lasfr}, it suffices to show that the analogue of \Refeq{\ref{eq-lem-L-J-op}} over $X_\et$ \Pth{with subscripts \Qtn{$\ket$} replaced with \Qtn{$\et$}} is an isomorphism.  Since $D$ is a normal crossings divisor \Pth{again, see \cite[\aEx \logRHexlogadicspncd]{Diao/Lan/Liu/Zhu:lrhrv}}, up to \'etale localization, we may reduce \Pth{by \cite[\aProp 2.1.4 and \aThms 3.8.1 and 5.7.2]{Huber:1996-ERA}} to the case of schemes, and assume that $X$ is a fiber product of two varieties $X_1$ and $X_2$ over $k$, with the morphisms $\jmath^J$ and $\widetilde{\jmath}_J$ being pullbacks of some open immersions to $X_1$ and $X_2$, respectively.  Then the desired assertion follows from the K\"unneth isomorphisms as in \cite[\aSec 4.2.7]{Beilinson/Bernstein/Deligne/Gabber:2018-FP(2)} \Pth{\Refcf{} \cite[\aLem 4.3.23 and its proof]{Lan/Stroh:2018-csisv}}.
\end{proof}

\begin{rk}\label{rem-!-*-switch}
    A similar argument shows that there is a canonical isomorphism
    \[
        \jmath_{\et, !}^\Sc \, R\jmath_{\Sc, \et, *}(\bL|_U) \Mi R\jmath_{\et, *}^\Snc \, \jmath_{\Snc, \et, !}(\bL|_U).
    \]
\end{rk}

\begin{lem}\label{lem-L-J-pure}
    The adjunction morphism
    \begin{equation}\label{eq-lem-L-J-pure}
        \bL_J \to R\jmath_{J, \ket, *}^\partial \, \jmath_{J, \ket}^{\partial, -1}(\bL_J)
    \end{equation}
    is an isomorphism.
\end{lem}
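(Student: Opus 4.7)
The plan is to push forward the claimed isomorphism along the strict closed immersion $\imath_J^\partial$---which is exact and fully faithful by Lemma \ref{lem-exc-ex-seq}---and then to deduce it from log purity on $X$ combined with Lemma \ref{lem-L-J-op}. The composition $\imath_J^\partial \circ \jmath_J^\partial: U_J^\partial \to X$ factors through $X - D^J$ as $\widetilde{\jmath}_J \circ \tilde{\imath}_J$, where $\tilde{\imath}_J: U_J^\partial \to X - D^J$ is the strict closed immersion induced by $\imath_J^\partial$, and the resulting square is Cartesian. The easy base change $\widetilde{\jmath}_{J, \ket}^{-1} \, \imath_{J, \ket, *}^\partial \cong \tilde{\imath}_{J, \ket, *} \, \jmath_{J, \ket}^{\partial, -1}$ for such Cartesian squares of a strict closed and a strict open immersion---both sides being the canonical extension by zero of the restriction to $U_J^\partial$---then yields
\[
\imath_{J, \ket, *}^\partial R\jmath_{J, \ket, *}^\partial \, \jmath_{J, \ket}^{\partial, -1}(\bL_J) \cong R\widetilde{\jmath}_{J, \ket, *} \, \widetilde{\jmath}_{J, \ket}^{-1} \, \imath_{J, \ket, *}^\partial(\bL_J),
\]
so it suffices to show that $R\widetilde{\jmath}_{J, \ket, *} \, \widetilde{\jmath}_{J, \ket}^{-1}$ acts as the identity on $\imath_{J, \ket, *}^\partial(\bL_J)$.

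To establish this, I would apply $R\widetilde{\jmath}_{J, \ket, *} \, \widetilde{\jmath}_{J, \ket}^{-1}$ to the excision distinguished triangle $\jmath_{\ket, !}^J \, \jmath_{\ket}^{J, -1}(\bL) \to \bL \to \imath_{J, \ket, *}^\partial(\bL_J) \to$ furnished by Lemma \ref{lem-exc-ex-seq}. By the log purity theorem \cite[\aThm \logadicthmpurity]{Diao/Lan/Liu/Zhu:laske}, applied to the strict open immersion $\widetilde{\jmath}_J$ whose complement $D^J$ is a normal crossings divisor in $X$, the middle term remains $\bL$; combining this with Lemma \ref{lem-L-J-op} shows that the first term remains $\jmath_{\ket, !}^J \, \jmath_{\ket}^{J, -1}(\bL)$. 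The triangulated five-lemma then forces the third term to be $\imath_{J, \ket, *}^\partial(\bL_J)$, which is exactly what is needed.

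The main point deserving attention is the invocation of log purity: we are removing only the sub-divisor $D^J$ of the ambient normal crossings divisor $D \subset X$, keeping the log-structure contribution from the irreducible components $D_j$ with $j \in J$. This falls within the generality of \cite[\aThm \logadicthmpurity]{Diao/Lan/Liu/Zhu:laske}; alternatively, one can reduce to the case of a single smooth divisor by induction on $|I^\Snc|$ and invoke standard log purity at each step, using that each successive open immersion is again strict and complementary to a smooth divisor thanks to the smoothness assumption on all intersections $X_J$.
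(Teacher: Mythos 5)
Your proposal is correct and follows essentially the same route as the paper's own proof: push forward along the exact, fully faithful $\imath_{J, \ket, *}^\partial$, identify the resulting morphism with the adjunction morphism for $\imath_{J, \ket, *}^\partial(\bL_J)$ and $\widetilde{\jmath}_J$, and conclude via the excision triangle of Lemma \ref{lem-exc-ex-seq}, the purity theorem applied to $\bL$, and Lemma \ref{lem-L-J-op} for the term $\jmath_{\ket, !}^J \, \jmath_\ket^{J, -1}(\bL)$. The only cosmetic difference is that you phrase the key identification as a base change along the Cartesian square $U_J^\partial = X_J^\partial \times_X (X - D^J)$, whereas the paper factors $\imath_J^\partial \circ \jmath_J^\partial = \widetilde{\jmath}_J \circ (\imath_J^\partial|_{U_J})$ and makes the same identification directly.
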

\begin{proof}
    Let us retain the setting of Lemma \ref{lem-L-J-op}.  By Lemma \ref{lem-exc-ex-seq}, it suffices to apply $\imath_{J, \ket, *}^\partial$ to the morphism \Refeq{\ref{eq-lem-L-J-pure}}, and show that the morphism
    \[
    \begin{split}
        & \imath_{J, \ket, *}^\partial \, \imath_{J, \ket}^{\partial, -1}(\bL) \to \imath_{J, \ket, *}^\partial \, R\jmath_{J, \ket, *}^\partial \, \jmath_{J, \ket}^{\partial, -1} \, \imath_{J, \ket}^{\partial, -1}(\bL) \\
        & \cong R\widetilde{\jmath}_{J, \ket, *} \, (\imath_J^\partial|_{U_J})_{\ket, *} \, \jmath_{J, \ket}^{\partial, -1} \, \imath_{J, \ket}^{\partial, -1}(\bL) \cong R\widetilde{\jmath}_{J, \ket, *} \, \widetilde{\jmath}_{J, \ket}^{\; -1} \, \imath_{J, \ket, *}^\partial \, \imath_{J, \ket}^{\partial, -1}(\bL),
    \end{split}
    \]
    which can be identified with the adjunction morphism for the sheaf $\imath_{J, \ket, *}^\partial \, \imath_{J, \ket}^{\partial, -1}(\bL)$ and the morphism $\widetilde{\jmath}_J$, is an isomorphism.  By \cite[\aThm \logadicthmpurity]{Diao/Lan/Liu/Zhu:lasfr}, the adjunction morphism $\bL \to R\widetilde{\jmath}_{J, \ket, *} \, \widetilde{\jmath}_{J, \ket}^{\; -1}(\bL)$ is an isomorphism over $X_\ket$.  Hence, we have a canonical isomorphism $\jmath_{\ket, !}^J \, \jmath_\ket^{J, -1}(\bL) \Mi \jmath_{\ket, !}^J \, \jmath_\ket^{J, -1} \, R\widetilde{\jmath}_{J, \ket, *} \, \widetilde{\jmath}_{J, \ket}^{\; -1}(\bL)$, whose composition with \Refeq{\ref{eq-lem-L-J-op}} is the adjunction morphism for the sheaf $\jmath_{\ket, !}^J \, \jmath_\ket^{J, -1}(\bL)$ and the morphism $\widetilde{\jmath}_J$.  Thus, the desired assertion follows from Lemmas \ref{lem-exc-ex-seq} and \ref{lem-L-J-op}.
\end{proof}

Let $\cM_X$ be as in Section \ref{sec-log-str-bd}.  By \cite[\aLems \logadiclemclimmketmor{} and \logadiclemkettoetconst]{Diao/Lan/Liu/Zhu:lasfr}, we have
\begin{equation}\label{eq-L-J-dir-im-et-J-log-str}
    R^i(\varepsilon_J^\partial|_{U_J^\partial})_{\et, *}(\bZ / n) \cong \bigl(\Ex^i (\overline{\cM}_X^\gp / n)\bigr)(-i)|_{U_J}
\end{equation}
over $U_{J, \et}$, for each $n \in \bZ_{\geq 1}$.  Now we are ready for the following:
\begin{proof}[Proof of Proposition \ref{prop-L-J-dir-im-et}]
    By Lemma \ref{lem-L-J-pure}, and by applying \cite[\aThm \logadicthmpurity]{Diao/Lan/Liu/Zhu:lasfr} to torsion local systems on $X_J$, we may replace $X_J^\partial$ \Pth{\resp $X$} with $U_J^\partial$ \Pth{\resp $X - D^J$}.  By \cite[\aLem \logadiclemclimmketmor]{Diao/Lan/Liu/Zhu:lrhrv} and Remark \ref{rem-ket-local-geom-pattern}, and by the same argument as in the proof of \cite[\aThm \logadicthmpurity]{Diao/Lan/Liu/Zhu:lasfr}, we may work locally on $X_\ket$, and assume that $\bL = \bZ / n$ for some $n \in \bZ_{\geq 1}$.  Then Proposition \ref{prop-L-J-dir-im-et} reduces to the isomorphism \Refeq{\ref{eq-L-J-dir-im-et-J-log-str}}, which is clearly compatible with reduction mod $m$ on both sides.
\end{proof}

\begin{cor}\label{cor-L-J-coh-fin}
    Let $\bL$ be a $\bZ / p^m$-local system on $X_\ket$.  Then we have the Leray spectral sequence
    \begin{equation}\label{eq-cor-L-J-coh-fin}
        E_2^{a, b} := H^a\bigl(X_{J, \ket}, R^b\varepsilon_{J, \ket, *}^\partial(\bL_J)\bigr) \Rightarrow H^{a + b}(X_{J, \ket}^\partial, \bL_J).
    \end{equation}
    In particular, the $\bZ / p^m$-module $H^i(X_{J, \ket}^\partial, \bL_J)$ is finitely generated, for any $i \geq 0$ and $m \geq 0$, and vanishes when $i > 2 \dim(X_J) + |J| = 2 \dim(X) - |J|$.
\end{cor}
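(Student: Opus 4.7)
The plan is to derive both the spectral sequence and the vanishing/finiteness bounds directly from Proposition~\ref{prop-L-J-dir-im-et} by iterating Leray spectral sequences. First I would set up \eqref{eq-cor-L-J-coh-fin} as the Grothendieck spectral sequence for the morphism of sites $\varepsilon_J^\partial\colon X_{J,\ket}^\partial\to X_{J,\ket}$: its pushforward $\varepsilon_{J,\ket,*}^\partial$ has the exact left adjoint $\varepsilon_J^{\partial,-1}$ and hence preserves injectives, and $\Gamma(X_{J,\ket}^\partial,-)=\Gamma(X_{J,\ket},-)\circ\varepsilon_{J,\ket,*}^\partial$, so the composed-functor spectral sequence is formally available.

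Second, I would plug in Proposition~\ref{prop-L-J-dir-im-et}: each $R^b\varepsilon_{J,\ket,*}^\partial(\bL_J)$ is a $\bZ/p^m$-local system on $X_{J,\ket}$, vanishing for $b>|J|$. Hence the spectral sequence is concentrated in the horizontal strip $0\le b\le|J|$, and to control the abutment it remains to bound and to assess finiteness of $H^a(X_{J,\ket},\cF)$ for a $\bZ/p^m$-local system $\cF$ on $X_{J,\ket}$.

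Third, I would run a secondary Leray spectral sequence along $\varepsilon_\et\colon X_{J,\ket}\to X_{J,\et}$, reducing (via Kummer \'etale localization as in Remark~\ref{rem-ket-local-geom-pattern}) to the case of constant torsion coefficients, where \cite[\aLem \logadiclemkettoetconst]{Diao/Lan/Liu/Zhu:laske} computes the higher direct images in terms of exterior powers of $\overline{\cM}_{X_J}^\gp/m$. Combined with the classical cohomological-dimension bound $2\dim(X_J)$ for the \'etale site of a smooth rigid variety of dimension $\dim(X_J)$, this forces the surviving $E_2^{a,b}$ to vanish for $a>2\dim(X_J)$, and hence $H^i(X_{J,\ket}^\partial,\bL_J)=0$ for $i>2\dim(X_J)+|J|=2\dim(X)-|J|$.

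The main obstacle is the finite-generation claim, which requires each of the (finitely many) surviving $E_2^{a,b}$ to be a finitely generated $\bZ/p^m$-module; via the above reduction this boils down to finiteness of ordinary \'etale cohomology of torsion sheaves on $X_{J,\et}$. I would secure this by (implicitly) invoking properness of $X_J$, inherited from the setting in which the corollary is applied (as in Theorem~\ref{thm-prim-comp-bd}), together with the finiteness theorem of Huber/Scholze for \'etale cohomology with torsion coefficients on proper smooth rigid analytic varieties; since the spectral sequence then has only finitely many nonzero entries, each finitely generated over $\bZ/p^m$, the finite generation of $H^i(X_{J,\ket}^\partial,\bL_J)$ follows for every $i\ge 0$.
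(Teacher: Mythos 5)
Your skeleton matches the paper's: the Grothendieck--Leray spectral sequence for $\varepsilon_J^\partial$ (pullback exact, so pushforward preserves injectives) together with Proposition~\ref{prop-L-J-dir-im-et}, which confines the $E_2$-page to the strip $0 \le b \le |J|$, is exactly how the corollary begins. The divergence is in how you control the terms $H^a\bigl(X_{J, \ket}, \cF\bigr)$ for a torsion local system $\cF$ on $X_{J, \ket}$: the paper does this in one stroke by applying the primitive comparison theorem \cite[\aThm \logadicthmprimcomp]{Diao/Lan/Liu/Zhu:laske} to the \emph{proper log smooth} log adic space $X_J$ (with its log structure defined by $D_J$), which gives both finiteness and vanishing for $a > 2\dim(X_J)$ for torsion local systems on $X_{J, \ket}$, after which a dévissage from $\bF_p$ to $\bZ / p^m$ is routine. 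Your substitute for this step has genuine gaps.

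First, the \Qtn{classical} cohomological-dimension bound $2\dim(X_J)$ you invoke is not available for $p$-power torsion coefficients, which is precisely the case at hand: Huber's bounds concern torsion prime to the residue characteristic, and for $\ell = p$ the vanishing above $2\dim$ is itself part of what the primitive comparison theorems provide \Pth{\Refcf{} \cite[\aThm 5.1]{Scholze:2013-phtra}} and requires properness---this is why the paper, here and elsewhere \Pth{\eg, in the proof of Lemma~\ref{lem-exc-isom}}, derives such vanishing from primitive comparison rather than from Huber's theory. Second, even granting a bound on $X_{J, \et}$, your stated combination only yields vanishing of $H^a(X_{J, \ket}, \cF)$ for $a$ exceeding $2\dim(X_J)$ plus the top degree in which $R^c\varepsilon_{\et, *}$ is nonzero, not for $a > 2\dim(X_J)$; to recover the sharp bound you must also use that $R^c\varepsilon_{\et, *}(\cF)$ is supported on the closed locus where at least $c$ branches of $D_J$ meet, which has dimension $\dim(X_J) - c$, so that the degree shift by $c$ is offset---a point your sketch omits, and without which the final bound $2\dim(X) - |J|$ does not come out. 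Third, for finiteness, the Huber--Scholze theorem you cite applies to local systems on proper smooth spaces, whereas $R^c\varepsilon_{\et, *}(\cF)$ is only a constructible sheaf supported on the boundary strata; the dévissage needed to reduce to the smooth proper strata would require, among other things, finiteness of compactly supported $p$-torsion cohomology of the open strata, which is not classical either---it is essentially what this section of the paper is establishing \Pth{\Refcf{} Theorem~\ref{thm-L-!-prim-comp}}. The repair is simply to quote \cite[\aThm \logadicthmprimcomp]{Diao/Lan/Liu/Zhu:laske} for $X_{J, \ket}$ directly, keeping in force the properness of $X_J$ and algebraic closedness of $k$ under which the corollary is applied, as in Theorem~\ref{thm-prim-comp-bd}.
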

\begin{proof}
    This follows from Proposition \ref{prop-L-J-dir-im-et} and \cite[\aThm \logadicthmprimcomp]{Diao/Lan/Liu/Zhu:lasfr}.
\end{proof}

\begin{proof}[Proof of Theorem \ref{thm-prim-comp-bd}]
    Consider the Leray spectral sequence
    \[
    \begin{split}
        E_2^{a, b} & := H^a\bigl(X_{J, \ket}, \bigl(R^b\varepsilon_{J, \ket, *}^\partial(\bL_J )\bigr) \otimes_{\bF_p} (\cO_{X_{J, \ket}}^+ / p)\bigr) \\
        & \cong H^a\bigl(X_{J, \ket}, R^b\varepsilon_{J, \ket, *}^\partial\bigl(\bL_J \otimes_{\bF_p} (\cO_{X_{J, \ket}^\partial}^+ / p)\bigr)\bigr) \\
        & \Rightarrow H^{a + b}\bigl(X_{J, \ket}^\partial, \bL_J  \otimes_{\bF_p} (\cO_{X_{J, \ket}^\partial}^+ / p)\bigr),
    \end{split}
    \]
    where the first isomorphism is based on \cite[\aLem \logadiclemketmorOplusp]{Diao/Lan/Liu/Zhu:lasfr}, which admits a morphism from the following spectral sequence, given by the base change of \Refeq{\ref{eq-cor-L-J-coh-fin}}:
    \[
        E_2^{a, b} := H^a\bigl(X_{J, \ket}, R^b\varepsilon_{J, \ket, *}^\partial(\bL_J)\bigr) \otimes_{\bF_p} (k^+ / p) \Rightarrow H^{a + b}(X_{J, \ket}^\partial, {\bL}_J) \otimes_{\bF_p} (k^+ / p).
    \]
    By Proposition \ref{prop-L-J-dir-im-et} and \cite[\aThm \logadicthmprimcomp]{Diao/Lan/Liu/Zhu:lasfr}, this morphism is given by almost isomorphisms of $k^+$-modules between the $E_2$ terms, which are almost finitely generated $k^+$-modules that are almost zero except when $a, b \geq 0$ and $a + b \leq 2 \dim(X_J) + |J| = 2 \dim(X) - |J|$ \Pth{as in Corollary \ref{cor-L-J-coh-fin}}.  Thus, the theorem follows.
\end{proof}

\subsection{Kummer \'etale cohomology with partial compact support}\label{sec-ket-coh-cpt}

In this subsection, let us fix $I^\Sc \subset I$ and define $U^\Sc$ etc as in Section \ref{sec-log-str-bd}.  Let $\bL$ be a torsion local system on $X_\ket$ as before.  As in Lemma \ref{lem-!-resol}, for each $a \geq 0$, let
\begin{equation}\label{eq-def-L-a}
   \bL_{(a)} := \imath_{(a), \ket}^{\partial, -1}(\bL).
\end{equation}

We shall deduce from Theorem \ref{thm-prim-comp-bd} its analogue for the cohomology with partial compact support.  Let us first introduce the relevant cohomology groups.
\begin{defn}\label{def-H-c-torsion}
    Assume that $k$ is algebraically closed and of characteristic zero, and that $X$ is proper over $k$.  For any torsion local system $\bL$ on $X_\ket$, we abusively define
     \[
          H_{\et, \Sc}^i(U, \bL) := H_\Sc^i(U_\et, \bL) := H^i\bigl(X_\ket, \jmath_{\ket, !}^\Sc(\bL|_{U^\Sc_\ket})\bigr).
     \]
     \Pth{We introduce both $H_{\et, \Sc}^i(U, \bL)$ and $H_\Sc^i(U_\et, \bL)$ for the sake of flexibility.}
\end{defn}

The following lemma shows that $H_{\et, \Sc}^i(U, \bL)$ can be interpreted as the cohomology of $\bL|_U$ with a partial compact support condition along $D^\Sc \subset X$, which justifies our choice of notation.
\begin{lem}\label{lem-L-!}
    We have canonical isomorphisms
    \begin{equation}\label{eq-L-!}
        \jmath_{\et, !}^\Sc \, R\jmath_{\Sc, \et, *}(\bL|_U) \Mi \jmath_{\et, !}^\Sc \, R\varepsilon_{\et, *}(\bL|_{U^\Sc_\ket}) \Mi R\varepsilon_{\et, *} \, \jmath_{\ket, !}^\Sc(\bL|_{U^\Sc_\ket})
    \end{equation}
    \Pth{\Refcf{} Remark \ref{rem-!-*-switch}}.  Therefore, if $k$ is algebraically closed and of characteristic zero and $X$ is proper, then we have
    \[
        H_{\et,\Sc}^i(U, \bL) \cong H^i\bigl(X_\et, \jmath_{\et, !}^\Sc \, R\jmath_{\Sc, \et, *}(\bL|_U)\bigr) \cong H_\cpt^i\bigl(U^\Sc_\et, R\jmath_{\Sc, \et, *}(\bL|_U)\bigr).
    \]
    In particular,
    \begin{itemize}
         \item if $I^\Sc = \emptyset$, then $H_{\et, \Sc}^i(U, \bL) \cong H^i(U_\et, \bL|_U)$;
         \item if $I^\Sc = I$, then $H_{\et, \Sc}^i(U, \bL) \cong H_\cpt^i(U_\et, \bL|_U)$,
    \end{itemize}
    where $H_\cpt^i(U_\et, \bL|_U)$ is the \'etale cohomology with compact support of the \'etale local system $\bL|_U$ on $U_\et$, as defined in \cite[\aSec 5]{Huber:1996-ERA}.
\end{lem}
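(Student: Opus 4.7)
The plan is to establish the two displayed isomorphisms in the derived category separately, and then deduce the cohomological consequences using the Leray identification $H^i(X_\ket, -) \cong H^i(X_\et, R\varepsilon_{\et,*}(-))$ together with Huber's definition $H^i_\cpt(U^\Sc_\et, G) = H^i(X_\et, \jmath^\Sc_{\et,!} G)$ from \cite[\aSec 5]{Huber:1996-ERA}, which is available because $X$ is proper.

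For the first isomorphism, I would apply the purity theorem \cite[\aThm \logadicthmpurity]{Diao/Lan/Liu/Zhu:laske} to the smooth log adic space $U^\Sc$ equipped with the normal crossings divisor $D^\Snc \cap U^\Sc$, whose open complement is $U$ with trivial log structure. This yields $\bL|_{U^\Sc_\ket} \Mi R\jmath_{\Sc, \ket, *}(\bL|_{U_\ket})$. Pushing forward along $R\varepsilon_{\et,*}$, and using that $R\varepsilon_{\et,*} \circ R\jmath_{\Sc, \ket, *} = R\jmath_{\Sc, \et, *} \circ R\varepsilon_{\et,*}$ together with the coincidence of the Kummer \'etale and \'etale topologies on $U$, one obtains $R\varepsilon_{\et,*}(\bL|_{U^\Sc_\ket}) \cong R\jmath_{\Sc, \et, *}(\bL|_U)$ on $U^\Sc_\et$.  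Applying the exact functor $\jmath^\Sc_{\et,!}$ then gives the first iso.

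For the second isomorphism, I would use Lemma \ref{lem-!-resol}. Applied with $? = \ket$ to the sheaf $\bL$ on $X_\ket$, it provides an exact resolution of $\jmath^\Sc_{\ket,!}(\bL|_{U^\Sc_\ket})$ whose terms are $\imath^\partial_{(a), \ket, *}(\bL_{(a)})$. Applying $R\varepsilon_{\et,*}$ termwise, and invoking both the exactness of $\imath^\partial_{(a), ?, *}$ from Lemma \ref{lem-exc-ex-seq} and the base change for $\varepsilon_\et$ along the strict closed immersions $\imath^\partial_{(a)}$ provided by \cite[\aLem \logadiclemclimmketmor]{Diao/Lan/Liu/Zhu:laske}, each term is rewritten as $\imath^\partial_{(a), \et, *}\bigl((R\varepsilon_{\et,*}\bL)_{(a)}\bigr)$. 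On the other hand, Lemma \ref{lem-!-resol} applied with $? = \et$ to the bounded complex $R\varepsilon_{\et,*}\bL$ produces exactly this same resolution, but now as a resolution of $\jmath^\Sc_{\et,!}\bigl((R\varepsilon_{\et,*}\bL)|_{U^\Sc_\et}\bigr)$. Comparing the two resolutions term-by-term yields the desired iso.

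Once both isomorphisms are established, applying $H^i(X_\et, -)$ to their composite and using the identifications from the first paragraph yields the two cohomological formulas. The special cases follow by direct inspection: for $I^\Sc = \emptyset$ one has $U^\Sc = X$ and $\jmath^\Sc_\et = \OP{id}$, so the formula reduces to $H^i(X_\et, R\jmath_{\Sc, \et, *}(\bL|_U)) = H^i(U_\et, \bL|_U)$; for $I^\Sc = I$ one has $U^\Sc = U$ and $\jmath_\Sc = \OP{id}$, and Huber's definition is recovered directly. The main obstacle is the second iso, specifically the base change identity $R\varepsilon_{\et,*} \circ \imath^\partial_{(a), \ket, *} \cong \imath^\partial_{(a), \et, *} \circ R\varepsilon_{\et,*}$; once this is secured, matching the two Lemma \ref{lem-!-resol} resolutions is purely formal.
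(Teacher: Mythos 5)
Your argument is correct, and its first half is the same as the paper's: the paper also deduces the first isomorphism in \Refeq{\ref{eq-L-!}} from the purity theorem \cite[\aThm \logadicthmpurity]{Diao/Lan/Liu/Zhu:laske} (applied exactly as you do, to $U^\Sc$ with boundary $D \cap U^\Sc$, so that $\bL|_{U^\Sc_\ket} \Mi R\jmath_{\Sc,\ket,*}(\bL|_{U_\ket})$), and the passage to cohomology and the two special cases are handled the same way. Where you diverge is the second isomorphism: the paper obtains it directly from \cite[\aLem \logadiclemkettoetconst]{Diao/Lan/Liu/Zhu:laske} (the explicit computation of $R\varepsilon_{\et,*}$ of Kummer \'etale torsion local systems in terms of $\overline{\cM}_X^\gp$) together with the definition of $\jmath_{\ket,!}^\Sc$, essentially a local/stalkwise check; you instead run the \v{C}ech-type resolution of Lemma \ref{lem-!-resol} through $R\varepsilon_{\et,*}$ and match it with the same resolution applied to $R\varepsilon_{\et,*}(\bL)$ on the \'etale side. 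Your route works, and it has the merit of isolating exactly one nontrivial input, but two points deserve care. First, the word \Qtn{termwise} hides some derived-category bookkeeping: applying $R\varepsilon_{\et,*}$ to the resolution means totalizing a bounded complex whose columns $\imath_{(a),\et,*}\,R\varepsilon_{(a),\et,*}(\bL_{(a)})$ are genuinely not concentrated in degree $0$; this is harmless because $\jmath_{!}$, $\imath_*$, $\imath^{-1}$ are exact and $|I^\Sc|$ is finite, but it should be said (and one also needs the trivial compatibility $(R\varepsilon_{X,\et,*}\bL)|_{U^\Sc_\et} \cong R\varepsilon_{U^\Sc,\et,*}(\bL|_{U^\Sc_\ket})$ to identify the targets). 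Second, the key base change $\imath_{(a),\et}^{-1}\,R\varepsilon_{X,\et,*}(\bL) \Mi R\varepsilon_{(a),\et,*}(\bL_{(a)})$ along the strict closed immersions is not literally \cite[\aLem \logadiclemclimmketmor]{Diao/Lan/Liu/Zhu:laske}; it is the combination of that lemma with \cite[\aLem \logadiclemkettoetconst]{Diao/Lan/Liu/Zhu:laske} (equivalently, a stalkwise comparison of the $\widehat{\bZ}(1)^r$-cohomologies, which is how the paper itself obtains \Refeq{\ref{eq-L-J-dir-im-et-J-log-str}}), since the pulled-back log structure on $X_{(a)}^\partial$ has the same $\overline{\cM}^\gp$ as $X$. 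So in the end your second-isomorphism argument rests on the same computation the paper invokes, just packaged through the boundary strata rather than applied directly.
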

\begin{proof}
    The first isomorphism in \Refeq{\ref{eq-L-!}} follows from \cite[\aThm \logadicthmpurity]{Diao/Lan/Liu/Zhu:lasfr} and its proof.  The second isomorphism, as in the proof of \cite[\aLem \logadiclemclimmketmor]{Diao/Lan/Liu/Zhu:lasfr}, follows from the definitions of the sheaves by comparing stalks at log geometric points using \cite[\aLem \logadiclemloggeompt]{Diao/Lan/Liu/Zhu:lasfr}.  The rest of the lemma follows immediately.
\end{proof}

Now we are ready to state the following primitive comparison theorem for the cohomology with partial compact support \Pth{\Refcf{} the analogous results \cite[\aThm 5.1]{Scholze:2013-phtra} and \cite[\aThm \logadicthmprimcomp]{Diao/Lan/Liu/Zhu:lasfr} for the usual cohomology}:
\begin{thm}\label{thm-L-!-prim-comp}
    Assume that $k$ is algebraically closed and of characteristic zero, that $X$ is proper over $k$, and that $\bL$ is an $\bF_p$-local system on $X_\ket$.  Then:
    \begin{enumerate}
        \item\label{thm-L-!-prim-comp-fg}  $H^i\bigl(X_\ket, \bigl(\jmath_{\ket, !}^\Sc(\bL|_{U^\Sc_\ket})\bigr) \otimes_{\bF_p} (\cO_X^+ / p)\bigr)$ is an almost finitely generated $k^+$-module for each $i \geq 0$, and is almost zero if $i > 2 \dim(X)$.

        \item\label{thm-L-!-prim-comp-isom}  There is a canonical almost isomorphism
            \begin{equation}\label{eq-thm-L-!-prim-comp}
                H_{\et, \Sc}^i(U, \bL) \otimes_{\bF_p} (k^+ / p) \Mi H^i\bigl(X_\ket, \bigl(\jmath_{\ket, !}^\Sc(\bL)\bigr) \otimes_{\bF_p} (\cO_X^+ / p)\bigr)
            \end{equation}
            of $k^+$-modules, for each $i \geq 0$.  In particular, $H_{\et, \Sc}^i(U, \bL)$ is a finite-dimensional $\bF_p$-vector space for each $i \geq 0$, and vanishes for $i > 2 \dim(X)$.
    \end{enumerate}
\end{thm}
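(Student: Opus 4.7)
The plan is to reduce the theorem to Theorem \ref{thm-prim-comp-bd} and Corollary \ref{cor-L-J-coh-fin} via the boundary resolution provided by Lemma \ref{lem-!-resol}. Applying that lemma with $F = \bL$ and $? = \ket$, I obtain an exact complex
\begin{equation*}
    0 \to \jmath_{\ket, !}^\Sc(\bL|_{U^\Sc_\ket}) \to \imath^\partial_{(0), \ket, *}(\bL_{(0)}) \to \imath^\partial_{(1), \ket, *}(\bL_{(1)}) \to \cdots
\end{equation*}
on $X_\ket$. Since $\bF_p$ is a field, tensoring over $\bF_p$ with $\cO_X^+/p$ is exact, so this produces a resolution of $\jmath_{\ket,!}^\Sc(\bL|_{U^\Sc_\ket}) \otimes_{\bF_p} (\cO_X^+/p)$ by the sheaves $\imath^\partial_{(a), \ket, *}(\bL_{(a)}) \otimes_{\bF_p} (\cO_X^+/p)$, and hence a first-quadrant hypercohomology spectral sequence
\begin{equation*}
    E_1^{a, b} = H^b\bigl(X_\ket, \imath^\partial_{(a), \ket, *}(\bL_{(a)}) \otimes_{\bF_p} (\cO_X^+/p)\bigr) \Rightarrow H^{a + b}\bigl(X_\ket, \jmath_{\ket, !}^\Sc(\bL|_{U^\Sc_\ket}) \otimes_{\bF_p} (\cO_X^+/p)\bigr).
\end{equation*}

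The next step is to identify the $E_1$ terms. Since $\imath^\partial_{(a)}$ is a finite closed immersion of log adic spaces, the pushforward $\imath^\partial_{(a), \ket, *}$ is exact and preserves cohomology (by Lemma \ref{lem-exc-ex-seq}), and the projection formula together with the almost isomorphism $\imath^{\partial, -1}_{(a), \ket}(\cO_X^+/p) \cong \cO^+_{X^\partial_{(a), \ket}}/p$ from \cite[\aLem \logadiclemclimmOplusp]{Diao/Lan/Liu/Zhu:laske} yields an almost isomorphism
\begin{equation*}
    E_1^{a, b} \cong \bigoplus_{J \subset I^\Sc, \, |J| = a} H^b\bigl(X^\partial_{J, \ket}, \bL_J \otimes_{\bF_p} (\cO^+_{X^\partial_{J,\ket}}/p)\bigr).
\end{equation*}
Theorem \ref{thm-prim-comp-bd} then provides, for each such $J$, an almost isomorphism of this summand with $H^b(X^\partial_{J, \ket}, \bL_J) \otimes_{\bF_p} (k^+/p)$, which by Corollary \ref{cor-L-J-coh-fin} is an almost finitely generated $k^+$-module that is almost zero for $b > 2\dim(X_J) + |J| = 2\dim(X) - a$, i.e., whenever $a + b > 2\dim(X)$. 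This proves statement \Refenum{\ref{thm-L-!-prim-comp-fg}}.

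For statement \Refenum{\ref{thm-L-!-prim-comp-isom}}, I would compare the above spectral sequence with a second one arising from the same resolution before tensoring, with cohomology then base-changed to $k^+/p$:
\begin{equation*}
    \widetilde{E}_1^{a, b} = H^b\bigl(X_\ket, \imath^\partial_{(a), \ket, *}(\bL_{(a)})\bigr) \otimes_{\bF_p} (k^+/p) \Rightarrow H^{a + b}\bigl(X_\ket, \jmath_{\ket, !}^\Sc(\bL|_{U^\Sc_\ket})\bigr) \otimes_{\bF_p} (k^+/p).
\end{equation*}
The canonical inclusion $\bF_p \to \cO_X^+/p$ induces a morphism $\widetilde{E} \to E$ of spectral sequences which, on $E_1$ terms, is precisely the collection of primitive comparison morphisms \Refeq{\ref{eq-thm-prim-comp-bd}} provided by Theorem \ref{thm-prim-comp-bd}. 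Since those are almost isomorphisms, the induced morphism on abutments is an almost isomorphism as well, giving \Refeq{\ref{eq-thm-L-!-prim-comp}}; the finite-dimensionality and vanishing of $H_{\et, \Sc}^i(U, \bL)$ then follow by combining \Refeq{\ref{eq-thm-L-!-prim-comp}} with part \Refenum{\ref{thm-L-!-prim-comp-fg}} and the standard argument that if $M \otimes_{\bF_p} (k^+/p)$ is almost finitely generated then $M$ itself is finite-dimensional over $\bF_p$. The main technical point to track carefully is the canonicity of all the identifications (projection formula, the almost isomorphism with $\cO^+_{X^\partial_{(a)}}/p$, and the boundary maps in the resolution), so that the morphism of spectral sequences is genuinely well-defined on every page; once this bookkeeping is done, the result is immediate from Theorem \ref{thm-prim-comp-bd}.
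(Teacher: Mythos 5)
Your proof is correct and follows essentially the same route as the paper's: resolve $\jmath_{\ket, !}^\Sc(\bL|_{U^\Sc_\ket})$ by the boundary pushforwards from Lemma \ref{lem-!-resol}, identify the tensored terms via the cited lemma on $\cO^+/p$ along closed immersions, and compare the resulting spectral sequence with its $(\,\cdot\,)\otimes_{\bF_p}(k^+/p)$ counterpart using the $E_1$-level almost isomorphisms from Theorem \ref{thm-prim-comp-bd} and the degree bounds from Corollary \ref{cor-L-J-coh-fin}. The only differences are cosmetic (when the tensoring and the projection-formula identification are performed, and the indexing of the spectral sequence), so no changes are needed.
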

\begin{proof}
    By Lemma \ref{lem-!-resol} and \cite[\aLem \logadiclemclimmOplusp]{Diao/Lan/Liu/Zhu:lasfr}, we have an exact complex
    \begin{equation}\label{eq-thm-L-!-prim-comp-resol}
    \begin{split}
        0 & \to \bigl(\jmath_{\ket, !}^\Sc(\bL|_{U^\Sc_\ket})\bigr) \otimes_{\bF_p} (\cO_X^+ / p) \\
        & \to \imath_{(0), \ket, *}^\partial\bigl(\bL_{(0)} \otimes_{\bF_p} (\cO_{X^\partial_{(0)}}^+ / p)\bigr) \to \imath_{(1), \ket, *}^\partial\bigl(\bL_{(1)} \otimes_{\bF_p} (\cO_{X^\partial_{(1)}}^+ / p)\bigr) \\
        & \to \cdots \to \imath_{(a), \ket, *}^\partial\bigl(\bL_{(a)} \otimes_{\bF_p} (\cO_{X^\partial_{(a)}}^+ / p)\bigr) \to \cdots
    \end{split}
    \end{equation}
    over $X_\ket$, which admits a canonical morphism from the complex
    \[
        0 \to \jmath_{\ket, !}^\Sc(\bL|_{U^\Sc_\ket}) \to \imath_{(0), \ket, *}^\partial(\bL_{(0)}) \to \imath_{(1), \ket, *}^\partial(\bL_{(1)}) \to \cdots \to \imath_{(a), \ket, *}^\partial(\bL_{(a)}) \to \cdots
    \]
    \Pth{as in Lemma \ref{lem-!-resol}}.  Therefore, we obtain a \Pth{filtration} spectral sequence
    \[
    \begin{split}
        E_1^{a, b} & := H^{a + b}\bigl(X_{(a), \ket}^\partial, \bL_{(a)} \otimes_{\bF_p} (\cO_{X^\partial_{(a)}}^+ / p)\bigr) \\
        & \cong \oplus_{J \subset I^\Sc, \, |J| = a} \; H^{a + b}\bigl(X_{J, \ket}^\partial, \bL_J \otimes_{\bF_p} (\cO_{X^\partial_J}^+ / p)\bigr) \\
        & \Rightarrow H^{a + b}\bigl(X_\ket, \bigl(\jmath_{\ket, !}^\Sc(\bL|_{U^\Sc_\ket})\bigr) \otimes_{\bF_p} (\cO_X^+ / p)\bigr),
    \end{split}
    \]
    which admits a canonical morphism from the spectral sequence
    \[
    \begin{split}
        E_1^{a, b} & := H^{a + b}(X_{(a), \ket}^\partial, \bL_{(a)}) \otimes_{\bF_p} (k^+ / p) \\
        & \cong \oplus_{J \subset I^\Sc, \, |J| = a} \; \bigl(H^{a + b}(X_{J, \ket}^\partial, \bL_J) \otimes_{\bF_p} (k^+ / p)\bigr) \\
        & \Rightarrow H^{a + b}\bigl(X_\ket, \jmath_{\ket, !}^\Sc(\bL|_{U^\Sc_\ket})\bigr) \otimes_{\bF_p} (k^+ / p).
    \end{split}
    \]
    By Theorem \ref{thm-prim-comp-bd}, this morphism is given by almost isomorphisms of $k^+$-modules between the $E_1$ terms, which are almost finitely generated $k^+$-modules that are almost zero except when $a, b \geq 0$ and $a + b \leq 2 \dim(X)$.  Thus, the morphism induces the canonical almost isomorphism \Refeq{\ref{eq-thm-L-!-prim-comp}} in \Refeq{\ref{thm-L-!-prim-comp-isom}} and justifies \Refeq{\ref{thm-L-!-prim-comp-fg}}, as desired.
\end{proof}

\subsection{Pro-Kummer \'etale cohomology with partial compact support}\label{sec-proket-coh-cpt}

Recall that a $\bZ_p$-local system $\bL$ is an inverse system $\{ \bL_n \}_{n \geq 1}$, where each $\bL_n$ is a $\bZ / p^n$-local system, satisfying $\bL_m / p^n \cong \bL_n$ for all $m \geq n \geq 1$.  \Pth{See \cite[\aDef \logadicdefketlisse]{Diao/Lan/Liu/Zhu:lasfr}.}  Since we will need to deal with inverse systems of sheaves on $X_\ket$ such as $\{ \jmath_!(\bL_n) \}_{n \geq 1}$, it is convenient to introduce the following definitions:
\begin{defn}\label{def-adic-formalism}
    A Kummer \'etale $\bZ_p$-sheaf $\cF$ on a locally noetherian fs log adic space $Y$ \Pth{over $k$} is an inverse system $\{ \cF_n \}_{n \geq 1}$ of sheaves on $Y_\ket$, where $\cF_n$ is a $\bZ / p^n$-module, for each $n \geq 1$.  Let $\Shv_{\bZ_p}(Y_\ket)$ denote the abelian category of $\bZ_p$-sheaves on $Y_\ket$, which has enough injectives by \cite[Prop. 1.1]{Jannsen:1988-cec}. If $f: Y' \to Y$ is a morphism between such log adic spaces, let
    \[
        f_\ket^{-1}: \Shv_{\bZ_p}(Y_\ket) \rightleftarrows \Shv_{\bZ_p}(Y'_\ket): f_{\ket, *}
    \]
    be the pair of adjoint functors, namely, the inverse and direct image functors of $\bZ_p$-sheaves, given by applying the usual $f_\ket^{-1}$ and $f_{\ket, *}$ \Pth{for torsion sheaves} to each component of the inverse system.  If $f = \jmath: W \to Y$ is an open immersion, let
    \[
        \jmath_{\ket, !}: \Shv_{\bZ_p}(W_\ket) \to \Shv_{\bZ_p}(Y_\ket)
    \]
    be the left adjoint of $\jmath_\ket^{-1}$, again given by applying the usual $\jmath_{\ket, !}$ \Pth{for torsion sheaves} to each component of the inverse system.

    When $k$ is algebraically closed of characteristic zero, we define the $i$-th cohomology $H^i(Y_\ket, \cdot)$ as the $i$-th right derived functor of the functor
    \[
        \Shv_{\bZ_p}(Y_\ket) \to \Mod_{\bZ_p}: \; \{ \cF_n \}_{n \geq 1} \mapsto \Gamma(Y_\ket, \varprojlim_n \cF_n) \cong \varprojlim_n \Gamma(Y_\ket, \cF_n).
    \]
\end{defn}

\begin{defn}\label{def-H-c}
    Let $X$ be as before.  Assume that $k$ is algebraically closed of characteristic zero and that $X$ is proper over $k$.  For each $\bZ_p$-local system $\bL$ on $X_\ket$, we define
    \[
        H_{\et, \Sc}^i(U, \bL) := H_\Sc^i(U_\et, \bL) := H^i\bigl(X_\ket, \jmath_{\ket, !}(\bL|_{U^\Sc_\ket})\bigr).
    \]
    \Pth{Again, we introduce both $H_{\et, \Sc}^i(U, \bL)$ and $H_\Sc^i(U_\et, \bL)$ for the sake of flexibility.}
\end{defn}

\begin{lem}\label{lem-def-H-c-fin-Z-p}
    In the setting of Definition \ref{def-H-c}, there is a canonical isomorphism $H_{\et, \Sc}^i(U, \bL) \cong \varprojlim_n H^i_{\et, \Sc}(U, \bL_n)$ as finite $\bZ_p$-modules.
\end{lem}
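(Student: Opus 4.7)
The plan is to reduce the cohomology of the $\bZ_p$-sheaf $\jmath_{\ket, !}^\Sc(\bL|_{U^\Sc_\ket})$ to the inverse limits of its mod-$p^n$ reductions via the derived inverse limit.  Since the functor of Definition \ref{def-adic-formalism} factors as $\{F_n\} \mapsto \{\Gamma(X_\ket, F_n)\}_n$ followed by $\varprojlim_n$, and since inverse limits of countable systems of $\bZ_p$-modules have cohomological dimension at most one, the Grothendieck spectral sequence for this composition degenerates to a short exact sequence
\[
    0 \to {\varprojlim_n}^1 \, H^{i - 1}_{\et, \Sc}(U, \bL_n) \to H^i_{\et, \Sc}(U, \bL) \to \varprojlim_n \, H^i_{\et, \Sc}(U, \bL_n) \to 0
\]
for each $i \geq 0$.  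It then suffices to show that each $H^i_{\et, \Sc}(U, \bL_n)$ is a finite $\bZ / p^n$-module, as this immediately yields the Mittag-Leffler condition and the vanishing of ${\varprojlim_n}^1$.

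This finiteness I prove by induction on $n$.  The base case $n = 1$ is Theorem \ref{thm-L-!-prim-comp}\Refenum{\ref{thm-L-!-prim-comp-isom}}.  For the inductive step, I use the short exact sequence $0 \to \bL_1 \xrightarrow{p^{n - 1}} \bL_n \to \bL_{n - 1} \to 0$ of $\bZ / p^n$-local systems on $X_\ket$; together with the exactness of $\jmath_{\ket, !}^\Sc$ established in Lemma \ref{lem-exc-ex-seq}, this produces a short exact sequence of $\bZ / p^n$-sheaves on $X_\ket$ whose long exact sequence of cohomology sandwiches $H^i_{\et, \Sc}(U, \bL_n)$ between the finite $\bZ / p^n$-modules $H^i_{\et, \Sc}(U, \bL_1)$ and $H^i_{\et, \Sc}(U, \bL_{n - 1})$, the latter being finite by the inductive hypothesis.

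Given finiteness, the isomorphism $H^i_{\et, \Sc}(U, \bL) \cong \varprojlim_n \, H^i_{\et, \Sc}(U, \bL_n)$ follows at once.  For finite generation over $\bZ_p$, I note that the limit is automatically $p$-adically separated and complete as an inverse limit of bounded torsion modules.  Using the same short exact sequences of torsion coefficients to compare the mod-$p$ reductions $H^i_{\et, \Sc}(U, \bL_n) / p$ against the fixed finite-dimensional $\bF_p$-vector space $H^i_{\et, \Sc}(U, \bL_1)$ from Theorem \ref{thm-L-!-prim-comp}\Refenum{\ref{thm-L-!-prim-comp-isom}}, one obtains a uniform bound on their dimensions, so that $H^i_{\et, \Sc}(U, \bL) / p$ is itself finite-dimensional; a standard $p$-adic Nakayama argument then concludes.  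The main subtlety lies in the Grothendieck spectral sequence for the derived inverse limit, which requires some care with acyclic resolutions in $\Shv_{\bZ_p}(X_\ket)$, but this is routine.
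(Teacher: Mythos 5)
Your overall strategy is the same as the paper's: the paper likewise rewrites $H_{\et, \Sc}^i(U, \bL)$ as $H^i\bigl(R\varprojlim_n R\Gamma\bigl(X_\ket, \jmath_{\ket, !}^\Sc(\bL_n|_{U^\Sc_\ket})\bigr)\bigr)$, uses finiteness of each $H^i_{\et, \Sc}(U, \bL_n)$ \Pth{via Theorem \ref{thm-L-!-prim-comp} and d\'evissage} to kill the higher limit terms, and then asserts finite generation over $\bZ_p$ \Qtn{by standard arguments}.  Your Milnor-type sequence and the reduction of the finiteness of $H^i_{\et, \Sc}(U, \bL_n)$ to the $\bF_p$-case are fine, with one small caveat: the sequence $0 \to \bL_1 \Mapn{p^{n - 1}} \bL_n \to \bL_{n - 1} \to 0$ is exact only when the $\bL_n$ are locally free over $\bZ / p^n$; in general you should instead use the filtration $\{ p^j \bL_n \}_j$, whose graded pieces are $\bF_p$-local systems, together with the exactness of $\jmath_{\ket, !}^\Sc$ from Lemma \ref{lem-exc-ex-seq}.

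The genuine problem is in your final step.  The exact sequences you invoke do not yield a uniform bound on $\dim_{\bF_p}\bigl(H^i_{\et, \Sc}(U, \bL_n) / p\bigr)$: from $0 \to p\bL_n \to \bL_n \to \bL_1 \to 0$ one only learns that the kernel of $H^i_{\et, \Sc}(U, \bL_n) \to H^i_{\et, \Sc}(U, \bL_1)$ contains $p \, H^i_{\et, \Sc}(U, \bL_n)$, \ie, that $H^i_{\et, \Sc}(U, \bL_n) / p$ \emph{surjects} onto a subgroup of $H^i_{\et, \Sc}(U, \bL_1)$---an inequality in the wrong direction---while the naive induction via $0 \to \bL_1 \to \bL_n \to \bL_{n - 1} \to 0$ only bounds the number of generators of $H^i_{\et, \Sc}(U, \bL_n)$ by a quantity growing linearly in $n$.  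The standard argument bounds the reduction of the \emph{limit} instead: set $M := \varprojlim_n H^i_{\et, \Sc}(U, \bL_n)$ and, for fixed $n$, apply $\varprojlim_m$ \Pth{which is exact on inverse systems of finite groups} to the long exact sequences attached to $0 \to \bL_m \Mapn{p^n} \bL_{m + n} \to \bL_n \to 0$ \Pth{in the locally free case; otherwise argue with the filtrations as above or with the uniform almost finite generation coming from Theorem \ref{thm-L-!-prim-comp}, as in Scholze's argument}.  This gives an exact sequence $M \Mapn{p^n} M \to H^i_{\et, \Sc}(U, \bL_n)$, hence $M / p^n M \Em H^i_{\et, \Sc}(U, \bL_n)$; in particular $M / p M$ is finite, and since $M$ is a profinite \Pth{compact} $\bZ_p$-module, topological Nakayama yields finite generation.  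With this correction your proof coincides with the paper's.
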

\begin{proof}
    By adapting the arguments in the proof of \cite[\aLem 2.3(i)]{Huber:1998-ctlac} to the setting here for the cohomology of the proper $X$ over $k$, we can write $H_{\et, \Sc}^i(U, \bL)$ as
    \[
        H^i\bigl(R\Gamma\bigl(X_\ket, R\varprojlim_n \, \jmath_{\ket, !}(\bL_n|_{U^\Sc_\ket})\bigr)\bigr) \cong H^i\bigl(R\varprojlim_n R\Gamma\bigl(X_\ket, \jmath_{\ket, !}(\bL_n|_{U^\Sc_\ket})\bigr)\bigr).
    \]
    Under our assumptions that $k$ is algebraically closed of characteristic zero and that $X$ is proper over $k$, since each $H^i_{\et, \Sc}(U, \bL_n)$ is finite by Theorem \ref{thm-L-!-prim-comp}, the right-hand side is equal to $\varprojlim_n H^i\bigl(R\Gamma\bigl(X_\ket, \jmath_{\ket, !}(\bL_n|_{U^\Sc_\ket})\bigr)\bigr) \cong \varprojlim_n H^i_{\et, \Sc}(U, \bL_n)$, which is a finite $\bZ_p$-module by standard arguments.
\end{proof}

\begin{rk}\label{rem-def-H-c-conv}
    When $I^\Sc = I$, we have $H_{\et, \Sc}^i(U, \bL) \cong \varprojlim_n H_\cpt^i(U_\et, \bL_n|_U)$, by Lemma \ref{lem-L-!}.  Note that this is different from the $H_\cpt^i(U_\et, \bL|_U)$ as defined in \cite{Huber:1998-ctlac}.  \Pth{Recall that, for a $\bZ_p$-sheaf $\cF = \{ \cF_n \}_{n \geq 1}$ on $U_\et$, which is partially proper over $k$, the cohomology with compact support $H_\cpt^i(U_\et, \cF)$ is defined in \cite{Huber:1998-ctlac} as the $i$-th derived functor of the functor $\cF = \{ \cF_n \}_{n \geq 1} \mapsto \Gamma_\cpt(U_\et, \varprojlim_n \cF_n)$, where $\Gamma_c$ is the functor of sections with proper support, as in \cite[\aDef 5.2.1]{Huber:1996-ERA}.}  In particular, as explained in \cite[\aEx A.1]{Colmez/Dospinescu/Hauseux/Niziol:2021-pecpd}, even when $U$ is the affine line over the $p$-adic complex number, Huber's $H_\cpt^2\bigl(U_\et, \bZ_p(1)\bigr)$ can fail to be a finite $\bZ_p$-module, and hence is not suitable for our study of de Rham comparison and Poincar\'e duality.  Nevertheless, despite this discrepancy, we shall abusively \emph{define} \Pth{or rather \emph{denote}}
    \[
        H_\cpt^i(U_\et, \bL|_U) := \varprojlim_n H_\cpt^i(U_\et, \bL_n|_U).
    \]
    Again by Lemma \ref{lem-L-!}, when $I^\Sc = \emptyset$, which is another extremal case, we have
    \[
        H_{\et, \Sc}^i(U, \bL) \cong \varprojlim_n H^i(U_\et, \bL_n|_U) \cong H^i(U_\et, \bL|_U).
    \]
\end{rk}

\begin{rk}\label{rem-def-H-c-alt}
    We shall denote the objects defined by any subset $I^\Scalt \subset I^\Sc \subset I$ with subscripts \Qtn{$\Scalt$}.  Then the objects with subscripts \Qtn{$\Sc$} admit compatible canonical morphisms to those with subscripts \Qtn{$\Scalt$}.  We shall also denote with subscripts \Qtn{$\Snc$} the objects defined with the complementary subset $I^\Snc \subset I$ replacing $I^\Sc \subset I$.  \Pth{This is consistent of the previous definitions of $\jmath_\Snc$ and $\jmath^\Snc$, although we will not explicitly use them.}
\end{rk}

For each locally noetherian fs log adic space $Y$, the pro-Kummer \'etale site $Y_\proket$ was introduced in \cite[\aSec \logadicsecproket]{Diao/Lan/Liu/Zhu:lasfr}.  Let $\upsilon_Y: Y_\proket \to Y_\ket$ denote the natural projection of sites.  \Pth{We shall omit the subscript \Qtn{$Y$} when the context is clear.}

\begin{lem}\label{lem-from-ket-to-proket}
    For each morphism $f: Z \to Y$ of locally noetherian fs log adic spaces, $\upsilon_Z^{-1} \, f^{-1}_\ket \cong f^{-1}_\proket \, \upsilon_Y^{-1}$.  If $f$ is quasi-compact, then $\upsilon_Y^{-1} \, f_{\ket, *} \cong f_{\proket, *} \, \upsilon_Z^{-1}$.
\end{lem}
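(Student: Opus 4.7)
The plan is to deduce part (1) formally from a straightforward commutation of direct image functors, and then establish part (2) by a sectionwise computation on a basis of qcqs pro-Kummer \'etale objects, where quasi-compactness of $f$ is exactly the hypothesis that makes the computation work.

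For part (1), the key observation is that the projection $\nu$ is functorial in the space: for any Kummer \'etale object $U \in Y_\ket$, viewed as a constant pro-system in $Y_\proket$ via the canonical continuous functor, and for any sheaf $F$ on $Z_\proket$, both $(\nu_{Y, *} \, f_{\proket, *} \, F)(U)$ and $(f_{\ket, *} \, \nu_{Z, *} \, F)(U)$ compute the same value $F(U \times_Y Z)$.  Hence $\nu_{Y, *} \, f_{\proket, *} \cong f_{\ket, *} \, \nu_{Z, *}$ canonically, and by uniqueness of left adjoints, this gives the desired isomorphism $\nu_Z^{-1} \, f_\ket^{-1} \cong f_\proket^{-1} \, \nu_Y^{-1}$.

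For part (2), I would check the canonical base change morphism $\nu_Y^{-1} \, f_{\ket, *} \, G \to f_{\proket, *} \, \nu_Z^{-1} \, G$ sectionwise over a basis.  By \cite[\aProp \logadicpropproketsiteqcqs]{Diao/Lan/Liu/Zhu:laske}, the qcqs pro-Kummer \'etale objects form a basis of $Y_\proket$, and any such $U$ can be written as a cofiltered limit $U = \varprojlim_i U_i$ of qcqs Kummer \'etale objects $U_i \in Y_\ket$.  The standard description of sections of $\nu_Y^{-1}(H)$ over qcqs objects as the filtered colimit $\varinjlim_i H(U_i)$ then yields
\[
    (\nu_Y^{-1} \, f_{\ket, *} \, G)(U) \cong \varinjlim_i (f_{\ket, *} \, G)(U_i) \cong \varinjlim_i G(U_i \times_Y Z).
\]
On the other hand, $(f_{\proket, *} \, \nu_Z^{-1} \, G)(U) = (\nu_Z^{-1} \, G)(U \times_Y Z)$, and the core of the proof is to identify this with the same colimit.

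This last identification is where the quasi-compactness hypothesis on $f$ is essential: under this assumption, each fiber product $U_i \times_Y Z$ is a qcqs Kummer \'etale object of $Z$, so that $U \times_Y Z \cong \varprojlim_i (U_i \times_Y Z)$ is a qcqs pro-Kummer \'etale object, to which the same section formula applies and yields $(\nu_Z^{-1} \, G)(U \times_Y Z) \cong \varinjlim_i G(U_i \times_Y Z)$.  The only subtle step, and the main \Pth{if mild} obstacle, is verifying that this pro-system presentation is a valid qcqs pro-Kummer \'etale object of $Z_\proket$, which reduces to the stability of qcqs-ness under base change along the quasi-compact morphism $f$; without quasi-compactness, the $U_i \times_Y Z$ need not be qcqs, and the colimit formula for $\nu_Z^{-1}$ would no longer apply directly.
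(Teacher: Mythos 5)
Your proposal is correct and follows essentially the same route as the paper: part (1) is dispatched formally (the paper simply calls it clear), and part (2) is proved exactly as in the paper by evaluating both sides on a qcqs object $U = \varprojlim_i U_i$ of $Y_\proket$, using the colimit description of sections of $\nu^{-1}$ on qcqs pro-objects, with quasi-compactness of $f$ ensuring that $f^{-1}(U) = \varprojlim_i f^{-1}(U_i)$ is again a qcqs object of $Z_\proket$. The only cosmetic difference is that the paper first reduces to $Y$ affinoid before running the same computation.
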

\begin{proof}
    The first statement is clear.  As for the second, we may assume that $Y$ is affinoid.  Let $U = \varprojlim_i U_i$ be any qcqs object of $Y_\proket$.  Then $f^{-1}(U) = \varprojlim_i f^{-1}(U_i)$ is a qcqs object in $Z_\proket$.  By \cite[\aProp \logadicpropproketvsket]{Diao/Lan/Liu/Zhu:lasfr}, we have $\bigl(\upsilon_Y^{-1} \, f_{\ket, *}(\cF)\bigr)(U) \cong \varinjlim_i \cF\bigl(f^{-1}(U_i)\bigr) \cong \bigl(\upsilon_Z^{-1}(\cF)\bigr)\bigl(f^{-1}(U)\bigr) \cong \bigl(f_{\proket, *} \, \upsilon_Z^{-1}(\cF)\bigr)(U)$, for each abelian sheaf $\cF$ on $Z_\ket$, as desired.
\end{proof}

\begin{rk}\label{rem-compat-!-bc}
    The above basic results in this subsection are compatible with base changes from $k$ to other nonarchimedean local fields.
\end{rk}

Finally, let $\widehat{\bZ}_p := \varprojlim_n (\bZ / p^n)$, and let $\Shv_{\widehat{\bZ}_p}(Y_\proket)$ denote the category of $\widehat{\bZ}_p$-sheaves on $Y_\proket$, understood in the naive sense.  Then there is a natural functor
\begin{equation}\label{eq-ket-to-proket}
    \upsilon_Y^{-1}: \Shv_{\bZ_p}(Y_\ket) \to \Shv_{\widehat{\bZ}_p}(Y_\proket): \cF = \{ \cF_n \}_{n \geq 1} \mapsto \widehat{\cF} := \varprojlim_n \bigl(\upsilon_Y^{-1}(\cF_n)\bigr).
\end{equation}

\subsection{Period sheaves on the boundary strata}\label{sec-period-bd}

Let us begin with some notational preparation, which will also be used in some later subsections.  Consider the perfectoid field $K := \widehat{\AC{k}}$, the $p$-adic completion of some fixed algebraic closure $\AC{k}$ of $k$, with $K^+ = \cO_K$.  Let $(K^\flat, K^{\flat+})$ denote the tilt of $(K, K^+)$, as usual.  As in \cite[\aSec \logRHsecOBdlexplicit]{Diao/Lan/Liu/Zhu:lrhrv}, let $\xi \in \Ainf = W(K^{\flat+})$ be given by \cite[\aLem 6.3]{Scholze:2013-phtra}, which generates the kernel of the surjective canonical homomorphism $\theta: \Ainf \to K^+$.  Let $\varpi \in K^{\flat+}$ be such that $\varpi^\sharp = p$.  Then $p^m \Ainf / p^{m + 1} \Ainf \cong \Ainf / p \cong K^{\flat+}$ and $\varpi^n K^{\flat+} / \varpi^{n + 1} K^{\flat+} \cong K^{\flat+} / \varpi \cong K^+ / p$, for all $m , n \geq 0$.

\begin{rk}\label{rem-almost-Ainf}
    We shall consider the almost setting over $\Ainf$ with respect to the ideal generated by $\{ [\varpi^{1 / p^N}] \}_{N \geq 1}$, as in \cite[paragraph preceding \aThm 6.5]{Scholze:2013-phtra}.  As explained in the proof of \cite[\aThm 6.5]{Scholze:2013-phtra}, multiplication by $[\varpi] \in \Binf = W(K^{\flat+})[\frac{1}{p}]$ is invertible \Pth{and so almost isomorphisms becomes isomorphisms} after reduction modulo powers of $\xi$, because $[\varpi]$ is mapped to $\varpi^\sharp = p$ in $K$.
\end{rk}

\begin{defn}\label{def-AAinf-etc-bd}
    For each $J \subset I$, by applying $\imath_{J, \proket, *}^\partial$ to the sheaves on $X_{J, \proket}^\partial$ defined in \cite[\aDef \logadicdefproketsheaves]{Diao/Lan/Liu/Zhu:lasfr} and \cite[\aSec \logRHsecOBdl]{Diao/Lan/Liu/Zhu:lasfr}, we obtain the sheaves $\widehat{\cO}_{X_J^\partial}^\partial$, $\widehat{\cO}_{X_J^\partial}^{+, \partial}$, $\widehat{\cO}_{X_J^\partial}^{\flat, \partial}$, $\widehat{\cO}_{X_J^\partial}^{\flat+, \partial}$, $\AAinfX{X_J^\partial}^\partial$, $\BBinfX{X_J^\partial}^\partial$, $\BBdRX{X_J^\partial}^{+, \partial}$, $\BBdRX{X_J^\partial}^\partial$, $\OBdlX{X_J^\partial}^{+, \partial}$, $\OBdlX{X_J^\partial}^\partial$, their filtered pieces, and $\OClX{X_J^\partial}^\partial := \gr^0 \OBdlX{X_J^\partial}^\partial$, together with the homomorphisms $\theta^\partial: \AAinfX{X_J^\partial}^\partial \to \widehat{\cO}_{X_J^\partial}^{+, \partial}$ and $\theta^\partial: \BBinfX{X_J^\partial}^\partial \to \widehat{\cO}_{X_J^\partial}^\partial$, on $X_\proket$, denoted with additional superscripts \Qtn{$\partial$}.  For each $a \geq 0$, we define similar sheaves $\widehat{\cO}_{X_{(a)}^\partial}^\partial$, $\widehat{\cO}_{X_{(a)}^\partial}^{+, \partial}$, $\widehat{\cO}_{X_{(a)}^\partial}^{\flat, \partial}$, $\widehat{\cO}_{X_{(a)}^\partial}^{\flat+, \partial}$, $\AAinfX{X_{(a)}^\partial}^\partial$, $\BBinfX{X_{(a)}^\partial}^\partial$, $\BBdRX{X_{(a)}^\partial}^{+, \partial}$, $\BBdRX{X_{(a)}^\partial}^\partial$, $\OBdlX{X_{(a)}^\partial}^{+, \partial}$, $\OBdlX{X_{(a)}^\partial}^\partial$, their filtered pieces, and $\OClX{X_{(a)}^\partial}^\partial$ on $X_\proket$ by direct sums.
\end{defn}

\begin{lem}\label{lem-O-flat+-cl}
    For each $J \subset I$, and for each log affinoid perfectoid object $U = \varprojlim_{i \in I} U_i$ in $X_\proket$ with associated perfectoid space $\widehat{U}$, the pullback of $U$ to $X_{J, \proket}^\partial$ defined by $V = \varprojlim_{i \in I} (U_i \times_X X_J^\partial)$ is a log perfectoid affinoid object in $X_{J, \proket}^\partial$, with an associated perfectoid space $\widehat{V}$ and a closed immersion $\widehat{V} \to \widehat{U}$ of adic spaces compatible with $\imath_J^\partial: X_J^\partial \to X$.  \Pth{However, the closed immersion $\widehat{V} \to \widehat{U}$ is generally \emph{not} the pullback of $\imath_J^\partial$ under $\widehat{U} \to U$.}  Suppose that $\widehat{V} = \Spa(\overline{R}, \overline{R}^+)$ for some perfectoid $(\overline{R}, \overline{R}^+)$ with tilt $(\overline{R}^\flat, \overline{R}^{\flat+})$.  Then we have the following:
    \begin{enumerate}
        \item $\bigl(\widehat{\cO}_{X_{J, \proket}^\partial}^\partial(U), \widehat{\cO}_{X_{J, \proket}^\partial}^{+, \partial}(U)\bigr) \cong \bigl(\widehat{\cO}_{X_{J, \proket}^\partial}(V), \widehat{\cO}_{X_{J, \proket}^\partial}^+(V)\bigr) \cong \bigl(\overline{R}, \overline{R}^+\bigr)$.

        \item $\bigl(\widehat{\cO}_{X_{J, \proket}^\partial}^{\flat, \partial}(U), \widehat{\cO}_{X_{J, \proket}^\partial}^{\flat+, \partial}(U)\bigr) \cong \bigl(\widehat{\cO}_{X_{J, \proket}^\partial}^\flat(V), \widehat{\cO}_{X_{J, \proket}^\partial}^{\flat+}(V)\bigr) \cong \bigl(\overline{R}^\flat, \overline{R}^{\flat+}\bigr)$.
    \end{enumerate}
\end{lem}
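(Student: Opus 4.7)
The plan is to reduce to a local situation where $U$ arises from a toric chart lifted from $X$, verify directly that the pullback $V$ is log affinoid perfectoid by exhibiting its perfectoid model, and then read off the values of the period sheaves from the definitions.

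First, since $\imath_J^\partial: X_J^\partial \to X$ is a strict closed immersion in the sense of Definition \ref{def-imm}, the log structure on $X_J^\partial$ is canonically the pullback of that on $X$, and each fiber product $U_i \times_X X_J^\partial$ in the category of log adic spaces is Kummer \'etale over $X_J^\partial$. Hence $V = \varprojlim_i (U_i \times_X X_J^\partial)$ is a well-defined pro-Kummer \'etale object over $X_J^\partial$, and it coincides with the preimage of $U$ under the morphism of sites $X_{J, \proket}^\partial \to X_\proket$ induced by $\imath_J^\partial$. Consequently, for any sheaf $F$ on $X_{J, \proket}^\partial$, we have $(\imath^\partial_{J, \proket, *}F)(U) \cong F(V)$ by the very definition of the pushforward along a morphism of sites.

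Next, I would show that $V$ is log affinoid perfectoid, with associated perfectoid space $\widehat{V}$ realized as a closed subspace of $\widehat{U}$. Working locally on $X$, we may assume that $X$ admits a toric chart, so that $U$ arises from a standard tower obtained by adjoining all $p^n$-th roots of the monoid generators. Because $\imath_J^\partial$ is strict, this toric chart restricts to a toric chart of $X_J^\partial$ with the \emph{same} monoid---the conditions cutting out $X_J$ live entirely in the non-logarithmic directions. The analogous tower over $X_J^\partial$ therefore presents $V$, and \cite[\aProp \logadicproplogaffperfbasis]{Diao/Lan/Liu/Zhu:laske} shows that the limit $V$ is log affinoid perfectoid, while \cite[\aLem \logadiclemlogaffperfclimm]{Diao/Lan/Liu/Zhu:laske} produces the desired closed immersion $\widehat{V} \hookrightarrow \widehat{U}$ at the level of perfectoid spaces.

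Finally, the claimed identifications are nearly tautological. By Definition \ref{def-AAinf-etc-bd}, the sheaves decorated with the superscript $\partial$ on $X_\proket$ are the $\imath^\partial_{J, \proket, *}$-pushforwards of their un-superscripted counterparts on $X_{J, \proket}^\partial$, so by the first step their values at $U$ agree with the corresponding values at $V$. The defining property of log affinoid perfectoid objects then gives $\widehat{\cO}^+_{X_J^\partial}(V) \cong \overline{R}^+$ and $\widehat{\cO}^{\flat+}_{X_J^\partial}(V) \cong \overline{R}^{\flat+}$; inverting $p$ (respectively $\varpi$) yields the analogous statements for $\widehat{\cO}$ and $\widehat{\cO}^\flat$. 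The principal obstacle is the middle step: verifying that $V$ remains log affinoid perfectoid and that $\widehat{V}$ sits as a closed subspace of $\widehat{U}$. The delicate point is that $X_J^\partial$ carries the nontrivial pulled-back log structure rather than the natural log structure on $X_J$, so the closed immersion occurs only in the non-logarithmic directions; this is precisely where the strictness built into Definition \ref{def-imm} is essential.
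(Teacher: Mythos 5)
Your proposal follows essentially the same route as the paper, whose proof simply cites \cite[\aLem \logadiclemlogaffperfclimm{} and \aThm \logadicthmalmostvanhat]{Diao/Lan/Liu/Zhu:laske}: the strictness of $\imath_J^\partial$ makes the pullback $V$ log affinoid perfectoid with $\widehat{V}$ a closed perfectoid subspace of $\widehat{U}$, and the values of the period sheaves are then read off from the definition of the pushforward as in your first step.

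Two small corrections, though, so that the argument is actually watertight. First, your middle step is framed as a reduction to the case where $U$ comes from a toric tower, invoking \cite[\aProp \logadicproplogaffperfbasis]{Diao/Lan/Liu/Zhu:laske}; but that proposition only says log affinoid perfectoid objects form a basis, and one cannot replace the given $U$ by a nicer one here, since the assertion concerns the sections over the specific $U$ and the specific $(\overline{R}, \overline{R}^+)$. The statement that $V = \varprojlim_i (U_i \times_X X_J^\partial)$ is log affinoid perfectoid with a closed immersion $\widehat{V} \to \widehat{U}$, for an \emph{arbitrary} log affinoid perfectoid $U$, is precisely \cite[\aLem \logadiclemlogaffperfclimm]{Diao/Lan/Liu/Zhu:laske}, which you cite anyway; the toric-chart discussion is at best a heuristic for its proof and should be dropped or replaced by that single citation. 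Second, the final identifications are not ``nearly tautological'' and do not follow from ``the defining property'' of log affinoid perfectoid objects: the fact that $\widehat{\cO}^+_{X_{J}^\partial}(V) \cong \overline{R}^+$ \Pth{and similarly for the tilted sheaves, whence the non-integral versions after inverting $p$, \resp $\varpi$} is the content of the almost-vanishing theorem \cite[\aThm \logadicthmalmostvanhat]{Diao/Lan/Liu/Zhu:laske}, since $\widehat{\cO}^+$ is a completed inverse limit of $\cO^+/p^n$ and its sections must be computed; this is the second citation in the paper's proof and should appear explicitly in yours.
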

\begin{proof}
    These follow from \cite[\aLem \logadiclemlogaffperfclimm{} and \aThm \logadicthmalmostvanhat]{Diao/Lan/Liu/Zhu:lasfr}.
\end{proof}

\begin{cor}\label{cor-O-flat+-cl}
    For each $J \subset I$, let $\cF$ be one of the following sheaves on $X_{J, \proket}^\partial$: $\widehat{\cO}_{X_J^\partial}$, $\widehat{\cO}_{X_J^\partial}^+$, $\widehat{\cO}_{X_J^\partial}^\flat$, $\widehat{\cO}_{X_J^\partial}^{\flat+}$, $\AAinfX{X_J^\partial}$, $\BBinfX{X_J^\partial}$, $\BBdRpX{X_J^\partial}$, and $\BBdRX{X_J^\partial}$.  Then the canonical morphisms $\imath_{J, \proket}^{\partial, -1}(\cF^\partial) = \imath_{J, \proket}^{\partial, -1} \, \imath_{J, \proket, *}^\partial(\cF) \to \cF$ and $\cF^\partial \to \imath_{J, \proket, *}^\partial \, \imath_{J, \proket}^{\partial, -1}(\cF^\partial)$ defined by adjunction are isomorphisms.  If $U$ and $V$ are in Lemma \ref{lem-O-flat+-cl}, then $\cF^\partial(U) \cong \cF(V)$.  Moreover, we have the following:
    \begin{enumerate}
        \item $\AAinfX{X_J^\partial}^\partial \cong W(\widehat{\cO}_{X_{J, \proket}^\partial}^{\flat+, \partial})$ and $\BBinfX{Z}^\partial \cong \AAinfX{Z}^\partial[\frac{1}{p}]$.

        \item The kernels of $\theta^\partial: \AAinfX{X_J^\partial}^\partial \to \widehat{\cO}_{X_J^\partial}^{+, \partial}$ and $\theta^\partial: \BBinfX{X_J^\partial}^\partial \to \widehat{\cO}_{X_J^\partial}^\partial$ are locally principal over $X_\proket$, and are generated by the above $\xi$ over $X_{K, \proket}$.

        \item $\BBdRX{X_J^\partial}^{+, \partial} \cong \varprojlim (\BBinfX{X_J^\partial}^\partial / \xi^r)$ and $\BBdRX{X_J^\partial}^\partial \cong \BBdRX{X_J^\partial}^{+, \partial}[\frac{1}{\xi}]$, where $\xi$ is any local generator of $\ker \theta^\partial$, which can be the above $\xi$ over $X_{K, \proket}$.
    \end{enumerate}
\end{cor}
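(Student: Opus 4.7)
The plan is to reduce every assertion to sectionwise statements on a basis of log affinoid perfectoid objects, where Lemma \ref{lem-O-flat+-cl} applies directly. By \cite[\aProp \logadicproplogaffperfbasis]{Diao/Lan/Liu/Zhu:laske}, such objects form a basis of $X_\proket$, and by Lemma \ref{lem-O-flat+-cl} their pullbacks under $\imath_J^\partial$ are log affinoid perfectoid, hence form a basis of $X_{J, \proket}^\partial$. For the four elementary sheaves $\widehat{\cO}_{X_J^\partial}$, $\widehat{\cO}^+_{X_J^\partial}$, $\widehat{\cO}^\flat_{X_J^\partial}$, $\widehat{\cO}^{\flat+}_{X_J^\partial}$, Lemma \ref{lem-O-flat+-cl} already gives the sectionwise identification $\cF^\partial(U) \cong \cF(V)$ for any log affinoid perfectoid $U \in X_\proket$ with pullback $V$. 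Since the $V$'s arising this way are cofinal in the relevant neighborhood system, the counit $\imath_\proket^{-1}(\cF^\partial) \to \cF$ is an isomorphism on this basis and hence globally; applying $\imath_{\proket, *}$ yields the dual isomorphism $\cF^\partial \to \imath_{\proket, *} \, \imath_\proket^{-1}(\cF^\partial)$.

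For the remaining period sheaves, I would proceed by unwinding their construction. Since $\imath_{\proket, *}$ is a right adjoint and $W(-)$ is a limit construction, the elementary case immediately gives $\AAinfX{X_J^\partial}^\partial \cong W(\widehat{\cO}^{\flat+, \partial}_{X_J^\partial})$ in part (1), and $\BBinfX{X_J^\partial}^\partial \cong \AAinfX{X_J^\partial}^\partial[\tfrac{1}{p}]$ follows because the filtered colimit inverting $p$ commutes with evaluation on the qcqs log affinoid perfectoid basis. For part (2), the analogous local principality of $\ker \theta$ on $X_{J, \proket}^\partial$ is recorded in \cite[\aSec \logRHsecOBdlexplicit]{Diao/Lan/Liu/Zhu:lrhrv}, and since the local generator $\xi$ comes from $X_{K, \proket}$ \Ref{via \cite[\aLem 6.3]{Scholze:2013-phtra}}, applying $\imath_{\proket, *}$ to the corresponding statement transports it to $X_\proket$. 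For part (3), the identification $\BBdRX{X_J^\partial}^{+, \partial} \cong \varprojlim_r \bigl(\BBinfX{X_J^\partial}^\partial / \xi^r\bigr)$ follows once one verifies $\imath_{\proket, *}\bigl(\BBinfX{X_J^\partial}/\xi^r\bigr) \cong \BBinfX{X_J^\partial}^\partial/\xi^r$ and combines it with the commutation of $\imath_{\proket, *}$ with limits; inverting $\xi$ by another filtered colimit argument then gives $\BBdRX{X_J^\partial}^\partial \cong \BBdRX{X_J^\partial}^{+, \partial}[\tfrac{1}{\xi}]$.

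The main obstacle I foresee is verifying that $\imath_{\proket, *}$ commutes with the quotient by $\xi^r$, since passage to quotients is not automatically preserved by pushforwards. I would attack this by checking the vanishing of $R^1 \imath_{\proket, *}$ on the short exact sequence $0 \to \BBinfX{X_J^\partial} \to \BBinfX{X_J^\partial} \to \BBinfX{X_J^\partial}/\xi^r \to 0$ defined by multiplication by $\xi^r$, evaluating on the log affinoid perfectoid basis and invoking the explicit description of sections from Lemma \ref{lem-O-flat+-cl} together with the fact that $\xi$ is a non-zero-divisor on $\AAinfX{X_J^\partial}(V)$ for $V$ in this basis. Once this sectionwise check is secured, the remainder of the argument is a routine unwinding of limits and filtered colimits built on the first paragraph.
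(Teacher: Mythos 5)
Your overall route coincides with the paper's: handle the four elementary sheaves via Lemma \ref{lem-O-flat+-cl} on the log affinoid perfectoid basis, and then transport the limit/colimit constructions of the period sheaves through $\imath_{\proket, *}$.  The paper settles the second step in one line by invoking the compatibility of $\imath_{\proket, *}$ with limits and colimits \Pth{\cite[\aProp \logadicpropproketsiteqcqs]{Diao/Lan/Liu/Zhu:laske}}, so the only place where your proposal genuinely deviates is the step you yourself single out, namely that $\imath_{\proket, *}$ commutes with the quotient by $\xi^r$ --- and there your justification has a real hole.  The inputs you name \Pth{the explicit description of sections in Lemma \ref{lem-O-flat+-cl} and the fact that $\xi$ is a non-zero-divisor on $\AAinfX{X_J^\partial}(V)$} only give the left exactness of $0 \to \imath_{\proket, *}(\BBinfX{X_J^\partial}) \Mapn{\xi^r} \imath_{\proket, *}(\BBinfX{X_J^\partial}) \to \imath_{\proket, *}(\BBinfX{X_J^\partial} / \xi^r)$, which is automatic because $\imath_{\proket, *}$ is left exact.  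What must actually be proved is that the last arrow is surjective, \ie, that the connecting map into $R^1\imath_{\proket, *}(\BBinfX{X_J^\partial})$ vanishes; and Lemma \ref{lem-O-flat+-cl} says nothing about sections of the quotient sheaf over $V$, nor about $H^1(V_\proket, \,\cdot\,)$.

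The missing ingredient is the almost vanishing of higher pro-Kummer \'etale cohomology on log affinoid perfectoid objects \Pth{\cite[\aThm \logadicthmalmostvanhat]{Diao/Lan/Liu/Zhu:laske}}, which, by the d\'evissage the paper carries out later for exactly this kind of statement \Pth{see Lemma \ref{lem-property-AAinf-bd} and the proofs of Lemmas \ref{lem-cplx-AAinf} and \ref{lem-BBdRp-cpt}}, controls $H^j\bigl(V_\proket, \AAinfX{X_J^\partial} / (p^m, [\varpi^n])\bigr)$ for $j > 0$ and then, after taking \Pth{derived} limits and inverting $p$, yields the vanishing you need on the basis.  With that input added, your sectionwise argument does close and gives a correct, if more laborious, proof of parts (2) and (3); alternatively you can bypass the issue entirely, as the paper does, since once the elementary cases are known all remaining assertions are formal consequences of the cited compatibility of $\imath_{\proket, *}$ with limits and colimits.
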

\begin{proof}
    The assertions for $\widehat{\cO}_{X_J^\partial}$, $\widehat{\cO}_{X_J^\partial}^+$, $\widehat{\cO}_{X_J^\partial}^\flat$, and $\widehat{\cO}_{X_J^\partial}^{\flat+}$ follow from Lemma \ref{lem-O-flat+-cl} and \cite[\aLem \logadiclemlogaffperfclimm{} and \aThm \logadicthmalmostvanhat]{Diao/Lan/Liu/Zhu:lasfr}.  Since $\cF^\partial = \imath_{\proket, *}(\cF)$, and since $\imath_{\proket, *}$ is compatible with limits and colimits \Pth{by \cite[\aProp \logadicpropproketsiteqcqs]{Diao/Lan/Liu/Zhu:lasfr}}, the remaining assertions also follow.
\end{proof}

\begin{lem}\label{lem-property-AAinf-bd}
    Over ${X_\proket}_{/X_K}$, we have the following, for each $J \subset I$:
    \begin{enumerate}
        \item\label{lem-property-AAinf-bd-mod-pi}  $\AAinfX{X_J^\partial}^\partial / (p, [\varpi]) \cong \widehat{\cO}_{X_{J, \proket}^\partial}^{+, \partial} / p \cong \imath_{\proket,*}(\cO_{X_{J, \proket}^\partial}^+ / p)$.

        \item\label{lem-property-AAinf-bd-coh}  For all log affinoid perfectoid object $U$ in ${X_\proket}_{/X_K}$, and all $m, n \geq 1$, the $\Ainf$-module $H^j\bigl(U_\proket, \AAinfX{X_J^\partial}^\partial / (p^m, [\varpi^n])\bigr)$ is almost zero, when $j > 0$; and is almost isomorphic to $\AAinfX{X_J^\partial}^\partial(U) / (p^m, [\varpi^n])$, when $j = 0$.

        \item\label{lem-property-AAinf-bd-lim}  $\AAinfX{X_J^\partial}^\partial \cong \varprojlim_{m, n} \bigl(\AAinfX{X_J^\partial}^\partial / (p^m, [\varpi^n])\bigr)$; and $R^j\varprojlim_{m, n} \bigl(\AAinfX{X_J^\partial}^\partial / (p^m, [\varpi^n])\bigr)$ is almost zero, for all $j > 0$.
    \end{enumerate}
\end{lem}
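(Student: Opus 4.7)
The overall plan is to reduce each assertion to its analogue for $\AAinfX{X_J^\partial}$ on the pro-Kummer \'etale site $X_{J, \proket}^\partial$---where the corresponding results are already established in \cite{Diao/Lan/Liu/Zhu:laske}---and then to push forward under $\imath_{J, \proket, *}^\partial$. The key technical input is Lemma \ref{lem-O-flat+-cl} together with Corollary \ref{cor-O-flat+-cl}: for every log affinoid perfectoid object $U \in {X_\proket}_{/X_K}$, the pullback $V := U \times_X X_J^\partial$ is log affinoid perfectoid in $X_{J, \proket}^\partial$, and $\AAinfX{X_J^\partial}^\partial(U) \cong \AAinfX{X_J^\partial}(V)$, $\widehat{\cO}^{+, \partial}_{X_J^\partial}(U) \cong \widehat{\cO}^+_{X_J^\partial}(V)$, etc.  Since such $U$ form a basis of $X_\proket$ by \cite[\aProp \logadicproplogaffperfbasis]{Diao/Lan/Liu/Zhu:laske}, it suffices to verify all the assertions on this basis.

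For \Refenum{\ref{lem-property-AAinf-bd-mod-pi}}, the identity $W(R^{\flat+})/(p, [\varpi]) \cong R^{\flat+} / \varpi \cong R^+ / p$, for a perfectoid affinoid $(R, R^+)$ with tilt $(R^\flat, R^{\flat+})$, gives the first isomorphism after evaluation at $U$, while the second follows from $\widehat{\cO}^+ / p \cong \cO^+ / p$ (reduction mod $p$ is insensitive to $p$-adic completion).  For \Refenum{\ref{lem-property-AAinf-bd-coh}}, the key cohomological reduction is
\[
    R\Gamma\bigl(U_\proket, \imath_{J, \proket, *}^\partial(\cF)\bigr) \cong R\Gamma\bigl(V_\proket, \cF\bigr),
\]
applied to $\cF = \AAinfX{X_J^\partial} / (p^m, [\varpi^n])$.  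Granting this, the right-hand side is almost zero in positive degrees and almost isomorphic to $\AAinfX{X_J^\partial}(V) / (p^m, [\varpi^n]) \cong \AAinfX{X_J^\partial}^\partial(U) / (p^m, [\varpi^n])$ in degree zero, by the standard almost vanishing for log affinoid perfectoid objects underlying \cite[\aThm \logadicthmalmostvanhat]{Diao/Lan/Liu/Zhu:laske}, applied directly to $V$.

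For \Refenum{\ref{lem-property-AAinf-bd-lim}}, part \Refenum{\ref{lem-property-AAinf-bd-coh}} shows that on each such $U$, the inverse system $\{ \AAinfX{X_J^\partial}^\partial(U) / (p^m, [\varpi^n]) \}_{m, n}$ has almost surjective transition maps and is hence almost Mittag--Leffler, so $R^j\varprojlim$ is almost zero for $j > 0$, while the ordinary limit recovers $\AAinfX{X_J^\partial}^\partial(U) \cong W\bigl(\widehat{\cO}^{\flat+}_{X_J^\partial}(V)\bigr)$ by $(p, [\varpi])$-adic completeness of the Witt ring; sheafification on the log affinoid perfectoid basis then gives the sheaf-level statements.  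The main obstacle is establishing the pushforward compatibility used in \Refenum{\ref{lem-property-AAinf-bd-coh}}, since Lemma \ref{lem-exc-ex-seq} only supplies exactness of $\imath_{?, *}$ for classical topologies $? \in \{ \an, \et, \ket \}$; on the pro-Kummer \'etale site, the higher pushforward vanishing must be verified directly on the log affinoid perfectoid basis, which reduces to the almost acyclicity of $\cF$ on $V_\proket$ for the perfectoid $V$, itself a consequence of the almost purity input in \cite[\aThm \logadicthmalmostvanhat]{Diao/Lan/Liu/Zhu:laske}.
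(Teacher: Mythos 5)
Your proposal is correct and follows essentially the same route as the paper: both verify everything on the basis of log affinoid perfectoid objects via the identification $\cF^\partial(U)\cong\cF(V)$ from Lemma \ref{lem-O-flat+-cl} and Corollary \ref{cor-O-flat+-cl}, reduce assertion \Refenum{\ref{lem-property-AAinf-bd-coh}} to the identification $H^j\bigl(U_\proket,\imath_{\proket,*}(\,\cdot\,)\bigr)\cong H^j\bigl(V_\proket,\,\cdot\,\bigr)$ together with the almost vanishing on the log affinoid perfectoid $V$, and deduce \Refenum{\ref{lem-property-AAinf-bd-lim}} by the standard limit argument \Pth{\cite[\aLem 3.18]{Scholze:2013-phtra}} on that basis. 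The only point you gloss---almost acyclicity of $\AAinfX{X_J^\partial}/(p^m,[\varpi^n])$ itself---is exactly what the paper supplies by induction on $m, n$, reducing via \Refenum{\ref{lem-property-AAinf-bd-mod-pi}} to $\cO^+/p$ and invoking \cite[\aProp 7.13]{Scholze:2012-ps} and \cite[\aThm \logadicthmalmostvanhat]{Diao/Lan/Liu/Zhu:laske}, which is the same induction your argument implicitly requires.
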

\begin{proof}
    The assertion \Refeq{\ref{lem-property-AAinf-bd-mod-pi}} follows from Lemmas \ref{lem-from-ket-to-proket} and \ref{lem-O-flat+-cl}, and Definition \ref{def-AAinf-etc-bd}.  Since $H^j(U_\proket, \widehat{\cO}_{X_{J, \proket}^\partial}^{+, \partial} / p^m) \cong H^j(V_\proket, \widehat{\cO}_{X_{J, \proket}^\partial}^+ / p^m)$, where $V$ is the pullback of $U$ as in Lemma \ref{lem-O-flat+-cl}, the assertion \Refeq{\ref{lem-property-AAinf-bd-coh}} follows \Pth{by induction} from \cite[\aProp 7.13]{Scholze:2012-ps} and \cite[\aThm \logadicthmalmostvanhat]{Diao/Lan/Liu/Zhu:lasfr}.  Finally, the assertion \Refeq{\ref{lem-property-AAinf-bd-lim}} follows from \cite[\aLem 3.18]{Scholze:2013-phtra}, \cite[\aProp \logadicproplogaffperfbasis]{Diao/Lan/Liu/Zhu:lasfr}, and the previous two assertions.
\end{proof}

Essentially by definition, and by Lemmas \ref{lem-from-ket-to-proket} and \ref{lem-O-flat+-cl}, we have the following two lemmas:
\begin{lem}\label{lem-AAinf-bd-mor}
    For $J \subset J' \subset I$, we have canonical morphisms $\widehat{\cO}_{X_J^\partial}^\partial \to \widehat{\cO}_{X_{J'}^\partial}^\partial$, $\widehat{\cO}_{X_J^\partial}^{+, \partial} \to \widehat{\cO}_{X_{J'}^\partial}^{+, \partial}$, $\widehat{\cO}_{X_J^\partial}^{\flat, \partial} \to \widehat{\cO}_{X_{J'}^\partial}^{\flat, \partial}$, $\widehat{\cO}_{X_J^\partial}^{\flat+, \partial} \to \widehat{\cO}_{X_{J'}^\partial}^{\flat+, \partial}$, $\AAinfX{X_J^\partial}^\partial \to \AAinfX{X_{J'}^\partial}^\partial$, $\BBinfX{X_J^\partial}^\partial \to \BBinfX{X_{J'}^\partial}^\partial$, $\BBdRX{X_J^\partial}^{+, \partial} \to \BBdRX{X_{J'}^\partial}^{+, \partial}$, and $\BBdRX{X_J^\partial}^\partial \to \BBdRX{X_{J'}^\partial}^\partial$ over $X_\proket$.  For $a \geq a' \geq 0$, we have similar morphisms for the analogous sheaves for $X_{(a), \proket}^\partial$ and $X_{(a'), \proket}^\partial$.
\end{lem}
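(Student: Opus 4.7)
The plan is to derive all the stated morphisms uniformly from a single input: the strict closed immersion of log adic spaces $\imath_{J, J'}^\partial \colon X_J^\partial \to X_{J'}^\partial$, whose existence rests on the transitivity of the pullback log structures. For $J' \subset J$, the underlying map $X_J \to X_{J'}$ of adic spaces is a closed immersion since $X_J = \bigcap_{j \in J} D_j \subseteq \bigcap_{j \in J'} D_j = X_{J'}$. The log structure $\cM_{X_J}^\partial$ is defined as the pullback of $\cM_X$ along $\imath_J = \imath_{J'} \circ \imath_{J, J'}$; by transitivity of pullback, $\cM_{X_J}^\partial$ is therefore canonically isomorphic to the pullback of $\cM_{X_{J'}}^\partial$ along $\imath_{J, J'}$, which meets the strictness condition of Definition \ref{def-imm}. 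Consequently, we have a factorization $\imath_J^\partial = \imath_{J'}^\partial \circ \imath_{J, J'}^\partial$ of log adic spaces, and the corresponding factorization $\imath_{J, \proket, *}^\partial = \imath_{J', \proket, *}^\partial \circ \imath_{J, J', \proket, *}^\partial$ of pushforward functors on $X_\proket$.

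With this set-up, the morphisms on the basic $\widehat{\cO}$-type period sheaves are obtained by applying $\imath_{J', \proket, *}^\partial$ to the natural transition morphisms between the respective sheaves on $X_{J'}^\partial$ provided by functoriality along $\imath_{J, J'}^\partial$. To make this concrete, Lemmas \ref{lem-from-ket-to-proket} and \ref{lem-O-flat+-cl} allow me to compute sections on a log affinoid perfectoid object $U$ of $X_\proket$ via the perfectoid spaces $\widehat{V}$ and $\widehat{V'}$ associated with the pullbacks $U \times_X X_J^\partial$ and $U \times_X X_{J'}^\partial$, related by a perfectoid closed immersion $\widehat{V} \hookrightarrow \widehat{V'}$; the induced morphisms on sections are compatible across a basis of $X_\proket$ and hence sheafify globally. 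Having established the assertion for $\widehat{\cO}^\partial, \widehat{\cO}^{+, \partial}, \widehat{\cO}^{\flat, \partial}, \widehat{\cO}^{\flat+, \partial}$, the remaining morphisms follow by covariant functoriality of the subsequent constructions: $W(-)$ applied to $\widehat{\cO}^{\flat+, \partial}$ yields $\AAinfX{X_J^\partial}^\partial$ (using Corollary \ref{cor-O-flat+-cl}), inverting $p$ yields $\BBinfX{X_J^\partial}^\partial$, completing along $\ker\theta^\partial$ yields $\BBdRX{X_J^\partial}^{+, \partial}$, and inverting $\xi$ yields $\BBdRX{X_J^\partial}^\partial$; the key input here is that by Corollary \ref{cor-O-flat+-cl} the same element $\xi \in \Ainf$ serves as a local generator of $\ker\theta^\partial$ for every $J$ over $X_K$, so the ideals line up across the strata. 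The morphisms for $X_{(a)}^\partial$ and $X_{(a')}^\partial$ with $a \geq a'$ are then just the componentwise assembly indexed by pairs $(J, J')$ with $|J| = a$, $|J'| = a'$, and $J' \subset J$.

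I do not foresee any serious obstacle: the lemma is essentially a functoriality statement, and its content reduces to the strictness of $\imath_{J, J'}^\partial$ (which is transparent from the pullback definition of the boundary log structures) together with the covariance of pushforward, Witt vectors, and the standard period-ring operations. The only mild bookkeeping concerns ensuring that the local generators of $\ker \theta^\partial$---and hence the transition ideals defining $\BBdRX{X_J^\partial}^{+, \partial}$---are compatible across strata, which is addressed precisely by the uniform choice of $\xi$ in Corollary \ref{cor-O-flat+-cl}.
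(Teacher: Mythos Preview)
Your approach is correct and matches the paper's own justification, which is the single lead-in sentence ``Essentially by definition, and by Lemmas \ref{lem-from-ket-to-proket} and \ref{lem-O-flat+-cl}''; you invoke exactly these lemmas and spell out the functoriality (via the strict closed immersion $X_J^\partial \to X_{J'}^\partial$ and Corollary \ref{cor-O-flat+-cl}) that the paper leaves implicit. One minor caveat: the arrows in the printed statement are reversed relative to how they are actually used (compare Lemma \ref{lem-!-resol} and Definition \ref{def-AAinf-cpt}); the restriction maps your construction produces go from the $J'$-indexed sheaf to the $J$-indexed sheaf when $J' \subset J$, which is the intended direction.
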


\begin{lem}\label{lem-BdR-bd-gr}
    For each $J \subset I$, both $\BBdRX{X_J^\partial}^{+, \partial}$ and $\BBdRX{X_J^\partial}^\partial$ admit filtrations induced by powers of $\ker(\theta^\partial: \BBinfX{X_J^\partial}^\partial \to \widehat{\cO}_{X_J^\partial}^\partial)$, which are also induced by those of $\BBdRpX{X}$ and $\BBdRX{X}$.  Over $X_{K, \proket}$, the filtrations are given by multiplication by powers of $\xi$, and induce canonical isomorphisms $\gr^r \BBdRX{X_J^\partial}^{+, \partial} \cong \widehat{\cO}_{X_J^\partial}^\partial(r)$, for $r \geq 0$; and $\gr^r \BBdRX{X_J^\partial}^\partial \cong \widehat{\cO}_{X_J^\partial}^\partial(r)$, for all $r \in \bZ$, where $(r)$ denotes Tate twists as usual.  For each $a \geq 0$, we have similar facts for $\BBdRX{X_{(a)}^\partial}^{+, \partial}$ and $\BBdRX{X_{(a)}^\partial}^\partial$.
\end{lem}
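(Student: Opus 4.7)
The plan is to reduce every assertion to the filtration structure on $\BBdRpX{X_J^\partial}$ and $\BBdRX{X_J^\partial}$ over $X_{J, \proket}^\partial$ itself, which was already established in \cite[\aSec \logRHsecOBdl]{Diao/Lan/Liu/Zhu:lrhrv}, by pushing forward along $\imath_{J, \proket, *}^\partial$. Recall that over $X_J^\partial$ one already has filtrations of $\BBdRpX{X_J^\partial}$ and $\BBdRX{X_J^\partial}$ by powers of $\ker\bigl(\theta : \BBinfX{X_J^\partial} \to \widehat{\cO}_{X_J^\partial}\bigr)$; that over $X_{J, K, \proket}^\partial$ this filtration is given by multiplication by $\xi$; and that $\gr^r \BBdRpX{X_J^\partial} \cong \widehat{\cO}_{X_J^\partial}(r)$ for $r \geq 0$, with the analogue for $\BBdRX{X_J^\partial}$ extending to all $r \in \bZ$.

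The first step is to check that $\imath_{J, \proket, *}^\partial$ is exact on abelian pro-Kummer \'etale sheaves. The analogue for the Kummer \'etale topology is Lemma \ref{lem-exc-ex-seq}, and the pro-Kummer \'etale case then reduces to it by evaluating on log affinoid perfectoid objects, which form a basis by \cite[\aProp \logadicproplogaffperfbasis]{Diao/Lan/Liu/Zhu:laske}, and using Lemmas \ref{lem-from-ket-to-proket} and \ref{lem-O-flat+-cl} to translate back and forth between the sections of a sheaf on $X_{J, \proket}^\partial$ over a pullback $V$ and the sections of its pushforward over the original object $U$ in $X_\proket$.  In particular, $\imath_{J, \proket, *}^\partial$ commutes with kernels, cokernels, and hence with the formation of graded pieces.

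Applying this to $\theta$, Definition \ref{def-AAinf-etc-bd} identifies the pushforward with $\theta^\partial$, and the exactness yields $\ker(\theta^\partial) = \imath_{J, \proket, *}^\partial(\ker \theta)$.  Pushing forward the short exact sequences
\[
0 \to (\ker \theta)^{r+1} \BBdRpX{X_J^\partial} \to (\ker \theta)^r \BBdRpX{X_J^\partial} \to \widehat{\cO}_{X_J^\partial}(r) \to 0
\]
over $X_{J, K, \proket}^\partial$, and noting that $\imath_{J, \proket, *}^\partial(\widehat{\cO}_{X_J^\partial}(r)) = \widehat{\cO}^\partial_{X_J^\partial}(r)$ by Definition \ref{def-AAinf-etc-bd}, I obtain the required filtration on $\BBdRX{X_J^\partial}^{+, \partial}$ and the identifications $\gr^r \BBdRX{X_J^\partial}^{+, \partial} \cong \widehat{\cO}^\partial_{X_J^\partial}(r)$ for $r \geq 0$, with the analogue for $\BBdRX{X_J^\partial}^\partial$ valid for all $r \in \bZ$.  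To match this filtration with the one induced from $\BBdRpX{X}$ \Pth{\resp $\BBdRX{X}$} via Lemma \ref{lem-AAinf-bd-mor}, it suffices to note that the canonical morphism intertwines the two $\theta$ maps and that, by Corollary \ref{cor-O-flat+-cl}(2), both kernels are locally principal and generated by the common element $\xi$ over $X_{K, \proket}$.  The corresponding disjoint-union statements for $X_{(a)}^\partial$ then follow immediately by taking direct sums over $J \subset I^\Sc$ with $|J| = a$, as in Definition \ref{def-AAinf-etc-bd}.

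The main technical obstacle is the exactness of $\imath_{J, \proket, *}^\partial$ on pro-Kummer \'etale sheaves; once this is in place, everything else is bookkeeping.  The exactness itself is a general fact about strict closed immersions, but to use it cleanly one needs the basis of log affinoid perfectoid objects from \cite[\aProp \logadicproplogaffperfbasis]{Diao/Lan/Liu/Zhu:laske} so that the pushforward and the passage to inverse limits commute, which is exactly the content of Lemmas \ref{lem-from-ket-to-proket} and \ref{lem-O-flat+-cl}.
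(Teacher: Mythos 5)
Your proposal is correct and follows essentially the same route as the paper: the paper disposes of this lemma with the one-line remark that it holds \Qtn{essentially by definition} via Lemmas \ref{lem-from-ket-to-proket} and \ref{lem-O-flat+-cl}, with the real content residing in Corollary \ref{cor-O-flat+-cl} \Pth{the limit description of $\BBdRX{X_J^\partial}^{+, \partial}$ and the local principality of $\ker \theta^\partial$ generated by $\xi$ over $X_{K, \proket}$}, whose proof invokes the compatibility of $\imath_{\proket, *}$ with limits and colimits from \cite[\aProp \logadicpropproketsiteqcqs]{Diao/Lan/Liu/Zhu:laske}---exactly the exactness point you isolate and justify by evaluating on the basis of log affinoid perfectoid objects.
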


\begin{lem}\label{lem-OBdRp-bd-loc}
    Let us temporarily assume that $X$ is affinoid and admits a toric chart $X \to \bD_k^n := \Spa(k\Talg{T_1, \ldots, T_n}, k^+\Talg{T_1, \ldots, T_n})$ as in \cite[\aSec \logRHsecOBdlexplicit]{Diao/Lan/Liu/Zhu:lrhrv}, with $D$ defined by $\{ T_1 \cdots T_n = 0 \}$.  Let $\widetilde{X} \to X$ be the log affinoid perfectoid object of $X_\proket$ defined as in \cite[\aSec \logRHsecOBdlexplicit]{Diao/Lan/Liu/Zhu:lrhrv}, with associated perfectoid space $\widehat{\widetilde{X}}$.  Let $\xi \in \Ainf$ be as above.  Suppose that $X_J^\partial$ is defined by $\{ T_1 = \cdots = T_a = 0 \}$.  Then, for each $U \in {X_\proket}_{/\widetilde{X}}$, we have a canonical surjective morphism
    \begin{equation}\label{eq-lem-OBdRp-bd-loc-BdR}
        \BBdRpX{X}(U) \Surj \BBdRX{X_J^\partial}^{+, \partial}(U)
    \end{equation}
    inducing, for each $r \geq 1$, an isomorphism
    \begin{equation}\label{eq-lem-OBdRp-bd-loc-BdR-mod-xi-r}
        \bigl(\BBdRpX{X}(U) \big/ \xi^r\bigr) \big/ ([T_1^{s\flat}], \ldots, [T_a^{s\flat}])^\wedge_{s \in \bQ_{> 0}} \Mi \bigl(\BBdRX{X_J^\partial}^{+, \partial}(U) \big/ \xi^r\bigr),
    \end{equation}
    where $([T_1^{s\flat}], \ldots, [T_a^{s\flat}])^\wedge_{s \in \bQ_{> 0}}$ denotes the $p$-adic completion of the ideal generated by $\{ [T_1]^{s\flat}, \ldots, [T_a^{s\flat}] \}_{s \in \bQ_{> 0}}$; and we have a canonical $\BBdRX{X_J^\partial}^{+, \partial}|_{\widetilde{X}}$-linear isomorphism
    \begin{equation}\label{eq-lem-OBdRp-bd-loc-OBdR-bd}
        \OBdlX{X_J^\partial}^{+, \partial}|_{\widetilde{X}} \cong \BBdRX{X_J^\partial}^{+, \partial}|_{\widetilde{X}}[[y_1, \ldots, y_n]].
    \end{equation}
    compatible with the canonical $\BBdRpX{X}|_{\widetilde{X}}$-linear isomorphism
    \begin{equation}\label{eq-lem-OBdRp-bd-loc-OBdR}
        \OBdlpX{X}|_{\widetilde{X}} \cong \BBdRpX{X}|_{\widetilde{X}}[[y_1, \ldots, y_n]],
    \end{equation}
    in \cite[\aProp \logRHpropOBdlploc]{Diao/Lan/Liu/Zhu:lrhrv}.
\end{lem}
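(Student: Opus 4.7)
The plan is to reduce everything to the pushforward formalism of Corollary \ref{cor-O-flat+-cl}, which identifies $\BBdRX{X_J^\partial}^{+, \partial}(U)$ with $\BBdRpX{X_J^\partial}(V)$ for the pullback $V = U \times_X X_J^\partial \in {X_{J, \proket}^\partial}_{/\widetilde{X}_J^\partial}$. By Lemma \ref{lem-O-flat+-cl}, this pullback comes with a closed immersion $\widehat{V} \to \widehat{U}$ of associated perfectoid spaces. Tilting yields a surjection $\widehat{\cO}^{\flat+}(\widehat{U}) \twoheadrightarrow \widehat{\cO}^{\flat+}(\widehat{V})$ whose kernel is the $\varpi$-adic closure of the ideal generated by $\{ T_i^{s\flat} \}_{i \leq a,\, s \in \bQ_{> 0}}$, since these generators cut out $\widehat{V}$ on each level of the log perfectoid cover \Pth{only the $s \in \bZ[1 / p]_{> 0}$ genuinely appear before taking closure}. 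Applying the Witt vector functor, inverting $p$, and completing $\xi$-adically then produces the surjection \Refeq{\ref{eq-lem-OBdRp-bd-loc-BdR}}.

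For the kernel description \Refeq{\ref{eq-lem-OBdRp-bd-loc-BdR-mod-xi-r}}, the inclusion of the $p$-adic completion of the Teichm\"uller ideal in the kernel is immediate from $\theta^\partial([T_i^{s\flat}]) = T_i^s = 0$ in $\widehat{\cO}^{+, \partial}(\widehat{V})$.  For the reverse inclusion, I argue by induction on $r$, using the short exact sequence $0 \to \xi^{r - 1} \BBinfX{X}(U) / \xi^r \to \BBinfX{X}(U) / \xi^r \to \BBinfX{X}(U) / \xi^{r - 1} \to 0$ \Pth{valid since $\xi$ is regular on $\BBinfX{X}(U)$ by Corollary \ref{cor-O-flat+-cl}}, its analogue for $V$, and the snake lemma.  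The base case $r = 1$ identifies, via $\theta$, the kernel with the closed ideal of $\widehat{V} \subset \widehat{U}$, which is the $p$-adic closure of $(T_1, \ldots, T_a)$ by the untilting correspondence.  The key algebraic input is that, for a perfect $\bF_p$-algebra $A$ and a $\varpi$-adically closed ideal $J \subset A$ stable under $p$-th roots, the kernel of $W(A) \to W(A / J)$ is the $p$-adic closure of the ideal generated by the Teichm\"uller lifts $\{ [a] : a \in J \}$, which follows from the convergent expansion $\sum_{n \geq 0} p^n [x_n^{1 / p^n}]$ available by perfectness; our $J$ is stable under $p$-th roots because the generators $T_i^{s\flat}$ already include all $p$-th roots as $s$ varies.

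For the isomorphism \Refeq{\ref{eq-lem-OBdRp-bd-loc-OBdR-bd}} and its compatibility with \Refeq{\ref{eq-lem-OBdRp-bd-loc-OBdR}}, I use that the log structure on $X_J^\partial$ is by definition pulled back from $X$, so the toric chart of $X$ restricts to a chart on $X_J^\partial$ with all $n$ generators intact \Pth{the $T_i$ for $i \leq a$ vanish in the structure sheaf but remain non-trivial in the log structure}.  Applying \cite[\aProp \logRHpropOBdlploc]{Diao/Lan/Liu/Zhu:lrhrv} directly to $X_J^\partial$ with the log perfectoid cover $\widetilde{X}_J^\partial$ pulled back from $\widetilde{X}$ yields $\OBdlpX{X_J^\partial}|_{\widetilde{X}_J^\partial} \cong \BBdRpX{X_J^\partial}|_{\widetilde{X}_J^\partial}[[y_1, \ldots, y_n]]$.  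Pushing forward along $\imath_{J, \proket, *}^\partial$ and using Definition \ref{def-AAinf-etc-bd} then yields \Refeq{\ref{eq-lem-OBdRp-bd-loc-OBdR-bd}}; compatibility with \Refeq{\ref{eq-lem-OBdRp-bd-loc-OBdR}} follows from the functoriality of the construction in \cite[\aProp \logRHpropOBdlploc]{Diao/Lan/Liu/Zhu:lrhrv} with respect to the strict closed immersion $\imath_J^\partial: X_J^\partial \to X$.

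The hardest part will be making the identification of $W(J)$ with the $p$-adic closure of the Teichm\"uller ideal precise at the sheaf level: promoting the pointwise algebraic statement to one concerning the kernel ideal modulo $\xi^r$ in a manner uniform in $U$ requires careful use of the almost vanishing results of Lemma \ref{lem-property-AAinf-bd} to control the $p$-adic completions and the $\xi$-adic filtrations simultaneously on basis objects before sheafifying.
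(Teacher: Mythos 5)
The main problem is your third step.  You cannot apply \cite[\aProp \logRHpropOBdlploc]{Diao/Lan/Liu/Zhu:lrhrv} \Qtn{directly to $X_J^\partial$}: that proposition is stated for a smooth rigid analytic variety equipped with the \emph{divisorial} log structure of a normal crossings divisor and a toric chart $X \to \bD_k^n$ cutting out that divisor, whereas $X_J^\partial$ has underlying space $X_J$ of dimension $n - a$ and carries the pulled-back \Pth{idealized} log structure in which the sections corresponding to $T_1, \ldots, T_a$ map to \emph{zero} in the structure sheaf.  Such a space admits no chart of the kind required by that proposition, so the power-series description of $\OBdlX{X_J^\partial}^{+, \partial}|_{\widetilde{X}}$ with all $n$ variables, and the identification of those variables compatibly with the $y_i$ of $\OBdlpX{X}|_{\widetilde{X}}$ \Pth{which is exactly what the later strictness and Poincar\'e-lemma arguments need}, does not follow by \Qtn{restriction of the chart}.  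This is precisely the content of the closed-immersion analogue \cite[\aCor \logRHcorOBdRplocclimm]{Diao/Lan/Liu/Zhu:lrhrv}, and the paper's proof is simply to combine that corollary with Corollary \ref{cor-O-flat+-cl} \Pth{which converts statements about the pushforward sheaves evaluated on $U$ into statements about the sheaves on the stratum evaluated on the pullback $V$}.  Without invoking that corollary, or reproving it in the idealized setting, your third isomorphism and its compatibility with \Refeq{\ref{eq-lem-OBdRp-bd-loc-OBdR}} are unsupported.

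Your first two steps amount to re-deriving the $\BBdRpX{X}$-part of the same cited corollary, and the sketch is plausible but the crux is asserted rather than proved: the identification of $\ker\bigl(\widehat{\cO}^{\flat+}(\widehat{U}) \to \widehat{\cO}^{\flat+}(\widehat{V})\bigr)$ with the closure of the ideal generated by the $T_i^{s\flat}$ is justified only by \Qtn{these generators cut out $\widehat{V}$ on each level}, but at finite level the ideal is $(T_1, \ldots, T_a)$, and after passing to the completed colimit, taking integral closures, and tilting one needs the structure theory of closed immersions of log affinoid perfectoids \Ref{\cite[\aLem \logadiclemlogaffperfclimm{} and \aThm \logadicthmalmostvanhat]{Diao/Lan/Liu/Zhu:laske}, as packaged in the corollary above} to know the quotient is again the expected perfectoid ring.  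Your Witt-vector lemma about $W(A) \to W(A / J)$ is fine, but the transition from it to the statement modulo $\xi^r$ in $\BBdRpX{X}(U)$ \Pth{where $p$ is invertible, so the \Qtn{$p$-adic completion of the ideal} must be interpreted via lattices, and one must check that completion, closure, and the $\xi$-adic filtration interact as claimed uniformly in $U$} is exactly the difficulty you flag in your last paragraph without resolving it.  So, as written, the proposal has genuine gaps; the intended argument is the short reduction via Corollary \ref{cor-O-flat+-cl} to the already-established local computation for strict closed immersions in \cite[\aCor \logRHcorOBdRplocclimm]{Diao/Lan/Liu/Zhu:lrhrv}.
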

\begin{proof}
    Combine Corollary \ref{cor-O-flat+-cl} and \cite[\aCor \logRHcorOBdRplocclimm]{Diao/Lan/Liu/Zhu:lrhrv}.
\end{proof}

\begin{cor}\label{cor-OBdRp-cplx-bd}
    For each $a \geq 0$, we have an exact complex
    \begin{equation}\label{eq-cor-OBdRp-cplx-bd}
    \begin{split}
        0 \to \BBdRX{X_{(a)}^\partial}^{+, \partial} \to \OBdlX{X_{(a)}^\partial}^{+, \partial} & \Mapn{\nabla} \OBdlX{X_{(a)}^\partial}^{+, \partial} \otimes_{\cO_{X_\proket}} \Omega^{\log, 1}_X \\
        & \Mapn{\nabla} \OBdlX{X_{(a)}^\partial}^{+, \partial} \otimes_{\cO_{X_\proket}} \Omega^{\log, 2}_X \to \cdots
    \end{split}
    \end{equation}
    over $X_\proket$.  The statement also holds with $\BBdRX{X_{(a)}^\partial}^{+, \partial}$ and $\OBdlX{X_{(a)}^\partial}^{+, \partial}$ replaced with $\BBdRX{X_{(a)}^\partial}^\partial$ and $\OBdlX{X_{(a)}^\partial}^\partial$, respectively.
\end{cor}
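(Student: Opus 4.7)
The plan is to reduce the exactness of \Refeq{\ref{eq-cor-OBdRp-cplx-bd}} to the Poincar\'e lemma for $\OBdlpX{X}$ established in \cite[\aCor \logRHcorlogdRcplx]{Diao/Lan/Liu/Zhu:lrhrv}, by exploiting the explicit local description provided in Lemma \ref{lem-OBdRp-bd-loc}.

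First, since $X_{(a)}^\partial = \coprod_{J \subset I^\Sc,\, |J| = a} X_J^\partial$ and the sheaves involved are defined as direct sums over the same index set, it suffices to prove the analogous exact complex with $X_{(a)}^\partial$ replaced by a single $X_J^\partial$. Next, exactness of a complex of sheaves on $X_\proket$ can be checked on any basis of the site; by \cite[\aProp \logadicproplogaffperfbasis]{Diao/Lan/Liu/Zhu:laske}, the log affinoid perfectoid objects form such a basis, so one can work locally on $X$ and assume that $X$ is affinoid with a toric chart $X \to \bD_k^n$ as in Lemma \ref{lem-OBdRp-bd-loc}, with $D$ defined by $\{ T_1 \cdots T_n = 0 \}$ and $X_J^\partial$ cut out by $\{ T_1 = \cdots = T_a = 0 \}$.

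Then I would invoke the isomorphism \Refeq{\ref{eq-lem-OBdRp-bd-loc-OBdR-bd}}, namely $\OBdlX{X_J^\partial}^{+, \partial}|_{\widetilde{X}} \cong \BBdRX{X_J^\partial}^{+, \partial}|_{\widetilde{X}}[[y_1, \ldots, y_n]]$, which is compatible with the analogous isomorphism \Refeq{\ref{eq-lem-OBdRp-bd-loc-OBdR}} for $\OBdlpX{X}$ via the surjection \Refeq{\ref{eq-lem-OBdRp-bd-loc-BdR}}. Under this identification, the connection $\nabla$ acts as formal differentiation in the $y_i$'s, with $\nabla y_i$ corresponding to the basis element $d\log T_i$ of $\Omega^{\log, 1}_X$, so that the complex \Refeq{\ref{eq-cor-OBdRp-cplx-bd}} restricted to $\widetilde{X}$ becomes the Koszul-type de Rham complex of the formal power series ring $\BBdRX{X_J^\partial}^{+, \partial}|_{\widetilde{X}}[[y_1, \ldots, y_n]]$ over $\BBdRX{X_J^\partial}^{+, \partial}|_{\widetilde{X}}$, which is exact by the standard formal Poincar\'e lemma. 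This yields the $+$-version of the statement.

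For the variant with the $+$'s removed, one simply inverts $\xi$: exactness is preserved under localization, and by Corollary \ref{cor-O-flat+-cl}(3) this recovers the non-$+$ versions of the period sheaves without altering the connection. The main subtlety I anticipate is verifying that the differential $\nabla$ on $\OBdlX{X_J^\partial}^{+, \partial}$ induced from $X$ acts as advertised under \Refeq{\ref{eq-lem-OBdRp-bd-loc-OBdR-bd}}, with no anomalous contributions from those $d\log T_j$ with $j \leq a$ whose associated functions vanish on $X_J^\partial$; this compatibility should follow from the construction of \Refeq{\ref{eq-lem-OBdRp-bd-loc-OBdR-bd}} as a quotient of \Refeq{\ref{eq-lem-OBdRp-bd-loc-OBdR}} by the $p$-adically completed ideal generated by $\{[T_1^{s\flat}], \ldots, [T_a^{s\flat}]\}_{s \in \bQ_{> 0}}$ as in \Refeq{\ref{eq-lem-OBdRp-bd-loc-BdR-mod-xi-r}}, but it is the one piece I would check most carefully to ensure that the formal Poincar\'e lemma applies cleanly.
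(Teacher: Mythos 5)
Your proposal is correct and follows essentially the same route as the paper, whose proof simply combines Lemma \ref{lem-OBdRp-bd-loc} (the power-series description of $\OBdlX{X_J^\partial}^{+, \partial}$ over $\BBdRX{X_J^\partial}^{+, \partial}$, compatible with the one for $\OBdlpX{X}$), the Poincar\'e lemma \cite[\aCor \logRHcorlogdRcplx]{Diao/Lan/Liu/Zhu:lrhrv}, and the local description of $\Omega^{\log, \bullet}_X$ for a normal crossings divisor from \cite[\aEx \logadicexlogdiffsheafncd]{Diao/Lan/Liu/Zhu:laske}. The compatibility you flag as the main subtlety is exactly what the last clause of Lemma \ref{lem-OBdRp-bd-loc} provides, and the non-$+$ variant is handled there just as you suggest, via the corresponding part of \cite[\aCor \logRHcorlogdRcplx]{Diao/Lan/Liu/Zhu:lrhrv}.
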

\begin{proof}
    Combine Lemma \ref{lem-OBdRp-bd-loc}, \cite[\aCor \logRHcorlogdRcplx]{Diao/Lan/Liu/Zhu:lrhrv}, and \cite[\aEx \logadicexlogdiffsheafncd]{Diao/Lan/Liu/Zhu:lasfr}.
\end{proof}

\begin{cor}\label{cor-OBdRp-cplx-bd-strict}
    Both the canonical morphisms $\OBdlpX{X} \to \OBdlX{X_J^\partial}^{+, \partial}$ and $\OBdlX{X} \to \OBdlX{X_J^\partial}^\partial$ are strictly compatible with the filtrations on both sides.
\end{cor}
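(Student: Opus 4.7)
The plan is to reduce to a local computation via Lemma \ref{lem-OBdRp-bd-loc}.  Since strict compatibility of a morphism of filtered sheaves is a local property on $X_\proket$, we may pass to a cover and assume the setup of that lemma: $X$ admits a toric chart and $X_J^\partial$ is cut out by $\{ T_1 = \cdots = T_a = 0 \}$.  Restricting further to the log affinoid perfectoid cover $\widetilde{X} \to X$, the identifications \Refeq{\ref{eq-lem-OBdRp-bd-loc-OBdR}} and \Refeq{\ref{eq-lem-OBdRp-bd-loc-OBdR-bd}} realize both sides of the first morphism as power series rings in the variables $y_1, \ldots, y_n$ over $\BBdRpX{X}$ and $\BBdRX{X_J^\partial}^{+, \partial}$ respectively, with the canonical map between them induced by the surjective base morphism \Refeq{\ref{eq-lem-OBdRp-bd-loc-BdR}} applied coefficient-by-coefficient.

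The first step would be to verify strictness on the base.  By \Refeq{\ref{eq-lem-OBdRp-bd-loc-BdR-mod-xi-r}}, passing to the quotient by $\xi^r$ turns \Refeq{\ref{eq-lem-OBdRp-bd-loc-BdR}} into the quotient by the $p$-adically completed ideal $([T_1^{s\flat}], \ldots, [T_a^{s\flat}])^\wedge_{s \in \bQ_{> 0}}$; in particular, $\xi^r \BBdRpX{X}$ surjects onto $\xi^r \BBdRX{X_J^\partial}^{+, \partial}$.  Combined with the surjectivity of \Refeq{\ref{eq-lem-OBdRp-bd-loc-BdR}} itself, this gives strict compatibility of the filtered base morphism $\BBdRpX{X} \to \BBdRX{X_J^\partial}^{+, \partial}$.

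Strictness then propagates through the power series rings.  Under the identifications above, the filtration on $\OBdlpX{X}|_{\widetilde{X}}$ is given by
\[
    \Fil^r \OBdlpX{X}|_{\widetilde{X}} = \Bigl\{ \sum_{\mathbf{k}} a_{\mathbf{k}} \, y^{\mathbf{k}} : a_{\mathbf{k}} \in \Fil^{r - |\mathbf{k}|} \BBdRpX{X}|_{\widetilde{X}} \text{ for all } \mathbf{k} \Bigr\}
\]
(since each $y_i$ lies in $\ker \theta$; see \cite[\aCor \logRHcorOBdlplocgr]{Diao/Lan/Liu/Zhu:lrhrv}), and similarly for the target.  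Base strictness then lifts term-by-term, yielding the desired strict compatibility for $\OBdlpX{X} \to \OBdlX{X_J^\partial}^{+, \partial}$.

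For the second morphism $\OBdlX{X} \to \OBdlX{X_J^\partial}^\partial$, I would simply invert $\xi$: both filtrations are exhaustive, with $\Fil^r = \xi^r \cdot \Fil^0$ for $r < 0$ and compatible with multiplication by $\xi$, so localization preserves strict compatibility on each filtered piece.  The principal subtlety, rather than a real obstacle, is to verify that the filtration on $\OBdlX{X_J^\partial}^{+, \partial} = \imath_{J, \proket, *}^\partial \OBdlX{X_J^\partial}^+$ agrees with the one inherited from the power series description \Refeq{\ref{eq-lem-OBdRp-bd-loc-OBdR-bd}}; but this is immediate from the definition in \cite[\aDef \logRHdefOBdl]{Diao/Lan/Liu/Zhu:lrhrv} together with the explicit local analysis in Lemma \ref{lem-OBdRp-bd-loc}, so the real content of the proof is simply the base strictness supplied by \Refeq{\ref{eq-lem-OBdRp-bd-loc-BdR-mod-xi-r}}.
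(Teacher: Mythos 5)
Your proposal is correct and takes essentially the same route as the paper: the paper likewise treats the assertion for $\OBdlpX{X} \to \OBdlX{X_J^\partial}^{+, \partial}$ as a local statement deduced from the power series description and the surjection of $\BBdRpX{X}$-type sheaves in Lemma \ref{lem-OBdRp-bd-loc}, and then obtains the assertion for $\OBdlX{X} \to \OBdlX{X_J^\partial}^\partial$ formally from the way these sheaves and their filtrations are built from the plus versions by inverting $t$ (the ``limits with respect to the filtrations'' in \cite[\aDef \logRHdefOBdl]{Diao/Lan/Liu/Zhu:lrhrv}), exactly as in your last step.  The details you add---strictness of the base morphism from surjectivity plus the $\xi$-power filtration, the explicit filtration on the power series rings, and the term-by-term lifting---are consistent with the cited references and merely flesh out what the paper leaves implicit.
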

\begin{proof}
    The assertion for $\OBdlpX{X} \to \OBdlX{X_J^\partial}^{+, \partial}$, which is \'etale local in nature, follows from Lemma \ref{lem-OBdRp-bd-loc}.  Then the assertion for $\OBdlX{X} \to \OBdlX{X_J^\partial}^\partial$ follows from the definition of both sides as completions \Pth{see \cite[\aDef \logRHdefOBdl]{Diao/Lan/Liu/Zhu:lrhrv}}.
\end{proof}

\section{Comparison theorems for cohomology with compact support}\label{sec-dR-comp-cpt}

\subsection{Statements of main results}\label{sec-dR-comp-cpt-main}

In this section, we shall retain the setting of Section \ref{sec-bd}, but assume that $k$ is a $p$-adic field, and that $X$ is \emph{proper} over $k$.  As in Section \ref{sec-period-bd}, let $K = \widehat{\AC{k}}$ and $K^+ = \cO_K$, and let $\xi \in \Ainf = W(K^{\flat+})$ be given by \cite[\aLem 6.3]{Scholze:2013-phtra}.  Let $\bL$ be a $\bZ_p$-local system on $X_\ket$, as in \cite[\aDef \logadicdefketlisse]{Diao/Lan/Liu/Zhu:lasfr}.  Fix $I^\Sc \subset I$ as before.  As usual, we shall denote by $(-D^\Sc)$ the tensor product with \Pth{pullbacks of} the invertible ideal defining the divisor $D^\Sc \subset X$.

We will freely use the notation and constructions in \cite[\aSec \logRHseclogRH]{Diao/Lan/Liu/Zhu:lrhrv}.  In particular, we have the ringed spaces $\cX^+ = (X_\an, \cO_X \ho_k \BdRp)$ and $\cX = (X_\an, \cO_X \ho_k \BdR)$, as in \cite[(\logRHeqdefcX)]{Diao/Lan/Liu/Zhu:lrhrv}.  Moreover, we have the notions of \emph{log connections and their log de Rham complexes} on $\cX$ and on $X$, and of \emph{log Higgs bundles and their log Higgs complexes} on $X_K$, as in \cite[\aDef \logRHdeflogconnetc]{Diao/Lan/Liu/Zhu:lrhrv}.  Note that, given a log connection or a log Higgs bundle, its tensor product with \Pth{the pullback of} the invertible ideal defining $D^\Sc \subset X$ is still a log connection or a log Higgs bundle.

\begin{defn}\label{def-dR-Hi-Hdg-coh-cpt}
    For a log connection $\cE$ on $\cX$, we define
    \begin{equation}\label{eq-def-dR-coh-cpt-geom}
        H_{\dR, \Sc}^i(\cU, \cE) := H^i\bigl(\cX, \DRl\bigl(\cE(-D^\Sc)\bigr)\bigr).
    \end{equation}
    Similarly, for a log connection $E$ on $X$, we define
    \begin{equation}\label{eq-def-dR-coh-cpt}
        H_{\dR, \Sc}^i(U_\an, E) := H^i\bigl(X_\an, \DRl\bigl(E(-D^\Sc)\bigr)\bigr),
    \end{equation}
    For a log Higgs bundle $E$ on $X_K$, we define
    \begin{equation}\label{eq-def-Hi-coh-cpt}
        H_{\Hi, \Sc}^i(U_{K, \an}, E) := H^i\bigl(X_{K, \an}, \Hil\bigl(E(-D^\Sc)\bigr)\bigr),
    \end{equation}
    Finally, for a log connection $E$ on $X$ equipped with a decreasing filtration by coherent subsheaves $\Fil^\bullet E$ satisfying the \Pth{usual} Griffiths transversality condition, we define
    \begin{equation}\label{eq-def-Hdg-coh-cpt}
        H_{\Hdg, \Sc}^{a, i - a}(U_\an, E) := H^i\bigl(X_\an, \gr^a \DRl\bigl(E(-D^\Sc)\bigr)\bigr).
    \end{equation}
    Then there is also the Hodge--de Rham spectral sequence
    \begin{equation}\label{eq-def-Hdg-dR-coh-cpt-spec-seq}
        E_1^{a, i - a} := H_{\Hdg, \Sc}^{a, i - a}(U_\an, E) \Rightarrow H_{\dR, \Sc}^i(U_\an, E),
    \end{equation}
    When the eigenvalues of the residues of $\cE$ and $E$ \Pth{along the irreducible components of $D$} are in $\bQ \cap [0, 1)$, in which case $\cE$ and $E$ are the \emph{canonical extensions} of $\cE|_\cU$ and $E|_U$, respectively \Pth{see the discussions in \cite[\aCh 1, \aSec 4]{Andre/Baldassarri:2001-DDA} or \cite[\aSec 11]{Andre/Baldassarri/Cailotto:2020-DDA(2)}}, we shall also write $H_{\dR, \Sc}^i(\cU, \cE|_\cU)$, $H_{\dR, \Sc}^i(U_\an, E|_U)$, and $H_{\Hdg, \Sc}^i(U_\an, E|_U)$, when the meaning of such notation is clear in the context.  In particular, for each $\bZ_p$-local system $\bL$ on $X_\ket$, we shall write $H_{\dR, \Sc}^i\bigl(\cU, \RH(\bL)\bigr)$, $H_{\dR, \Sc}^i\bigl(U_\an, \DdR(\bL)\bigr)$, and $H_{\Hdg, \Sc}^i\bigl(U_\an, \DdR(\bL)\bigr)$, which is justified by \cite[\aThms \logRHthmlogRHgeom(\logRHthmlogRHgeomres) and \logRHthmlogRHarith(\logRHthmlogRHarithres)]{Diao/Lan/Liu/Zhu:lrhrv}.  We shall also abusively write $H_{\Hi, \Sc}^i\bigl(\cU, \Hc(\bL)\bigr)$ instead of $H_{\Hi, \Sc}^i\bigl(\cU, \Hl(\bL)\bigr)$.  \Pth{For simplicity, we shall write $\RH(\bL)$ etc instead of $\RH(\bL|_U)$ etc in such notation.}
\end{defn}

\begin{rk}\label{rem-def-dR-Hi-Hdg-coh-cpt-abuse}
    The definitions above are rather serious abuses of notation, because, a priori, they do depend on $\cE$ and $E$ over the \emph{whole} $X_\an$.   Nevertheless, we will mainly apply them to $\cE = \RHl(\bL)$, $E = \Hl(\bL)$, and $E = \Ddl(\bL)$, for $\bZ_p$-local systems $\bL$ on $X_\ket$.  Since the eigenvalues of the residues of $\RHl(\bL)$ and $\Ddl(\bL)$ are in $\bQ \cap [0, 1)$, the definition of their de Rham cohomology \Pth{with support conditions} is compatible with their analogues in the complex analytic setting using canonical extensions, as in \cite[II, 6]{Deligne:1970-EDR} and \cite[\aSec 2.11 and \aCor 2.12]{Esnault/Viehweg:1992-LVT-B}.
\end{rk}

\begin{rk}\label{rem-def-dR-Hi-Hdg-coh-cpt-conv}
    If $I^\Sc = I$ and hence $D^\Sc = D$ in the above, we shall abusively denote $H_{\dR, \Sc}^i(U_\an, E)$ by $H_{\dR, \cpt}^i(U_\an, E)$.  If $I^\Sc = \emptyset$ and hence $D^\Sc = \emptyset$, we shall abusively denote $H_{\dR, \Sc}^i(U_\an, E)$ by $H_\dR^i(U_\an, E)$, even though $H_\dR^i(U_\an, E)$ is defined using $E$ over the whole compactification $X$.  Nevertheless, for simplicity, we shall still write $H_{\dR, \cpt}^i(U_\an, E|_U)$ and $H_\dR^i(U_\an, E|_U)$, as in the last part of Definition \ref{def-dR-Hi-Hdg-coh-cpt}, when the meaning is clear in the context.  This abusive choice of notation is consistent with our previous choice for the \'etale cohomology \Pth{see Remark \ref{rem-def-H-c-conv}}.  We shall use similar notation for the other cohomology in Definition \ref{def-dR-Hi-Hdg-coh-cpt}.
\end{rk}

\begin{rk}[{\Refcf{} Remark \ref{rem-def-H-c-alt}}]\label{rem-def-dR-Hi-Hdg-coh-cpt-alt}
    We shall denote the objects defined by any subset $I^\Scalt \subset I^\Sc \subset I$ with subscripts \Qtn{$\Scalt$}.  Then the objects with subscripts \Qtn{$\Sc$} admit compatible canonical morphisms to those with subscripts \Qtn{$\Scalt$}.  Also, we shall denote with subscripts \Qtn{$\Snc$} the objects defined with the complementary subset $I^\Snc \subset I$ replacing $I^\Sc \subset I$.
\end{rk}

The main result of this section is the following:
\begin{thm}\label{thm-L-!-coh-comp}
    For each $i \geq 0$, we have a canonical $\Gal(\AC{k} / k)$-equivariant isomorphism
    \begin{equation}\label{eq-thm-L-!-coh-comp-dR-RH}
        H_{\et, \Sc}^i(U_K, \bL) \otimes_{\bZ_p} \BdR \cong H_{\dR, \Sc}^i\bigl(\cU, \RH(\bL)\bigr),
    \end{equation}
    compatible with the filtrations on both sides, and also \Pth{by taking $\gr^0$} a canonical $\Gal(\AC{k} / k)$-equivariant isomorphism
    \begin{equation}\label{eq-thm-L-!-coh-comp-Hi}
        H_{\et, \Sc}^i(U_K, \bL) \otimes_{\bZ_p} K \cong H_{\Hi, \Sc}^i\bigl(U_{K, \an}, \Hc(\bL)\bigr).
    \end{equation}

    Suppose that $\bL|_U$ is a \emph{de Rham} $\bZ_p$-local system on $U_\et$.  Then we also have a canonical $\Gal(\AC{k} / k)$-equivariant isomorphism
    \begin{equation}\label{eq-thm-L-!-coh-comp-dR}
        H_{\et, \Sc}^i(U_K, \bL) \otimes_{\bZ_p} \BdR \cong H_{\dR, \Sc}^i\bigl(U_\an, \DdR(\bL)\bigr) \otimes_k \BdR,
    \end{equation}
    compatible with the filtrations on both sides, and also \Pth{by taking $\gr^0$} a canonical $\Gal(\AC{k} / k)$-equivariant isomorphism
    \begin{equation}\label{eq-thm-L-!-coh-comp-Hdg}
        H_{\et, \Sc}^i(U_K, \bL) \otimes_{\bZ_p} K \cong \oplus_{a + b = i} \, \Bigl(H_{\Hdg, \Sc}^{a, b}\bigl(U_\an, \DdR(\bL)\bigr) \otimes_k K(-a)\Bigr).
    \end{equation}
    Moreover, the Hodge--de Rham spectral sequence
    \begin{equation}\label{eq-thm-L-!-coh-comp-spec-seq}
        E_1^{a, b} := H_{\Hdg, \Sc}^{a, b}\bigl(U_\an, \DdR(\bL)\bigr) \Rightarrow H_{\dR, \Sc}^{a + b}\bigl(U_\an, \DdR(\bL)\bigr)
    \end{equation}
    degenerates on the $E_1$ page.
\end{thm}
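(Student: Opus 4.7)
The strategy follows the template of \cite[\aSec \logRHseclogRHthm]{Diao/Lan/Liu/Zhu:lrhrv}, with the key new ingredient being the construction of period sheaves incorporating the partial compact support. For each period sheaf $\cF$ among $\BBdRpX{X}$, $\BBdRX{X}$, $\OBdlpX{X}$, $\OBdlX{X}$, introduce the subsheaf $\cF^\Sc := \jmath^\Sc_{\proket, !}(\cF|_{U^\Sc_\proket})$ of $\cF$ on $X_\proket$.  By Lemma \ref{lem-!-resol} applied with $F = \cF$, this $\cF^\Sc$ is the kernel of a canonical exact complex
\[
0 \to \cF^\Sc \to \cF \to \imath^\partial_{(1), \proket, *}(\cF^\partial_{X_{(1)}^\partial}) \to \imath^\partial_{(2), \proket, *}(\cF^\partial_{X_{(2)}^\partial}) \to \cdots,
\]
whose higher terms are given by the pushforwards of the boundary period sheaves of Definition \ref{def-AAinf-etc-bd}.

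The first main step is to establish a Poincar\'e lemma for the compactly supported period sheaves, combining the above exact complex (for $\cF = \BBdRX{X}$) with the stratum-wise Poincar\'e lemma of Corollary \ref{cor-OBdRp-cplx-bd} applied to each $\OBdlX{X_{(a)}^\partial}^\partial$.  A routine double-complex argument yields a quasi-isomorphism
\[
\widehat{\bL} \otimes_{\widehat{\bZ}_p} \BBdRX{X}^\Sc \;\simeq\; \RH(\bL) \otimes \bigl[\OBdlX{X}^\Sc \otimes_{\cO_X} \Omega^{\log, \bullet}_X\bigr]
\]
on $X_{K, \proket}$, strictly compatible with the natural filtrations by Corollary \ref{cor-OBdRp-cplx-bd-strict}.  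The second, more delicate step is to push forward along the natural projection $\lambda: X_\proket \to X_\an$ and identify $R\lambda_*(\OBdlX{X}^{+, \Sc})$ with $(\cO_X \ho \BdRp)(-D^\Sc)$, and analogously without $+$.  The appearance of $\cO_X(-D^\Sc)$ traces back to Lemma \ref{lem-OBdRp-bd-loc}: locally, $\OBdlX{X_J^\partial}^{+, \partial}$ is the quotient of $\OBdlpX{X}$ by the ideal generated by the topologically nilpotent elements $\{[T_j^{s\flat}]\}_{j \in J, \, s \in \bQ_{>0}}$, and these assemble, after passing through the \v{C}ech-type complex above and taking $R\lambda_*$, into the coherent ideal sheaf defining $D^\Sc$.

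Combining these two steps with the primitive comparison theorem (Theorem \ref{thm-L-!-prim-comp}), and passing from $\bF_p$-coefficients with $\widehat{\cO}^+_X / p$-twist to $\bZ_p$-coefficients with $\BBdR$-twist by inverting $p$ and taking derived inverse limits over powers of $\xi$, in parallel with the analogous computations in \cite[\aSec \logRHseccompcoh]{Diao/Lan/Liu/Zhu:lrhrv}, yields the comparison isomorphism \eqref{eq-thm-L-!-coh-comp-dR-RH}, together with strict compatibility of filtrations.  Taking $\gr^0$ of this isomorphism produces the Hodge--Tate version \eqref{eq-thm-L-!-coh-comp-Hi}, via the identification of the associated graded of the log de Rham complex with the log Higgs complex.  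When $\bL|_U$ is de Rham, \cite[\aThm \logRHthmlogRHarith(\logRHthmlogRHarithcomp)]{Diao/Lan/Liu/Zhu:lrhrv} furnishes a canonical filtered isomorphism $\RH(\bL) \cong \DdR(\bL) \otimes_k (\cO_X \ho \BdR)$ on $\cX$, which transports \eqref{eq-thm-L-!-coh-comp-dR-RH} and \eqref{eq-thm-L-!-coh-comp-Hi} to \eqref{eq-thm-L-!-coh-comp-dR} and \eqref{eq-thm-L-!-coh-comp-Hdg}, respectively.  The degeneration of the Hodge--de Rham spectral sequence \eqref{eq-thm-L-!-coh-comp-spec-seq} then follows by a dimension count: summing the right-hand side of \eqref{eq-thm-L-!-coh-comp-Hdg} over $a + b = i$ equals $\dim_{\BdR} \bigl(H^i_{\et, \Sc}(U_K, \bL) \otimes_{\bZ_p} \BdR\bigr)$, which in turn equals $\dim_k H^i_{\dR, \Sc}\bigl(U_\an, \DdR(\bL)\bigr)$ by \eqref{eq-thm-L-!-coh-comp-dR}.

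The main obstacle will be Step 2, the identification of $R\lambda_*(\OBdlX{X}^{+, \Sc})$ with the $(-D^\Sc)$-twist of $\cO_X \ho \BdRp$.  One must carefully track the Kummer-style ideals $([T_j^{s\flat}])^\wedge_{s \in \bQ_{>0}}$ from Lemma \ref{lem-OBdRp-bd-loc} through the \v{C}ech-type resolution of Lemma \ref{lem-!-resol} for $\OBdlpX{X}$, and verify, using the almost-vanishing properties of the pro-Kummer \'etale cohomology of the relevant period sheaves, that the resulting structure on $X_\an$ is precisely the coherent ideal sheaf $\cO_X(-D^\Sc)$ completed against $\BdRp$.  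Equally delicate is the preservation of strict filtration compatibility (essential for extracting the Hodge--Tate statements by taking $\gr^0$), which should be controllable via Corollary \ref{cor-OBdRp-cplx-bd-strict} together with the local freeness of $\Omega^{\log, \bullet}_X$.
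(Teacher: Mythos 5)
There is a genuine gap at the very first step, and it propagates through the rest of the argument. You define the compactly supported period sheaves as the naive extensions by zero, $\cF^\Sc := \jmath^\Sc_{\proket, !}(\cF|_{U^\Sc_\proket})$, and then claim via Lemma \ref{lem-!-resol} that this coincides with the kernel of a complex whose higher terms are the boundary period sheaves of Definition \ref{def-AAinf-etc-bd}.  This identification is false: Lemma \ref{lem-!-resol} resolves $\jmath_{?,!}(F|_{U^\Sc})$ by the pushforwards of the \emph{pullbacks} $\imath_{(a)}^{-1}(F)$, whereas the boundary period sheaves are the intrinsic period sheaves of the strata $X_J^\partial$, which by Lemma \ref{lem-OBdRp-bd-loc} are \emph{quotients} of the pullback by the completed ideals generated by $\{[T_j^{s\flat}]\}_{s \in \bQ_{>0}}$.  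Consequently the kernel $\ker\bigl(\OBdlX{X} \to \imath_{(1),\proket,*}(\OBdlX{X_{(1)}^\partial}^\partial)\bigr)$ is strictly larger than $\jmath_{\proket,!}$ of the restriction (it contains, e.g., sections like $[T_j^\flat]$ along the boundary), and the introduction of the paper explicitly warns that the correct period sheaf is \emph{not} the naive $!$-pushforward.  The paper's definitions (Definitions \ref{def-AAinf-cpt}, \ref{def-BBdRp-cpt}, \ref{def-OBdRp-cpt}) take the kernel, not the $!$-extension; this is what makes the limit arguments (Lemma \ref{lem-cplx-AAinf}, Lemma \ref{lem-BBdRp-cpt}) interface correctly with the torsion-level primitive comparison of Theorem \ref{thm-L-!-prim-comp}, since $\jmath_!$ does not commute with the $p$-adic and $\xi$-adic completions defining $\BBdRp$.

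A second, related problem is your Step 2.  Even with the correct period sheaf, the identification $R\lambda_*(\OBdlX{X}^{\Sc,+}) \cong (\cO_X \ho \BdRp)(-D^\Sc)$, tensored afterwards with $\RH(\bL)$, is not how the computation can go: the pushforward does not factor through such a tensor decomposition, and for a general $\bZ_p$-local system $\bL$ the pushforward $R\mu'_*(\widehat{\bL} \otimes_{\widehat{\bZ}_p} \OBdl^\Sc)$ is \emph{not} $\RHl(\bL)(-D^\Sc)$ but the sheaf $\RHl^\Sc(\bL)$ of Definition \ref{def-unip-twist}, whose residues along $D^\Sc$ lie in $(0,1]$; the two only become interchangeable after passing to log de Rham complexes via the residue-eigenvalue argument of Lemma \ref{lem-unip-twist-qis}.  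Establishing this requires the explicit $\Gamma_\geom$-cohomology analysis with good models and unipotent parts carried out in Proposition \ref{prop-RHl-Hl-Ddl-cpt}, which your sketch bypasses.  So while your overall architecture (boundary resolutions, Poincar\'e lemma, primitive comparison, descent to $\gr^0$, dimension count for $E_1$-degeneration) mirrors the paper's, the two pivotal identifications you rely on are each incorrect as stated and need to be replaced by the kernel definition of the period sheaves and by the $\RHl^\Sc$/residue analysis, respectively.
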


The proof of Theorem \ref{thm-L-!-coh-comp} will be carried out in the following subsections.  We shall freely use the notation introduced in Section \ref{sec-period-bd}.  For simplicity, the pullbacks of various sheaves from $X_\proket$ to $X_{K, \proket}$ will be denoted by the same symbols.

\subsection{Period sheaves {$\AAinf^\Sc$} and {$\BBinf^\Sc$}}\label{sec-period-A-inf-B-inf-cpt}

\begin{defn}\label{def-AAinf-cpt}
    Let
    \[
        \AAinfX{X}^\Sc := \ker(\AAinfX{X_{(0)}^\partial}^\partial \to \AAinfX{X_{(1)}^\partial}^\partial)
    \]
    \Pth{see Lemma \ref{lem-AAinf-bd-mor}}, and
    \[
        \BBinfX{X}^\Sc := \AAinfX{X}^\Sc[\tfrac{1}{p}] \cong \AAinfX{X}^\Sc \otimes_{\widehat{\bZ}_p} \widehat{\bQ}_p.
    \]
    We shall omit the subscripts \Qtn{$X$} when the context is clear.
\end{defn}

\begin{rk}\label{rem-def-AAinf-cpt}
    By definition, we have $\AAinfX{X_{(0)}^\partial}^\partial \cong \AAinfX{X}$, $\BBinfX{X_{(0)}^\partial}^\partial \cong \BBinfX{X} \cong \AAinfX{X}[\frac{1}{p}] \cong \AAinfX{X} \otimes_{\widehat{\bZ}_p} \widehat{\bQ}_p$, and $\BBinfX{X}^\Sc \cong \ker(\BBinfX{X_{(0)}^\partial}^\partial \to \BBinfX{X_{(1)}^\partial}^\partial)$.  Moreover, we could have defined $\AAinfX{X}^\Sc$ as a derived limit as in \Refeq{\ref{eq-cplx-AAinf-deg-zero}} below \Pth{with $\widehat{\bL} = \widehat{\bZ}_p$ there}, without using the boundary stratification.
\end{rk}

\begin{lem}\label{lem-fil-AAinf-cpt}
    Both $\AAinfX{X}^\Sc$ and $\BBinfX{X}^\Sc$ are equipped with filtrations induced by those of $\AAinfX{X}$ and $\BBinfX{X}$, respectively.  Over $X_{K, \proket}$, they agree with the filtrations defined more directly by multiplication by powers of $\xi$, where $\xi$ is as in Section \ref{sec-period-bd}, and we have compatible canonical isomorphisms $\AAinfX{X}^\Sc \otimes_{\Ainf} (\Ainf / \xi^r) \Mi \AAinfX{X}^\Sc / \xi^r$ and $\BBinfX{X}^\Sc \otimes_{\Binf} (\Binf / \xi^r) \Mi \BBinfX{X}^\Sc / \xi^r$, for each $r \geq 1$.
\end{lem}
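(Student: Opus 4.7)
The plan is to construct the filtration on $\AAinfX{X}^\Sc$ as the one inherited from $\AAinfX{X_{(0)}^\partial}^\partial \cong \AAinfX{X}$ via its defining kernel description, then to establish that $\xi$ acts as a non-zero-divisor on each boundary sheaf $\AAinfX{X_{(a)}^\partial}^\partial$, and finally to use this regularity both to reconcile the induced filtration with multiplication by $\xi^r$ and to deduce the tensor identity. The morphism $\AAinfX{X_{(0)}^\partial}^\partial \to \AAinfX{X_{(1)}^\partial}^\partial$ of Lemma \ref{lem-AAinf-bd-mor} is induced from restrictions of the underlying period sheaves followed by $\imath_{\proket,*}^\partial$, and is therefore automatically compatible with the filtrations defined by powers of $\ker\theta^\partial$ on each side. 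I would thus set
\[
    \Fil^r \AAinfX{X}^\Sc := \ker\bigl(\Fil^r \AAinfX{X_{(0)}^\partial}^\partial \to \Fil^r \AAinfX{X_{(1)}^\partial}^\partial\bigr) = \AAinfX{X}^\Sc \cap \Fil^r \AAinfX{X_{(0)}^\partial}^\partial,
\]
and analogously for $\BBinfX{X}^\Sc$; this gives the sought induced filtrations by construction.

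For the comparison over $X_{K,\proket}$, the essential input is that $\xi$ is a non-zero-divisor on each sheaf $\AAinfX{X_{(a)}^\partial}^\partial$. By Corollary \ref{cor-O-flat+-cl}, evaluating on a log affinoid perfectoid $U$ yields $\AAinfX{X_{(a)}^\partial}^\partial(U) \cong W(\overline{R}^{\flat+})$ for the tilt of a perfectoid algebra, which is $\varpi$-torsion free; and a standard computation shows that the distinguished element $\xi \in \Ainf = W(K^{\flat+})$ is a non-zero-divisor in any such Witt ring of a $\varpi$-torsion-free perfectoid tilt. Since log affinoid perfectoid objects form a basis of $X_{K,\proket}$ by \cite[\aProp \logadicproplogaffperfbasis]{Diao/Lan/Liu/Zhu:laske}, this regularity promotes to the sheaf level, and then passes to the subsheaf $\AAinfX{X}^\Sc \subset \AAinfX{X_{(0)}^\partial}^\partial$. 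With $\Fil^r \AAinfX{X_{(a)}^\partial}^\partial = \xi^r \AAinfX{X_{(a)}^\partial}^\partial$ in hand (from the same corollary), the inclusion $\xi^r \AAinfX{X}^\Sc \subset \Fil^r \AAinfX{X}^\Sc$ is formal; for the reverse, given a local section $h$ of $\Fil^r \AAinfX{X}^\Sc$, I write $h = \xi^r g$ with $g \in \AAinfX{X_{(0)}^\partial}^\partial$ (possible and unique by regularity), observe that $\xi^r \cdot g$ maps to $0$ in $\AAinfX{X_{(1)}^\partial}^\partial$, and use regularity of $\xi$ on the target to conclude that $g$ already lies in $\AAinfX{X}^\Sc$.

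Finally, the canonical surjection $\AAinfX{X}^\Sc \otimes_{\Ainf} (\Ainf/\xi^r) \Surj \AAinfX{X}^\Sc/\xi^r \AAinfX{X}^\Sc$ becomes an isomorphism precisely because $\xi^r$ is a non-zero-divisor on $\AAinfX{X}^\Sc$ (killing the relevant $\OP{Tor}_1$), which was already secured in the previous paragraph. The entire statement for $\BBinfX{X}^\Sc$ then follows by inverting $p$, since $\BBinfX{X}^\Sc = \AAinfX{X}^\Sc[\tfrac{1}{p}]$ and the filtrations on $\BBinfX{X_{(a)}^\partial}^\partial$ over $X_{K,\proket}$ are likewise given by multiplication by powers of $\xi$; compatibility in the index $r$ is immediate from the constructions. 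The main obstacle in the whole argument is the middle step: the regularity of $\xi$ not merely on $\AAinfX{X_{(0)}^\partial}^\partial$ but on the auxiliary boundary sheaf $\AAinfX{X_{(1)}^\partial}^\partial$, which is what allows the ``divide by $\xi^r$ and chase the image'' argument to actually close; everything else is formal bookkeeping.
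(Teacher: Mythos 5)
Your proposal is correct and follows essentially the same route as the paper's proof: the filtration is the one induced on the kernel subsheaf, the $\xi$-regularity of all the sheaves $\AAinfX{X_{(a)}^\partial}^\partial$ over $X_{K, \proket}$ is extracted from Corollary \ref{cor-O-flat+-cl} \Pth{via the Witt-vector description on log affinoid perfectoid objects, i.e.\ \cite[\aLem 6.3]{Scholze:2013-phtra}}, and the agreement of the induced filtration with $\xi^r \AAinfX{X}^\Sc$ plus the tensor identity follow from that regularity.  Your explicit ``write $h = \xi^r g$ and chase the image in $\AAinfX{X_{(1)}^\partial}^\partial$'' step is just a spelled-out version of what the paper compresses into ``$\xi^r$ acts with zero kernels \ldots and the second assertion also follows.''
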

\begin{proof}
    By Definition \ref{def-AAinf-cpt} and Remark \ref{rem-def-AAinf-cpt}, $\AAinfX{X}^\Sc$ and $\BBinfX{X}^\Sc$ are subsheaves of $\AAinfX{X}$ and $\BBinfX{X}$, respectively, and the first assertion follows.  Over $X_{K, \proket}$, by \cite[\aLem 6.3]{Scholze:2013-phtra}, the filtrations on $\AAinfX{X}$ and $\BBinfX{X}$ are defined by multiplication by powers of $\xi$, and the same is true for all the sheaves $\AAinfX{X_J^\partial}^\partial$ and $\BBinfX{X_J^\partial}^\partial$ \Pth{see Definition \ref{def-AAinf-etc-bd} and Corollary \ref{cor-O-flat+-cl}}.  Hence, $\xi^r$ acts with zero kernels on $\AAinfX{X}^\Sc$ and $\BBinfX{X}^\Sc$, for each $r \geq 1$, and the second assertion also follows.
\end{proof}

The goal of this subsection is to prove the following:
\begin{prop}\label{prop-coh-AAinf-cpt}
    We have a canonical $\Gal(\AC{k} / k)$-equivariant almost isomorphism \Pth{of $\Ainf$-modules, as in Remark \ref{rem-almost-Ainf}}
    \begin{equation}\label{eq-prop-coh-AAinf-cpt-comp-Ainf}
        H^i_{\et, \Sc}(U_K, \bL) \otimes_{\bZ_p} \Ainf \cong H^i\bigl(X_{K, \proket}, \widehat{\bL} \otimes_{\widehat{\bZ}_p} \AAinfX{X}^\Sc\bigr),
    \end{equation}
    which induces by inverting $p$ a canonical $\Gal(\AC{k} / k)$-equivariant almost isomorphism
    \begin{equation}\label{eq-prop-coh-AAinf-cpt-comp-Binf}
        H^i_{\et, \Sc}(U_K, \bL) \otimes_{\bZ_p} \Binf \cong H^i\bigl(X_{K, \proket}, \widehat{\bL} \otimes_{\widehat{\bZ}_p} \BBinfX{X}^\Sc\bigr).
    \end{equation}
    Moreover, the isomorphism \Refeq{\ref{eq-prop-coh-AAinf-cpt-comp-Binf}} is compatible with the filtrations defined by multiplication by powers of $\xi$ \Pth{\Refcf{} Lemma \ref{lem-fil-AAinf-cpt}}; and, for all $r \geq 1$, we have compatible canonical $\Gal(\AC{k} / k)$-equivariant isomorphisms \Pth{see Remark \ref{rem-almost-Ainf}}
    \begin{equation}\label{eq-prop-coh-AAinf-cpt-comp-Binf-gr}
        H^i_{\et, \Sc}(U_K, \bL) \otimes_{\bZ_p} (\Binf / \xi^r) \cong H^i\bigl(X_{K, \proket}, \widehat{\bL} \otimes_{\widehat{\bZ}_p} (\BBinfX{X}^\Sc / \xi^r)\bigr).
    \end{equation}
\end{prop}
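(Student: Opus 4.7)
The plan is to deduce the desired comparisons from Theorem \ref{thm-L-!-prim-comp} by first establishing the mod $(p, [\varpi])$ analog of \Refeq{\ref{eq-prop-coh-AAinf-cpt-comp-Ainf}}, then inductively lifting to $\Ainf$-coefficients via devissage and an inverse limit, and finally inverting $p$ and passing to $\xi^r$-quotients to derive \Refeq{\ref{eq-prop-coh-AAinf-cpt-comp-Binf}} and \Refeq{\ref{eq-prop-coh-AAinf-cpt-comp-Binf-gr}}.

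First, I would establish an exact resolution
\[
    0 \to \widehat{\bL} \otimes_{\widehat{\bZ}_p} \AAinfX{X}^\Sc \to \widehat{\bL} \otimes_{\widehat{\bZ}_p} \AAinfX{X_{(0)}^\partial}^\partial \to \widehat{\bL} \otimes_{\widehat{\bZ}_p} \AAinfX{X_{(1)}^\partial}^\partial \to \cdots
\]
on $X_{K, \proket}$, modeled on Lemma \ref{lem-!-resol} and extending Definition \ref{def-AAinf-cpt}.  Its exactness can be verified by restriction to (pullbacks to the pro-Kummer \'etale site of) the subspaces $U^\Sc$ and $U_J^\partial$ as in the proof of Lemma \ref{lem-!-resol}, using Corollary \ref{cor-O-flat+-cl}; and the commutation $\nu^{-1} \, \imath_{(a), \ket, *}^\partial \cong \imath_{(a), \proket, *}^\partial \, \nu^{-1}$ is provided by Lemma \ref{lem-from-ket-to-proket}, since each $\imath_{(a)}^\partial$ is finite.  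Reducing modulo $(p, [\varpi])$ via Lemma \ref{lem-property-AAinf-bd}\Refenum{\ref{lem-property-AAinf-bd-mod-pi}}, and using Lemma \ref{lem-property-AAinf-bd}\Refenum{\ref{lem-property-AAinf-bd-coh}} to pass from pro-Kummer \'etale to Kummer \'etale cohomology, I would identify $H^i\bigl(X_{K, \proket}, (\widehat{\bL} \otimes \AAinfX{X}^\Sc) / (p, [\varpi])\bigr)$ almost with $H^i\bigl(X_{K, \ket}, \jmath_{\ket, !}^\Sc(\bL_1|_{U^\Sc_\ket}) \otimes_{\bF_p} (\cO_X^+ / p)\bigr)$, which by Theorem \ref{thm-L-!-prim-comp} is almost isomorphic to $H^i_{\et, \Sc}(U_K, \bL_1) \otimes_{\bF_p} (K^+ / p)$.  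This yields the $\Gal(\AC{k}/k)$-equivariant mod $(p, [\varpi])$ version of \Refeq{\ref{eq-prop-coh-AAinf-cpt-comp-Ainf}}.

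Next, I would inductively lift to mod $(p^m, [\varpi^n])$ coefficients by tensoring the above with the sheaf-theoretic short exact sequences
\[
    0 \to \AAinfX{X}^\Sc / (p, [\varpi^n]) \Mapn{p^{m-1}} \AAinfX{X}^\Sc / (p^m, [\varpi^n]) \to \AAinfX{X}^\Sc / (p^{m-1}, [\varpi^n]) \to 0
\]
and their $[\varpi]$-analogues, chasing the associated long exact sequences and applying the five lemma at each step.  Since Theorem \ref{thm-L-!-prim-comp} provides a uniform almost-finite-generation and a uniform bound $i \le 2 \dim(X)$ on cohomological amplitude at every finite level, passing to the inverse limit via Lemma \ref{lem-property-AAinf-bd}\Refenum{\ref{lem-property-AAinf-bd-lim}} commutes with cohomology up to almost-zero contributions from $R^1\varprojlim$, yielding \Refeq{\ref{eq-prop-coh-AAinf-cpt-comp-Ainf}}.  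Inverting $p$ and using $\BBinfX{X}^\Sc = \AAinfX{X}^\Sc[\tfrac{1}{p}]$ gives \Refeq{\ref{eq-prop-coh-AAinf-cpt-comp-Binf}}.

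Finally, by Lemma \ref{lem-fil-AAinf-cpt}, multiplication by $\xi^r$ is injective on $\BBinfX{X}^\Sc|_{X_K}$, so the short exact sequence
\[
    0 \to \widehat{\bL} \otimes_{\widehat{\bZ}_p} \BBinfX{X}^\Sc \Mapn{\xi^r} \widehat{\bL} \otimes_{\widehat{\bZ}_p} \BBinfX{X}^\Sc \to \widehat{\bL} \otimes_{\widehat{\bZ}_p} (\BBinfX{X}^\Sc / \xi^r) \to 0
\]
induces a long exact sequence in cohomology that, combined with \Refeq{\ref{eq-prop-coh-AAinf-cpt-comp-Binf}} and the $\xi^r$-torsion-freeness of $H^i_{\et, \Sc}(U_K, \bL) \otimes_{\bZ_p} \Binf$ (which follows from the finiteness of $H^i_{\et, \Sc}(U_K, \bL)$ as a $\bZ_p$-module, by Lemma \ref{lem-def-H-c-fin-Z-p}), yields \Refeq{\ref{eq-prop-coh-AAinf-cpt-comp-Binf-gr}} and its compatibility with the $\xi$-filtration.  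The main obstacle will be the devissage and inverse limit step, where one must carefully propagate almost isomorphisms through a double-indexed tower and control the vanishing of $R^1\varprojlim$; the crucial input making this step tractable is the uniform cohomological bound provided by Theorem \ref{thm-L-!-prim-comp}.
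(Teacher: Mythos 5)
Your skeleton (boundary resolution, primitive comparison, d\'evissage in $m,n$, inverse limits, then the $\xi^r$ step) is the same as the paper's, but two of your steps are unjustified, and they are exactly where the real work lies.  First, you assert that the integral complex $0 \to \widehat{\bL} \otimes_{\widehat{\bZ}_p} \AAinfX{X}^\Sc \to \widehat{\bL} \otimes_{\widehat{\bZ}_p} \AAinfX{X_{(0)}^\partial}^\partial \to \cdots$ is exact and that this \Qtn{can be verified by restriction to $U^\Sc$ and $U_J^\partial$ as in Lemma \ref{lem-!-resol}}.  That restriction argument is only valid for torsion sheaves on the Kummer \'etale site, where excision \Pth{Lemma \ref{lem-exc-ex-seq}} applies; the sheaves $\AAinfX{X_J^\partial}^\partial = \imath_{J, \proket, *}^\partial(\AAinfX{X_J^\partial})$ are pushforwards of completed period sheaves whose sections on a log affinoid perfectoid object are Witt vectors of completed rings \Pth{Lemma \ref{lem-O-flat+-cl}}, and no strata-restriction argument applies to them.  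In fact the paper never proves, and does not need, exactness of this integral complex on the nose: it proves the finite-level exactness of the complex resolving $\bigl(\nu_X^{-1} \, \jmath_{\ket, !}^\Sc(\bL_m|_{U^\Sc_\ket})\bigr) \otimes_{\widehat{\bZ}_p} \bigl(\AAinfX{X} / (p^m, [\varpi^n])\bigr)$ \Pth{Lemma \ref{lem-cplx-AAinf}, deduced from the exactness of \Refeq{\ref{eq-thm-L-!-prim-comp-resol}}}, and then obtains only an \emph{almost} identification $\widehat{\bL} \otimes_{\widehat{\bZ}_p} \AAinfX{X}^\Sc \Mi R\varprojlim_{m, n}(\cdots)$ by evaluating on a basis of log affinoid perfectoid objects and applying the limit Lemma \ref{lem-Scholze-cplx}.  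Second, and relatedly, your identification of $(\widehat{\bL} \otimes_{\widehat{\bZ}_p} \AAinfX{X}^\Sc)/(p, [\varpi])$ with $\jmath_{\ket, !}^\Sc(\bL_1|_{U^\Sc_\ket}) \otimes_{\bF_p} (\cO_X^+/p)$ does not follow from Lemma \ref{lem-property-AAinf-bd}\Refenum{\ref{lem-property-AAinf-bd-mod-pi}}: that lemma treats the boundary sheaves $\AAinfX{X_J^\partial}^\partial$, and reduction modulo $(p, [\varpi])$ does not commute with the kernel defining $\AAinfX{X}^\Sc$; establishing this compatibility is essentially equivalent to the limit statement above, so your argument is circular at this point.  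The paper avoids the problem by running the d\'evissage on $\jmath_{\ket, !}^\Sc(\bL_m) \otimes \bigl(\AAinfX{X}/(p^m, [\varpi^n])\bigr)$, where Theorem \ref{thm-L-!-prim-comp} and the inductive argument of \cite[\aThm 8.4]{Scholze:2013-phtra} apply directly \Pth{Lemma \ref{lem-coh-Ainf}}, and only relates the limit to $\AAinfX{X}^\Sc$ afterwards.

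A smaller omission: \Refeq{\ref{eq-prop-coh-AAinf-cpt-comp-Binf}} is only an almost isomorphism, while \Refeq{\ref{eq-prop-coh-AAinf-cpt-comp-Binf-gr}} is asserted as an honest one; your long-exact-sequence argument as written only yields an almost isomorphism modulo $\xi^r$.  The missing observation is that $[\varpi]$ is mapped to $\varpi^\sharp = p$ in $K$, hence becomes invertible after reduction modulo $\xi$, so almost isomorphisms become genuine isomorphisms there; with that added, your $\xi^r$-torsion-freeness argument \Pth{via the finiteness of $H^i_{\et, \Sc}(U_K, \bL)$ from Lemma \ref{lem-def-H-c-fin-Z-p}} matches the paper's conclusion of the proof.
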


Let $\varpi \in K^{\flat+}$ be such that $\varpi^\sharp = p$.  We begin with the following consequence of the primitive comparison isomorphism \Pth{see Theorem \ref{thm-L-!-prim-comp}}:
\begin{lem}\label{lem-coh-Ainf}
    For each $i \geq 0$ and all $m, n \geq 1$, we have canonical $\Gal(\AC{k} / k)$-equivariant almost isomorphisms
    \begin{equation}\label{eq-lem-coh-Ainf}
    \begin{split}
        & H^i_{\et, \Sc}(U_K, \bL) \otimes_{\bZ_p} \Ainf \Mi R\varprojlim_{m, n}\Bigl(H^i_{\et, \Sc}(U_K, \bL_m) \otimes_{\bZ_p} \bigl(\Ainf / (p^m, [\varpi^n])\bigr) \Bigr) \\
        & \Mi H^i\Bigl(X_{K, \proket}, R\varprojlim_{m, n} \Bigl(\bigl(\upsilon_X^{-1} \, \jmath_{\ket, !}^\Sc(\bL_m|_{U^\Sc_\ket})\bigr) \otimes_{\widehat{\bZ}_p} \bigl(\AAinfX{X} / (p^m, [\varpi^n])\bigr)\Bigr)\Bigr).
    \end{split}
    \end{equation}
\end{lem}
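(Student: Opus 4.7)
My plan is to establish the two almost isomorphisms in the displayed formula separately; the first is essentially formal bookkeeping, while the second is the real content and proceeds by a d\'evissage to Theorem \ref{thm-L-!-prim-comp}.

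For the first almost isomorphism, I would start from Lemma \ref{lem-def-H-c-fin-Z-p}, which gives $H^i_{\et,\Sc}(U_K,\bL) \cong \varprojlim_m H^i_{\et,\Sc}(U_K,\bL_m)$ as finite $\bZ_p$-modules. Combined with the identification $\Ainf \cong R\varprojlim_{m,n} \Ainf / (p^m, [\varpi^n])$, with vanishing higher derived limits (a standard fact, compatible with Lemma \ref{lem-property-AAinf-bd}(\ref{lem-property-AAinf-bd-lim})), and the finiteness of each $H^i_{\et,\Sc}(U_K,\bL_m)$ as a $\bZ / p^m$-module (which in particular kills $R^j\varprojlim$ for $j > 0$), this formal manipulation yields the first isomorphism.

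For the second almost isomorphism, my plan is to first establish, for each fixed pair $(m,n)$, an almost isomorphism
\[
H^i_{\et,\Sc}(U_K,\bL_m) \otimes_{\bZ_p} \Ainf / (p^m, [\varpi^n]) \cong H^i\bigl(X_{K,\proket},\, \nu_X^{-1}\jmath_{\ket,!}^\Sc(\bL_m|_{U^\Sc_\ket}) \otimes_{\widehat{\bZ}_p} \AAinfX{X} / (p^m, [\varpi^n])\bigr),
\]
and then pass to $R\varprojlim_{m,n}$ on both sides; on the right, $R\Gamma(X_{K,\proket},-)$ commutes with derived inverse limits. The base case $m=n=1$ reduces to Theorem \ref{thm-L-!-prim-comp}(\ref{thm-L-!-prim-comp-isom}), using the identifications $\Ainf/(p,[\varpi]) \cong K^+/p$ and $\AAinfX{X}/(p,[\varpi]) \cong \nu_X^{-1}(\cO^+_X / p)$ (the non-boundary analogue of Lemma \ref{lem-property-AAinf-bd}(\ref{lem-property-AAinf-bd-mod-pi})), together with the transition from the Kummer \'etale to the pro-Kummer \'etale site provided by Lemma \ref{lem-from-ket-to-proket}. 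The induction step exploits the short exact sequences
\[
0 \to p^{m-1}\Ainf/(p^m,[\varpi^n]) \to \Ainf/(p^m,[\varpi^n]) \to \Ainf/(p^{m-1},[\varpi^n]) \to 0
\]
and its analogue in the $[\varpi]$-filtration; the successive subquotients are all almost isomorphic to $K^+/p$-type modules, via $p^{m-1}\Ainf/p^m \cong K^{\flat+}$ and $[\varpi^{n-1}]K^{\flat+}/[\varpi^n]K^{\flat+} \cong K^+/p$, so the five-lemma applied to the induced long exact sequences of cohomology on both sides reduces to the base case.

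The main obstacle will be the careful handling of the almost category throughout the d\'evissage: one must verify that the primitive comparison isomorphisms of Theorem \ref{thm-L-!-prim-comp} fit into compatible morphisms of the Bockstein-style long exact sequences induced by the above filtrations, so that the inductive step remains an almost isomorphism at every layer. One also has to check the sheaf-theoretic identification $\widehat{\bL}_m \otimes_{\widehat{\bZ}_p} \AAinfX{X}/(p^m,[\varpi^n]) \cong \nu_X^{-1}\bL_m \otimes_{\bZ/p^m} \AAinfX{X}/(p^m,[\varpi^n])$, which again follows from Lemma \ref{lem-from-ket-to-proket} and the iterative structure of $\AAinfX{X}/(p^m,[\varpi^n])$ as successive extensions of copies of $\nu_X^{-1}(\cO^+_X/p)$. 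Galois equivariance is then automatic from the functoriality of all the constructions involved.
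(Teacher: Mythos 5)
Your proposal is correct and follows essentially the same route as the paper: the first almost isomorphism is obtained from Lemma \ref{lem-def-H-c-fin-Z-p} together with the formal limit manipulation over $(p^m, [\varpi^n])$, and the second by the d\'evissage on $m, n$ via the Bockstein-type filtration of $\Ainf / (p^m, [\varpi^n])$, reducing to the primitive comparison of Theorem \ref{thm-L-!-prim-comp} through Lemma \ref{lem-property-AAinf-bd}\Refenum{\ref{lem-property-AAinf-bd-mod-pi}}. The only difference is presentational: the paper compresses your inductive step into a citation of the argument of \cite[\aThm 8.4]{Scholze:2013-phtra}, whereas you spell it out (and, for the first isomorphism, the paper is slightly more explicit that $H^i_{\et, \Sc}(U_K, \bL) / p^m \cong H^i_{\et, \Sc}(U_K, \bL_m)$ only for sufficiently large $m$, which suffices by cofinality).
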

\begin{proof}
    By Lemma \ref{lem-def-H-c-fin-Z-p}, $H^i_{\et, \Sc}(U_K, \bL) \cong \varprojlim_m H^i_{\et, \Sc}(U_K, \bL_m)$ is a finitely generated $\bZ_p$-module, and $H^i_{\et, \Sc}(U_K, \bL) / p^m \Mi H^i_{\et, \Sc}(U_K, \bL_m)$ for all sufficiently large $m$.  Therefore, since $\Ainf \cong \varprojlim_m (\Ainf / p^m) \cong \varprojlim_{m, n} \bigl(\Ainf / (p^m, [\varpi^n])\bigr)$, we obtain
    \[
    \begin{split}
        & H^i_{\et, \Sc}(U_K, \bL) \otimes_{\bZ_p} \Ainf \Mi R\varprojlim_m \bigl(H^i_{\et, \Sc}(U_K, \bL) \otimes_{\bZ_p} (\Ainf / p^m)\bigr) \\
        & \Mi R\varprojlim_m \bigl(H^i_{\et, \Sc}(U_K, \bL_m) \otimes_{\bZ_p} (\Ainf / p^m)\bigr) \\
        & \Mi R\varprojlim_{m, n} \Bigl(H^i_{\et, \Sc}(U_K, \bL_m) \otimes_{\bZ_p} \bigl(\Ainf/ (p^m, [\varpi^n])\bigr)\Bigr)
    \end{split}
    \]
    \Pth{with vanishing higher limits}, whose composition is the first almost isomorphism in \Refeq{\ref{eq-lem-coh-Ainf}}.  By using the almost isomorphism \Refeq{\ref{eq-thm-L-!-prim-comp}} in Theorem \ref{thm-L-!-prim-comp}, by Lemma
    \ref{lem-property-AAinf-bd}\Refenum{\ref{lem-property-AAinf-bd-mod-pi}}, and by the same inductive argument as in the proof of \cite[\aThm 8.4]{Scholze:2013-phtra}, we obtain the second almost isomorphism in \Refeq{\ref{eq-lem-coh-Ainf}}.  By their very constructions, both the almost isomorphisms in \Refeq{\ref{eq-lem-coh-Ainf}} are canonical and independent of the choices, and hence $\Gal(\AC{k} / k)$-equivariant, as desired.
\end{proof}

\begin{lem}\label{lem-Scholze-cplx}
    Let $\{ \cF_i \}_{i \in \bZ_{\geq 1}}$ be an inverse system of abelian sheaves on a site $T$, and let $\{ 0 \to \cF_i \to \cF_{i, 0} \to \cF_{i, 1} \to \cdots \cF_{i, a} \to \cdots \}_{i \in \bZ_{\geq 1}}$ be an inverse system of exact complexes.  Assume that there exists a basis $\cB$ of the site $T$ such that, for each $U \in \cB$, the following conditions hold:
    \begin{enumerate}
        \item\label{lem-Scholze-cplx-coh-van}  $H^b(U, \cF_{i, a}) = 0$, for all $a \geq 0$, $b > 0$, and $i \geq 1$.

        \item\label{lem-Scholze-cplx-ex}  The complex $0 \to \cF_i(U) \to \cF_{i, 0}(U) \to \cF_{i, 1}(U) \to \cdots \to \cF_{i, a}(U) \to \cdots$ is exact, for each $i \geq 0$.

        \item\label{lem-Scholze-cplx-lim-van}  $\cF_{i + 1, a}(U) \to \cF_{i, a}(U)$ is surjective, for all $a \geq 0$.
    \end{enumerate}
    Then, for $? = \emptyset$ or any $a \geq 0$, we have $R^j \varprojlim_i \cF_{i, ?} = 0$ and $H^j(U, \varprojlim_i \cF_{i, ?}) = 0$, for $j > 0$; and $\bigl(\varprojlim_i \cF_{i, ?}\bigr)(U) \cong \varprojlim_i\bigl(\cF_{i, ?}(U)\bigr)$.  Moreover, the complex $0 \to \varprojlim_i \cF_i \to \varprojlim_i \cF_{i, 0} \to \varprojlim_i \cF_{i, 1} \to \cdots$ is also exact.
\end{lem}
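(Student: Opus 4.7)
The plan is to deduce the conclusions from two classical ingredients: Mittag--Leffler preservation of exactness, and the Grothendieck spectral sequence for $R\varprojlim$. The key observation is that conditions \Refenum{2} and \Refenum{3} together propagate to the kernels of the transition maps, yielding a Mittag--Leffler property not only for $\{ \cF_{i, a} \}$ but also for $\{ \cF_i \}$, which allows all the statements to be handled uniformly for $? = \emptyset$ and for $? = a$ with $a \geq 0$.

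First I would fix $U \in \cB$ and analyze the complexes of sections. Let $K_i^a := \ker\bigl(\cF_{i + 1, a}(U) \to \cF_{i, a}(U)\bigr)$, with the convention that $\cF_{i, -1} := \cF_i$. By \Refenum{3}, there is a short exact sequence of complexes of abelian groups $0 \to K_i^\bullet \to \cF_{i + 1, \bullet}(U) \to \cF_{i, \bullet}(U) \to 0$. Since the latter two complexes are exact by \Refenum{2}, the long exact sequence of cohomology forces $K_i^\bullet$ to be exact as well.  In particular, $K_i^{-1} \to K_i^0$ is surjective, so $\cF_{i + 1}(U) \to \cF_i(U)$ is surjective too---that is, the analogue of \Refenum{3} with $\cF_{i, a}$ replaced by $\cF_i$ also holds.

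Next I would establish the $R\varprojlim$ and cohomology vanishings. Since surjectivity on sections over a basis implies an epimorphism of sheaves, each transition $\cF_{i + 1, ?} \to \cF_{i, ?}$ is a sheaf epimorphism, for $? = \emptyset$ or $? = a$ with $a \geq 0$. The standard four-term exact sequence $0 \to \varprojlim_i \cF_{i, ?} \to \prod_i \cF_{i, ?} \to \prod_i \cF_{i, ?} \to R^1 \varprojlim_i \cF_{i, ?} \to 0$ then forces $R^1 \varprojlim_i \cF_{i, ?} = 0$, while higher $R^j \varprojlim$ vanish automatically for $\bZ_{\geq 1}$-indexed systems.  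The identification $\bigl(\varprojlim_i \cF_{i, ?}\bigr)(U) \cong \varprojlim_i \cF_{i, ?}(U)$ is immediate because inverse limits of sheaves are computed at the presheaf level. For cohomology on $U \in \cB$, I would invoke the Grothendieck spectral sequence
\[
    E_2^{p, q} = R^p \varprojlim_i H^q(U, \cF_{i, ?}) \Rightarrow H^{p + q}\bigl(U, R\varprojlim_i \cF_{i, ?}\bigr).
\]
When $? = a$, the rows $q > 0$ vanish by \Refenum{1}; when $? = \emptyset$, they vanish because $\cF_{i, \bullet}$ is then an acyclic resolution of $\cF_i$ whose sections on $U$ form an exact complex \Pth{by \Refenum{1} and \Refenum{2}}. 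The columns $p > 0$ vanish by the Mittag--Leffler property on sections established just above. Combined with the previous step, this yields $H^j\bigl(U, \varprojlim_i \cF_{i, ?}\bigr) = 0$ for $j > 0$.

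Finally, for the exactness of $0 \to \varprojlim_i \cF_i \to \varprojlim_i \cF_{i, 0} \to \varprojlim_i \cF_{i, 1} \to \cdots$, I would check it on sections over $U \in \cB$, where it becomes the inverse limit of the exact complexes $0 \to \cF_i(U) \to \cF_{i, 0}(U) \to \cdots$ whose transition maps are surjective at every level by the first paragraph. Breaking these into short exact sequences $0 \to Z_{i, a} \to \cF_{i, a}(U) \to Z_{i, a + 1} \to 0$ with $Z_{i, 0} = \cF_i(U)$, and iterating the same kernel-complex argument used in the first paragraph, one shows that $\{ Z_{i, a} \}_i$ is Mittag--Leffler for every $a$; then $\varprojlim_i$ preserves each of these short exact sequences, and they glue back into the desired exact complex. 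The main technical point is precisely this propagation of Mittag--Leffler to the cocycles $Z_{i, a}$, for which the kernel-complex trick of the first paragraph is the essential device; everything else is formal.
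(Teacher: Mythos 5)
Your first paragraph contains the decisive gap.  With your convention $\cF_{i, -1} := \cF_i$, the sequence $0 \to K_i^\bullet \to \cF_{i + 1, \bullet}(U) \to \cF_{i, \bullet}(U) \to 0$ is termwise exact only in degrees $a \geq 0$, where \Refenum{\ref{lem-Scholze-cplx-lim-van}} applies; in degree $-1$ its exactness is precisely the surjectivity of $\cF_{i + 1}(U) \to \cF_i(U)$ that you are trying to prove, so the argument is circular \Pth{and exactness of $K_i^\bullet$ would anyway say that $K_i^{-1} \to K_i^0$ is injective with image $\ker(K_i^0 \to K_i^1)$, not that it is surjective, and neither statement yields surjectivity of $\cF_{i + 1}(U) \to \cF_i(U)$}.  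In fact that surjectivity is not a consequence of \Refenum{\ref{lem-Scholze-cplx-ex}} and \Refenum{\ref{lem-Scholze-cplx-lim-van}}: on the site with one object and only identity coverings \Pth{so abelian sheaves are abelian groups and \Refenum{\ref{lem-Scholze-cplx-coh-van}} is automatic}, take at level $1$ the exact complex $0 \to \bZ \to \bZ \to 0 \to \cdots$ and at every level $i \geq 2$ the exact complex $0 \to 0 \to \bZ \to \bZ \to 0 \to \cdots$, with all transition maps the identity in degree $0$ and the only possible maps elsewhere; all three hypotheses hold, yet $\cF_2(U) = 0 \to \cF_1(U) = \bZ$ is not surjective.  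This removes the support for both places where you use the claim: the vanishing of the columns $R^p\varprojlim_i H^0(U, \cF_i)$ for $? = \emptyset$, and the Mittag--Leffler property of $Z_{i, 0} = \cF_i(U)$ in your last paragraph \Pth{your lifting argument does give surjective transitions on $Z_{i, a}$ for $a \geq 1$, but exactness of the limit complex at $\varprojlim_i \cF_{i, 1}(U)$ is controlled by $R^1\varprojlim_i Z_{i, 0}$, which is exactly the case you cannot reach}.  Worse, the difficulty is not an artifact of your argument: the tower of exact complexes $0 \to \bZ \to \bQ \to \bQ/\bZ \to 0$ on the same site, with every transition map given by multiplication by $p$, also satisfies \Refenum{\ref{lem-Scholze-cplx-coh-van}}--\Refenum{\ref{lem-Scholze-cplx-lim-van}}, while the augmentation tower is $\bZ$ with transition multiplication by $p$, for which $R^1\varprojlim_i$ is nonzero.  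So even the Mittag--Leffler property of $\{ \cF_i(U) \}_i$---which is what the $? = \emptyset$ assertions and the final exactness really hinge on---cannot be extracted formally from the three stated hypotheses; it is genuinely additional input \Pth{and the same point deserves attention in the paper's own proof, where the degeneration of the spectral sequence $E_1^{a, b} = R^b\varprojlim_i\bigl(\cF_{i, a}(U)\bigr) \Rightarrow R^{a + b}\varprojlim_i\bigl(\cF_i(U)\bigr)$ a priori identifies $R^j\varprojlim_i\bigl(\cF_i(U)\bigr)$ with the $j$-th cohomology of $\varprojlim_i \cF_{i, \bullet}(U)$ rather than with $0$}.

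For comparison, the paper claims no surjectivity for the augmentation tower.  It first runs $E_1^{a, b} = H^b(U, \cF_{i, a}) \Rightarrow H^{a + b}(U, \cF_i)$, which by \Refenum{\ref{lem-Scholze-cplx-coh-van}} and \Refenum{\ref{lem-Scholze-cplx-ex}} gives $H^j(U, \cF_i) = 0$ for $j > 0$ \Pth{this agrees with your row-vanishing for $? = \emptyset$}, then the hyper-derived-limit spectral sequence just recalled, and finally passes to the sheaf-theoretic conclusions and the exactness of the limit complex via \cite[\aLem 3.18]{Scholze:2013-phtra} and \cite[\aProp 1.12.4]{Kashiwara/Shapira:1990-SM}.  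Your sheaf-level shortcuts are also not available in this generality: for inverse systems of abelian sheaves on a general site, $R^j\varprojlim$ need not vanish for $j \geq 2$ \Pth{countable products of sheaves need not be exact}, and an epimorphism of sheaves need not be surjective on sections, so the four-term-sequence argument does not give $R^1\varprojlim_i \cF_{i, ?} = 0$; this is exactly why the basis $\cB$ and the acyclicity hypothesis \Refenum{\ref{lem-Scholze-cplx-coh-van}} appear in the statement and why the paper routes these steps through Scholze's lemma.  In summary, your treatment of the cases $? = a \geq 0$ and of the cocycle towers in degrees $a \geq 1$ is sound and parallel to the paper, but the reduction of the $? = \emptyset$ statements and of the final exactness to a surjectivity of $\cF_{i + 1}(U) \to \cF_i(U)$ deduced from \Refenum{\ref{lem-Scholze-cplx-ex}} and \Refenum{\ref{lem-Scholze-cplx-lim-van}} is both circular as written and false in general, and the limit manipulations at the sheaf level need the basis-wise treatment of \cite[\aLem 3.18]{Scholze:2013-phtra}.
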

\begin{proof}
    Since $\cF_{i, \bullet}$ is a resolution of $\cF_i$, we have a \Pth{filtration} spectral sequence $E_1^{a, b} := H^b\bigl(U, \cF_{i, a}) \Rightarrow H^{a + b}(U, \cF_i)$, which is concentrated on the terms $E_1^{a, 0}$, by assumption \Refenum{\ref{lem-Scholze-cplx-coh-van}}.  Then the spectral sequence degenerates on the $E_2$ page, and
    \begin{equation}\label{eq-lem-Scholze-cplx-coh-van}
        H^j(U, \cF_i) = 0,
    \end{equation}
    for all $i \geq 1$ and $j > 0$.  Similarly, by assumption \Refenum{\ref{lem-Scholze-cplx-ex}}, we have a spectral sequence $E_1^{a, b} := R^b\varprojlim_i\bigl(\cF_{i, a}(U)\bigr) \Rightarrow R^{a + b}\varprojlim_i\bigl(\cF_i(U)\bigr)$, which is concentrated on the terms $E_1^{a, 0}$, because $R^b\varprojlim_i\bigl(\cF_{i, a}(U)\bigr) = 0$ for all $b > 0$, by assumption \Refenum{\ref{lem-Scholze-cplx-lim-van}}.  Then the spectral sequence degenerates on the $E_2$ page, and
    \begin{equation}\label{eq-lem-Scholze-cplx-lim-van}
        R^j\varprojlim_i\bigl(\cF_i(U)\bigr) = 0
    \end{equation}
    for all $j > 0$.  Hence, by \Refeq{\ref{eq-lem-Scholze-cplx-coh-van}} and \Refeq{\ref{eq-lem-Scholze-cplx-lim-van}}, by assumptions \Refeq{\ref{lem-Scholze-cplx-coh-van}} and \Refeq{\ref{lem-Scholze-cplx-lim-van}}, and by \cite[\aLem 3.18]{Scholze:2013-phtra}, the first assertion of the lemma follows.  Consequently, by \cite[\aProp 1.12.4]{Kashiwara/Shapira:1990-SM}, we have an exact complex $0 \to (\varprojlim_i \cF_i)(U) \to (\varprojlim_i \cF_{i, 0})(U)\to (\varprojlim_i \cF_{i,  1})(U) \to \cdots$.  Since $U$ is an arbitrary object in the basis $\cB$ of $T$, the second assertion of the lemma also follows, as desired.
\end{proof}

\begin{lem}\label{lem-cplx-AAinf}
    For each $m \geq 1$ and each $n \geq 1$, we have a canonical $\Gal(\AC{k} / k)$-equivariant exact complex
    \begin{equation}\label{eq-lem-cplx-AAinf-tor}
    \begin{split}
        0 & \to \bigl(\upsilon_X^{-1} \, \jmath_{\ket, !}^\Sc(\bL_m|_{U^\Sc_\ket})\bigr) \otimes_{\widehat{\bZ}_p} \bigl(\AAinfX{X} / (p^m, [\varpi^n])\bigr) \\
        & \to \widehat{\bL} \otimes_{\widehat{\bZ}_p} \bigl(\AAinfX{X_{(0)}^\partial}^\partial / (p^m, [\varpi^n])\bigr) \to \widehat{\bL} \otimes_{\widehat{\bZ}_p} \bigl(\AAinfX{X_{(1)}^\partial}^\partial / (p^m, [\varpi^n])\bigr) \\
        & \to \cdots \to \widehat{\bL} \otimes_{\widehat{\bZ}_p} \bigl(\AAinfX{X_{(a)}^\partial}^\partial / (p^m, [\varpi^n])\bigr) \to \cdots
    \end{split}
    \end{equation}
    over $X_{K, \proket}$.  Consequently, we have a canonical $\Gal(\AC{k} / k)$-equivariant almost quasi-isomorphism between
    \[
        R\varprojlim_{m, n} \Bigl(\bigl(\upsilon_X^{-1} \, \jmath_{\ket, !}^\Sc(\bL_m|_{U^\Sc_\ket})\bigr) \otimes_{\widehat{\bZ}_p} \bigl(\AAinfX{X} / (p^m, [\varpi^n])\bigr)\Bigr)
    \]
    \Pth{with almost vanishing higher limits} and
    \[
        \widehat{\bL} \otimes_{\widehat{\bZ}_p} \AAinfX{X_{(0)}^\partial}^\partial \to \widehat{\bL} \otimes_{\widehat{\bZ}_p} \AAinfX{X_{(1)}^\partial}^\partial \to \cdots \to \widehat{\bL} \otimes_{\widehat{\bZ}_p} \AAinfX{X_{(a)}^\partial}^\partial \to \cdots
    \]
    \Pth{which is almost exact except in degree $0$} over $X_{K, \proket}$.  Since $\widehat{\bL}$ is a local system, by Definition \ref{def-AAinf-cpt}, we obtain a canonical $\Gal(\AC{k} / k)$-equivariant almost isomorphism
    \begin{equation}\label{eq-cplx-AAinf-deg-zero}
        \widehat{\bL} \otimes_{\widehat{\bZ}_p} \AAinfX{X}^\Sc \Mi R\varprojlim_{m, n} \Bigl(\bigl(\upsilon_X^{-1} \, \jmath_{\ket, !}^\Sc(\bL_m|_{U^\Sc_\ket})\bigr) \otimes_{\widehat{\bZ}_p} \bigl(\AAinfX{X} / (p^m, [\varpi^n])\bigr)\Bigr).
    \end{equation}
\end{lem}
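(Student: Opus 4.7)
The plan is to establish the lemma in three steps: the exactness of the complex \Refeq{\ref{eq-lem-cplx-AAinf-tor}} at each finite level $(m, n)$, the passage to the derived inverse limit to obtain the almost quasi-isomorphism with the complex of $\widehat{\bL} \otimes_{\widehat{\bZ}_p} \AAinfX{X_{(a)}^\partial}^\partial$, and the resulting identification \Refeq{\ref{eq-cplx-AAinf-deg-zero}} in degree zero.

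To construct \Refeq{\ref{eq-lem-cplx-AAinf-tor}}, I would begin from Lemma \ref{lem-!-resol} with $? = \ket$ and $F = \bL_m$, which gives an exact complex $0 \to \jmath_{\ket, !}^\Sc(\bL_m|_{U^\Sc_\ket}) \to \imath_{(0), \ket, *}^\partial(\bL_{m, (0)}) \to \imath_{(1), \ket, *}^\partial(\bL_{m, (1)}) \to \cdots$ over $X_\ket$.  Applying the exact functor $\nu_X^{-1}$ and commuting it past the (quasi-compact) closed immersions $\imath_{(a), \ket}^\partial$ via Lemma \ref{lem-from-ket-to-proket} yields an exact complex on $X_\proket$.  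Tensoring with $\AAinfX{X} / (p^m, [\varpi^n])$ preserves exactness, because $\bL_m$ is Kummer \'etale locally a finite direct sum of copies of $\bZ / p^m$.  It remains to identify, for each $J \subset I^\Sc$,
\[
    \bigl(\nu_X^{-1} \, \imath_{J, \ket, *}^\partial(\bL_{m, J})\bigr) \otimes_{\widehat{\bZ}_p} \bigl(\AAinfX{X} / (p^m, [\varpi^n])\bigr) \cong \widehat{\bL} \otimes_{\widehat{\bZ}_p} \bigl(\AAinfX{X_J^\partial}^\partial / (p^m, [\varpi^n])\bigr).
\]
Since both sides are $p^m$-torsion, we may replace $\widehat{\bL}$ by $\nu_X^{-1}(\bL_m)$ on the right; then by the projection formula applied to the closed immersion $\imath_{J, \proket}^\partial$ (valid since $\bL_m$ is locally free), both sides rewrite as $\imath_{J, \proket, *}^\partial$ of tensor products on $X_{J, \proket}^\partial$, and the identification reduces to the claim that the canonical morphism $\imath_{J, \proket}^{\partial, -1} \AAinfX{X} \to \AAinfX{X_J^\partial}$ induced by Lemma \ref{lem-AAinf-bd-mor} is an almost isomorphism modulo $(p^m, [\varpi^n])$, which follows by $(p, [\varpi])$-adic induction from Lemma \ref{lem-property-AAinf-bd}\Refenum{\ref{lem-property-AAinf-bd-mod-pi}} and Corollary \ref{cor-O-flat+-cl}.

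For the passage to the limit, I would apply Lemma \ref{lem-Scholze-cplx} (or its immediate generalization to inverse systems indexed by $\bZ_{\geq 1}^2$, equivalently by any cofinal chain) to the inverse system of exact complexes \Refeq{\ref{eq-lem-cplx-AAinf-tor}} over $T = X_{K, \proket}$, taking $\cB$ to be the basis of log affinoid perfectoid objects \Ref{see \cite[\aProp \logadicproplogaffperfbasis]{Diao/Lan/Liu/Zhu:laske}}.  Hypothesis \Refenum{\ref{lem-Scholze-cplx-coh-van}} follows after Kummer \'etale localization (where $\bL_m$ trivializes) from Lemma \ref{lem-property-AAinf-bd}\Refenum{\ref{lem-property-AAinf-bd-coh}}; hypothesis \Refenum{\ref{lem-Scholze-cplx-ex}} follows by combining this vanishing with the exactness of \Refeq{\ref{eq-lem-cplx-AAinf-tor}}; and the Mittag-Leffler condition \Refenum{\ref{lem-Scholze-cplx-lim-van}} is immediate from the obvious surjections between the quotients.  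Then Lemma \ref{lem-property-AAinf-bd}\Refenum{\ref{lem-property-AAinf-bd-lim}} identifies $R\varprojlim_{m, n} \bigl(\widehat{\bL} \otimes (\AAinfX{X_{(a)}^\partial}^\partial / (p^m, [\varpi^n]))\bigr)$ with $\widehat{\bL} \otimes \AAinfX{X_{(a)}^\partial}^\partial$ up to almost isomorphism, and Definition \ref{def-AAinf-cpt} identifies the degree-$0$ term with $\widehat{\bL} \otimes \AAinfX{X}^\Sc$, yielding \Refeq{\ref{eq-cplx-AAinf-deg-zero}}.

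The main obstacle will be the delicate identification of tensor products at finite level, because the first term of \Refeq{\ref{eq-lem-cplx-AAinf-tor}} uses the ambient sheaf $\AAinfX{X}$ while the other terms use its $\partial$-variants supported on the boundary strata.  Making the transition rigorous requires a careful application of the projection formula combined with the almost compatibility, modulo $(p^m, [\varpi^n])$, of the canonical morphisms between these sheaves---an issue that ultimately descends via the Witt vector construction to the almost identification $\imath_{J, \proket}^{\partial, -1}(\widehat{\cO}_X^+ / p) \cong \widehat{\cO}_{X_J^\partial}^+ / p$ coming from Corollary \ref{cor-O-flat+-cl} and Lemma \ref{lem-property-AAinf-bd}\Refenum{\ref{lem-property-AAinf-bd-mod-pi}}.
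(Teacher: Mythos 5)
Your overall architecture (finite-level exactness, then a limit argument via Lemma \ref{lem-Scholze-cplx} on the basis of log affinoid perfectoid objects, then Lemma \ref{lem-property-AAinf-bd}\Refenum{\ref{lem-property-AAinf-bd-lim}} and Definition \ref{def-AAinf-cpt}) is the right one, but the way you produce the finite-level complex \Refeq{\ref{eq-lem-cplx-AAinf-tor}} has a genuine gap. You tensor the resolution of Lemma \ref{lem-!-resol} for $\bL_m$ with the single ambient sheaf $\AAinfX{X}/(p^m, [\varpi^n])$ and then try to identify, for each $J$, the term $\bigl(\nu_X^{-1}\imath^\partial_{J, \ket, *}(\bL_{m, J})\bigr) \otimes \bigl(\AAinfX{X}/(p^m, [\varpi^n])\bigr)$ with $\widehat{\bL} \otimes \bigl(\AAinfX{X_J^\partial}^\partial/(p^m, [\varpi^n])\bigr)$, reducing via a projection formula to the claim that $\imath^{\partial, -1}_{J, \proket}\AAinfX{X} \to \AAinfX{X_J^\partial}$ is an almost isomorphism modulo $(p^m, [\varpi^n])$.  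Neither result you cite gives this: Corollary \ref{cor-O-flat+-cl} concerns $\imath^{-1}_\proket\,\imath_{\proket, *}(\cF) \cong \cF$, i.e., the pullback of the \emph{pushed-forward} sheaf, and Lemma \ref{lem-property-AAinf-bd}\Refenum{\ref{lem-property-AAinf-bd-mod-pi}} identifies $\AAinfX{X_J^\partial}^\partial/(p, [\varpi])$ with $\imath_{\proket, *}(\cO^+_{X_{J, \proket}^\partial}/p)$; neither says anything about the pullback of the \emph{ambient} sheaf $\AAinfX{X}$ (or of $\widehat{\cO}^+_X/p$) to the stratum.  This is precisely the delicate point: evaluated naively on the pullback $V$ of a log affinoid perfectoid $U$ (as in Lemma \ref{lem-O-flat+-cl}), the map in question becomes $\widehat{R}^+_{K, \infty}/p \to \widehat{R}^+_{J, K, \infty}/p$, the quotient by the ideal topologically generated by $\{T_j^s\}_{j \in J, \, s \in \bQ_{>0}}$ (cf.\ Lemma \ref{lem-OBdRp-bd-loc} and the proof of Proposition \ref{prop-RHl-Hl-Ddl-cpt}), which is far from almost zero; so any proof of your claim would have to exploit the colimit over pro-Kummer \'etale neighborhoods of the stratum hidden in $\imath^{-1}_\proket$, and no such argument appears in your sketch.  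Moreover, the projection formula for $\imath_{\proket, *}$ that you invoke is not available in the paper (Lemma \ref{lem-exc-ex-seq} deliberately excludes $? = \proket$).  In effect you are assuming a statement at least as strong as the exactness you are trying to prove.

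The paper's route avoids this entirely: the terms of \Refeq{\ref{eq-lem-cplx-AAinf-tor}} are built directly from the strata period sheaves $\AAinfX{X_{(a)}^\partial}^\partial$ (pushforwards, Definition \ref{def-AAinf-etc-bd}) with the transition morphisms of Lemma \ref{lem-AAinf-bd-mor}, and exactness is proved by d\'evissage---induction on $m$ via $0 \to p\bL_m \to \bL_m \to \bL_m/p \to 0$ and on the $(p, [\varpi])$-adic filtration via Lemma \ref{lem-property-AAinf-bd}\Refenum{\ref{lem-property-AAinf-bd-mod-pi}}---reducing to the already-established exact complex \Refeq{\ref{eq-thm-L-!-prim-comp-resol}}, whose exactness rests on \cite[\aLem \logadiclemclimmOplusp]{Diao/Lan/Liu/Zhu:laske} at the mod-$p$, Kummer \'etale level, pulled back to $X_{K, \proket}$.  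Your limit step is essentially the paper's, but note two further points of care: Lemma \ref{lem-property-AAinf-bd}\Refenum{\ref{lem-property-AAinf-bd-coh}} only gives \emph{almost} vanishing, so Lemma \ref{lem-Scholze-cplx} must be applied in the almost sense; and the almost exactness of the complex of sections on $U$ does not follow merely by \Qtn{combining} sheaf exactness with that vanishing---one needs the downward induction on $a$ using the finiteness of $|I^\Sc|$, as in the paper's proof.
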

\begin{proof}
    The first assertion follows inductively from the exactness of \Refeq{\ref{eq-thm-L-!-prim-comp-resol}}, by using Lemma \ref{lem-property-AAinf-bd}\Refenum{\ref{lem-property-AAinf-bd-mod-pi}} and the canonical morphisms induced by the short exact sequence $0 \to p \bL_m \to \bL_m \to \bL_m / p \to 0$.  Let $U$ be any log perfectoid affinoid object in ${X_\proket}_{/X_K}$, which we may assume to trivialize $\widehat{\bL}$, because such objects form a basis, by \cite[\aProp \logadicproplogaffperfbasis]{Diao/Lan/Liu/Zhu:lasfr}.  By induction on $m$ and $n$, by \cite[\aLem \logadiclemclimmOplusp]{Diao/Lan/Liu/Zhu:lasfr} and Lemmas \ref{lem-!-resol} and \ref{lem-property-AAinf-bd}, and by downward induction on $a$ using the finiteness of $|I^\Sc|$, we see that \Refeq{\ref{eq-lem-cplx-AAinf-tor}} is almost exact when evaluated on $U$, and so that the second assertion follows from the almost version of Lemma \ref{lem-Scholze-cplx}, as desired.
\end{proof}

Thus, we are ready for the following:
\begin{proof}[Proof of Proposition \ref{prop-coh-AAinf-cpt}]
    By combining Lemma \ref{lem-coh-Ainf} and \Refeq{\ref{eq-cplx-AAinf-deg-zero}}, we obtain the two almost isomorphisms \Refeq{\ref{eq-prop-coh-AAinf-cpt-comp-Ainf}} and \Refeq{\ref{eq-prop-coh-AAinf-cpt-comp-Binf}}, which are naturally compatible with the multiplication by powers of $\xi$ on both sides.  For each $r \geq 1$, since
    \[
        H^i_{\et, \Sc}(U_K, \bL) \otimes_{\bZ_p} (\Binf / \xi^r) \cong \bigl(H^i_{\et, \Sc}(U_K, \bL) \otimes_{\bZ_p} \bQ_p\bigr) \otimes_{\bQ_p} (\Binf / \xi^r)
    \]
    and since $H^i_{\et, \Sc}(U_K, \bL) \otimes_{\bZ_p} \bQ_p$ is a finite-dimensional $\bQ_p$-vector space, by using the canonical almost isomorphism \Refeq{\ref{eq-prop-coh-AAinf-cpt-comp-Binf}} just established, we see that $\xi^r$ acts with almost zero kernel on $H^i(X_{K, \proket}, \widehat{\bL} \otimes_{\widehat{\bZ}_p} \BBinfX{X}^\Sc)$.  Therefore, all the connecting morphisms in the long exact sequence associated with the short exact sequence $0 \to \widehat{\bL} \otimes_{\widehat{\bZ}_p} \BBinfX{X}^\Sc \Mapn{\xi^r} \widehat{\bL} \otimes_{\widehat{\bZ}_p} \BBinfX{X}^\Sc \to \widehat{\bL} \otimes_{\widehat{\bZ}_p} (\BBinfX{X}^\Sc / \xi^r) \to 0$ over $X_{K, \proket}$ \Pth{see Lemma \ref{lem-fil-AAinf-cpt}} are almost zero, and we obtain a canonical isomorphism
    \begin{equation}\label{eq-prop-coh-AAinf-cpt-Binf-mod-xi-r}
        H^i\bigl(X_{K, \proket}, \widehat{\bL} \otimes_{\widehat{\bZ}_p} \BBinfX{X}^\Sc\bigr) / \xi^r \Mi H^i\bigl(X_{K, \proket}, \widehat{\bL} \otimes_{\widehat{\bZ}_p} (\BBinfX{X}^\Sc / \xi^r)\bigr).
    \end{equation}
    \Pth{Again, see Remark \ref{rem-almost-Ainf}.}  It follows that \Refeq{\ref{eq-prop-coh-AAinf-cpt-comp-Binf}} is compatible with the filtrations, and its combination with \Refeq{\ref{eq-prop-coh-AAinf-cpt-Binf-mod-xi-r}} induces the desired isomorphism \Refeq{\ref{eq-prop-coh-AAinf-cpt-comp-Binf-gr}}.
\end{proof}

\subsection{Period sheaves {$\BBdR^{\Sc, +}$} and {$\BBdR^\Sc$}}\label{sec-period-B-dR-cpt}

\begin{defn}\label{def-O-hat-cpt}
    For $? = \emptyset$, $+$, $\flat$, or $\flat+$, let $\widehat{\cO}_X^{\Sc, ?} := \ker(\widehat{\cO}_{X_{(0)}^\partial}^{?, \partial} \to \widehat{\cO}_{X_{(1)}^\partial}^{?, \partial})$.
\end{defn}

\begin{defn}\label{def-BBdRp-cpt}
    Let
    \[
        \BBdRX{X}^{\Sc, +} := \ker(\BBdRX{X_{(0)}^\partial}^{+, \partial} \to \BBdRX{X_{(1)}^\partial}^{+, \partial})
    \]
    and
    \[
        \BBdRX{X}^\Sc := \ker(\BBdRX{X_{(0)}^\partial}^\partial \to \BBdRX{X_{(1)}^\partial}^\partial)
    \]
    \Pth{see Lemma \ref{lem-AAinf-bd-mor}}.  We shall omit the subscripts \Qtn{$X$} when the context is clear.
\end{defn}

\begin{rk}\label{rem-def-BBdRp-cpt}
    By definition, we have $\BBdRX{X_{(0)}^\partial}^{+, \partial} \cong \BBdRpX{X}$; and we have $\BBdRX{X}^\Sc \cong \BBdRX{X}^{\Sc, +}[\tfrac{1}{\xi}] \cong \BBdRX{X}^{\Sc, +} \otimes_{\BdRp} \BdR$ over $X_{K, \proket}$.  Moreover, we could have defined $\BBdRX{X}^{\Sc, +}$ as a derived limit as in \Refeq{\ref{eq-lem-BBdRp-cpt}} below \Pth{with $\widehat{\bL} = \widehat{\bZ}_p$ there}, without reference to the boundary stratification.
\end{rk}

The goal of this subsection is to prove the following generalization of \cite[\aLem \logRHlemketBdR]{Diao/Lan/Liu/Zhu:lrhrv}:

\begin{prop}\label{prop-L-!-coh-comp-proket}
    For each $i \geq 0$, we have a canonical $\Gal(\AC{k} / k)$-equivariant isomorphism
    \begin{equation}\label{eq-prop-BBdRp-cpt-comp-dR}
        H^i_{\et, \Sc}(U_K, \bL) \otimes_{\bZ_p} \BdRp \cong H^i(X_{K, \proket}, \widehat{\bL} \otimes_{\widehat{\bZ}_p} \BBdRX{X}^{\Sc, +}),
    \end{equation}
    compatible with filtrations on both sides, and also \Pth{by taking $\gr^0$} a canonical $\Gal(\AC{k} / k)$-equivariant isomorphism
    \begin{equation}\label{eq-prop-BBdRp-cpt-comp-Higgs}
        H^i_{\et, \Sc}(U_K, \bL) \otimes_{\bZ_p} K \cong H^i(X_{K, \proket}, \widehat{\bL} \otimes_{\widehat{\bZ}_p} \widehat{\cO}_{X_{K, \proket}}^\Sc).
    \end{equation}
\end{prop}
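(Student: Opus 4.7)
The plan is to deduce the proposition from Proposition \ref{prop-coh-AAinf-cpt} by taking a derived inverse limit with respect to powers of $\xi$, following the strategy of \cite[\aLem \logRHlemketBdR]{Diao/Lan/Liu/Zhu:lrhrv}. The key point is that $\BBdRX{X}^{\Sc, +}$ is $\xi$-adically complete and can be recovered from its reductions modulo $\xi^r$, which are in turn controlled by the sheaves $\BBinfX{X}^\Sc / \xi^r$, for which Proposition \ref{prop-coh-AAinf-cpt} already supplies the desired comparison.

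First, I would establish a canonical isomorphism
\[
    \BBdRX{X}^{\Sc, +} \Mi R\varprojlim_r \bigl(\BBinfX{X}^\Sc / \xi^r\bigr)
\]
on $X_{K, \proket}$, with $R^j \varprojlim_r$ vanishing for $j > 0$. Since each $\BBdRX{X_{(a)}^\partial}^{+, \partial} \cong \varprojlim_r \BBinfX{X_{(a)}^\partial}^\partial / \xi^r$ by Corollary \ref{cor-O-flat+-cl} and since limits commute with kernels, this reduces to showing that the canonical morphism
\[
    \BBinfX{X}^\Sc / \xi^r \Mi \ker\bigl(\BBinfX{X_{(0)}^\partial}^\partial / \xi^r \to \BBinfX{X_{(1)}^\partial}^\partial / \xi^r\bigr)
\]
is an isomorphism for each $r \geq 1$, which follows from the exactness (after inverting $p$) of the complex appearing in Lemma \ref{lem-cplx-AAinf}, combined with the $\xi$-torsion freeness of each $\BBinfX{X_{(a)}^\partial}^\partial$ over $X_{K, \proket}$ (as in Lemma \ref{lem-fil-AAinf-cpt}).

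Next, I would tensor with the (locally free) pro-Kummer \'etale $\widehat{\bZ}_p$-local system $\widehat{\bL}$ and apply $R\Gamma(X_{K, \proket}, -)$, using the finite-level comparison \Refeq{\ref{eq-prop-coh-AAinf-cpt-comp-Binf-gr}} of Proposition \ref{prop-coh-AAinf-cpt} at each $r$. Interchanging $R\Gamma$ with $R\varprojlim_r$ yields
\[
    R\Gamma\bigl(X_{K, \proket}, \widehat{\bL} \otimes_{\widehat{\bZ}_p} \BBdRX{X}^{\Sc, +}\bigr) \cong R\varprojlim_r\, \bigl(H^\bullet_{\et, \Sc}(U_K, \bL) \otimes_{\bZ_p} (\Binf / \xi^r)\bigr).
\]
By Lemma \ref{lem-def-H-c-fin-Z-p}, each $H^i_{\et, \Sc}(U_K, \bL)$ is a finitely generated $\bZ_p$-module, so the transition morphisms on the right-hand side are surjective in $r$; thus the inverse system is Mittag-Leffler, the higher $R^j\varprojlim_r$ vanish, and taking $H^i$ yields the isomorphism \Refeq{\ref{eq-prop-BBdRp-cpt-comp-dR}}, whose Galois equivariance and compatibility with filtrations are inherited from those of Proposition \ref{prop-coh-AAinf-cpt}.

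Finally, the isomorphism \Refeq{\ref{eq-prop-BBdRp-cpt-comp-Higgs}} is obtained by applying $\gr^0$ with respect to the $\xi$-adic filtrations to \Refeq{\ref{eq-prop-BBdRp-cpt-comp-dR}}, using $\gr^0 \BdRp \cong K$ together with the identification $\gr^0 \BBdRX{X}^{\Sc, +} \cong \widehat{\cO}^\Sc_{X_{K, \proket}}$. The latter follows by taking $\gr^0$ of the defining exact sequence $0 \to \BBdRX{X}^{\Sc, +} \to \BBdRX{X_{(0)}^\partial}^{+, \partial} \to \BBdRX{X_{(1)}^\partial}^{+, \partial}$---which preserves exactness because the second morphism is strict for the $\xi$-adic filtration---combined with $\gr^0 \BBdRX{X_{(a)}^\partial}^{+, \partial} \cong \widehat{\cO}^\partial_{X_{(a)}^\partial}$ from Lemma \ref{lem-BdR-bd-gr} and Definition \ref{def-O-hat-cpt}. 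The main technical subtlety will be the interchange of $R\Gamma(X_{K, \proket}, -)$ with $R\varprojlim_r$, but this is controlled by the Mittag-Leffler property once the uniform finiteness of $H^i_{\et, \Sc}$ has been established via Proposition \ref{prop-coh-AAinf-cpt}.
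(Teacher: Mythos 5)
Your outline follows the same route as the paper's proof (reduce to the tower $\BBinfX{X}^\Sc / \xi^r$, feed in Proposition \ref{prop-coh-AAinf-cpt}, pass to the limit, and take $\gr^0$), but there is a genuine gap at the central sheaf-theoretic step. You assert that $\BBdRX{X}^{\Sc, +} \cong R\varprojlim_r \bigl(\BBinfX{X}^\Sc / \xi^r\bigr)$ \emph{with vanishing higher derived limits}, yet the justification you give \Pth{\Qtn{limits commute with kernels} plus the mod-$\xi^r$ kernel identification} only produces the underived limit. In the pro-Kummer \'etale topos, surjectivity of the transition morphisms of sheaves does not imply $R^j\varprojlim = 0$ for $j > 0$; one must check, on the basis of log affinoid perfectoid objects trivializing $\widehat{\bL}$ \Ref{\cite[\aProp \logadicproplogaffperfbasis]{Diao/Lan/Liu/Zhu:laske}}, that the mod-$\xi^r$ complexes built from the $\BBinfX{X_{(a)}^\partial}^\partial / \xi^r$ are exact on sections and that their terms have \Pth{almost} vanishing higher cohomology, using \cite[\aThm \logadicthmalmostvanhat]{Diao/Lan/Liu/Zhu:laske}, induction on $r$, a downward induction on the stratum index $a$, and an argument as in Lemma \ref{lem-Scholze-cplx} \Ref{\Refcf{} \cite[\aLem 3.18]{Scholze:2013-phtra}}; this is exactly the content of Lemma \ref{lem-BBdRp-cpt}. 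Your Mittag--Leffler remark only addresses the tower of cohomology groups $H^\bullet_{\et, \Sc}(U_K, \bL) \otimes_{\bZ_p} (\Binf / \xi^r)$, which is the easy interchange; without the sheaf-level derived-limit statement you cannot even write $R\Gamma\bigl(X_{K, \proket}, \widehat{\bL} \otimes_{\widehat{\bZ}_p} \BBdRX{X}^{\Sc, +}\bigr) \cong R\varprojlim_r R\Gamma\bigl(X_{K, \proket}, \widehat{\bL} \otimes_{\widehat{\bZ}_p} (\BBinfX{X}^\Sc / \xi^r)\bigr)$, so the place you flag as the \Qtn{main technical subtlety} is not where the real work lies.

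Two smaller points. First, the complex of Lemma \ref{lem-cplx-AAinf} is only \emph{almost} exact, and inverting $p$ does not remove the almost ambiguity \Pth{which is with respect to $[\varpi]$}; what rescues the mod-$\xi^r$ kernel identification is that $[\varpi]$ maps to a unit modulo $\xi^r$, so almost isomorphisms there are isomorphisms. Second, the strictness you invoke for the map $\BBdRX{X_{(0)}^\partial}^{+, \partial} \to \BBdRX{X_{(1)}^\partial}^{+, \partial}$ and the identification $\gr^0 \BBdRX{X}^{\Sc, +} \cong \widehat{\cO}^\Sc_{X_{K, \proket}}$ are again consequences of the same mod-$\xi^r$ analysis \Pth{the case $r = 1$}, and the Higgs comparison \Refeq{\ref{eq-prop-BBdRp-cpt-comp-Higgs}} is most directly the case $r = 1$ of \Refeq{\ref{eq-prop-coh-AAinf-cpt-comp-Binf-gr}}. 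Once the sheaf-level derived-limit statement is supplied as above, the rest of your argument agrees with the paper's proof.
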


\begin{lem}\label{lem-BBdRp-cpt}
    Over $X_{K, \proket}$, we have a canonical $\Gal(\AC{k} / k)$-equivariant isomorphism
    \begin{equation}\label{eq-lem-BBdRp-cpt}
        \widehat{\bL} \otimes_{\widehat{\bZ}_p} \BBdRX{X}^{\Sc, +} \cong R\varprojlim_r \bigl(\widehat{\bL} \otimes_{\widehat{\bZ}_p} (\BBinfX{X}^\Sc / \xi^r)\bigr),
    \end{equation}
    \Pth{with vanishing higher limits} and a canonical $\Gal(\AC{k} / k)$-equivariant exact complex
    \begin{equation}\label{eq-lem-BBdRp-cpt-cplx}
    \begin{split}
        0 & \to \widehat{\bL} \otimes_{\widehat{\bZ}_p} \BBdRX{X}^{\Sc, +} \to \widehat{\bL} \otimes_{\widehat{\bZ}_p} \BBdRX{X_{(0)}^\partial}^{+, \partial} \\
        & \to \widehat{\bL} \otimes_{\widehat{\bZ}_p} \BBdRX{X_{(1)}^\partial}^{+, \partial} \to \cdots \to \widehat{\bL} \otimes_{\widehat{\bZ}_p} \BBdRX{X_{(a)}^\partial}^{+, \partial} \to \cdots,
    \end{split}
    \end{equation}
    which is strictly compatible with the filtrations defined by multiplication by powers of $\xi$, and induces, for each $r \in \bZ$, a canonical $\Gal(\AC{k} / k)$-equivariant isomorphism
    \begin{equation}\label{eq-lem-BBdRp-cpt-gr}
        \gr^r(\widehat{\bL} \otimes_{\widehat{\bZ}_p} \BBdRX{X}^\Sc) \cong \widehat{\bL} \otimes_{\widehat{\bZ}_p} \gr^r(\BBdRX{X}^\Sc) \cong \widehat{\bL} \otimes_{\widehat{\bZ}_p} \widehat{\cO}_{X_{K, \proket}}^\Sc(r)
    \end{equation}
    and a canonical $\Gal(\AC{k} / k)$-equivariant exact complex
    \begin{equation}\label{eq-lem-BBdRp-cpt-cplx-gr}
    \begin{split}
        0 & \to \widehat{\bL} \otimes_{\widehat{\bZ}_p} \widehat{\cO}_{X_{K, \proket}}^\Sc \to \widehat{\bL} \otimes_{\widehat{\bZ}_p} \widehat{\cO}_{X_{(0), K, \proket}^\partial} \\
        & \to \widehat{\bL} \otimes_{\widehat{\bZ}_p} \widehat{\cO}_{X_{(1), K, \proket}^\partial} \to \cdots \to \widehat{\bL} \otimes_{\widehat{\bZ}_p} \widehat{\cO}_{X_{(a), K, \proket}^\partial} \to \cdots.
    \end{split}
    \end{equation}
\end{lem}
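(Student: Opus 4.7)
The plan is to descend the Cech-type resolution of $\widehat{\bL} \otimes_{\widehat{\bZ}_p} \AAinfX{X}^\Sc$ furnished by Lemma \ref{lem-cplx-AAinf} to the $\BBdRp$-level in three passes --- first inverting $p$, then reducing modulo $\xi^r$ for each $r \geq 1$, and finally taking the $\xi$-adic inverse limit --- and then to read off the graded statements from the resulting $\xi$-adic filtration.

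The first pass exploits that any almost zero $K^+$-module vanishes after inverting $p$, while $\BBinfX{X_J^\partial}^\partial \cong \AAinfX{X_J^\partial}^\partial[\tfrac{1}{p}]$ by Corollary \ref{cor-O-flat+-cl}. The almost quasi-isomorphism of Lemma \ref{lem-cplx-AAinf} will thus upgrade to an honest exact complex
\[
    0 \to \widehat{\bL} \otimes_{\widehat{\bZ}_p} \BBinfX{X}^\Sc \to \widehat{\bL} \otimes_{\widehat{\bZ}_p} \BBinfX{X_{(0)}^\partial}^\partial \to \widehat{\bL} \otimes_{\widehat{\bZ}_p} \BBinfX{X_{(1)}^\partial}^\partial \to \cdots
\]
over $X_{K, \proket}$, with the leftmost kernel identified via Definition \ref{def-AAinf-cpt} and Remark \ref{rem-def-AAinf-cpt}. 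For the second pass, each $\BBinfX{X_J^\partial}^\partial$ is $\xi$-torsion-free by Lemma \ref{lem-fil-AAinf-cpt} and Corollary \ref{cor-O-flat+-cl}, hence so is $\BBinfX{X}^\Sc$ as a subsheaf; tensoring the above complex with $\Binf/\xi^r$ will then preserve exactness, producing, for each $r \geq 1$, an exact complex of terms $\widehat{\bL} \otimes_{\widehat{\bZ}_p} (\BBinfX{X_{(a)}^\partial}^\partial/\xi^r)$.

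The third pass will apply Lemma \ref{lem-Scholze-cplx} to this inverse system of exact complexes, with basis $\cB$ given by the log affinoid perfectoid objects of $(X_\proket)_{/X_K}$ trivializing $\widehat{\bL}$ \Pth{such objects form a basis by \cite[\aProp \logadicproplogaffperfbasis]{Diao/Lan/Liu/Zhu:laske}}.  The surjectivity of transition maps on sections is immediate from $\xi$-torsion-freeness; the section-level exactness reduces to the case $\widehat{\bL} = \widehat{\bZ}_p$; and the higher cohomology vanishing on $\cB$ will follow from Lemma \ref{lem-property-AAinf-bd}\Refenum{\ref{lem-property-AAinf-bd-coh}} after inverting $p$ and d\'evissage along the $\xi$-adic filtration. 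The output will be the desired exact complex \eqref{eq-lem-BBdRp-cpt-cplx}, with each term $\widehat{\bL} \otimes_{\widehat{\bZ}_p} \BBdRX{X_{(a)}^\partial}^{+, \partial}$ (for $a \geq 0$) recognized via Corollary \ref{cor-O-flat+-cl} and the leftmost term $\widehat{\bL} \otimes_{\widehat{\bZ}_p} \BBdRX{X}^{\Sc, +}$ recognized via Definition \ref{def-BBdRp-cpt} together with the commutation of kernels with inverse limits; this also establishes the isomorphism \eqref{eq-lem-BBdRp-cpt}, the vanishing of higher inverse limits being part of the conclusion of Lemma \ref{lem-Scholze-cplx}.

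For the graded assertions, the boundary maps in \eqref{eq-lem-BBdRp-cpt-cplx} are induced by restrictions and hence strictly compatible with the $\xi$-adic filtration, which by Lemma \ref{lem-BdR-bd-gr} coincides with the canonical filtration and has graded pieces $\widehat{\cO}^\partial_{X_J^\partial}(r)$. Passing to $\gr^r$ and comparing with Definition \ref{def-O-hat-cpt} will produce \eqref{eq-lem-BBdRp-cpt-cplx-gr} and, by identifying the leftmost kernel, \eqref{eq-lem-BBdRp-cpt-gr}. Galois equivariance is automatic throughout from the canonicity of each step. The main obstacle I anticipate lies in the third pass: promoting the almost-vanishing for $\AAinfX{X_J^\partial}^\partial/(p^m, [\varpi^n])$ to the honest vanishing of $H^b\bigl(U, \widehat{\bL} \otimes_{\widehat{\bZ}_p} (\BBinfX{X_{(a)}^\partial}^\partial/\xi^r)\bigr)$ for $U \in \cB$ will require an inversion of $p$, a careful d\'evissage along the $\xi$-adic filtration, and some care in the tensor by $\widehat{\bL}$.
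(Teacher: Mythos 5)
Your overall architecture --- reduce modulo $\xi^r$, pass to the inverse limit over $r$ via Lemma \ref{lem-Scholze-cplx} evaluated on log affinoid perfectoid objects trivializing $\widehat{\bL}$, identify the terms and the kernel via Corollary \ref{cor-O-flat+-cl} and Definition \ref{def-BBdRp-cpt}, then read off the graded statements --- is the same as the paper's. The genuine gap is in your first pass. The almost exactness furnished by Lemma \ref{lem-cplx-AAinf} is not $K^+$-almost mathematics with respect to the maximal ideal of $K^+$: at the $\AAinfX{X}$- and $\BBinfX{X}$-level the ambiguity is measured by the ideal generated by the Teichm\"uller lifts $[\varpi^s]$, $s \in \bQ_{> 0}$, and $[\varpi]$ does \emph{not} become invertible in $\Binf$ upon inverting $p$. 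So your claimed honest exactness of $0 \to \widehat{\bL} \otimes_{\widehat{\bZ}_p} \BBinfX{X}^\Sc \to \widehat{\bL} \otimes_{\widehat{\bZ}_p} \BBinfX{X_{(0)}^\partial}^\partial \to \cdots$ is not established; the paper itself never asserts it (in its proof this complex is only called almost exact except in degree $0$, and Proposition \ref{prop-coh-AAinf-cpt} remains only an \emph{almost} isomorphism even for $\BBinf$-coefficients, becoming honest only after reduction modulo $\xi^r$). Consequently your second pass only yields almost exact complexes modulo $\xi^r$, and the sheaf- and section-level exactness you feed into Lemma \ref{lem-Scholze-cplx} is unsupported.

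The missing ingredient is the device used in the proof of Proposition \ref{prop-coh-AAinf-cpt} (following Scholze): after reduction modulo $\xi$ one has $\theta([\varpi]) = \varpi^\sharp = p$, a unit in $\widehat{\cO}$, so $[\varpi]$-almost isomorphisms become isomorphisms there. The paper combines this with the $\xi$-torsion-freeness of the terms (so that tensoring the almost-exact complex $\widehat{\bL} \otimes_{\widehat{\bZ}_p} \BBinfX{X_{(\bullet)}^\partial}^\partial$ against $0 \to \Binf \Mapn{\xi^r} \Binf \to \Binf / \xi^r \to 0$ behaves well) and induction on $r$ to obtain honest exactness of the mod-$\xi^r$ complexes as sheaves, and then a further downward induction on $a$, using $|I^\Sc| < \infty$ and the almost vanishing theorem, to get exactness of sections over the log affinoid perfectoid basis --- which is exactly what Lemma \ref{lem-Scholze-cplx} requires. (The difficulty you do anticipate, the vanishing of $H^b(U, \cdot)$ for $b > 0$, is the unproblematic part: it is handled by d\'evissage along the $\xi$-adic filtration to $\widehat{\cO}$-coefficients, where $K^+$-almost mathematics applies and inverting $p$ does kill the error; it is the exactness statements, not the vanishing, where the $[\varpi]$-issue bites.) Once this mod-$\xi$ upgrade is supplied, the remainder of your plan, including the identification of the limit terms and the graded assertions, goes through as in the paper.
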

\begin{proof}
    Since $\widehat{\bL}$ is a local system, by forming the tensor product of the short exact sequence $0 \to \Binf \Mapn{\xi^r} \Binf \to \Binf / \xi^r \to 0$ with the complex $\widehat{\bL} \otimes_{\widehat{\bZ}_p} \BBinfX{X_{(\bullet)}^\partial}^\partial$ \Pth{which is almost exact except in degree $0$, by Lemma \ref{lem-cplx-AAinf}}, we obtain a short exact sequence of complexes $0 \to \widehat{\bL} \otimes_{\widehat{\bZ}_p} \BBinfX{X_{(\bullet)}^\partial}^\partial \Mapn{\xi^r} \widehat{\bL} \otimes_{\widehat{\bZ}_p} \BBinfX{X_{(\bullet)}^\partial}^\partial \to \widehat{\bL} \otimes_{\widehat{\bZ}_p} (\BBinfX{X_{(\bullet)}^\partial}^\partial / \xi^r) \to 0$, inducing an almost long exact sequence with only three nonzero terms in the beginning $0 \to \widehat{\bL} \otimes_{\widehat{\bZ}_p} \BBinfX{X}^\Sc \to \widehat{\bL} \otimes_{\widehat{\bZ}_p} \BBinfX{X}^\Sc \to \widehat{\bL} \otimes_{\widehat{\bZ}_p} (\BBinfX{X}^\Sc / \xi^r) \to 0 \to \cdots$, showing that we have a canonical isomorphism $\bigl(\widehat{\bL} \otimes_{\widehat{\bZ}_p} \BBinfX{X}^\Sc\bigr) / \xi^r \Mi \widehat{\bL} \otimes_{\widehat{\bZ}_p} (\BBinfX{X}^\Sc / \xi^r)$ \Pth{\Refcf{} Lemma \ref{lem-fil-AAinf-cpt}} and a canonical $\Gal(\AC{k} / k)$-equivariant exact complex
    \begin{equation}\label{eq-lem-BBdRp-cpt-cplx-BBinf-xi-r}
    \begin{split}
        0 & \to \widehat{\bL} \otimes_{\widehat{\bZ}_p} (\BBinfX{X}^\Sc / \xi^r) \to \widehat{\bL} \otimes_{\widehat{\bZ}_p} (\BBinfX{X_{(0)}^\partial}^\partial / \xi^r) \\
        & \to \widehat{\bL} \otimes_{\widehat{\bZ}_p} (\BBinfX{X_{(1)}^\partial}^\partial / \xi^r) \to \cdots \to \widehat{\bL} \otimes_{\widehat{\bZ}_p} (\BBinfX{X_{(a)}^\partial}^\partial / \xi^r) \to \cdots.
    \end{split}
    \end{equation}
    When $r = 1$, this gives the exact complex \Refeq{\ref{eq-lem-BBdRp-cpt-cplx-gr}}, because $\BBinfX{X_{(a)}^\partial}^\partial / \xi \cong \widehat{\cO}_{X_{(a)}^\partial}^\partial$.  \Pth{Alternatively, we can obtain the exact complex \Refeq{\ref{eq-lem-BBdRp-cpt-cplx-gr}} more directly from the exact complex \Refeq{\ref{eq-thm-L-!-prim-comp-resol}}.}  More generally, let $U$ be any log perfectoid affinoid object in ${X_\proket}_{/X_K}$, which we may assume to trivialize $\widehat{\bL}$, because such objects form a basis, by \cite[\aProp \logadicproplogaffperfbasis]{Diao/Lan/Liu/Zhu:lasfr}.  By induction on $r$, by the exactness of \Refeq{\ref{eq-lem-BBdRp-cpt-cplx-BBinf-xi-r}}, by \cite[\aThm \logadicthmalmostvanhat]{Diao/Lan/Liu/Zhu:lasfr}, and by downward induction on $a$ using the finiteness of $|I^\Sc|$, we see that \Refeq{\ref{eq-lem-BBdRp-cpt-cplx-BBinf-xi-r}} is exact when evaluated on $U$, and so that Lemma \ref{lem-Scholze-cplx} applies, from which we obtain that $R^j\varprojlim_r \bigl(\widehat{\bL} \otimes_{\widehat{\bZ}_p} (\BBinfX{X}^\Sc / \xi^r)\bigr) = 0$, for all $j > 0$, and that the canonical $\Gal(\AC{k} / k)$-equivariant complex
    \begin{equation}\label{eq-lem-BBdRp-cpt-cplx-pre}
    \begin{split}
        0 & \to \varprojlim_r \bigl(\widehat{\bL} \otimes_{\widehat{\bZ}_p} (\BBinfX{X}^{\Sc, +} / \xi^r)\bigr) \to \widehat{\bL} \otimes_{\widehat{\bZ}_p} \BBdRX{X_{(0)}^{+, \partial}} \\
        & \to \widehat{\bL} \otimes_{\widehat{\bZ}_p} \BBdRX{X_{(1)}^{+, \partial}} \to \cdots \to \widehat{\bL} \otimes_{\widehat{\bZ}_p} \BBdRX{X_{(a)}^{+, \partial}} \to \cdots
    \end{split}
    \end{equation}
    is exact.  Since $\widehat{\bL}$ is a local system, by Definition \ref{def-BBdRp-cpt}, we obtain an exact sequence
    \begin{equation}\label{eq-lem-BBdRp-cpt-cplx-init}
        0 \to \widehat{\bL} \otimes_{\widehat{\bZ}_p} \BBdRX{X}^{\Sc, +} \to \widehat{\bL} \otimes_{\widehat{\bZ}_p} \BBdRX{X_{(0)}^\partial}^{+, \partial} \to \widehat{\bL} \otimes_{\widehat{\bZ}_p} \BBdRX{X_{(1)}^\partial}^{+, \partial}
    \end{equation}
    as in the first few terms of \Refeq{\ref{eq-lem-BBdRp-cpt-cplx}}.  Hence, we obtain both \Refeq{\ref{eq-lem-BBdRp-cpt}} and \Refeq{\ref{eq-lem-BBdRp-cpt-cplx}} by comparing \Refeq{\ref{eq-lem-BBdRp-cpt-cplx-pre}} and \Refeq{\ref{eq-lem-BBdRp-cpt-cplx-init}}, which are strictly compatible with filtrations because \Refeq{\ref{eq-lem-BBdRp-cpt-cplx-pre}} and \Refeq{\ref{eq-lem-BBdRp-cpt-cplx-init}} are, by their very constructions above.  Since
    \[
        \gr^r(\widehat{\bL} \otimes_{\widehat{\bZ}_p} \BBdRX{X_{(a)}^\partial}^{+, \partial}) \cong \widehat{\bL} \otimes_{\widehat{\bZ}_p} \widehat{\cO}_{X_{(a), K, \proket}^\partial}^\partial(a)
    \]
    by Lemma \ref{lem-BdR-bd-gr}, for all $a \geq 0$; and since
    \[
        \widehat{\bL} \otimes_{\widehat{\bZ}_p} \widehat{\cO}_{X_{K, \proket}}^\Sc \cong \ker(\widehat{\bL} \otimes_{\widehat{\bZ}_p} \widehat{\cO}_{X_{(0), K, \proket}^\partial}^\partial \to \widehat{\bL} \otimes_{\widehat{\bZ}_p} \widehat{\cO}_{X_{(1), K, \proket}^\partial}^\partial),
    \]
    by Definition \ref{def-O-hat-cpt}, we also obtain \Refeq{\ref{eq-lem-BBdRp-cpt-gr}} and \Refeq{\ref{eq-lem-BBdRp-cpt-cplx-gr}}, as desired.
\end{proof}

\begin{proof}[Proof of Proposition \ref{prop-L-!-coh-comp-proket}]
    Since $H^i_{\et, \Sc}(U_K, \bL) \otimes_{\bZ_p} \bQ_p$ is a finite $\bQ_p$-module \Pth{see Lemma \ref{lem-def-H-c-fin-Z-p}}, and since $\BdRp \cong \varprojlim_r (\Binf / \xi^r)$, by Proposition \ref{prop-coh-AAinf-cpt}, we obtain
    \[
    \begin{split}
        H^i_{\et, \Sc}(U_K, \bL) \otimes_{\bZ_p} \BdRp & \cong \bigl((H^i_{\et, \Sc}(U_K, \bL) \otimes_{\bZ_p} \bQ_p) \otimes_{\bQ_p} \Binf\bigr) \otimes_{\Binf} \BdRp \\
        & \cong R\varprojlim_r \bigl(H^i(X_{K, \proket}, \widehat{\bL} \otimes_{\widehat{\bZ}_p} \BBinfX{X}^\Sc) \otimes_{\Binf} (\Binf / \xi^r)\bigr) \\
        & \cong R\varprojlim_r H^i\bigl(X_{K, \proket}, \widehat{\bL} \otimes_{\widehat{\bZ}_p} (\BBinfX{X}^\Sc / \xi^r)\bigr)
    \end{split}
    \]
    \Pth{with vanishing higher limits}, which are compatible with the filtrations defined by multiplication by powers of $\xi$.  Thus, the proposition follows from Lemma \ref{lem-BBdRp-cpt} and the standard isomorphism $R\varprojlim_r R\Gamma\bigl(X_{K, \proket}, \widehat{\bL} \otimes_{\widehat{\bZ}_p} (\BBinfX{X}^\Sc / \xi^r)\bigr) \cong R\Gamma\bigl(X_{K, \proket}, R\varprojlim_r \bigl(\widehat{\bL} \otimes_{\widehat{\bZ}_p} (\BBinfX{X}^\Sc / \xi^r)\bigr)\bigr)$, as desired.
\end{proof}

\subsection{Period sheaves {$\OBdl^{\Sc, +}$} and {$\OBdl^\Sc$}, and Poincar\'e lemma}\label{sec-period-OB-dR-cpt}

\begin{defn}\label{def-OBdRp-cpt}
    Let
    \[
        \OBdlX{X}^{\Sc, +} := \ker(\OBdlX{X_{(0)}^\partial}^{+, \partial} \to \OBdlX{X_{(1)}^\partial}^{+, \partial});
    \]
    \[
        \OBdlX{X}^\Sc := \ker(\OBdlX{X_{(0)}^\partial}^\partial \to \OBdlX{X_{(1)}^\partial}^\partial);
    \]
    \[
        \Fil^r \OBdlX{X}^{\Sc, +} := \ker(\Fil^r \OBdlX{X_{(0)}^\partial}^{+, \partial} \to \Fil^r \OBdlX{X_{(1)}^\partial}^{+, \partial}),
    \]
    for $r \geq 0$;
    \[
        \Fil^r \OBdlX{X}^\Sc := \ker(\Fil^r \OBdlX{X_{(0)}^\partial}^\partial \to \Fil^r \OBdlX{X_{(1)}^\partial}^\partial);
    \]
    for $r \in \bZ$; and
    \[
        \OClX{X}^\Sc := \gr^0\bigl(\OBdlX{X}^\Sc\big) \cong \ker(\OClX{X_{(0)}^\partial}^\partial \to \OClX{X_{(1)}^\partial}^\partial).
    \]
    \Pth{See Lemma \ref{lem-AAinf-bd-mor}.  The isomorphism above is justified by Corollary \ref{cor-OBdRp-cplx-bd-strict}.}
\end{defn}

\begin{cor}\label{cor-OBdRp-cplx-bd-cplx}
    The morphisms in Lemma \ref{lem-AAinf-bd-mor} induce an exact complex
    \begin{equation}\label{eq-cor-OBdRp-cplx-bd-cplx-sh-OBdl}
    \begin{split}
        0 & \to \OBdlX{X}^{\Sc, +} \to \OBdlX{X_{(0)}^\partial}^{+, \partial} \\
        & \to \OBdlX{X_{(1)}^\partial}^{+, \partial} \to \cdots \to \OBdlX{X_{(a)}^\partial}^{+, \partial} \to \cdots
    \end{split}
    \end{equation}
    strictly compatible with the filtrations.  Moreover, by forming the tensor product of \Refeq{\ref{eq-cor-OBdRp-cplx-bd-cplx-sh-OBdl}} with the finite locally free $\cO_X$-module $\Omega^{\log, \bullet}_X$, we obtain an exact complex of log de Rham complexes \Pth{\Refcf{} \cite[\aCor \logRHcorlogdRcplx]{Diao/Lan/Liu/Zhu:lrhrv}}
    \begin{equation}\label{eq-cor-OBdRp-cplx-bd-cplx}
    \begin{split}
        0 & \to \OBdlX{X}^{\Sc, +} \otimes_{\cO_X} \Omega^{\log, \bullet}_X \to \OBdlX{X_{(0)}^\partial}^{+, \partial} \otimes_{\cO_X} \Omega^{\log, \bullet}_X \\
        & \to \OBdlX{X_{(1)}^\partial}^{+, \partial} \otimes_{\cO_X} \Omega^{\log, \bullet}_X \to \cdots \to \OBdlX{X_{(a)}^\partial}^{+, \partial} \otimes_{\cO_X} \Omega^{\log, \bullet}_X \to \cdots
    \end{split}
    \end{equation}
    strictly compatible with the filtrations.  The above statements hold with $\OBdlX{X_{(a)}^\partial}^{+, \partial}$ replaced with $\OBdlX{X_{(a)}^\partial}^\partial$, for all $a \geq 0$.  Consequently, we have an exact complex
    \begin{equation}\label{eq-cor-OBdRp-cplx-bd-cplx-sh-OCl}
        0 \to \OClX{X}^\Sc \to \OClX{X_{(0)}^\partial}^\partial \to \OClX{X_{(1)}^\partial}^\partial \to \cdots \to \OClX{X_{(a)}^\partial}^\partial \to \cdots.
    \end{equation}
\end{cor}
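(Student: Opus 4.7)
The plan is to reduce the exactness of (\ref{eq-cor-OBdRp-cplx-bd-cplx-sh-OBdl}) to the exactness of the $\BBdR^{+, \partial}$ complex (\ref{eq-lem-BBdRp-cpt-cplx}) established in Lemma \ref{lem-BBdRp-cpt} (taken with $\widehat{\bL} = \widehat{\bZ}_p$), by means of the local formal-power-series presentation of $\OBdl^{+, \partial}$ in Lemma \ref{lem-OBdRp-bd-loc}. Once this is done and the strict compatibility with filtrations is established, all remaining assertions follow by formal manipulations.

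First, exactness of (\ref{eq-cor-OBdRp-cplx-bd-cplx-sh-OBdl}) at its first two terms is immediate from the definition of $\OBdlX{X}^{\Sc, +}$ as a kernel (Definition \ref{def-OBdRp-cpt}). For the remaining terms, I would argue locally on $X_\proket$: after shrinking $X$ to admit a toric chart as in Lemma \ref{lem-OBdRp-bd-loc}, the isomorphism (\ref{eq-lem-OBdRp-bd-loc-OBdR-bd}) identifies the restriction of $\OBdlX{X_J^\partial}^{+, \partial}$ to the log affinoid perfectoid cover $\widetilde X$ with $\BBdRX{X_J^\partial}^{+, \partial}|_{\widetilde X}[[y_1, \ldots, y_n]]$, uniformly in $J$. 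Granting that these identifications are natural in $J$ with respect to the restriction morphisms of Lemma \ref{lem-AAinf-bd-mor}, the complex (\ref{eq-cor-OBdRp-cplx-bd-cplx-sh-OBdl}) restricted to $\widetilde X$ becomes the termwise base change along the flat ring extension $\BBdRpX{X}|_{\widetilde X} \to \BBdRpX{X}|_{\widetilde X}[[y_1, \ldots, y_n]]$ of the corresponding $\BBdR^{+, \partial}$ complex, which is exact by Lemma \ref{lem-BBdRp-cpt}. Since flatness preserves exactness, the restriction of (\ref{eq-cor-OBdRp-cplx-bd-cplx-sh-OBdl}) to $\widetilde X$ is exact; as log affinoid perfectoid objects form a basis, the complex itself is exact. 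Strict compatibility of the differentials with the filtrations follows from Corollary \ref{cor-OBdRp-cplx-bd-strict}, because each filtration on $\OBdlX{X_{(a)}^\partial}^{+, \partial}$ is induced from that on $\OBdlpX{X}$.

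With (\ref{eq-cor-OBdRp-cplx-bd-cplx-sh-OBdl}) in hand as a strictly filtered exact complex of $\cO_X$-modules, I would form its termwise tensor product with the bounded complex $\Omega^{\log, \bullet}_X$ of finite locally free $\cO_X$-modules, whose flatness yields the exact complex (\ref{eq-cor-OBdRp-cplx-bd-cplx}) of log de Rham complexes, still strictly filtered. The analogous statements with $\OBdlX{X_{(a)}^\partial}^\partial$ in place of $\OBdlX{X_{(a)}^\partial}^{+, \partial}$ follow by taking the filtered colimit $\bigcup_r \Fil^{-r}$ on each term, which preserves exactness. Finally, the exact complex (\ref{eq-cor-OBdRp-cplx-bd-cplx-sh-OCl}) for $\OClX{X}^\Sc$ is obtained by applying the exact functor $\gr^0$ to the strictly filtered exact complex for $\OBdlX{X_{(\bullet)}^\partial}^\partial$, invoking the identification $\OClX{X}^\Sc = \gr^0(\OBdlX{X}^\Sc)$ in Definition \ref{def-OBdRp-cpt}.

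The main obstacle will be verifying the naturality in $J$ of the local isomorphism (\ref{eq-lem-OBdRp-bd-loc-OBdR-bd}). This amounts to checking that the formal variables $y_1, \ldots, y_n$ constructed in \cite[\aProp \logRHpropOBdlploc]{Diao/Lan/Liu/Zhu:lrhrv} are defined in terms of the toric coordinates $T_1, \ldots, T_n$ on $X$ that are common to all strata $X_J^\partial$, so that the local identifications intertwine the restriction morphisms on the $\OBdl^{+, \partial}$ side with the base change of the restriction morphisms on the $\BBdR^{+, \partial}$ side. Once this compatibility is confirmed, everything else reduces to standard homological algebra.
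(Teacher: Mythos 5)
Your route is essentially the paper's: reduce the exactness of \Refeq{\ref{eq-cor-OBdRp-cplx-bd-cplx-sh-OBdl}} to the exactness of the $\BBdRX{X_{(\bullet)}^\partial}^{+, \partial}$ complex of Lemma \ref{lem-BBdRp-cpt} (with $\widehat{\bL} = \widehat{\bZ}_p$) via the local power-series presentation of Lemma \ref{lem-OBdRp-bd-loc}, and then handle filtrations, the non-plus sheaves, and $\gr^0$.  The naturality in $J$ that you flag as the main obstacle is exactly what Lemma \ref{lem-OBdRp-bd-loc} already supplies: the variables $y_1, \ldots, y_n$ come from the common toric coordinates, and the isomorphism \Refeq{\ref{eq-lem-OBdRp-bd-loc-OBdR-bd}} is stated to be compatible with \Refeq{\ref{eq-lem-OBdRp-bd-loc-OBdR}}, uniformly over the strata.

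However, the way you justify the key reduction would not survive scrutiny as written.  The identification $\OBdlX{X_J^\partial}^{+, \partial}|_{\widetilde{X}} \cong \BBdRX{X_J^\partial}^{+, \partial}|_{\widetilde{X}}[[y_1, \ldots, y_n]]$ is \emph{not} a base change along $\BBdRpX{X}|_{\widetilde{X}} \to \BBdRpX{X}|_{\widetilde{X}}[[y_1, \ldots, y_n]]$: for a module $M$ that is not finitely presented, $M \otimes_R R[[y_1, \ldots, y_n]]$ is strictly smaller than $M[[y_1, \ldots, y_n]]$, and flatness of $R[[y_1, \ldots, y_n]]$ over these huge non-noetherian rings (a direct product of copies of $R$) is not something you can simply invoke.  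The correct--and simpler--argument, which is the one the paper uses, avoids flatness altogether: a power-series module is a direct product of copies of $\BBdRX{X_J^\partial}^{+, \partial}$ indexed by monomials in the $y_i$, the transition morphisms act coefficientwise, and direct products of exact sequences of modules are exact, so exactness follows on sections over the basis of log affinoid perfectoid objects, where the complex of Lemma \ref{lem-BBdRp-cpt} is exact by its proof.  Relatedly, your appeal to Corollary \ref{cor-OBdRp-cplx-bd-strict} for strictness is not quite the right reference: that corollary concerns the morphisms $\OBdlpX{X} \to \OBdlX{X_J^\partial}^{+, \partial}$, not the differentials between strata of different depths appearing in \Refeq{\ref{eq-cor-OBdRp-cplx-bd-cplx-sh-OBdl}}.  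The paper instead obtains filtered and graded exactness directly from the local description (the filtration is given by powers of $\xi$ together with the grading in the $y_i$) combined with the exactness of the graded complex \Refeq{\ref{eq-lem-BBdRp-cpt-cplx-gr}}; this is also precisely what you need before applying $\gr^0$ to get \Refeq{\ref{eq-cor-OBdRp-cplx-bd-cplx-sh-OCl}}, so it should be argued rather than deduced from an asserted strictness.  With these repairs, your remaining steps (tensoring with the finite locally free $\Omega^{\log, \bullet}_X$, passing to $\OBdlX{X_{(a)}^\partial}^\partial$ by exhausting the filtration, and taking $\gr^0$) are fine and match the paper.
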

\begin{proof}
    By Lemma \ref{lem-OBdRp-bd-loc} and \cite[\aProp \logRHpropOBdlploc]{Diao/Lan/Liu/Zhu:lrhrv}, over each log affinoid perfectoid object $\widetilde{X}$ as in Lemma \ref{lem-OBdRp-bd-loc}, we have compatible isomorphisms as in \Refeq{\ref{eq-lem-OBdRp-bd-loc-OBdR-bd}} which $\BBdRX{X}|_{\widetilde{X}}[[y_1, \ldots, y_n]]$-equivariantly identify the pullback of \Refeq{\ref{eq-cor-OBdRp-cplx-bd-cplx-sh-OBdl}} with the complex
    \begin{equation}\label{eq-cor-OBdRp-cplx-bd-cplx-sh-OBdl-pow}
    \begin{split}
        0 & \to \OBdlX{X}^{\Sc, +}|_{\widetilde{X}} \to \BBdRX{X_{(0)}^\partial}^{+, \partial}|_{\widetilde{X}}[[y_1, \ldots, y_n]] \\
        & \to \BBdRX{X_{(1)}^\partial}^{+, \partial}|_{\widetilde{X}}[[y_1, \ldots, y_n]] \to \ldots \to \BBdRX{X_{(a)}^\partial}^{+, \partial}|_{\widetilde{X}}[[y_1, \ldots, y_n]] \to \ldots,
    \end{split}
    \end{equation}
    where the filtration on each $\BBdRX{X_{(a)}^\partial}^{+, \partial}|_{\widetilde{X}}[[y_1, \ldots, y_n]]$ is given by powers of the ideal generated by $(\xi, y_1, \ldots, y_n)$.  Consequently, by Definition \ref{def-OBdRp-cpt}, the complex \Refeq{\ref{eq-cor-OBdRp-cplx-bd-cplx-sh-OBdl-pow}} is strictly compatible with filtrations, and is exact because the complex \Refeq{\ref{eq-lem-BBdRp-cpt-cplx}} \Pth{with $\widehat{\bL} = \widehat{\bZ}_p$} is.  This verifies the assertions for $\OBdlX{X_{(a)}^\partial}^{+, \partial}$.  By similarly using \cite[\aCor \logRHcorOBdlplocgr]{Diao/Lan/Liu/Zhu:lrhrv}, the assertions for $\OBdlX{X_{(a)}^\partial}^\partial$ and $\OClX{X}^\Sc$ also follow.
\end{proof}

We have the following variant of the \emph{Poincar\'e lemma}:
\begin{prop}\label{prop-Poin-lem}
    We have the following convenient facts over $X_{K, \proket}$:
    \begin{enumerate}
        \item\label{prop-Poin-lem-1}  The exact complex in \cite[\aCor \logRHcorlogdRcplx(\logRHcorlogdRcplxone)]{Diao/Lan/Liu/Zhu:lrhrv} induces an exact complex \begin{equation}\label{eq-prop-Poin-lem-1}
            \begin{split}
                0 & \to \widehat{\bL} \otimes_{\widehat{\bZ}_p} \BBdRX{X}^{\Sc, +} \to \widehat{\bL} \otimes_{\widehat{\bZ}_p} \OBdlX{X}^{\Sc, +} \\
                & \Mapn{\nabla} (\widehat{\bL} \otimes_{\widehat{\bZ}_p} \OBdlX{X}^{\Sc, +}) \otimes_{\cO_X} \Omega^{\log, 1}_X \\
                & \Mapn{\nabla} (\widehat{\bL} \otimes_{\widehat{\bZ}_p} \OBdlX{X}^{\Sc, +}) \otimes_{\cO_X} \Omega^{\log, 2}_X \to \cdots.
            \end{split}
            \end{equation}

        \item\label{prop-Poin-lem-2}  The above statement holds with \cite[\aCor \logRHcorlogdRcplx(\logRHcorlogdRcplxone)]{Diao/Lan/Liu/Zhu:lrhrv} replaced with \cite[\aCor \logRHcorlogdRcplx(\logRHcorlogdRcplxtwo)]{Diao/Lan/Liu/Zhu:lrhrv}, and with $\BBdRX{X}^{\Sc, +}$ and $\OBdlX{X}^{\Sc, +}$ replaced with $\BBdRX{X}^\Sc$ and $\OBdlX{X}^\Sc$, respectively.

        \item\label{prop-Poin-lem-3}  As in \cite[\aCor \logRHcorlogdRcplx(\logRHcorlogdRcplxthree)]{Diao/Lan/Liu/Zhu:lrhrv}, for each $r \in \bZ$, the subcomplex
            \[
            \begin{split}
                0 & \to \Fil^r(\widehat{\bL} \otimes_{\widehat{\bZ}_p} \BBdRX{X}^\Sc) \to \Fil^r(\widehat{\bL} \otimes_{\widehat{\bZ}_p} \OBdlX{X}^\Sc) \\
                & \Mapn{\nabla} \Fil^{r - 1}(\widehat{\bL} \otimes_{\widehat{\bZ}_p} \OBdlX{X}^\Sc) \otimes_{\cO_X} \Omega^{\log, 1}_X \\
                & \Mapn{\nabla} \Fil^{r - 2}(\widehat{\bL} \otimes_{\widehat{\bZ}_p} \OBdlX{X}^\Sc) \otimes_{\cO_X} \Omega^{\log, 2}_X \to \cdots
            \end{split}
            \]
            of the complex for $\BBdRX{X}^\Sc$ and $\OBdlX{X}^\Sc$ is also exact.

        \item\label{prop-Poin-lem-4}  For each $r \in \bZ$, the quotient complex
            \[
            \begin{split}
                0 & \to \gr^r(\widehat{\bL} \otimes_{\widehat{\bZ}_p} \BBdRX{X}^\Sc) \to \gr^r(\widehat{\bL} \otimes_{\widehat{\bZ}_p} \OBdlX{X}^\Sc) \\
                & \Mapn{\nabla} \gr^{r - 1}(\widehat{\bL} \otimes_{\widehat{\bZ}_p} \OBdlX{X}^\Sc) \otimes_{\cO_X} \Omega^{\log, 1}_X \\
                & \Mapn{\nabla} \gr^{r - 2}(\widehat{\bL} \otimes_{\widehat{\bZ}_p} \OBdlX{X}^\Sc) \otimes_{\cO_X} \Omega^{\log, 2}_X \to \cdots
            \end{split}
            \]
            of the previous complex is exact, and can be $\Gal(\AC{k} / k)$-equivariantly identified with the complex
            \[
            \begin{split}
                0 & \to \widehat{\bL} \otimes_{\widehat{\bZ}_p} \widehat{\cO}_X^\Sc(r) \to \bigr(\widehat{\bL} \otimes_{\widehat{\bZ}_p} \OCl^\Sc(r)\bigr) \\
                & \Mapn{\nabla} \bigl(\widehat{\bL} \otimes_{\widehat{\bZ}_p} \OClX{X}^\Sc(r - 1)\bigr) \otimes_{\cO_X} \Omega^{\log, 1}_X \\
                & \Mapn{\nabla} \bigl(\widehat{\bL} \otimes_{\widehat{\bZ}_p} \OClX{X}^\Sc(r - 2)\bigr) \otimes_{\cO_X} \Omega^{\log, 2}_X \to \cdots.
            \end{split}
            \]
    \end{enumerate}
\end{prop}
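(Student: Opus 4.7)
The plan is to prove all four parts by a single double-complex argument, with parts (3) and (4) following from the argument for (2) by passing to filtered subcomplexes and associated graded pieces, using the strict compatibility of Corollary \ref{cor-OBdRp-cplx-bd-strict}. The core of the argument already appears in part (1).

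For part (1), I form the double complex over $X_{K, \proket}$ with $(a, q)$-entry
\[
    C^{a, q} := \bigl(\widehat{\bL} \otimes_{\widehat{\bZ}_p} \OBdlX{X_{(a)}^\partial}^{+, \partial}\bigr) \otimes_{\cO_X} \Omega^{\log, q}_X
\]
for $a, q \geq 0$, with horizontal differential given by the log de Rham connection and vertical differential from Corollary \ref{cor-OBdRp-cplx-bd-cplx}. Augment horizontally at $q = -1$ by $C^{a, -1} := \widehat{\bL} \otimes_{\widehat{\bZ}_p} \BBdRX{X_{(a)}^\partial}^{+, \partial}$, so that each augmented row is the Poincar\'e lemma complex on $X_{(a)}^\partial$ twisted by the locally free $\widehat{\bL}$, which is exact by \cite[\aCor \logRHcorlogdRcplx(\logRHcorlogdRcplxone)]{Diao/Lan/Liu/Zhu:lrhrv} applied stratum-wise. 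Augment vertically at $a = -1$ by $C^{-1, q} := \bigl(\widehat{\bL} \otimes_{\widehat{\bZ}_p} \OBdlX{X}^{\Sc, +}\bigr) \otimes_{\cO_X} \Omega^{\log, q}_X$, so that each augmented column is Corollary \ref{cor-OBdRp-cplx-bd-cplx} tensored with $\widehat{\bL} \otimes_{\cO_X} \Omega^{\log, q}_X$, hence exact. Place $C^{-1, -1} := \widehat{\bL} \otimes_{\widehat{\bZ}_p} \BBdRX{X}^{\Sc, +}$ at the corner, so that the leftmost augmented column $0 \to C^{-1, -1} \to C^{0, -1} \to C^{1, -1} \to \cdots$ is exact by Lemma \ref{lem-BBdRp-cpt}. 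A standard spectral-sequence or diagram-chase argument on the resulting augmented double complex then forces the top augmented row $0 \to C^{-1, -1} \to C^{-1, 0} \to C^{-1, 1} \to \cdots$ to be exact as well, which is exactly \Refeq{\ref{eq-prop-Poin-lem-1}}.

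Part (2) is the same argument with the rational sheaves $\OBdlX{X_{(a)}^\partial}^\partial$, $\BBdRX{X_{(a)}^\partial}^\partial$, $\OBdlX{X}^\Sc$, and $\BBdRX{X}^\Sc$ in place of their $+$-versions, using \cite[\aCor \logRHcorlogdRcplx(\logRHcorlogdRcplxtwo)]{Diao/Lan/Liu/Zhu:lrhrv} for the row-exactness. For part (3), I repeat the argument using the filtered subcomplex obtained by replacing $C^{a, q}$ with $\bigl(\widehat{\bL} \otimes_{\widehat{\bZ}_p} \Fil^{r - q} \OBdlX{X_{(a)}^\partial}^\partial\bigr) \otimes_{\cO_X} \Omega^{\log, q}_X$ and $C^{a, -1}$ with $\widehat{\bL} \otimes_{\widehat{\bZ}_p} \Fil^r \BBdRX{X_{(a)}^\partial}^\partial$. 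Here row-exactness is the filtered Poincar\'e lemma \cite[\aCor \logRHcorlogdRcplx(\logRHcorlogdRcplxthree)]{Diao/Lan/Liu/Zhu:lrhrv}, and column-exactness follows from the filtered analogue of Corollary \ref{cor-OBdRp-cplx-bd-cplx}, which is supplied by Corollary \ref{cor-OBdRp-cplx-bd-strict}. Part (4) then follows by taking $\gr^r$ of part (3), using the identifications $\gr^r \BBdRX{X}^\Sc \cong \widehat{\cO}^\Sc_{X_{K, \proket}}(r)$ (from Lemma \ref{lem-BBdRp-cpt}\Refeq{\ref{eq-lem-BBdRp-cpt-gr}} and Definition \ref{def-BBdRp-cpt}) and $\gr^s \OBdlX{X}^\Sc \cong \OClX{X}^\Sc(s)$ (from the analogous stratum-wise identification $\gr^s \OBdlX{X_{(a)}^\partial}^\partial \cong \OClX{X_{(a)}^\partial}^\partial(s)$ together with Definition \ref{def-OBdRp-cpt}).

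The main obstacle is maintaining strict compatibility with the filtrations through the double-complex argument in parts (3) and (4). Corollary \ref{cor-OBdRp-cplx-bd-strict} gives strictness of the morphisms between $\OBdlX{X}$ and the stratum sheaves $\OBdlX{X_J^\partial}^\partial$, and the key check is that the filtration on $\OBdlX{X}^\Sc$ defined in Definition \ref{def-OBdRp-cpt} as the kernel of a morphism of strictly filtered sheaves agrees with the filtration induced by viewing it as a subsheaf of $\OBdlX{X_{(0)}^\partial}^\partial$; this is precisely what Corollary \ref{cor-OBdRp-cplx-bd-strict} guarantees, and it is likewise what propagates the filtered exactness of Corollary \ref{cor-OBdRp-cplx-bd-cplx} to each $\Fil^r$-level needed to run the columns of the filtered double complex.
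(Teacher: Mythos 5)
Your argument is correct and is essentially the paper's own proof: the paper likewise resolves the complex \Refeq{\ref{eq-prop-Poin-lem-1}} by the stratum-wise Poincar\'e lemma complexes (Corollary \ref{cor-OBdRp-cplx-bd} with $\widehat{\bL}$-coefficients) as exact rows and the exactness in the stratification direction from Lemma \ref{lem-BBdRp-cpt} and Corollary \ref{cor-OBdRp-cplx-bd-cplx} as exact columns, then concludes by finiteness of $I^\Sc$ -- it just phrases the double-complex step as splitting the exact complex of complexes into short exact sequences and doing a downward induction with long exact sequences, which is the elementary form of your spectral-sequence argument. Your treatment of (2)--(4) via the filtered subcomplexes, strictness from Corollary \ref{cor-OBdRp-cplx-bd-strict}, and passage to graded quotients matches the paper's deduction as well.
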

\begin{proof}
    Let $\cR^\bullet$ denote the complex \Refeq{\ref{eq-prop-Poin-lem-1}}, which we would like to show to be exact.  Since $\widehat{\bL}$ is a local system, by forming its tensor product with the exact complex \Refeq{\ref{eq-cor-OBdRp-cplx-bd}} in Corollary \ref{cor-OBdRp-cplx-bd}, we obtain an exact complex
    \[
    \begin{split}
        0 & \to \widehat{\bL} \otimes_{\widehat{\bZ}_p} \BBdRX{X_{(a)}^\partial}^{+, \partial} \to \widehat{\bL} \otimes_{\widehat{\bZ}_p} \OBdlX{X_{(a)}^\partial}^{+, \partial} \\
        & \Mapn{\nabla} \widehat{\bL} \otimes_{\widehat{\bZ}_p} \OBdlX{X_{(a)}^\partial}^{+, \partial} \otimes_{\cO_X} \Omega^{\log, 1}_X \Mapn{\nabla} \widehat{\bL} \otimes_{\widehat{\bZ}_p} \OBdlX{X_{(a)}^\partial}^{+, \partial} \otimes_{\cO_X} \Omega^{\log, 2}_X \to \cdots,
    \end{split}
    \]
    which we denote by $\cR_{(a)}^\bullet$, for each $a \geq 0$; and we obtain a canonical exact complex of complexes
    \begin{equation}\label{prop-Poin-lem-cplx-cplx}
        0 \to \cR^\bullet \to \cR_{(0)}^\bullet \to \cR_{(1)}^\bullet \to \cdots \to \cR_{(a)}^\bullet \to \cdots,
    \end{equation}
    by Lemma \ref{lem-BBdRp-cpt} and Corollary \ref{cor-OBdRp-cplx-bd-cplx}.  Since \Refeq{\ref{prop-Poin-lem-cplx-cplx}} contains only finitely many nonzero terms, we can break it into finitely many short exact sequences of complexes by taking kernels and cokernels, and argue by taking the associated long exact sequences of cohomology and by downward induction that the complex $\cR^\bullet$ is exact when all the other complexes $\cR_{(a)}^\bullet$ are.  This shows that the complex \Refeq{\ref{eq-prop-Poin-lem-1}} in \Refenum{\ref{prop-Poin-lem-1}} is exact, as desired.  The remaining assertions then follow from this, from the strict compatibility with filtrations in Corollary \ref{cor-OBdRp-cplx-bd-cplx}, and from the corresponding assertions in \cite[\aCor \logRHcorlogdRcplx]{Diao/Lan/Liu/Zhu:lrhrv}.
\end{proof}

\subsection{Comparison of cohomology}\label{sec-dR-comp-cpt-proof}

For simplicity, we shall omit the subscripts \Qtn{$X$} from the period sheaves.  As in \cite[\aSec \logRHseclogRH]{Diao/Lan/Liu/Zhu:lrhrv}, let $\mu: X_\proket \to X_\an$ and $\mu': {X_\proket}_{/X_K} \to X_\an$ denote the canonical morphisms of sites.  Recall that $\RHl(\bL) = R\mu'_*(\widehat{\bL} \otimes_{\widehat{\bZ}_p} \OBdl)$, $\Hl(\bL) = \gr^0\bigl(\RHl(\bL)\bigr) \cong R\mu'_*(\widehat{\bL} \otimes_{\widehat{\bZ}_p} \OCl)$, and $\Ddl(\bL) = \mu_*(\widehat{\bL} \otimes_{\widehat{\bZ}_p} \OBdl)$.
\begin{defn}\label{def-unip-twist}
    Let
    \[
        \RHl^\Sc(\bL) := \ker\Bigl( \RHl(\bL) \to \oplus_{j \in I^\Sc} \, \bigl(\RHl(\bL)|_{\cD_j}^0\bigr)\Bigr)
    \]
    and
    \[
        \Ddl^\Sc(\bL) := \ker\Bigl( \Ddl(\bL) \to \oplus_{j \in I^\Sc} \, \bigl(\Ddl(\bL)|_{D_j}^0\bigr)\Bigr),
    \]
    which are equipped with the induced log connections and filtrations, where \Qtn{$|_{\cD_j}$} and \Qtn{$|_{D_j}$} denote pullbacks \Pth{as coherent sheaves} to $\cD_j$ and $D_j$, respectively, and where the superscripts \Qtn{$0$} denote the maximal quotient sheaves on which the residue endomorphisms act nilpotently \Pth{\Refcf{} \cite[(\logRHeqresgeneigen)]{Diao/Lan/Liu/Zhu:lrhrv}}, with induced quotient filtrations.  For simplicity, by pushforward, we shall abusively consider such sheaves as coherent sheaves on the ambient spaces $\cX$ and $X$.  Accordingly, let
    \[
        \Hl^\Sc(\bL) := \gr^0\bigl(\RHl^\Sc(\bL)\bigr),
    \]
    which is equipped with a canonically induced log Higgs field.
\end{defn}

\begin{rk}\label{rem-unip-twist}
    While the eigenvalues of the residues of $\RHl(\bL)$ along \Pth{the irreducible components of} $D$ are all in $[0, 1)$, the eigenvalues of the residues of $\RHl^\Sc(\bL)$ along $D^\Sc$ and $D^\Snc$ are in $(0, 1]$ and $[0, 1)$, respectively; and the analogous statement is true for $\Ddl(\bL)$ and $\Ddl^\Sc(\bL)$.  By definition, we always have the canonical inclusion $\RHl(\bL)(-D^\Sc) \Em \RHl^\Sc(\bL)$ \Pth{\resp $\Hl(\bL)(-D^\Sc) \Em \Hl^\Sc(\bL)$, \resp $\Ddl(\bL)(-D^\Sc) \Em \Ddl^\Sc(\bL)$}, which is an isomorphism when the residues of $\RHl(\bL)$ \Pth{\resp $\RHl(\bL)$, \resp $\Ddl(\bL)$} along irreducible components of $D^\Sc$ are all nilpotent.  \Pth{By \cite[\aThm \logRHthmunipvsnilp]{Diao/Lan/Liu/Zhu:lrhrv}, such a nilpotence holds when $\bL_{\bQ_p}$ has \emph{unipotent} geometric monodromy along all irreducible components of $D^\Sc$.}
\end{rk}

\begin{lem}\label{lem-unip-twist-qis}
    The canonical morphisms of log de Rham complexes
    \begin{equation}\label{eq-lem-unip-twist-qis-RHl}
        \DRl\bigl(\RHl(\bL)(-D^\Sc)\bigr) \to \DRl\bigl(\RHl^\Sc(\bL)\bigr)
    \end{equation}
    and
    \begin{equation}\label{eq-lem-unip-twist-qis-Ddl}
        \DRl\bigl(\Ddl(\bL)(-D^\Sc)\bigr) \to \DRl\bigl(\Ddl^\Sc(\bL)\bigr),
    \end{equation}
    which are strictly compatible with the filtrations by construction, are quasi-isomorphisms.  Hence, the log Higgs complex
    \begin{equation}\label{eq-lem-unip-twist-qis-Hil}
        \Hil\bigl(\Hl(\bL)(-D^\Sc)\bigr) \to \Hil\bigl(\Hl^\Sc(\bL)\bigr)
    \end{equation}
    is also a quasi-isomorphism.
\end{lem}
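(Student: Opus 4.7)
The plan is to prove the first quasi-isomorphism \eqref{eq-lem-unip-twist-qis-RHl} via a local eigenspace-and-Koszul-type computation, and to observe that \eqref{eq-lem-unip-twist-qis-Ddl} and \eqref{eq-lem-unip-twist-qis-Hil} follow by the same method. For \eqref{eq-lem-unip-twist-qis-Ddl}, the argument below will apply verbatim with $\cX$ replaced by $X$ and $\RHl$ replaced by $\Ddl$, using \cite[\aThm \logRHthmlogRHarith]{Diao/Lan/Liu/Zhu:lrhrv} to ensure that $\Ddl(\bL)$ has residues with eigenvalues in $\bQ \cap [0,1)$. For \eqref{eq-lem-unip-twist-qis-Hil}, the same Koszul-type acyclicity will apply to the Higgs field, which on each generalized eigenspace of $\Hl(\bL) = \gr^0 \RHl(\bL)$ still carries the invertible residues $R_i$ for $i \in I^\Sc_\alpha$ as its components.

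To prove \eqref{eq-lem-unip-twist-qis-RHl}, set $Q := \RHl^\Sc(\bL)/\RHl(\bL)(-D^\Sc)$ with its induced log connection; it suffices to show $\DRl(Q) \simeq 0$. The question is local on $X_{\an}$, so I would work in a toric chart (\cite[\aSec \logRHsecOBdlexplicit]{Diao/Lan/Liu/Zhu:lrhrv}) with coordinates $t_1, \ldots, t_n$, where $D$ is defined by $t_1 \cdots t_r = 0$ and $I^\Sc$ corresponds to $\{1, \ldots, s\}$ after reordering. In such a chart, the commuting residue endomorphisms $R_1, \ldots, R_r$ of $\RHl(\bL)$ have eigenvalues in $\bQ \cap [0,1)$, and after further shrinking one may decompose $\RHl(\bL) = \bigoplus_\alpha M_\alpha$ into the (finitely many) generalized joint eigenspaces indexed by tuples $\alpha = (\alpha_1, \ldots, \alpha_r) \in (\bQ \cap [0,1))^r$. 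This decomposition is preserved by the full log connection, since the $R_i$ commute with it.

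The next step would be to observe that both $\RHl(\bL)(-D^\Sc)$ and $\RHl^\Sc(\bL)$ respect this decomposition: a direct calculation from Definition \ref{def-unip-twist} gives
\[
\RHl(\bL)(-D^\Sc) \cap M_\alpha = \Bigl(\prod_{i \in I^\Sc} t_i\Bigr) \cdot M_\alpha, \quad \RHl^\Sc(\bL) \cap M_\alpha = \Bigl(\prod_{\substack{i \in I^\Sc \\ \alpha_i = 0}} t_i\Bigr) \cdot M_\alpha.
\]
Dividing through by the common non-zero-divisor factor $\prod_{i \in I^\Sc, \alpha_i = 0} t_i$ identifies $Q \cap M_\alpha$ with $M_\alpha \big/ \bigl(\prod_{i \in I^\Sc_\alpha} t_i\bigr) M_\alpha$, where $I^\Sc_\alpha := \{i \in I^\Sc : \alpha_i \neq 0\}$. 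Crucially, for each $i \in I^\Sc_\alpha$ we have $R_i = \alpha_i + N_i$ with $\alpha_i \in \bQ^\times$ and $N_i$ locally nilpotent, so $R_i$ acts invertibly on $M_\alpha$ and hence on the $R_i$-stable quotient $Q \cap M_\alpha$.

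The acyclicity step is then a Koszul-type argument: the log de Rham complex $\DRl(Q \cap M_\alpha)$ is naturally built from the commuting operators $R_1, \ldots, R_r, \partial_{r+1}, \ldots, \partial_n$ acting on $Q \cap M_\alpha$ (reflecting the local decomposition $\Omega^{\log, 1}_X = \bigoplus_{i=1}^r \cO_X \cdot dt_i/t_i \oplus \bigoplus_{i=r+1}^n \cO_X \cdot dt_i$), and the presence of an invertible operator $R_i$ for any $i \in I^\Sc_\alpha$ forces the total complex to be acyclic by the standard Koszul-with-invertible-operator argument (the two-term factor $[Q \cap M_\alpha \xrightarrow{R_i} Q \cap M_\alpha]$ being contractible). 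Summing over the finitely many $\alpha$ will then yield $\DRl(Q) \simeq 0$. The main subtlety I anticipate is justifying the generalized eigenspace decomposition rigorously over $\cX = (X_{\an}, \cO_X \ho \BdRp)$; this should follow from the local structure of $\RHl(\bL)$ in \cite[\aSec \logRHseclogRHthm]{Diao/Lan/Liu/Zhu:lrhrv} together with the Chinese Remainder Theorem applied to the minimal polynomials of the $R_i$'s, whose distinct rational roots in $[0,1)$ are invertible when nonzero.
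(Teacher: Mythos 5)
Your overall mechanism is the right one---the discrepancy between $\RHl^\Sc(\bL)$ and $\RHl(\bL)(-D^\Sc)$ is supported on $D^\Sc$ and the residues act invertibly on it, which kills the log de Rham cohomology---and this is exactly what the paper exploits: it observes that the residues induce automorphisms of the pullback of $\RHl^\Sc(\bL)/\bigl(\RHl(\bL)(-D^\Sc)\bigr)$ to each $D_j$ with $j \in I^\Sc$, invokes the argument of \cite[\aLem 2.10]{Esnault/Viehweg:1992-LVT-B} \Pth{a one-divisor-at-a-time d\'evissage whose graded pieces are two-term complexes on $D_j$ with differential the residue, up to sign}, and then deduces \Refeq{\ref{eq-lem-unip-twist-qis-Hil}} simply by taking $\gr^0$, using strict compatibility with the filtrations, instead of rerunning the argument for the Higgs field.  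However, your execution has two genuine gaps.  First, the residues $R_i$ are endomorphisms only of the restrictions $\RHl(\bL)|_{\cD_i}$, not of $\RHl(\bL)$ itself, so a joint generalized eigenspace decomposition of $\RHl(\bL)$ under $R_1, \ldots, R_r$ does not literally parse, and in particular cannot be produced by the Chinese Remainder Theorem applied to minimal polynomials of the $R_i$; such a local decomposition does exist, but it has to be extracted from the local models of $\RHl(\bL)$ \Pth{the quasi-unipotent monodromy action} as in \cite[\aSec \logRHseccalcres]{Diao/Lan/Liu/Zhu:lrhrv}, a nontrivial input you only gesture at.

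Second, and more seriously, the Koszul step is incorrect as stated: on $Q \cap M_\alpha \cong M_\alpha / \bigl(\prod_{i \in I^\Sc_\alpha} t_i\bigr) M_\alpha$ the $i$-th component of the log connection is the differential operator $t_i \partial_{t_i} + \Theta_i$, not the $\cO$-linear residue $R_i$ \Pth{$t_i \partial_{t_i}$ does not vanish on a quotient by a product of several coordinates}, so $\DRl(Q \cap M_\alpha)$ is not built from the operators $R_i$, and the asserted contractible factor $[Q \cap M_\alpha \Mapn{R_i} Q \cap M_\alpha]$ is not a factor of it.  To repair this you must either prove directly that the differential operator in the $i$-th direction is bijective on this multi-divisor quotient, or \Pth{cleaner} d\'evisse one divisor at a time: only on a quotient killed by a single $t_i$ does $t_i \partial_{t_i}$ vanish, so that the $i$-th connection component becomes the $\cO$-linear, invertible residue and the standard Koszul argument applies.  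Once you make that repair, your proof is essentially the cited Esnault--Viehweg argument, \ie, the paper's proof, and the eigenspace decomposition of the whole bundle becomes unnecessary: one only needs the nilpotent/invertible splitting of each restriction $\RHl(\bL)|_{\cD_j}$, which is already built into Definition \ref{def-unip-twist}.
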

\begin{proof}
    By definition of $\RHl^\Sc(\bL)$, the residues induce automorphisms of the pullback of $\bigl(\RHl^\Sc(\bL)\bigr) / \bigl(\RHl(\bL)(-D^\Sc)\bigr)$ to $D_j$, for all $j \in I^\Sc$.  Hence, \Refeq{\ref{eq-lem-unip-twist-qis-RHl}} is a quasi-isomorphism, by the same argument as in the proof of \cite[\aLem 2.10]{Esnault/Viehweg:1992-LVT-B}; and so is \Refeq{\ref{eq-lem-unip-twist-qis-Hil}} by taking $\gr^0$.  Similarly, \Refeq{\ref{eq-lem-unip-twist-qis-Ddl}} is also a quasi-isomorphism.
\end{proof}

\begin{prop}\label{prop-RHl-Hl-Ddl-cpt}
    The canonical morphisms $R\mu'_*(\widehat{\bL} \otimes_{\widehat{\bZ}_p} \OBdl^\Sc) \to \RHl(\bL)$, $R\mu'_*(\widehat{\bL} \otimes_{\widehat{\bZ}_p} \OCl^\Sc) \to \Hl(\bL)$, and $\mu_*(\widehat{\bL} \otimes_{\widehat{\bZ}_p} \OBdl^\Sc) \to \Ddl(\bL)$ factor through canonical isomorphisms $R\mu'_*(\widehat{\bL} \otimes_{\widehat{\bZ}_p} \OBdl^\Sc) \Mi \RHl^\Sc(\bL)$, $R\mu'_*(\widehat{\bL} \otimes_{\widehat{\bZ}_p} \OCl^\Sc) \Mi \Hl^\Sc(\bL)$, and $\mu_*(\widehat{\bL} \otimes_{\widehat{\bZ}_p} \OBdl^\Sc) \Mi \Ddl^\Sc(\bL)$, respectively.
\end{prop}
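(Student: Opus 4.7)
The plan is to apply $R\mu'_*$ \Pth{resp.\@\xspace $\mu_*$} to the exact complex of period sheaves $0 \to \OBdlX{X}^\Sc \to \OBdlX{X_{(0)}^\partial}^\partial \to \OBdlX{X_{(1)}^\partial}^\partial \to \cdots$ from Corollary \ref{cor-OBdRp-cplx-bd-cplx}, tensor with the flat sheaf $\widehat{\bL}$, identify each boundary term with the \Qtn{nilpotent part} of the restriction of $\RHl(\bL)$ \Pth{resp.\@\xspace $\Ddl(\bL)$} to the corresponding stratum, and then match the kernel of the resulting complex with $\RHl^\Sc(\bL)$ via its description in Definition \ref{def-unip-twist} as a kernel.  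The argument for $\Hl^\Sc$ will follow by taking $\gr^0$, and the argument for $\Ddl^\Sc$ will follow by replacing $R\mu'_*$ with $\mu_*$ throughout.

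For the reduction to boundary terms: since $X_{(0)}^\partial = X$, the $a = 0$ term produces $\RHl(\bL) = R\mu'_*(\widehat{\bL} \otimes_{\widehat{\bZ}_p} \OBdlX{X})$.  For each $J \subset I^\Sc$ with $|J| \geq 1$, the map $\imath_J^\partial$ is a strict closed immersion \Pth{and in particular finite}, so $\imath_{J, \proket, *}^\partial$ is exact and commutes with $R\mu'_*$ via the obvious commutative diagram of sites, yielding
\[
    R\mu'_*(\widehat{\bL} \otimes_{\widehat{\bZ}_p} \OBdlX{X_J^\partial}^\partial) \cong \imath_{J, \an, *}^\partial \, R\mu'_{J, *}(\widehat{\bL}|_{X_J^\partial} \otimes_{\widehat{\bZ}_p} \OBdlX{X_J^\partial}),
\]
where $\mu'_J$ is the analog of $\mu'$ for the log adic space $X_J^\partial$.

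The key technical step is then the identification of $R\mu'_{J, *}(\widehat{\bL}|_{X_J^\partial} \otimes_{\widehat{\bZ}_p} \OBdlX{X_J^\partial})$ with the maximal quotient of $\RHl(\bL)|_{X_J}$ on which the residues along $D_j$ for $j \in J$ all act nilpotently, in the sense of Definition \ref{def-unip-twist}.  The argument would mimic the local computations of \cite[\aSec \logRHseccalcres]{Diao/Lan/Liu/Zhu:lrhrv}: using the toric charts of Lemma \ref{lem-OBdRp-bd-loc}, the sheaf $\OBdlX{X_J^\partial}^{+, \partial}$ becomes $\BBdRX{X_J^\partial}^{+, \partial}[[y_1, \ldots, y_n]]$, and the quotient presentation $\BBdRX{X_J^\partial}^{+, \partial} \cong \BBdRpX{X} / ([T_j^{s\flat}])_{s \in \bQ_{> 0}, \, j \in J}^\wedge$ forces the logarithmic variables $t_j$ for $j \in J$ to become topologically nilpotent in the local model.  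The generalized-eigenspace decomposition \Ref{see \cite[(\logRHeqresgeneigen) and \aCor \logRHcorDdlgrvecbdl]{Diao/Lan/Liu/Zhu:lrhrv}} then implies that only the nilpotent-residue eigenspaces survive after $R\mu'_{J, *}$, exactly matching the recipe defining the \Qtn{nilpotent part} in Definition \ref{def-unip-twist}.

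Putting these pieces together, $R\mu'_*$ of the resolution becomes a complex whose first differential is the natural map $\RHl(\bL) \to \oplus_{j \in I^\Sc} \imath_{j, \an, *}\bigl(\RHl(\bL)|_{\cD_j}^0\bigr)$; taking the kernel yields $\RHl^\Sc(\bL)$ by Definition \ref{def-unip-twist}.  The $\Hl^\Sc$-statement then follows by taking $\gr^0$, using the strict compatibility with filtrations from Corollary \ref{cor-OBdRp-cplx-bd-strict}, and the $\Ddl^\Sc$-statement follows by the same argument with $\mu_*$ in place of $R\mu'_*$, since $\mu_*(\widehat{\bL} \otimes \OBdlX{X}) = \Ddl(\bL)$ by definition.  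The main obstacle is the identification in the third paragraph: carefully tracking, in local coordinates, how the defining quotient of $\BBdRX{X_J^\partial}^{+, \partial}$ kills precisely the non-nilpotent generalized eigenspaces of the residue endomorphisms \Pth{and simultaneously ensuring that the higher $R^i\mu'_{J, *}$ vanish so that the derived pushforward of each term stays concentrated in degree $0$, thereby reducing the final bookkeeping to a Čech-style computation}; once this local analysis is in hand, the remaining spectral-sequence arguments are formal.
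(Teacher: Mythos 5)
Your plan follows essentially the same route as the paper: tensor the boundary resolution of Corollary \ref{cor-OBdRp-cplx-bd-cplx} with $\widehat{\bL}$, push forward, identify the pushforward over each stratum $X_J^\partial$ with the part of the restriction of $\RHl(\bL)$ on which the residues along $D_j$, $j \in J$, act nilpotently \Pth{a local computation with the toric charts of Lemma \ref{lem-OBdRp-bd-loc} and the residue analysis of \cite[\aSec \logRHseccalcres]{Diao/Lan/Liu/Zhu:lrhrv}}, and then compare the kernel with Definition \ref{def-unip-twist}.  The one organizational divergence is the order: the paper first proves the assertion for $\OCl^\Sc$ and $\Hl^\Sc(\bL)$, where the \Qtn{good model} and maximal-unipotent-subspace techniques of \cite[\aSec \logRHseccoh]{Diao/Lan/Liu/Zhu:lrhrv} apply at the level of $\widehat{\cO}$-modules, and then deduces the $\RHl^\Sc$ and $\Ddl^\Sc$ statements; you run the argument for $\OBdl^\Sc$ first and take $\gr^0$ afterwards, which is workable but forces the local analysis to be carried out over $\BBdRpX{X}$ rather than over $\widehat{\cO}_X$.

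The genuine gap is in your closing claim that, once the term-wise identifications and the vanishing of the higher $R^i\mu'_{J, *}$ are in place, \Qtn{the remaining spectral-sequence arguments are formal}.  Those two inputs only show that $R\mu'_*(\widehat{\bL} \otimes_{\widehat{\bZ}_p} \OBdl^\Sc)$ is computed by the complex of coherent sheaves whose terms are the nilpotent parts over the strata, and left exactness of pushforward identifies its $H^0$ with $\RHl^\Sc(\bL)$; but the proposition asserts an isomorphism with a sheaf concentrated in degree $0$, so you must also prove that this complex is exact in positive degrees.  That exactness is not formal: in the paper it is the exact complex \Refeq{\ref{eq-prop-RHl-Hl-Ddl-cpt}}, obtained by choosing a good model $L_{m_0}(X_K)$ of $L_\infty$ over $R_{K, m_0}$, using its finite projectivity to get an exact Koszul-type complex of quotients by the $(T_j^{\frac{1}{m_0}})_{j \in J}$ \Ref{\Refcf{} \cite[\aLem 2.3]{Harris/Lan/Taylor/Thorne:2016-rccsv}}, and then checking that passing to the maximal subspaces on which $\Gamma_\geom$ acts unipotently preserves exactness \Ref{\Refcf{} \cite[\aRem \logRHremcompatLmzero]{Diao/Lan/Liu/Zhu:lrhrv}}.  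Without this step \Pth{or an equivalent direct proof that $R^i\mu'_*(\widehat{\bL} \otimes_{\widehat{\bZ}_p} \OBdl^\Sc) = 0$ for $i > 0$, which is how the paper packages it via a spectral sequence over the toric chart} your argument only identifies the zeroth cohomology sheaf, not the full derived pushforward.
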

\begin{proof}
    It suffices to establish the assertion for $\Hl^\Sc(\bL)$, after which the assertions for $\RHl^\Sc(\bL)$ and $\Ddl^\Sc(\bL)$ follow.  Since the assertions are \'etale local in nature, we may suppose as in \cite[\aSec \logRHseccoh]{Diao/Lan/Liu/Zhu:lrhrv} that $X = \Spa(R, R^+)$ is an affinoid log adic space over $k$, equipped with a strictly \'etale morphism
    \[
        X \to \bD^n = \Spa(k\Talg{T_1, \ldots, T_n}, k^+\Talg{T_1, \ldots, T_n})
    \]
    \Pth{with $P = \bZ_{\geq 0}^n$ and $Q = 0$ there and} with $D^\Sc \Em X$ given by the preimage of $\{ T_1 \cdots T_r = 0 \} \Em \bD^n$, so that we have a log perfectoid affinoid covering $\widetilde{X} \to X$ as defined there such that $\widetilde{X}_K \to X_K$ is a Galois pro-Kummer \'etale covering with Galois group $\Gamma_\geom \cong (\widehat{\bZ}(1))^n$.  For each $m \geq 1$, let us write $X_{K, m} = \Spa(R_{K, m}, R_{K, m}^+) := X_K \times_{\bD^n_K} \bD^n_{K, m}$, and denote by $(\widehat{R}_{K, \infty}, \widehat{R}^+_{K, \infty})$ the $p$-adic completion of $\varinjlim_m (R_{K, m}, R^+_{K, m})$, so that $\widehat{\cO}(\widetilde{X}_K) = \widehat{R}_{K, \infty}$.  For each subset $J$ of $\{ 1, \ldots, r \}$, let $R_{J, K, m}$ denote the quotient of $R_{K, m}$ by the ideal generated by $\{ T_j \}_{j \in J}$, and let $R^+_{J, K, m}$ denote the integral closure in $R_{J, K, m}$ of the image of $R^+_{K, m}$.  Note that the nilpotent elements in $R^+_{J, K, m}$ are necessarily $p$-divisible.  Therefore, if we denote by $(\widehat{R}_{J, K, \infty}, \widehat{R}^+_{J, K, \infty})$ the $p$-adic completion of $\varinjlim_m (R_{J, K, m}, R^+_{J, K, m})$, then we have a canonical isomorphism $\widehat{R}_{K, \infty} / (T_j^s)^\wedge_{j \in J, \, s \in \bQ_{> 0}} \Mi \widehat{R}_{J, K, \infty}$, and we have $\widehat{\cO}_{X_J^\partial}^\partial(\widetilde{X}_K) = \widehat{R}_{J, K, \infty}$, as in Lemma \ref{lem-O-flat+-cl}, where $X_J^\partial \subset D^\Sc$ is defined by $\{ T_j = 0 \}_{j \in J}$ \Pth{with its log structure pulled back from $X$}.  When $m = 1$, we shall drop the subscripts \Qtn{$m$} in the above notation.

    Let $\cL := \widehat{\bL} \otimes_{\widehat{\bZ}_p} \widehat{\cO}$ and $\cL^\Sc := \widehat{\bL} \otimes_{\widehat{\bZ}_p} \widehat{\cO}^\Sc$, so that $(\widehat{\bL} \otimes_{\widehat{\bZ}_p} \OCl)|_{\widetilde{X}_K} \cong \cL|_{\widetilde{X}_K}[W_1, \ldots, W_n]$ and $(\widehat{\bL} \otimes_{\widehat{\bZ}_p} \OCl^\Sc)|_{\widetilde{X}_K} \cong \cL^\Sc|_{\widetilde{X}_K}[W_1, \ldots, W_n]$, by Lemma \ref{lem-OBdRp-bd-loc} and \cite[\aCor \logRHcorOBdlplocgr]{Diao/Lan/Liu/Zhu:lrhrv}.  Let $L_\infty := \cL(\widetilde{X}_K)$ and $L^\Sc_\infty := \cL^\Sc(\widetilde{X}_K)$.  For each $J \subset \{ 1, \ldots, r \}$, let $\cL_J := \widehat{\bL} \otimes_{\widehat{\bZ}_p} \widehat{\cO}_{X_J^\partial}^\partial$ and $L_{J, \infty} := \cL_J(\widetilde{X}_K)$.  Then $L_\infty$ is a finite projective $\widehat{R}_{K, \infty}$-module, and $L_{J, \infty} \cong L_\infty \otimes_{\widehat{R}_K} \widehat{R}_{J, K}$, for all $j$.  By evaluating the exact complexes \Refeq{\ref{eq-lem-BBdRp-cpt-cplx-gr}} and \Refeq{\ref{eq-cor-OBdRp-cplx-bd-cplx-sh-OCl}} on $\widetilde{X}$, and by \cite[\aThm \logadicthmalmostvanhat]{Diao/Lan/Liu/Zhu:lasfr}, we obtain an exact complex
    \begin{equation}\label{eq-prop-RHl-Hl-Ddl-cpt-infty}
    \begin{split}
        0 & \to L^\Sc_\infty[W_1, \ldots, W_n] \to L_\infty[W_1, \ldots, W_n] \to \oplus_{|J| = 1} \, L_{J, \infty}[W_1, \ldots, W_n] \\
        & \to \oplus_{|J| = 2} \, L_{J, \infty}[W_1, \ldots, W_n] \to \cdots \to \oplus_{|J| = r} \, L_{J, \infty}[W_1, \ldots, W_n] \to 0
    \end{split}
    \end{equation}
    respecting the variables $W_1, \ldots, W_n$.  By Corollary \ref{cor-O-flat+-cl}, Lemma \ref{lem-OBdRp-bd-loc}, and \cite[\aProp \logRHpropLOCl{} and \aLem \logRHlemLOClcoh]{Diao/Lan/Liu/Zhu:lrhrv},
    \[
        H^i({X_\proket}_{/X_K}, \widehat{\bL} \otimes_{\widehat{\bZ}_p} \OClX{X_J^\partial}^\partial) \cong H^i(\Gamma_\geom, L_{J, \infty}[W_1, \ldots, W_n])
    \]
    is zero, when $i > 0$; and is canonically isomorphic to a finite projective $R_K / (T_j)_{j \in J}$-module $L(X_{J, K}^\partial)$, when $i = 0$, whose formation is compatible with pullbacks under rational localizations and finite \'etale morphisms, by \cite[\aLem \logRHlemLdescent]{Diao/Lan/Liu/Zhu:lrhrv}.  Concretely, in the notation of \cite[\aSec \logRHseccoh]{Diao/Lan/Liu/Zhu:lrhrv}, there exists some model $L_{m_0}(X_K)$ of $L_\infty$ over $R_{K, m_0}$, for some $m_0 \geq 1$, such that $L_{m_0}(X_{J, K}^\partial) := L_{m_0}(X_K) / (T_j^{\frac{1}{m_0}})_{j \in J}$ is a good model of $L_{J, \infty}$, for each $J$, and the $R_K / (T_j)_{j \in J}$-submodule $L(X_{J, K}^\partial)$ is the maximal $K$-subspace of $L_{m_0}(X_{J, K}^\partial)$ on which $\Gamma_\geom$ acts unipotently.  Let
    \begin{equation}\label{eq-def-L-X-K-cpt}
        L(X_K)^\Sc := \ker\bigl(L(X_K) \to \oplus_{|J| = 1} \, L(X_{J, K}^\partial)\bigr).
    \end{equation}

    Since each $L_{m_0}(X_K)$ is finite projective and hence flat over $R_{K, m_0}$, by usual arguments \Pth{\Refcf{} the proof of \cite[\aLem 2.3]{Harris/Lan/Taylor/Thorne:2016-rccsv}}, we have an exact complex
    \[
    \begin{split}
        & 0 \to (T_1^{\frac{1}{m_0}} \cdots T_r^{\frac{1}{m_0}}) \, L_{m_0}(X_K) \to L_{m_0}(X_K) \to \oplus_{|J| = 1} \, \bigl(L_{m_0}(X_K) / (T_j^{\frac{1}{m_0}})_{j \in J}\bigr) \\
        & \to \oplus_{|J| = 2} \, \bigl(L_{m_0}(X_K) / (T_j^{\frac{1}{m_0}})_{j \in J}\bigr) \to \cdots \to \oplus_{|J| = r} \, \bigl(L_{m_0}(X_K) / (T_j^{\frac{1}{m_0}})_{j \in J}\bigr) \to 0,
    \end{split}
    \]
    where $J$ in the above direct sums runs over subsets of $\{ 1, \ldots, r \}$.  By taking the maximal $K$-subspaces on which $\Gamma_\geom$ acts unipotently \Pth{\Refcf{} \cite[\aRem \logRHremcompatLmzero]{Diao/Lan/Liu/Zhu:lrhrv}}, we obtain an exact complex
    \begin{equation}\label{eq-prop-RHl-Hl-Ddl-cpt}
    \begin{split}
        & 0 \to L(X_K)^\Sc \to L(X_K) \to \oplus_{|J| = 1} \, L(X_{J, K}^\partial) \\
        & \to \oplus_{|J| = 2} \, L(X_{J, K}^\partial) \to \cdots \to \oplus_{|J| = r} \, L(X_{J, K}^\partial) \to 0
    \end{split}
    \end{equation}

    Now, by the exactness of \Refeq{\ref{eq-prop-RHl-Hl-Ddl-cpt-infty}}, we have a spectral sequence
    \[
        E_1^{a, b} := H^b(\Gamma_\geom, \oplus_{|J| = a} \, L_{J, \infty}[W_1, \ldots, W_n]) \Rightarrow H^{a + b}(\Gamma_\geom, L^\Sc_\infty[W_1, \ldots, W_n]).
    \]
    By the above discussions, the $E_1$ page is concentrated on the terms $E_1^{a, 0}$.  Hence, the spectral sequence degenerates on the $E_2$ page, and by the exactness of \Refeq{\ref{eq-prop-RHl-Hl-Ddl-cpt}},
    \[
        H^i({X_\proket}_{/X_K}, \widehat{\bL} \otimes_{\widehat{\bZ}_p} \OCl^\Sc) \cong H^i(\Gamma_\geom, L^\Sc_\infty[W_1, \ldots, W_n])
    \]
    is zero, when $i > 0$; and is canonically isomorphic to $L(X_K)^\Sc$, when $i = 0$, whose formation is compatible with pullbacks under rational localizations or finite \'etale morphisms.  Thus, by comparing Definition \ref{def-unip-twist} and \Refeq{\ref{eq-def-L-X-K-cpt}} using \cite[\aRem \logRHremcompatLmzerores]{Diao/Lan/Liu/Zhu:lrhrv}, we obtain $R\mu'_*(\widehat{\bL} \otimes_{\widehat{\bZ}_p} \OCl^\Sc) \cong \Hl^\Sc(\bL)$, as desired.
\end{proof}

\begin{lem}\label{lem-L-!-DdR-to-RH}
    The canonical morphism
    \begin{equation}\label{eq-lem-L-!-DdR-to-RH}
        \Ddl^\Sc(\bL) \ho_k \BdR \to \RHl^\Sc(\bL)
    \end{equation}
    induced by \cite[(\logRHeqlemDdltoRHlfilstrict)]{Diao/Lan/Liu/Zhu:lrhrv} is injective and strictly compatible with the filtrations on both sides.  That is, the induced morphism
    \begin{equation}\label{eq-lem-L-!-DdR-to-RH-gr}
        \gr^r\bigl(\Ddl^\Sc(\bL) \ho_k \BdR\bigr) \to \gr^r\bigl(\RHl^\Sc(\bL)\bigr)
    \end{equation}
    is injective, for each $r$.  If $\bL|_U$ is a \emph{de Rham} $\bZ_p$-local system on $U_\et$, then both \Refeq{\ref{eq-lem-L-!-DdR-to-RH}} and \Refeq{\ref{eq-lem-L-!-DdR-to-RH-gr}} are isomorphisms, and $\gr \Ddl^\Sc(\bL)$ is a vector bundle of rank $\rank_{\bQ_p}(\bL)$.
\end{lem}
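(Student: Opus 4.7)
The plan is to deduce all three assertions from their analogues for the non-$\Sc$ versions, given by \cite[\aLem \logRHlemDdltoRHlfilstrict, \aCor \logRHcorDdltoRHlfilstrict, and \aCor \logRHcorDdlgrvecbdl]{Diao/Lan/Liu/Zhu:lrhrv}, via a diagram chase on the kernel sequences defining $\Ddl^\Sc(\bL)$ and $\RHl^\Sc(\bL)$ in Definition \ref{def-unip-twist}.

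First, since the morphism $\Ddl(\bL) \ho_k \BdR \to \RHl(\bL)$ of \cite[(\logRHeqlemDdltoRHlfilstrict)]{Diao/Lan/Liu/Zhu:lrhrv} is compatible with log connections, it is compatible with taking residues along each $D_j$, and hence induces morphisms $\bigl(\Ddl(\bL)|_{D_j}^0\bigr) \ho_k \BdR \to \RHl(\bL)|_{\cD_j}^0$ between the respective maximal quotients on which residues act nilpotently. This yields a commutative diagram with exact rows whose left vertical (by construction) is the morphism \Refeq{\ref{eq-lem-L-!-DdR-to-RH}}; flatness of $\BdR$ over $k$ ensures the top row remains left exact. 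Injectivity of \Refeq{\ref{eq-lem-L-!-DdR-to-RH}} then follows by chasing from the injectivity of the middle vertical. Strict compatibility with filtrations is inherited from the middle vertical because, by Definition \ref{def-unip-twist}, the filtrations on $\Ddl^\Sc(\bL)$ and $\RHl^\Sc(\bL)$ are the induced intersection filtrations from the full versions: if $x \in \Ddl^\Sc(\bL) \ho_k \BdR$ maps into $\Fil^r \RHl^\Sc(\bL) \subset \Fil^r \RHl(\bL)$, then by strictness of the full version $x \in \Fil^r\bigl(\Ddl(\bL) \ho_k \BdR\bigr)$, which combined with $x \in \Ddl^\Sc(\bL) \ho_k \BdR$ gives $x \in \Fil^r\bigl(\Ddl^\Sc(\bL) \ho_k \BdR\bigr)$. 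The injectivity of \Refeq{\ref{eq-lem-L-!-DdR-to-RH-gr}} is then a formal consequence of strictness.

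For the de Rham case, when $\bL|_U$ is de Rham, the middle vertical of the diagram becomes an isomorphism by the cited lemma, and since this isomorphism is compatible with residues, it descends to an isomorphism on the maximal nilpotent quotients; hence the right vertical is also an isomorphism. The five lemma then yields that \Refeq{\ref{eq-lem-L-!-DdR-to-RH}} is an isomorphism, and by strictness so is \Refeq{\ref{eq-lem-L-!-DdR-to-RH-gr}}. For the rank assertion, by \cite[\aCor \logRHcorDdlgrvecbdl]{Diao/Lan/Liu/Zhu:lrhrv}, $\gr \Ddl(\bL)$ is a vector bundle of rank $\rank_{\bQ_p}(\bL)$, and I would argue locally, in the setup of \cite[\aSec \logRHseccoh]{Diao/Lan/Liu/Zhu:lrhrv}, that $\Ddl^\Sc(\bL)$ is obtained from $\Ddl(\bL)$ by a vector-bundle-preserving modification along $D^\Sc$: one multiplies the generalized zero-eigenspaces of the residues along each $D_j$ for $j \in I^\Sc$ by the local equation $T_j$ of $D_j$. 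This modification is compatible with the Hodge filtration (by Griffiths transversality and the fact that residues respect the filtration), so $\gr \Ddl^\Sc(\bL)$ remains a vector bundle of rank $\rank_{\bQ_p}(\bL)$.

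The main obstacle I anticipate is the rank statement: namely, verifying carefully via local coordinates that the modification giving $\Ddl^\Sc(\bL)$ preserves the vector bundle structure together with the Hodge filtration in a way that transfers to the associated graded. The injectivity and strict compatibility in the first two parts reduce essentially formally, via the diagram chase, to the corresponding results for the non-$\Sc$ version.
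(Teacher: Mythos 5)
There is a genuine gap, at two places. First, your strictness argument is not formal: what you actually need is the identity $\Fil^r\bigl(\Ddl^\Sc(\bL) \ho_k \BdR\bigr) = \bigl(\Ddl^\Sc(\bL) \ho_k \BdR\bigr) \cap \Fil^r\bigl(\Ddl(\bL) \ho_k \BdR\bigr)$.  While $\Fil^\bullet \Ddl^\Sc(\bL)$ is by definition the filtration induced from $\Ddl(\bL)$, the filtration on $\Ddl^\Sc(\bL) \ho_k \BdR$ is the \emph{convolution} of this induced filtration with $\Fil^\bullet \BdR$, and its agreement with the intersection filtration inside $\Ddl(\bL) \ho_k \BdR$ is precisely the kind of strictness statement at stake (equivalently, the injectivity of \Refeq{\ref{eq-lem-L-!-DdR-to-RH-gr}}); it does not follow from the non-$\Sc$ case by a diagram chase, and for general \Pth{non--de Rham} $\bL$ the filtration of $\Ddl(\bL)$ need not have locally free graded pieces, so no local adapted-basis or splitting argument is available, nor is it clear that $\gr$ commutes with $\ho_k \BdR$ here.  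Second---and this is the point you yourself flag---the assertion that $\gr \Ddl^\Sc(\bL)$ is a vector bundle of rank $\rank_{\bQ_p}(\bL)$ is the heart of the matter and is left unproven; moreover the sketched justification is flawed, since by Griffiths transversality the residues \emph{shift} the Hodge filtration by $-1$ rather than preserving it, so the claim that the elementary modification along $D^\Sc$ is compatible with the Hodge filtration cannot be deduced as stated.  Your chase does correctly give the plain injectivity of \Refeq{\ref{eq-lem-L-!-DdR-to-RH}}, and the five-lemma step in the de Rham case is fine once the filtration statements are known \Pth{granting that formation of the maximal nilpotent quotients commutes with $\ho_k \BdR$, which holds because the residue eigenvalues are rational}, but as written the reduction establishes neither strict compatibility nor the rank assertion.

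The paper's route avoids both problems: by Proposition \ref{prop-RHl-Hl-Ddl-cpt} and its proof, $\Ddl^\Sc(\bL) \cong \mu_*(\widehat{\bL} \otimes_{\widehat{\bZ}_p} \OBdl^\Sc)$ and $\RHl^\Sc(\bL) \cong R\mu'_*(\widehat{\bL} \otimes_{\widehat{\bZ}_p} \OBdl^\Sc)$, with explicit local finite projective models \Pth{the modules $L(X_K)^\Sc$ constructed there, whose formation is compatible with localization}; hence the arguments proving \cite[\aLem \logRHlemDdltoRHlfilstrict{} and \aCors \logRHcorDdltoRHlfilstrict{} and \logRHcorDdlgrvecbdl]{Diao/Lan/Liu/Zhu:lrhrv} apply verbatim with $\OBdl$ replaced by $\OBdl^\Sc$, yielding injectivity, strictness, the de Rham isomorphisms, and the local freeness of $\gr \Ddl^\Sc(\bL)$ simultaneously.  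If you want to salvage your formal reduction, you would need to supply a proof of the filtration compatibility above and an independent proof of the rank statement, at which point you are essentially redoing the local analysis that the period-sheaf approach provides.
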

\begin{proof}
    These follow from \cite[\aLem \logRHlemDdltoRHlfilstrict, and \aCors \logRHcorDdltoRHlfilstrict{} and \logRHcorDdlgrvecbdl]{Diao/Lan/Liu/Zhu:lrhrv}, and from Proposition \ref{prop-RHl-Hl-Ddl-cpt} and its proof.
\end{proof}

\begin{lem}\label{lem-L-!-coh-comp-arith}
    Suppose that $\bL|_U$ is a \emph{de Rham} $\bZ_p$-local system on $U_\et$.  For each $i \geq 0$, we have a canonical $\Gal(\AC{k} / k)$-equivariant isomorphism
    \begin{equation}\label{eq-lem-L-!-coh-comp-arith-dR}
        H_{\dR, \Sc}^i\bigl(\cU, \RH(\bL)\bigr) \cong H_{\dR, \Sc}^i\bigl(U_\an, \DdR(\bL)\bigr) \otimes_k \BdR,
    \end{equation}
    which induces \Pth{by taking $\gr^0$} a canonical $\Gal(\AC{k} / k)$-equivariant isomorphism
    \begin{equation}\label{eq-lem-L-!-coh-comp-arith-Hdg}
        H_{\Hi, \Sc}^i\bigl(U_{K, \an}, \Hc(\bL)\bigr) \cong \oplus_{a + b = i} \Bigl(H_{\Hdg, \Sc}^{a, b}\bigl(U_\an, \DdR(\bL)\bigr) \otimes_k K(-a)\Bigr).
    \end{equation}
\end{lem}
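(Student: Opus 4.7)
The plan is to combine Lemma \ref{lem-unip-twist-qis} with the filtered isomorphism from Lemma \ref{lem-L-!-DdR-to-RH}, together with a cohomology base change argument, following the strategy of \cite[\aLem \logRHlemlogRHarithcompdR]{Diao/Lan/Liu/Zhu:lrhrv} in the present setting, with the $(-D^\Sc)$-twist absorbed into the refined sheaves $\RHl^\Sc(\bL)$, $\Ddl^\Sc(\bL)$, and $\Hl^\Sc(\bL)$ from Definition \ref{def-unip-twist}.

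By Lemma \ref{lem-unip-twist-qis}, the canonical morphisms $\DRl\bigl(\RH(\bL)(-D^\Sc)\bigr) \to \DRl\bigl(\RHl^\Sc(\bL)\bigr)$ and $\DRl\bigl(\DdR(\bL)(-D^\Sc)\bigr) \to \DRl\bigl(\Ddl^\Sc(\bL)\bigr)$, and likewise for the Higgs complexes, are quasi-isomorphisms strictly compatible with the filtrations. The de Rham hypothesis on $\bL|_U$ together with Lemma \ref{lem-L-!-DdR-to-RH} then supplies a strict filtered isomorphism $\Ddl^\Sc(\bL) \ho_k \BdR \cong \RHl^\Sc(\bL)$, which extends termwise to a strict filtered quasi-isomorphism
\[
    \DRl\bigl(\Ddl^\Sc(\bL)\bigr) \ho_k \BdR \cong \DRl\bigl(\RHl^\Sc(\bL)\bigr)
\]
of complexes on $X_\an$. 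Thus \Refeq{\ref{eq-lem-L-!-coh-comp-arith-dR}} reduces to the filter-compatible base change assertion
\[
    H^i\bigl(X_\an, \DRl(\Ddl^\Sc(\bL)) \ho_k \BdR\bigr) \cong H^i\bigl(X_\an, \DRl(\Ddl^\Sc(\bL))\bigr) \otimes_k \BdR.
\]

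For this base change, since $X$ is proper and $\gr \Ddl^\Sc(\bL)$ is a vector bundle by Lemma \ref{lem-L-!-DdR-to-RH}, each $H^i\bigl(X_\an, \gr^r \DRl(\Ddl^\Sc(\bL))\bigr)$ is a finite-dimensional $k$-vector space. We filter $\BdR$ by $\Fil^r \BdR = \xi^r \BdRp$ with $\gr^r \BdR \cong K(r)$, reduce termwise to the graded case where flatness of $K$ over $k$ and finite-dimensionality permit commuting $H^i$ with $\otimes_k K(r)$, and then reassemble via $\BdRp = \varprojlim_N \BdRp / \xi^N$ and $\BdR = \varinjlim_N \xi^{-N} \BdRp$, invoking a Mittag--Leffler argument to handle the inverse limit. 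This is precisely the argument of \cite[\aLem \logRHlemlogRHarithcompdR]{Diao/Lan/Liu/Zhu:lrhrv}, and the only new ingredient here, namely the presence of $(-D^\Sc)$, has already been absorbed into the refined sheaves above. The Hodge isomorphism \Refeq{\ref{eq-lem-L-!-coh-comp-arith-Hdg}} then follows by taking $\gr^0$ of the filtered isomorphism \Refeq{\ref{eq-lem-L-!-coh-comp-arith-dR}}: using $\gr^b \BdR \cong K(b)$, the product filtration yields $\gr^0\bigl(\Ddl^\Sc(\bL) \ho_k \BdR\bigr) \cong \oplus_a \bigl(\gr^a \Ddl^\Sc(\bL)\bigr) \otimes_k K(-a)$, and Lemma \ref{lem-unip-twist-qis} identifies the Higgs side with $H^i\bigl(X_{K, \an}, \Hil(\Hl^\Sc(\bL))\bigr)$, paralleling \cite[\aLem \logRHlemlogRHarithcompHT]{Diao/Lan/Liu/Zhu:lrhrv}. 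The main obstacle is the base change step above---commuting hypercohomology on $X_\an$ with the completed tensor product $\ho_k \BdR$ compatibly with filtrations---which is handled exactly as in the cited lemma from \cite{Diao/Lan/Liu/Zhu:lrhrv}.
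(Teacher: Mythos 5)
Your proposal is correct and follows essentially the same route as the paper: the paper's proof is precisely to combine Proposition \ref{prop-RHl-Hl-Ddl-cpt} (which underlies Lemma \ref{lem-L-!-DdR-to-RH}), Lemmas \ref{lem-unip-twist-qis} and \ref{lem-L-!-DdR-to-RH}, and then run the same filtered base-change argument as in \cite[\aLems \logRHlemlogRHarithcompdR{} and \logRHlemlogRHarithcompHT]{Diao/Lan/Liu/Zhu:lrhrv}, with the $(-D^\Sc)$-twist absorbed into $\RHl^\Sc(\bL)$, $\Ddl^\Sc(\bL)$, and $\Hl^\Sc(\bL)$ exactly as you describe. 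Your write-up simply makes explicit the base-change and $\gr^0$ steps that the paper delegates to the cited lemmas.
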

\begin{proof}
    These follow from Proposition \ref{prop-RHl-Hl-Ddl-cpt}, Lemmas \ref{lem-unip-twist-qis} and \ref{lem-L-!-DdR-to-RH}, and the same arguments as in the proofs of \cite[\aLems \logRHlemlogRHarithcompdR{} and \logRHlemlogRHarithcompHT]{Diao/Lan/Liu/Zhu:lrhrv}.
\end{proof}

We are ready to complete the following:
\begin{proof}[Proof of Theorem \ref{thm-L-!-coh-comp}]
    By applying $R\mu'_*$ to the exact sequences in Lemma \ref{prop-Poin-lem}, and by the projection formula, Lemma \ref{lem-unip-twist-qis}, and Proposition \ref{prop-RHl-Hl-Ddl-cpt}, we can replace the targets of the isomorphisms in Proposition \ref{prop-L-!-coh-comp-proket} with $H_{\dR, \Sc}^i\bigl(\cU, \RH(\bL)\bigr)$ and $H_{\Hi, \Sc}^i\bigl(U_{K, \an}, \Hc(\bL)\bigr)$, respectively, and obtain the canonical isomorphisms \Refeq{\ref{eq-thm-L-!-coh-comp-dR-RH}} and \Refeq{\ref{eq-thm-L-!-coh-comp-Hi}}.  Consequently, by Lemma \ref{lem-L-!-coh-comp-arith}, we also obtain the canonical isomorphisms \Refeq{\ref{eq-thm-L-!-coh-comp-dR}} and \Refeq{\ref{eq-thm-L-!-coh-comp-Hdg}}.  Finally, these isomorphisms imply that
    \[
        \dim_k\bigl(H_{\dR, \Sc}^i\bigl(U_\an, \DdR(\bL)\bigr)\bigr) = \sum_{a + b = i} \dim_k\bigl(H_{\Hdg, \Sc}^{a, b}\bigl(U_\an, \DdR(\bL)\bigr)\bigr),
    \]
    and hence the spectral sequence \Refeq{\ref{eq-thm-L-!-coh-comp-spec-seq}} degenerates on the $E_1$ page, as desired.
\end{proof}

In the remainder of this subsection, let us provide some criteria for cohomology with different partial compact support conditions to be isomorphic to each other.

\begin{lem}\label{lem-dR-qis-geom}
    Let $I^+_\geom$ denote the subset of $I$ consisting of $j \in I$ such that the eigenvalues of the residue of $\RHl(\bL)$ along $D_j$ are all in $\bQ \cap (0, 1)$ \Pth{\ie, nonzero}.  Let $E = \sum_{j \in I} \, c_j D_j$, where $c_j \in \bZ$, be a divisor satisfying the following condition:
    \begin{enumerate}
        \item If $j \in I^+_\geom$, then there is no condition on $c_j$.

        \item If $j \in I^\Sc - I^+_\geom$, then $c_j \leq 0$.

        \item If $j \in I^\Snc - I^+_\geom$, then $c_j \geq 0$.
    \end{enumerate}
    Let us write $E = E^+ - E^-$, where $E^+ := \sum_{j \in I^+_\geom, \, c_j \geq 0} \, c_j D_j + \sum_{j \in I^\Snc - I^+_\geom} \, c_j D_j$ and $E^- := - \sum_{j \in I^+_\geom, \, c_j < 0} \, c_j D_j - \sum_{j \in I^\Sc - I^+_\geom} c_j D_j$ are both effective divisors.  Then the canonical morphisms
    \begin{equation}\label{eq-lem-dR-qis-geom-plus}
        \DRl\bigl(\bigl(\RHl(\bL)\bigr)(-D^\Sc)\bigr) \to \DRl\bigl(\bigl(\RHl(\bL)\bigr)(-D^\Sc + E^+)\bigr)
    \end{equation}
    and
    \begin{equation}\label{eq-lem-dR-qis-geom-minus}
        \DRl\bigl(\bigl(\RHl(\bL)\bigr)(-D^\Sc + E)\bigr) \to \DRl\bigl(\bigl(\RHl(\bL)\bigr)(-D^\Sc + E^+)\bigr)
    \end{equation}
    are quasi-isomorphisms, which induce a canonical isomorphism
    \[
        H_{\dR, \Sc}^i\bigl(\cU, \RH(\bL)\bigr) \cong H^i\bigl(\cX, \DRl\bigl((\RHl(\bL))(-D^\Sc + E)\bigr)\bigr).
    \]
\end{lem}
\begin{proof}
    By the same argument as in the proof of \cite[\aLem 2.7]{Esnault/Viehweg:1992-LVT-B}, for any $c \in \bZ$, the eigenvalues of the residue of $\bigl(\RHl(\bL)\bigr)(c D_j)$ along $D_j$ are the corresponding eigenvalues of $\RHl(\bL)$ minus $c$.  Since the eigenvalues of the residues of $\RHl(\bL)$ are all in $\bQ \cap [0, 1)$, the canonical morphism \Refeq{\ref{eq-lem-dR-qis-geom-plus}} \Pth{\resp{} \Refeq{\ref{eq-lem-dR-qis-geom-minus}}} is a quasi-isomorphism, by the same argument as in the proof of \cite[Properties 2.9 a)]{Esnault/Viehweg:1992-LVT-B} \Pth{\resp \cite[Properties 2.9 b)]{Esnault/Viehweg:1992-LVT-B}}, because none of the eigenvalues of the residues of $\bigl(\RHl(\bL)\bigr)(c_j D_j)$ are in $\bZ_{\geq 1}$ \Pth{\resp $\bZ_{\leq 0}$}, by the choice of $E^+$ \Pth{\resp $E^-$}.
\end{proof}

\begin{cor}\label{cor-dR-qis-geom}
    Let $I^+_\geom$ be as in Lemma \ref{lem-dR-qis-geom}.  Suppose
    \begin{equation}\label{eq-cor-dR-qis-geom-cond}
        I^\Sc - I^+_\geom \subset I^\Scalt \subset I^\Sc \cup I^+_\geom \subset I.
    \end{equation}
    Then, for each $i \geq 0$, we have a canonical $\Gal(\AC{k} / k)$-equivariant isomorphism
    \begin{equation}\label{eq-cor-dR-qis-geom}
        H_{\dR, \Sc}^i\bigl(\cU, \RH(\bL)\bigr) \cong H_{\dR, \Scalt}^i\bigl(\cU, \RH(\bL)\bigr),
    \end{equation}
    which induces \Pth{by taking $\gr^0$} a canonical $\Gal(\AC{k} / k)$-equivariant isomorphism
    \begin{equation}\label{eq-cor-dR-qis-geom-Hdg}
        H_{\Hi, \Sc}^i\bigl(U_{K, \an}, \Hc(\bL)\bigr) \cong H_{\Hi, \Scalt}^i\bigl(U_{K, \an}, \Hc(\bL)\bigr).
    \end{equation}
    Since $K$ is a field extension of $\bQ_p$, by Theorem \ref{thm-L-!-coh-comp}, for $\bL_{\bQ_p} := \bL \otimes_{\bZ_p} \bQ_p$, we also obtain a canonical $\Gal(\AC{k} / k)$-equivariant isomorphism
    \begin{equation}\label{eq-cor-dR-qis-geom-et}
        H_{\et, \Sc}^i(U_K, \bL_{\bQ_p}) \cong H_{\et, \Scalt}^i(U_K, \bL_{\bQ_p}).
    \end{equation}
\end{cor}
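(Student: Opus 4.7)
The plan is to deduce the corollary from Lemma \ref{lem-dR-qis-geom} by choosing a divisor $E$ that transforms $-D^\Sc$ into $-D^\Scalt$, and then to transfer the resulting de Rham and Higgs isomorphisms to the \'etale side via Theorem \ref{thm-L-!-coh-comp}.

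First, I would set $E := D^\Sc - D^\Scalt = \sum_{j \in I} c_j D_j$ with $c_j = 1$ for $j \in I^\Sc \setminus I^\Scalt$, $c_j = -1$ for $j \in I^\Scalt \setminus I^\Sc$, and $c_j = 0$ otherwise; this ensures $-D^\Sc + E = -D^\Scalt$. To apply Lemma \ref{lem-dR-qis-geom}, I would then check its three numerical conditions against the hypothesis \eqref{eq-cor-dR-qis-geom-cond}. For $j \in I^\Sc - I^+_\geom$, the containment $I^\Sc - I^+_\geom \subset I^\Scalt$ forces $j \in I^\Sc \cap I^\Scalt$, so $c_j = 0 \leq 0$. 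For $j \in I^\Snc - I^+_\geom$, we have $j \notin I^\Sc \cup I^+_\geom$, and the containment $I^\Scalt \subset I^\Sc \cup I^+_\geom$ forces $j \notin I^\Scalt$, so $c_j = 0 \geq 0$. No condition is required when $j \in I^+_\geom$. Lemma \ref{lem-dR-qis-geom} then produces a canonical $\Gal(\AC{k}/k)$-equivariant quasi-isomorphism
\[
    \DRl\bigl(\RHl(\bL)(-D^\Sc)\bigr) \to \DRl\bigl(\RHl(\bL)(-D^\Scalt)\bigr),
\]
and taking $H^i(\cX, -)$ yields \eqref{eq-cor-dR-qis-geom}.

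For the Higgs statement \eqref{eq-cor-dR-qis-geom-Hdg}, I would observe that the morphisms appearing in \eqref{eq-lem-dR-qis-geom} are induced by inclusions of twists of the same locally free sheaf by sections cutting out components of $D$, and hence are strictly compatible with the Hodge filtrations. Passing to $\gr^0$ therefore gives a quasi-isomorphism of log Higgs complexes $\Hil\bigl(\Hl(\bL)(-D^\Sc)\bigr) \to \Hil\bigl(\Hl(\bL)(-D^\Scalt)\bigr)$, exactly parallel to the deduction of the Higgs statement from the de Rham one in Lemma \ref{lem-unip-twist-qis}.

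Finally, for the \'etale iso \eqref{eq-cor-dR-qis-geom-et}, I would invoke the Hodge--Tate comparison \eqref{eq-thm-L-!-coh-comp-Hi}, which, crucially, is valid for any $\bZ_p$-local system with no de Rham hypothesis. Applying it for both $\Sc$ and $\Scalt$ and combining with the Higgs iso just constructed produces a canonical $\Gal(\AC{k}/k)$-equivariant $K$-linear iso
\[
    H_{\et, \Sc}^i(U_K, \bL_{\bQ_p}) \otimes_{\bQ_p} K \cong H_{\et, \Scalt}^i(U_K, \bL_{\bQ_p}) \otimes_{\bQ_p} K,
\]
from which the desired isomorphism over $\bQ_p$ should follow by faithfully flat descent along $\bQ_p \to K$. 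The main obstacle I anticipate is making this last descent genuinely $\Gal$-equivariant: faithful flatness alone yields equality of $\bQ_p$-dimensions, and one must additionally argue that the $K$-linear iso is the base change of a canonical iso of $\bQ_p[\Gal(\AC{k}/k)]$-modules. I would justify this by tracing through the naturality of the Hodge--Tate comparison at the sheaf level, so that the composite iso is manifestly induced by $\Gal$-equivariant maps before any base change is performed.
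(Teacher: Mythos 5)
Your de Rham and Higgs steps are essentially correct and amount to the intended use of Lemma \ref{lem-dR-qis-geom}: with $E = D^\Sc - D^\Scalt$ the nonzero coefficients $c_j$ occur only for $j \in I^+_\geom$ precisely because of \Refeq{\ref{eq-cor-dR-qis-geom-cond}}, so the lemma applies and gives \Refeq{\ref{eq-cor-dR-qis-geom}} (strictly speaking as a zig-zag through the twist by $-D^\Sc + E^+$, not a single morphism of complexes), and the graded/residue argument gives \Refeq{\ref{eq-cor-dR-qis-geom-Hdg}} as in Lemma \ref{lem-unip-twist-qis}.  The paper argues slightly differently: it first observes that \Refeq{\ref{eq-cor-dR-qis-geom-cond}} forces $I^\Sc - I^+_\geom = I^\Scalt - I^+_\geom$ and $I^\Sc \cup I^+_\geom = I^\Scalt \cup I^+_\geom$, and reduces to the nested case $I^\Scalt = I^\Sc - I^+_\geom \subset I^\Sc$.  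This reduction is not cosmetic; it is what makes the last part of the corollary work, and it is exactly the point where your proposal has a genuine gap.

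The gap is the descent of \Refeq{\ref{eq-cor-dR-qis-geom-et}} from $K$ to $\bQ_p$, which you flag but do not resolve.  In the non-nested case there is no canonical $\bQ_p$-rational, $\Gal(\AC{k}/k)$-equivariant morphism between $H_{\et, \Sc}^i(U_K, \bL_{\bQ_p})$ and $H_{\et, \Scalt}^i(U_K, \bL_{\bQ_p})$ to begin with: canonical maps of the sheaves $\jmath_{\ket, !}(\bL|_{U^?_\ket})$, and of the period sheaves $\widehat{\cO}^?_X$ and $\BBdRX{X}^?$, exist only when one index set contains the other, and every comparison map you can write \Qtn{at the sheaf level} factors through period sheaves and is therefore only $K$- or $\BdR$-linear.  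Moreover, a Galois-equivariant isomorphism after $\otimes_{\bQ_p} K$ does not descend in general, even abstractly: a nonsplit Kummer extension $V$ of $\bQ_p$ by $\bQ_p(1)$ satisfies $V \otimes_{\bQ_p} K \cong \bigl(\bQ_p \oplus \bQ_p(1)\bigr) \otimes_{\bQ_p} K$ equivariantly, since $H^1\bigl(\Gal(\AC{k}/k), K(1)\bigr) = 0$ by Tate, while $V$ is not isomorphic to $\bQ_p \oplus \bQ_p(1)$; so \Qtn{naturality} alone cannot produce the asserted canonical $\bQ_p$-isomorphism.  The repair is the reduction you skipped: both pairs $\bigl(I^\Sc, I^\Sc - I^+_\geom\bigr)$ and $\bigl(I^\Scalt, I^\Scalt - I^+_\geom\bigr) = \bigl(I^\Scalt, I^\Sc - I^+_\geom\bigr)$ are nested and satisfy \Refeq{\ref{eq-cor-dR-qis-geom-cond}}, so there are canonical $\bZ_p$-rational, Galois-equivariant morphisms from $H_{\et, \Sc}^i$ and $H_{\et, \Scalt}^i$ to the cohomology with partial compact support along $\cup_{j \in I^\Sc - I^+_\geom} D_j$; by Lemma \ref{lem-dR-qis-geom} and the comparison \Refeq{\ref{eq-thm-L-!-coh-comp-Hi}} each becomes an isomorphism after the faithfully flat extension $\bQ_p \to K$, hence is already an isomorphism over $\bQ_p$, and \Refeq{\ref{eq-cor-dR-qis-geom-et}} is the resulting composite.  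The same reduction also upgrades your de Rham and Higgs zig-zags to genuine canonical morphisms.
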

\begin{proof}
    Since $I^\Sc - I^+_\geom = I^\Scalt - I^+_\geom$ and $I^\Sc \cup I^+_\geom = I^\Scalt \cup I^+_\geom$, we may assume that $I^\Scalt = I^\Sc - I^+_\geom \subset I^\Sc$, in which case there are compatible canonical morphisms from the cohomology with compact support condition defined by $\Sc$ to that defined by $\Scalt$, and apply Lemma \ref{lem-dR-qis-geom} and Theorem \ref{thm-L-!-coh-comp}.
\end{proof}

\begin{lem}\label{lem-dR-qis-arith}
    Let $I^+_\arith$ denote the subset of $I$ consisting of $j \in I$ such that the eigenvalues of the residue of $\Ddl(\bL)$ along $D_j$ are all in $\bQ \cap (0, 1)$ \Pth{\ie, nonzero}.  Let $E = \sum_{j \in I} \, c_j D_j$, where $c_j \in \bZ$, be a divisor satisfying the following conditions:
    \begin{enumerate}
        \item If $j \in I^+_\arith$, then there is no condition on $c_j$.

        \item If $j \in I^\Sc - I^+_\arith$, then $c_j \leq 0$.

        \item If $j \in I^\Snc - I^+_\arith$, then $c_j \geq 0$.
    \end{enumerate}
    Let us write $E = E^+ - E^-$, where $E^+ := \sum_{j \in I^+_\arith, \, c_j \geq 0} \, c_j D_j + \sum_{j \in I^\Snc - I^+_\arith} \, c_j D_j$ and $E^- := - \sum_{j \in I^+_\arith, \, c_j < 0} \, c_j D_j - \sum_{j \in I^\Sc - I^+_\arith} c_j D_j$ are both effective divisors.  Then the canonical morphisms
    \begin{equation}\label{eq-lem-dR-qis-arith-plus}
        \DRl\bigl(\bigl(\Ddl(\bL)\bigr)(-D^\Sc)\bigr) \to \DRl\bigl(\bigl(\Ddl(\bL)\bigr)(-D^\Sc + E^+)\bigr)
    \end{equation}
    and
    \begin{equation}\label{eq-lem-dR-qis-arith-minus}
        \DRl\bigl(\bigl(\Ddl(\bL)\bigr)(-D^\Sc + E)\bigr) \to \DRl\bigl(\bigl(\Ddl(\bL)\bigr)(-D^\Sc + E^+)\bigr)
    \end{equation}
    are quasi-isomorphisms, which induce a canonical isomorphism
    \[
        H_{\dR, \Sc}^i\bigl(U_\an, \DdR(\bL)\bigr) \cong H^i\bigl(X_\an, \DRl\bigl((\Ddl(\bL))(-D^\Sc + E)\bigr)\bigr).
    \]
\end{lem}
\begin{proof}
    As in the proof of Lemma \ref{lem-dR-qis-geom}, these follow from the same arguments as in the proofs of \cite[\aLem 2.7 and Properties 2.9]{Esnault/Viehweg:1992-LVT-B}.
\end{proof}

\begin{cor}\label{cor-dR-qis-arith}
    Let $I^+_\arith$ be as in Lemma \ref{lem-dR-qis-arith}.  Suppose
    \begin{equation}\label{eq-cor-dR-qis-arith-cond}
        I^\Sc - I^+_\arith \subset I^\Scalt \subset I^\Sc \cup I^+_\arith \subset I.
    \end{equation}
    Then, for each $i \geq 0$, we have a canonical isomorphism
    \begin{equation}\label{eq-cor-dR-qis-arith}
        H_{\dR, \Sc}^i\bigl(U_\an, \DdR(\bL)\bigr) \cong H_{\dR, \Scalt}^i\bigl(U_\an, \DdR(\bL)\bigr),
    \end{equation}
    which induces, for each $a \in \bZ$, \Pth{by taking $\gr^a$} a canonical isomorphism
    \begin{equation}\label{eq-cor-dR-qis-arith-Hdg}
        H_{\Hdg, \Sc}^{a, i - a}\bigl(U_\an, \DdR(\bL)\bigr) \cong H_{\Hdg, \Scalt}^{a, i - a}\bigl(U_\an, \DdR(\bL)\bigr).
    \end{equation}
\end{cor}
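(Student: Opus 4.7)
The plan is to parallel the proof of Corollary \ref{cor-dR-qis-geom}, with $\RHl(\bL)$ replaced by $\Ddl(\bL)$ and Lemma \ref{lem-dR-qis-geom} replaced by Lemma \ref{lem-dR-qis-arith}. The hypothesis \Refeq{\ref{eq-cor-dR-qis-arith-cond}} is equivalent to the two equalities $I^\Sc - I^+_\arith = I^\Scalt - I^+_\arith$ and $I^\Sc \cup I^+_\arith = I^\Scalt \cup I^+_\arith$; that is, $I^\Sc$ and $I^\Scalt$ differ only inside $I^+_\arith$. By transitivity, chaining through the common subset $J := I^\Sc - I^+_\arith = I^\Scalt - I^+_\arith$, it suffices to treat the case $I^\Scalt = I^\Sc - I^+_\arith \subset I^\Sc$, so that $D^\Scalt \leq D^\Sc$ and one has a canonical inclusion $\Ddl(\bL)(-D^\Sc) \Em \Ddl(\bL)(-D^\Scalt)$.

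Setting $E := \sum_{j \in I^\Sc \cap I^+_\arith} D_j$, one has $-D^\Sc + E = -D^\Scalt$.  The coefficients $c_j$ are $1$ for $j \in I^\Sc \cap I^+_\arith$ and $0$ otherwise; since all nonzero $c_j$ occur at indices $j \in I^+_\arith$ \Pth{on which Lemma \ref{lem-dR-qis-arith} imposes no constraint}, and the zero coefficients trivially satisfy both $c_j \leq 0$ and $c_j \geq 0$, the three bullet-point conditions of Lemma \ref{lem-dR-qis-arith} all hold \Pth{with $E = E^+$ and $E^- = 0$}.  The lemma then yields a quasi-isomorphism
\[
    \DRl\bigl(\Ddl(\bL)(-D^\Sc)\bigr) \to \DRl\bigl(\Ddl(\bL)(-D^\Scalt)\bigr),
\]
and taking hypercohomology over $X_\an$ gives the isomorphism \Refeq{\ref{eq-cor-dR-qis-arith}}.

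For the Hodge-graded version \Refeq{\ref{eq-cor-dR-qis-arith-Hdg}}, I would verify that the above quasi-isomorphism is strictly compatible with the Hodge filtrations on both sides.  The filtration on $\Ddl(\bL)(-D^*)$ for $* = \Sc$ or $\Scalt$ is, by definition, the Hodge filtration on $\Ddl(\bL)$ tensored with the corresponding invertible ideal, so the inclusion is manifestly strict term by term.  The Esnault--Viehweg argument underlying Lemma \ref{lem-dR-qis-arith} \Pth{after \cite[\aLem 2.7 and Properties 2.9]{Esnault/Viehweg:1992-LVT-B}} is a local Koszul-type computation that reduces the vanishing of the mapping cone to eigenvalue conditions on the residue endomorphism of $\Ddl(\bL)$ along the relevant $D_j$.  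Once one verifies that this local computation descends to each Hodge graded piece---which rests on the compatibility of the residue with the Hodge filtration under the logarithmic Griffiths transversality satisfied by $\Ddl(\bL)$---one obtains a quasi-isomorphism $\gr^a \DRl\bigl(\Ddl(\bL)(-D^\Sc)\bigr) \to \gr^a \DRl\bigl(\Ddl(\bL)(-D^\Scalt)\bigr)$ for every $a \in \bZ$, whence \Refeq{\ref{eq-cor-dR-qis-arith-Hdg}} follows on taking hypercohomology.  The main obstacle is precisely this last compatibility; everything else is a formal reduction from Lemma \ref{lem-dR-qis-arith}, but here one must carefully track how the residue interacts with the Hodge filtration in the proof of that lemma.
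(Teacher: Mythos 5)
Your treatment of the de Rham isomorphism \Refeq{\ref{eq-cor-dR-qis-arith}} is correct and is exactly the paper's argument: reduce to $I^\Scalt = I^\Sc - I^+_\arith$ using the two equalities forced by \Refeq{\ref{eq-cor-dR-qis-arith-cond}}, take $E = \sum_{j \in I^\Sc \cap I^+_\arith} D_j$ \Pth{so $-D^\Sc + E = -D^\Scalt$, $E = E^+$, $E^- = 0$}, and apply Lemma \ref{lem-dR-qis-arith}.

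The gap is in your plan for \Refeq{\ref{eq-cor-dR-qis-arith-Hdg}}, and the step you flag as the ``main obstacle'' is not merely unverified but fails in general. You hope to show that the quasi-isomorphism $\DRl\bigl(\Ddl(\bL)(-D^\Sc)\bigr) \to \DRl\bigl(\Ddl(\bL)(-D^\Scalt)\bigr)$ remains a quasi-isomorphism on each $\gr^a$ of the sheaf-level complexes, by tracking the residue against the Hodge filtration.  But Griffiths transversality gives $\nabla(\Fil^p) \subset \Fil^{p - 1} \otimes \Omega^{\log, 1}_X$, so the residue along $D_j$ \emph{shifts} the filtration down by one; consequently, on the two-term quotient complexes in the Esnault--Viehweg argument, the graded differential $\gr^{p} \to \gr^{p - 1}$ is the \emph{graded} residue, from which the invertible shift by the integer $c_j$ drops out \Pth{it preserves $\Fil^p$ and hence dies in $\gr^{p-1}$}.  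The graded residue is typically not invertible even when all residue eigenvalues lie in $\bQ \cap (0, 1)$: for instance, a scalar residue $\tfrac{1}{2} \cdot \mathrm{id}$ on a bundle with a two-step Hodge filtration induces the zero map $\gr^1 \to \gr^0$, so the graded cone is not acyclic and the sheaf-level map on $\gr^a$ is not a quasi-isomorphism.  So no amount of careful bookkeeping of ``residue versus filtration'' will make your local argument descend to graded pieces.  This is why the paper's proof invokes Theorem \ref{thm-L-!-coh-comp} in addition to Lemma \ref{lem-dR-qis-arith}: the passage to Hodge graded pieces is made at the level of cohomology, using the comparison isomorphisms and the $E_1$-degeneration of the Hodge--de Rham spectral sequences for both support conditions \Pth{rather than a filtered quasi-isomorphism of complexes of sheaves}.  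To complete your proof you should replace the final paragraph by such an argument, e.g., deducing from \Refeq{\ref{eq-cor-dR-qis-arith}} and the filtered comparison \Refeq{\ref{eq-thm-L-!-coh-comp-dR}}, together with the Hodge--Tate decomposition \Refeq{\ref{eq-thm-L-!-coh-comp-Hdg}} and degeneration \Refeq{\ref{eq-thm-L-!-coh-comp-spec-seq}}, that the canonical map is strict and induces isomorphisms on each graded piece.
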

\begin{proof}
    Since $I^\Sc - I^+_\arith = I^\Scalt - I^+_\arith$ and $I^\Sc \cup I^+_\arith = I^\Scalt \cup I^+_\arith$, we may assume that $I^\Scalt = I^\Sc - I^+_\arith$, and apply Lemma \ref{lem-dR-qis-arith} and Theorem \ref{thm-L-!-coh-comp}.
\end{proof}

\section{Trace morphisms and Poincar\'e duality}\label{sec-trace}

In this section, we shall retain the setting of Section \ref{sec-dR-comp-cpt-main} \Pth{except in the review in Section \ref{sec-trace-coh}}, but assume moreover that $X$ is \emph{of pure dimension $d$}.  The goal of this section is to construct the trace morphisms for de Rham and \'etale cohomology with compact support in degree $2d$, and show that they are compatible with trace morphisms of lower dimensions via Gysin morphisms, and with each other under the de Rham comparison isomorphism in Theorem \ref{thm-L-!-coh-comp}.

Throughout the section, we shall denote by $\bL$ a $\bZ_p$-local system on $X_\ket$.  Recall that, by \cite[\aCor \logadiccorpuritylisse]{Diao/Lan/Liu/Zhu:lasfr}, any $\bZ_p$-local system on $U_\et$ uniquely extends over $X_\ket$ by pushforward.  Thus, our results for $\bL$ on $X_\ket$ are applicable to all $\bZ_p$-local systems on $U_\et$, despite the notation.

\subsection{Serre duality for coherent cohomology}\label{sec-trace-coh}

In this subsection, we review the trace morphism and Serre duality for the coherent cohomology of proper smooth rigid analytic varieties, and record some of their basic properties.

Let $Y$ be a rigid analytic variety \Pth{regarded as an adic space} over $k$.  For any closed subset $Z \subset Y$, let $H^i_Z(Y_\an, \,\cdot\,)$ denote the usual sheaf cohomology with support in $Z$; \ie, the $i$-th derived functor of
\[
    \cF \mapsto \Gamma_Z(Y_\an, \cF) := \ker\bigl(\Gamma(Y_\an, \cF) \to \Gamma((Y - Z)_\an, \cF)\bigr).
\]
Then there is a canonical morphism $H^i_Z(Y_\an, \,\cdot\,) \to H^i(Y_\an, \,\cdot\,)$, for each $i \geq 0$.  Let $y$ be a classical point \Pth{defined by a finite extension of $k$} and $\cF$ a coherent sheaf on $Y$.  Let $(A, \ideal{m}, M)$ be the $\ideal{m}_y$-adic completion of $(\cO_{Y, y}, \ideal{m}_y, \cF_y)$, where $\ideal{m}_y$ is the maximal ideal of $\cO_{Y, y}$.  Then $A$ is a noetherian complete local ring with residue field $k(y) = A / \ideal{m}$ a finite extension of $k$, and $M$ is a finitely generated $A$-module.  Let $H^i_{\ideal{m}}(M)$ denote the usual algebraic local cohomology \Pth{see, \eg, \cite[\aCh 1]{Brodmann/Sharp:1998-LC}}.

\begin{lem}\label{lem-alg-to-an-loc-coh}
    For each $i \geq 0$, there is a canonical morphism
    \begin{equation}\label{eq-lem-alg-to-an-loc-coh}
        H^i_{\ideal{m}}(M) \to H^i_{\{ y \}}(Y_\an, \cF).
    \end{equation}
\end{lem}
\begin{proof}
    We may replace $Y$ with an open affinoid subspace $U = \Spa(R, R^\circ)$ containing $y$, because $H^i_{\{ y \}}(Y_\an, \cF) \cong H^i_{\{ y \}}(U_\an, \cF|_U)$.  Let $N := \Gamma(Y_\an, \cF)$, which is a finite $R$-module because $\cF$ is coherent.  Let $V := \Spec(R)$, which is a noetherian scheme, and let $\pi: (Y, \cO_Y) \to (V, \cO_V)$ denote the morphism of ringed spaces.  Note that $\pi^{-1}\bigl(\pi(y)\bigr) = \{ y \}$, and that $\cO_Y$ is flat over $\pi^{-1}(\cO_V)$.  Moreover, for the coherent $\cO_V$-module $\widetilde{N}$ associated with the above $N$, we have $\cF \cong \cO_Y \otimes_{\pi^{-1}(\cO_V)} \pi^{-1}(\widetilde{N})$.  By \cite[\aThms 1.3.8 and 4.3.2]{Brodmann/Sharp:1998-LC}, the local cohomology for a finitely generated module over a noetherian local ring is torsion, and hence its formation is compatible with the formation of completions \Pth{with respect to powers of the maximal ideal, as usual}.  Hence, by \cite[\aThm 2.8 and its proof]{Hartshorne:1967-LC}, we have $H^i_{\ideal{m}}(M) \cong H_{\{ \pi(y) \}}^i(V, \widetilde{N})$, and the desired morphism \Refeq{\ref{eq-lem-alg-to-an-loc-coh}} is induced by the composition of canonical morphisms $H_{\{ \pi(y) \}}^i(V, \widetilde{N}) \to H_{\{ y \}}^i\bigl(Y_\an, \pi^{-1}(\widetilde N)\bigr) \to H_{\{ y \}}^i(Y_\an, \cF)$.
\end{proof}

By composing \Refeq{\ref{eq-lem-alg-to-an-loc-coh}} with the canonical morphism $H^i_{\{ y \}}(Y_\an, \cF) \to H^i(Y_\an, \cF)$, we obtain a canonical morphism
\begin{equation}\label{eq-alg-loc-coh-to-an-coh}
    H^i_{\ideal{m}}(M) \to H^i(Y, \cF).
\end{equation}

Now let $Y$ be smooth of pure dimension $d$, with $\cF = \Omega_Y^d$ the sheaf of top-degree differentials on $Y$.  Then the $A$-module $M$ can be identified with the top-degree continuous K\"ahler differentials $\Omega_{A / k}^d$ of $A$ \Pth{with its $\ideal{m}$-adic topology} over $k$.
\begin{constr}\label{constr-Poin-res}
    Let us construct a canonical residue morphism
    \begin{equation}\label{eq-constr-Poin-res}
        \res_y: H^d_{\ideal{m}}(\Omega_{A / k}^d) \to k.
    \end{equation}
    By choosing local coordinates of $Y$ near $k(y)$, we have compatible isomorphisms $A \cong k(y)[[T_1, \ldots, T_d]]$ and $\Omega_{A / k}^d \cong k(y)[[T_1, \ldots, T_d]] \, dT_1 \wedge \cdots \wedge dT_d$.  Accordingly, we have \Pth{\Refcf{} \cite[\aSec 4, \aEx 3]{Hartshorne:1967-LC}}
    \[
        H^d_{\ideal{m}}(\Omega_{A / k}^d) \cong \Bigl\{ \sum_\alpha \, a_\alpha T^\alpha \, dT_1 \wedge \cdots \wedge dT_d : \alpha = (\alpha_1, \ldots, \alpha_d) \in \bZ_{< 0}^d, \, a_\alpha \in k(y) \Bigr\}
    \]
    \Pth{where the sum is finite}, and the desired morphism \Refeq{\ref{eq-constr-Poin-res}} is the composition of the \Pth{multiple} residue morphism $\sum_{\alpha \in \bZ_{< 0}^d} \, a_\alpha T^\alpha \, dT_1 \wedge \cdots \wedge dT_d \mapsto a_{(-1, \ldots, -1)}$ with the usual trace morphism $k(y) \to k$, which \Pth{by the chain rule} is independent of the choice of coordinates.  \Pth{When $d = 0$, our convention is that $H^d_{\ideal{m}}(\Omega_{A / k}^d) \cong k(y)$ and that the residue morphism reduces to the identity morphism on $k(y)$.}
\end{constr}

\begin{thm}[Serre duality]\label{thm-Serre-duality}
    Let $Y$ be a proper smooth rigid analytic variety over $k$ of pure dimension $d$.  Then there is a unique morphism
    \begin{equation}\label{eq-trace-coh-gen}
        t_\coh: H^d(Y_\an, \Omega_Y^d) \to k,
    \end{equation}
    whose pre-composition with \Refeq{\ref{eq-alg-loc-coh-to-an-coh}}, for any classical point $y$, gives the residue morphism \Refeq{\ref{eq-constr-Poin-res}}.  We call $t_\coh$ the \emph{trace morphism}.  Moreover, by pre-composition with the cup product pairing, it induces the usual Serre duality for coherent cohomology; \ie, a perfect pairing
    \begin{equation}\label{eq-thm-Serre-duality-coh}
        H^i(Y_\an, \cF^\bullet) \times \Ext_{\cO_Y}^{d - i}(\cF^\bullet, \Omega_Y^d) \to k,
    \end{equation}
    for each bounded complex $\cF^\bullet$ of coherent $\cO_Y$-modules and each $i \in \bZ$.  As a special case, we have a perfect pairing
    \begin{equation}\label{eq-thm-Serre-duality-loc-free}
        H^i(Y_\an, \cF^\bullet) \times H^{d - i}(Y_\an, \cF^{\bullet, \dualsign} \otimes_{\cO_Y} \Omega_Y^d) \to k,
    \end{equation}
    for each bounded complex $\cF^\bullet$ of finite locally free $\cO_Y$-modules and each $i \in \bZ$, where $\cF^{\bullet, \dualsign}$ denotes the complex whose $j$-th term is $\dual{(\cF^{-j})}$, for each $j \in \bZ$.  In particular, if $Y$ is geometrically connected, then \Refeq{\ref{eq-trace-coh-gen}} is an isomorphism.
\end{thm}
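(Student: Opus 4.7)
The plan is to establish $t_\coh$ and its uniqueness first, and then to derive the perfect pairings by devissage from the case $\cF^\bullet = \cO_Y$. Throughout I will invoke Kiehl's finiteness theorem, which ensures that $H^i(Y_\an, \cF)$ is finite-dimensional over $k$ for every coherent $\cF$ on our proper $Y$; in particular, all spaces entering the asserted pairings are finite-dimensional over $k$.

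For the uniqueness of $t_\coh$: the difference of two candidate traces is a $k$-linear functional on $H^d(Y_\an, \Omega_Y^d)$ vanishing on the image of every local cohomology $H^d_{\{y\}}(Y_\an, \Omega_Y^d) \to H^d(Y_\an, \Omega_Y^d)$ coming from a classical point $y$. I will show these local images jointly span $H^d(Y_\an, \Omega_Y^d)$: given a class $\xi$, choose a finite affinoid covering $\{U_i\}$ of $Y$; by Tate acyclicity, $H^d(U_i, \Omega_Y^d|_{U_i}) = 0$, so the \v{C}ech complex for this covering realizes $\xi$ as a coboundary on the top-degree component, and a supplementary refinement localizes that component to a finite set of classical points in the deepest intersections. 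Each such contribution lies in the image of the corresponding local cohomology, completing the spanning argument.

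For the existence of $t_\coh$, I will invoke the rigid analytic Grothendieck--Serre duality developed by van der Put and Beyer, which provides a dualizing complex on any proper rigid analytic variety, equal to $\Omega_Y^d[d]$ for smooth $Y$ of pure dimension $d$, together with a trace morphism $H^d(Y_\an, \Omega_Y^d) \to k$. An alternative is to combine Raynaud's existence of a proper smooth formal model with Kopf's GAGA to pull back the algebraic trace. The compatibility of the resulting $t_\coh$ with the residue morphism at a classical point $y$ is verified in local coordinates $A \cong k(y)[[T_1, \ldots, T_d]]$, by tracking the Grothendieck residue symbol against the explicit formula in Lemma \ref{lem-Poin-res}. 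This local verification is the main obstacle in the construction, but reduces to a standard calculation in the completed local ring.

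Once $t_\coh$ is constructed, the perfect pairings follow formally. The hypercohomology spectral sequence reduces to the case of a single coherent $\cF$ concentrated in one degree; a local resolution by finite locally free $\cO_Y$-modules combined with a Cartan--Eilenberg argument reduces further to the case of locally free $\cF$ of finite rank, in which case $\Ext_{\cO_Y}^{d - i}(\cF, \Omega_Y^d) \cong H^{d - i}(Y_\an, \dual{\cF} \otimes_{\cO_Y} \Omega_Y^d)$ and the pairing identifies with the cup product followed by $t_\coh$. Nondegeneracy then follows from Serre duality for locally free sheaves. Finally, the special case $\cF = \cO_Y$, $i = d$, combined with $H^0(Y_\an, \cO_Y) = k$ when $Y$ is geometrically connected, yields the assertion that $t_\coh$ is an isomorphism.
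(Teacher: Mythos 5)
Your overall route is the one the paper takes: both rest on the rigid-analytic Serre duality of van der Put and Beyer \cite{vanderPut:1992-sdras, Beyer:1997-sdcsr}, with the trace pinned down by its compatibility with the residue morphisms \Refeq{\ref{eq-lem-Poin-res}} checked in the completed local rings, and with the complex case handled by a formal devissage (the paper uses the lemma on way-out functors \cite[\aProp 7.1]{Hartshorne:1966-RD} rather than a hypercohomology spectral sequence, a cosmetic difference).  However, two of your auxiliary steps do not hold up as written.  First, the \v{C}ech argument for the spanning claim behind uniqueness is not a proof: a nonzero class in $H^d(Y_\an, \Omega_Y^d)$ is precisely \emph{not} a coboundary for the covering, and the assertion that a \Qtn{supplementary refinement localizes that component to a finite set of classical points} is unjustified.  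The spanning statement (equivalently, uniqueness) should instead be extracted from the duality itself---for instance, once $H^d(Y_\an, \Omega_Y^d) \cong \Hom_k\bigl(H^0(Y_\an, \cO_Y), k\bigr)$ is available, a functional killing the image of $H^d_{\{y\}}(Y_\an, \Omega_Y^d)$ for every classical point $y$ corresponds to a global function vanishing at all classical points, hence zero---or read off from Beyer's explicit description of the trace, which is how the paper proceeds.

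Second, your reduction of the coherent case to the locally free case via \Qtn{local resolutions plus Cartan--Eilenberg} would fail: a coherent sheaf on a proper rigid analytic variety admits finite locally free resolutions only locally, and there is no mechanism to globalize them, so this step begs exactly the content of the duality theorem; moreover, the \Qtn{Serre duality for locally free sheaves} you then invoke for nondegeneracy has no independent source in your argument.  This devissage is also unnecessary and runs opposite to the paper's: Beyer's results give the perfect pairing \Refeq{\ref{eq-thm-Serre-duality-coh}} for a single coherent sheaf directly, and one then passes to bounded complexes and specializes to \Refeq{\ref{eq-thm-Serre-duality-loc-free}}.  Finally, the proposed alternative construction of $t_\coh$ via Raynaud formal models and K\"opf's GAGA \cite{Kopf:1974-efava} does not apply to proper smooth rigid analytic varieties that are not algebraizable, so it cannot replace the appeal to \cite{Beyer:1997-sdcsr}.
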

\begin{proof}
    When $\cF^\bullet$ is concentrated in degree zero, the isomorphism \Refeq{\ref{eq-thm-Serre-duality-coh}} follows from \cite[\aThms 5.1.1 and 5.1.2, \aDef 4.2.4, \aLem 4.2.9, and the explicit descriptions in \aSecs 1.2, 1.3, and 2.1]{Beyer:1997-sdcsr}.  \Pth{An earlier construction of the Serre duality for proper rigid analytic varieties is in \cite{vanderPut:1992-sdras}, but the more explicit descriptions in \cite{Beyer:1997-sdcsr} allow us to more directly relate the trace morphism there to the residue morphism \Refeq{\ref{eq-constr-Poin-res}} here.}  Therefore, by using \cite[\aCh I, \aProp 7.1 (\emph{Lemma on Way-Out Functors})]{Hartshorne:1966-RD} as usual, we also have the isomorphism \Refeq{\ref{eq-thm-Serre-duality-coh}} for each bounded complex $\cF^\bullet$ of coherent $\cO_Y$-modules, which specializes to the isomorphism \Refeq{\ref{eq-thm-Serre-duality-loc-free}} when $\cF^\bullet$ is a bounded complex of finite locally free $\cO_Y$-modules.
\end{proof}

Let us record the following properties of the trace morphism $t_\coh$ for later use.
\begin{lem}\label{lem-property-trace}
    Assume that $Y$ is proper smooth over $k$, of pure dimension $d$.
    \begin{enumerate}
        \item\label{lem-property-trace-exc}  Let $f: Y' \to Y$ be a morphism of proper smooth rigid analytic varieties which induces an isomorphism $f^{-1}(U) \to U$ for some open dense rigid analytic subvariety $U$ of $Y$, then the canonical morphism $H^d(Y_\an, \Omega_Y^d) \to H^d(Y'_\an, \Omega_{Y'}^d)$ induced by the canonical morphism $f^*(\Omega_Y^d) \to \Omega_{Y'}^d$ is an isomorphism compatible with the trace morphisms of $Y$ and $Y'$ as in \Refeq{\ref{eq-trace-coh-gen}}.

        \item\label{lem-property-trace-Gysin}  Let $\imath: Z \Em Y$ be a smooth divisor of $Y$.  Then the canonical morphism $H^{d - 1}(Z_\an, \Omega_Z^{d - 1}) \to H^d(Y_\an, \Omega_Y^d)$ induced by the adjunction exact sequence
            \begin{equation}\label{eq-lem-property-trace-Gysin}
                0 \to \Omega_Y^d \to \Omega_Y^d(Z) \to \imath_*(\Omega_Z^{d - 1}) \to 0
            \end{equation}
            is compatible with the trace morphisms of $Y$ and $Z$ as in \Refeq{\ref{eq-trace-coh-gen}}.
   \end{enumerate}
\end{lem}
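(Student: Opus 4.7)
The strategy for both parts exploits the uniqueness statement in Theorem \ref{thm-Serre-duality}: writing $\iota_y$ for the local-to-global morphism \Refeq{\ref{eq-lem-alg-to-an-loc-coh}} at a classical point $y$, the trace $t_\coh$ is characterized by $t_\coh \circ \iota_y = \res_y$ for every classical point $y$. Since Serre duality makes $H^d(Y_\an, \Omega_Y^d)$ one-dimensional over $k$ on each geometrically connected component, and since $\res_y$ is surjective onto $k$ (the trace $k(y) \to k$ being surjective in characteristic zero), the image of $\iota_y$ already exhausts the component-wise cohomology. Hence, to identify any $k$-linear map $t: H^d(Y_\an, \Omega_Y^d) \to k$ with $t_\coh$, it suffices to verify $t \circ \iota_y = \res_y$ at a single classical point $y$ in each geometrically connected component.

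For \Refenum{\ref{lem-property-trace-exc}}: Since $f$ is proper with dense image, it is surjective; the density of $U$ then ensures that $f$ induces a bijection on geometrically connected components \Pth{each $Y$-component contains a dense open that lifts isomorphically, forcing its preimage to be a single connected piece of $Y'$}. For each such component of $Y$, pick a classical point $y \in U$ inside it with unique preimage $y' := f^{-1}(y) \in f^{-1}(U)$. The isomorphism $f^{-1}(U) \Mi U$ identifies $\widehat{\cO}_{Y',y'} \Mi \widehat{\cO}_{Y,y}$ together with their top differentials, so $\res_{y'} = \res_y$ under this identification and $\iota_{y'} = (\Utext{can}) \circ \iota_y$ by naturality of \Refeq{\ref{eq-lem-alg-to-an-loc-coh}}. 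Consequently,
\[
    (t_\coh^{Y'} \circ \Utext{can}) \circ \iota_y = t_\coh^{Y'} \circ \iota_{y'} = \res_{y'} = \res_y = t_\coh^Y \circ \iota_y,
\]
and the uniqueness principle above yields $t_\coh^Y = t_\coh^{Y'} \circ \Utext{can}$. Since $t_\coh^Y$ and $t_\coh^{Y'}$ are componentwise isomorphisms and the two sets of components are in bijection, $\Utext{can}$ is itself an isomorphism.

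For \Refenum{\ref{lem-property-trace-Gysin}}: Let $\partial: H^{d-1}(Z_\an, \Omega_Z^{d-1}) \to H^d(Y_\an, \Omega_Y^d)$ be the connecting morphism of \Refeq{\ref{eq-lem-property-trace-Gysin}}. Fix a geometrically connected component of $Z$ and a classical point $z$ inside it, and pick local coordinates $T_1, \ldots, T_d$ at $z$ with $Z = \{ T_1 = 0 \}$; set $A = \widehat{\cO}_{Y,z} \cong k(z)[[T_1, \ldots, T_d]]$ and $B = A/(T_1)$. Then \Refeq{\ref{eq-lem-property-trace-Gysin}} locally becomes the standard Poincar\'e residue sequence
\[
    0 \to \Omega^d_{A/k} \to \tfrac{1}{T_1}\Omega^d_{A/k} \to \Omega^{d-1}_{B/k} \to 0,
\]
and a direct computation via the \v{C}ech description of the connecting map shows that the associated local connecting morphism $\partial_\mathrm{loc}: H^{d-1}_{\ideal{m}_z^Z}(\Omega^{d-1}_{B/k}) \to H^d_{\ideal{m}_z^Y}(\Omega^d_{A/k})$ sends the generator $\frac{dT_2 \wedge \cdots \wedge dT_d}{T_2 \cdots T_d}$ to $\frac{dT_1 \wedge \cdots \wedge dT_d}{T_1 T_2 \cdots T_d}$, both of which compute to residue $1$ by the explicit formula in the proof of Lemma \ref{lem-Poin-res}; an analogous check on arbitrary basis elements yields $\res_z^Y \circ \partial_\mathrm{loc} = \res_z^Z$. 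Naturality of connecting morphisms with respect to \Refeq{\ref{eq-lem-alg-to-an-loc-coh}} produces a commutative square relating $\partial_\mathrm{loc}$ and $\partial$ with $\iota_z^Z$ and $\iota_z^Y$, giving
\[
    (t_\coh^Y \circ \partial) \circ \iota_z^Z = t_\coh^Y \circ \iota_z^Y \circ \partial_\mathrm{loc} = \res_z^Y \circ \partial_\mathrm{loc} = \res_z^Z,
\]
and the uniqueness principle once more concludes $t_\coh^Y \circ \partial = t_\coh^Z$.

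The principal obstacle is sign bookkeeping in the Poincar\'e residue formula and the connecting morphism; with the sign conventions implicit in the proof of Lemma \ref{lem-Poin-res}, these signs cancel to produce the clean identity stated, but this cancellation must be confirmed rather than assumed. A minor auxiliary input in \Refenum{\ref{lem-property-trace-exc}} is that classical points of a rigid analytic variety are Zariski-dense in each geometric component, ensuring that classical points of the dense open $U$ are available in every component of $Y$.
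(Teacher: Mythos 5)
Your argument is correct and essentially the paper's own: both proofs characterize the trace morphism by its compatibility with the residue maps at classical points (the uniqueness in Theorem \ref{thm-Serre-duality}), identify the completed local rings at points of $f^{-1}(U) \cong U$ for part (1), and compute the local connecting morphism of the residue sequence in explicit coordinates for part (2), exactly as in the paper. The only slight imprecision is your appeal to one-dimensionality of $H^d(Y_\an, \Omega_Y^d)$ over $k$ on \emph{geometrically} connected components: when a connected component $Y_0$ of $Y$ is not geometrically connected over $k$, this space is one-dimensional only over $k_0 = H^0(Y_0, \cO_{Y_0})$, but since the local-to-global map at a classical point is $k_0$-linear and nonzero, your ``one point per component'' principle still holds after this minor adjustment.
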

\begin{proof}
    The assertion \Refenum{\ref{lem-property-trace-exc}} follows from Theorem \ref{thm-Serre-duality}, Lemma \ref{lem-alg-to-an-loc-coh}, and Construction \ref{constr-Poin-res}, because we can determine the trace morphisms as in \Refeq{\ref{eq-trace-coh-gen}} for $Y$ and $Y'$ by choosing sufficiently many $y \in f^{-1}(U) \Mi U$, and because the residue morphisms as in \Refeq{\ref{eq-constr-Poin-res}} for $Y$ and $Y'$ at $y$ can be canonically identified with each other.

    As for the assertion \Refenum{\ref{lem-property-trace-Gysin}}, for each point $y \in Z \subset Y$, we may choose local coordinates such that, if $(A, \ideal{m})$ denotes the completion of $(\cO_{Y, y}, \ideal{m}_y)$ as in the beginning of this subsection, and if $(B, \ideal{n})$ denotes the corresponding completion of $(\cO_{Z, y}, \ideal{m}_y \cO_{Z, y})$, then we have compatible isomorphisms $A \cong k(y)[[T_1, \ldots, T_d]]$ and $B \cong k(y)[[T_1, \ldots, T_{d - 1}]]$, with maximal ideals $\ideal{m}$ and $\ideal{n}$ generated by $T_1, \ldots, T_d$ and by $T_1, \ldots, T_{d - 1}$, respectively, together with the canonical short exact sequence $0 \to \Omega_{A / k}^d \to \frac{1}{T_d}\Omega_{A / k}^d \to \Omega_{B / k}^{d - 1} \to 0$ induced by \Refeq{\ref{eq-lem-property-trace-Gysin}}, which is given by
    \[
    \begin{split}
        0 & \to k(y)[[T_1, \ldots, T_d]] \, dT_1 \wedge \cdots \wedge dT_d \to k(y)[[T_1, \ldots, T_d]] \, dT_1 \wedge \cdots \wedge \tfrac{dT_d}{T_d} \\
        & \to k(y)[[T_1, \ldots, T_{d - 1}]] \, dT_1 \wedge \cdots \wedge dT_{d - 1} \to 0
    \end{split}
    \]
    in explicit coordinates.  Then the connecting morphism $H_{\ideal{m}}^{d - 1}(\Omega_{B / k}^{d - 1}) \to H_{\ideal{m}}^d(\Omega_{A / k}^d)$ in the associated long exact sequence is \Pth{by an explicit calculation} given by
    \[
    \begin{split}
        & \Bigl\{ \sum_\alpha a_\alpha T^\alpha \, dT_1 \wedge \cdots \wedge dT_{d - 1} : \alpha = (\alpha_1, \ldots, \alpha_{d - 1}) \in \bZ_{< 0}^{d - 1}, \, a_\alpha \in k(y) \Bigr\} \\
        & \to \Bigl\{ \sum_\alpha a_\alpha T^\alpha \, dT_1 \wedge \cdots \wedge dT_d : \alpha = (\alpha_1, \ldots, \alpha_d) \in \bZ_{< 0}^d, \, a_\alpha \in k(y) \Bigr\} : f \mapsto f \wedge \tfrac{dT_d}{T_d},
    \end{split}
    \]
    which is compatible with the trace morphisms, by the construction based on \Pth{multiple} residue morphisms in Construction \ref{constr-Poin-res}.
\end{proof}

By applying the above results to our setting in the beginning of Section \ref{sec-trace}, we obtain a trace morphism
\begin{equation}\label{eq-trace-coh}
    t_\coh: H^d(X_\an, \Omega_X^d) \to k,
\end{equation}
as in \Refeq{\ref{eq-trace-coh-gen}}, which induces by base change from $k$ to $K$ a trace morphism
\begin{equation}\label{eq-trace-coh-K}
    t_{\coh, K}: H^d(X_{K, \an}, \Omega_{X_K}^d) \to K,
\end{equation}
which in turn induces the Serre duality \Pth{as in Theorem \ref{thm-Serre-duality}} for bounded complexes of coherent $\cO_{X_K}$-modules.

\subsection{Poincar\'e duality for de Rham cohomology}\label{sec-trace-dR}

\begin{thm}\label{thm-Higgs-pairing}
    For each $\bZ_p$-local system $\bL$ on $X_\ket$, the composition of the canonical cup product pairing with \Refeq{\ref{eq-trace-coh-K}} induces a perfect pairing
    \begin{equation}\label{eq-thm-Higgs-pairing}
        H_{\Hi, \Sc}^i\bigl(U_{K, \an}, \Hc(\bL)\bigr) \times H_{\Hi, \Snc}^{2d - i}\bigl(U_{K, \an}, \Hc\bigl(\dual{\bL}(d)\bigr)\bigr) \to K.
    \end{equation}
\end{thm}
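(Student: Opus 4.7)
The plan is to apply the Serre duality for bounded complexes of finite locally free sheaves \Pth{equation \Refeq{\ref{eq-thm-Serre-duality-loc-free}}} to the log Higgs complex $\cF^\bullet := \Hil\bigl(\Hc(\bL)(-D^\Sc)\bigr)$, which lives in cohomological degrees $0, 1, \ldots, d$.  Each term of $\cF^\bullet$ is finite locally free over $\cO_{X_K}$, because $\Hc(\bL) = \gr^0 \RHl(\bL)$ is a vector bundle on $X_K$, as can be extracted from the local calculation in the proof of Proposition \ref{prop-RHl-Hl-Ddl-cpt}, where $\Hc(\bL)$ is locally identified with a finite projective module over the affinoid algebra $R_K$.

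The crucial step is to identify the Serre-dual complex $\cF^{\bullet, \dualsign} \otimes_{\cO_{X_K}} \Omega^d_{X_K}$ with the shifted log Higgs complex $\Hil\bigl(\Hc(\dual{\bL}(d))(-D^\Snc)\bigr)[d]$, so that both complexes live in degrees $-d, \ldots, 0$.  Term by term in degree $-j$, this uses: \textup{(i)} the canonical isomorphism $\Omega^{\log, d}_X \cong \Omega^d_X(D)$, so that $\Omega^{\log, d}_X \otimes_{\cO_X} \cO_X(-D) \cong \Omega^d_X$; \textup{(ii)} the perfect wedge-product pairing $\Omega^{\log, j}_X \otimes_{\cO_X} \Omega^{\log, d - j}_X \to \Omega^{\log, d}_X$ of locally free sheaves, giving $\dual{\Omega^{\log, j}_X} \otimes_{\cO_X} \Omega^{\log, d}_X \cong \Omega^{\log, d - j}_X$; \textup{(iii)} the identity $\cO_X(D^\Sc) \otimes_{\cO_X} \cO_X(-D) \cong \cO_X(-D^\Snc)$; and \textup{(iv)} an isomorphism $\Hc(\dual{\bL}(d)) \cong \dual{\Hc(\bL)}(d)$ of log Higgs bundles on $X_K$, which follows from $\Hc(\bL) \cong R\mu'_*(\widehat{\bL} \otimes_{\widehat{\bZ}_p} \OCl)$ by combining the trace pairing $\bL \otimes_{\bZ_p} \dual{\bL}(d) \to \bZ_p(d)$ with the identification $\Hc(\bZ_p(d)) \cong \cO_{X_K}(d)$ \Pth{which is $\cO_{X_K}$ as a sheaf, with the Tate twist only on the Galois action} and with the compatibility of $\Hc$ with tensor products.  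The Tate twists $(-j)$ from $\Omega^{\log, j}_X(-j)$ on the two sides then cancel out with $(d)$ from $\dual{\bL}(d)$ to yield an untwisted target $K$.

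The differentials in $\cF^{\bullet, \dualsign} \otimes_{\cO_{X_K}} \Omega^d_{X_K}$ are, up to sign, the transposes of the Higgs differentials in $\cF^\bullet$, and under the identification above they correspond to the Higgs differentials in $\Hil\bigl(\Hc(\dual{\bL}(d))(-D^\Snc)\bigr)$ by functoriality of $\Hc$.  Hence Theorem \ref{thm-Serre-duality} gives a perfect pairing
\[
    H^i(X_{K, \an}, \cF^\bullet) \times H^{d - i}\bigl(X_{K, \an}, \cF^{\bullet, \dualsign} \otimes_{\cO_{X_K}} \Omega^d_{X_K}\bigr) \to K,
\]
which under the identification becomes the desired pairing
\[
    H_{\Hi, \Sc}^i\bigl(U_{K, \an}, \Hc(\bL)\bigr) \times H_{\Hi, \Snc}^{2d - i}\bigl(U_{K, \an}, \Hc(\dual{\bL}(d))\bigr) \to K.
\]
The compatibility of Serre duality with cup products, together with the definition of $t_{\coh, K}$ in \Refeq{\ref{eq-trace-coh-K}}, guarantees that this matches the pairing described in the statement.

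The main obstacle will be verifying the identification \textup{(iv)}, i.e., $\Hc(\dual{\bL}(d)) \cong \dual{\Hc(\bL)}(d)$ as log Higgs bundles \emph{including the Higgs fields}, because this requires unwinding the construction $\Hc(\bL) \cong R\mu'_*(\widehat{\bL} \otimes_{\widehat{\bZ}_p} \OCl)$ and exploiting an appropriate self-duality of the period sheaf $\OCl$ over the relevant subsheaf of $\OBdl$.
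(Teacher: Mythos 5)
Your step (iv) is where the argument breaks down.  The log Higgs bundle $\Hl(\bL)$ is normalized so that the eigenvalues of its residues along the boundary components $D_j$ lie in $\bQ \cap [0, 1)$; hence the residues of $\dual{\bigl(\Hl(\bL)\bigr)}(d)$ have eigenvalues in $\bQ \cap (-1, 0]$, whereas those of $\Hl\bigl(\dual{\bL}(d)\bigr)$ again lie in $\bQ \cap [0, 1)$.  So the asserted isomorphism $\Hl\bigl(\dual{\bL}(d)\bigr) \cong \dual{\bigl(\Hl(\bL)\bigr)}(d)$ of log Higgs bundles over $X_K$ is false whenever some residue eigenvalue along some $D_j$ is nonzero; for the same reason $\Hl$ \Pth{like the canonical-extension construction underlying it} is \emph{not} compatible with tensor products or duals across $D$, so the trace pairing \Pth{which does give the duality over $U$, by \cite[\aThm 3.8(i)]{Liu/Zhu:2017-rrhpl}} only produces a canonical \emph{injective} morphism $\Hl\bigl(\dual{\bL}(d)\bigr) \to \dual{\bigl(\Hl(\bL)\bigr)}(d)$ whose cokernel is supported on $D$; this is \Refeq{\ref{eq-thm-Higgs-pairing-mor}} in the paper, stated there for $\RHl$, with the $\Hl$-statement obtained by taking $\gr^0$.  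Consequently your term-by-term identification of the Serre-dual complex of $\Hil\bigl(\Hl(\bL)(-D^\Sc)\bigr)$ with $\Hil\bigl(\Hl\bigl(\dual{\bL}(d)\bigr)(-D^\Snc)\bigr)[d]$ does not hold termwise, and points (i)--(iii) alone cannot repair it.

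What the paper does instead, and what is missing from your proposal, is to show that after twisting by $-D^\Snc$ the induced morphism of log de Rham complexes \Refeq{\ref{eq-thm-Higgs-pairing-qis-dR}} \Pth{hence, taking $\gr^0$, of log Higgs complexes} is a \emph{quasi-isomorphism}, even though the map of bundles is not an isomorphism.  This is a residue computation in the style of \cite[\aLem 2.10]{Esnault/Viehweg:1992-LVT-B}: the discrepancy is filtered by two-term complexes supported on the various $D_j$ whose differential is essentially the residue, and the relevant eigenvalues lie in $\bQ \cap (-1, 0)$ for $j \in I^\Sc$ and in $\bQ \cap (0, 1)$ for $j \in I^\Snc$, hence the residue acts invertibly there; the choice of the twist $-D^\Snc$ \Pth{rather than $-D^\Sc$ or $-D$} is exactly what makes this work.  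Your endgame --- the duality $\Omega_X^{\log, \bullet} \cong \dual{(\Omega_X^{\log, d - \bullet})} \otimes_{\cO_X} \bigl(\Omega_X^d(D)(d)[-d]\bigr)$ combined with Serre duality for bounded complexes of finite locally free sheaves \Pth{Theorem \ref{thm-Serre-duality}} --- does agree with the paper's, but it can only be run after this quasi-isomorphism is in place.
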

\begin{proof}
    By \cite[\aThm 3.8(i)]{Liu/Zhu:2017-rrhpl}, $\RH\bigl(\dual{\bL}(d)\bigr) \cong \dual{\bigl(\RH(\bL)(-d)\bigr)}$ as filtered vector bundles on $\cU$.  By the definition of $\RHl(\,\cdot\,)$, we have a canonical morphism $\bigl(\RHl(\bL)(-d)\bigr) \otimes_{\cO_\cX} \RHl\bigl(\dual{\bL}(d)\bigr) \to \RHl(\bQ_p) \cong \cO_\cX$.  Since $\RHl(\bL)(-d)$ and $\RHl\bigl(\dual{\bL}(d)\bigr)$ are filtered vector bundles on $\cX$ extending $\RH(\bL|_U)(-d)$ and $\RH\bigl(\dual{(\bL|_U)}(d)\bigr)$, respectively, we obtain a canonical injective morphism
    \begin{equation}\label{eq-thm-Higgs-pairing-mor}
        \RHl\bigl(\dual{\bL}(d)\bigr) \to \dual{\bigl(\RHl(\bL)(-d)\bigr)}
    \end{equation}
    of vector bundles on $\cX$, which is compatible with the connections with log poles on both sides, whose cokernel is supported on the boundary $D$.  Moreover, the filtration on $\RHl\bigl(\dual{\bL}(d)\bigr)$ is induced by the one on $\RH\bigl(\dual{\bL}(d)\bigr)$ via the canonical injective morphism $\RHl\bigl(\dual{\bL}(d)\bigr) \to \jmath_*\bigl(\RH\bigl(\dual{\bL}(d)\bigr)\bigr)$, where $\jmath: \cU \to \cX$ denotes the canonical open immersion; and the analogous assertions are true for $\RHl(\bL)(-d)$ and $\dual{\bigl(\RHl(\bL)(-d)\bigr)}$.  Therefore, \Refeq{\ref{eq-thm-Higgs-pairing-mor}} is strictly compatible with the filtrations on both sides, and induces a canonical morphism
    \begin{equation}\label{eq-thm-Higgs-pairing-qis-dR}
        \DRl\bigl(\RHl\bigl(\dual{\bL}(d)\bigr)(-D^\Snc)\bigr) \to \DRl\bigl(\dual{\bigl(\RHl(\bL)(-d)\bigr)}(-D^\Snc)\bigr)
    \end{equation}
    between the associated log de Rham complexes over $\cX$, which is also strictly compatible with the filtrations on both sides.

    By comparing the residues of the two sides of \Refeq{\ref{eq-thm-Higgs-pairing-qis-dR}} using \cite[\aThm \logRHthmlogRHgeom(\logRHthmlogRHgeomres) and \aProp \logRHpropconnisomres]{Diao/Lan/Liu/Zhu:lrhrv}, and by the same argument as in the proof of \cite[\aLem 2.10]{Esnault/Viehweg:1992-LVT-B}, we may factor \Refeq{\ref{eq-thm-Higgs-pairing-qis-dR}} as a composition of a series of inclusions $\cE \to \cE'$ of complexes each of whose cokernel is a two-term complex \Pth{in some degrees $a$ and $a + 1$}
    \[
        0 \to \Omega_{D_j}^a\bigl(\log (D - D_j)|_{D_j}\bigr) \otimes_{\cO_{D_j}} \cF \to \Omega_{D_j}^a\bigl(\log (D - D_j)|_{D_j}\bigr) \otimes_{\cO_{D_j}} \cF \to 0,
    \]
    where $\cF$ is the maximal subsheaf of $\bigl(\dual{\bigl(\RHl(\bL)(-d)\bigr)}(-D^\Snc)\bigr)|_{D_j}$, for some $j$, on which the eigenvalues of residues differ from that of $\bigl(\RHl\bigl(\dual{\bL}(d)\bigr)(-D^\Snc)\bigr)|_{D_j}$ and hence belong to $\bQ \cap (-1, 0)$ \Pth{\resp $\bQ \cap (0, 1)$} when $j \in I^\Sc$ \Pth{\resp $j \in I^\Snc$}; and where the morphism between the two terms is induced by $(-1)^a$ times the residue morphism and hence is an isomorphism.  Thus, \Refeq{\ref{eq-thm-Higgs-pairing-qis-dR}} is a \emph{quasi-isomorphism}, which induces by taking cohomology and taking $\gr^0$ a canonical isomorphism
    \begin{equation}\label{eq-thm-Higgs-pairing-isom}
        H_{\Hi, \Snc}^{2d - i}\bigl(U_{K, \an}, \Hc\bigl(\dual{\bL}(d)\bigr)\bigr) \Mi H_{\Hi, \Snc}^{2d - i}\bigl(U_{K, \an}, \dual{\Hc\bigl(\bL(-d)\bigr)}\bigr).
    \end{equation}

    By using the canonical isomorphisms $\Omega_X^{\log, a} \cong \dual{(\Omega_X^{\log, d - a})} \otimes_{\cO_X} \bigl(\Omega_X^d(D)(d)[-d]\bigr)$, for all $0 \leq a \leq d$, induced by $\Omega_X^d \cong \Omega_X^{\log, d}(-D)$ and the exterior algebra structure of $\Omega_X^{\log, \bullet} = \Ex^\bullet \Omega_X^{\log}$, we have a canonical isomorphism $\Hil\bigl(\Hl(\bL)(-D^\Sc)\bigr) \cong \dual{\bigl(\Hil\bigl(\dual{\bigl(\Hl(\bL)(-d)\bigr)}(-D^\Snc)\bigr)\bigr)} \otimes \bigl(\Omega_{X_K}^d(d)[-d]\bigr)$ over $X_K$.  Hence, we obtain the desired perfect pairing \Refeq{\ref{eq-thm-Higgs-pairing}} by combining \Refeq{\ref{eq-thm-Higgs-pairing-isom}} with the duality for bounded complexes of finite locally free $\cO_{X_K}$-modules \Pth{as in Theorem \ref{thm-Serre-duality}}.
\end{proof}

Since $\Omega_X^d \cong \Omega_X^{\log, d}(-D)$ \Pth{which we already used in the above proof} and therefore
\begin{equation}\label{eq-diff-top-deg-coh}
    H_{\dR, \cpt}^{2d}\bigl(U_\an, \cO_U(d)\bigr) \cong H_{\Hdg, \cpt}^{d, d}\bigl(U_\an, \cO_U(d)\bigr) \cong H^d(X_\an, \Omega_X^d)
\end{equation}
\Pth{by Theorem \ref{thm-L-!-coh-comp}, with $\bL = \bZ_p(d)$ and with $\cO_U(d)$ denoting the same underlying $\cO_U$-module with the trivial Hodge filtration shifted by $d$}, we obtain the following:
\begin{thm}\label{thm-trace-dR-Hdg}
    The trace morphism $t_\coh$ as in \Refeq{\ref{eq-trace-coh}} induces compatible trace morphisms
    \begin{equation}\label{eq-thm-trace-dR}
        t_\dR: H_{\dR, \cpt}^{2d}\bigl(U_\an, \cO_U(d)\bigr) \to k
    \end{equation}
    and
    \begin{equation}\label{eq-thm-trace-Hdg}
        t_\Hdg: H_{\Hdg, \cpt}^{d, d}\bigl(U_\an, \cO_U(d)\bigr) \to k.
    \end{equation}
    When $d = 0$ and $X$ is geometrically connected \Pth{\ie, a $k$-point}, the trace morphisms $t_\dR: H_\dR^0(X_\an, \cO_X) \to k$ and $t_\Hdg: H_\Hdg^{0, 0}(X_\an, \cO_X) \to k$ are just the canonical isomorphisms given by $H_\dR^0(X_\an, \cO_X) \cong H_\Hdg^{0, 0}(X_\an, \cO_X) \cong H^0(X_\an, \cO_X) \cong k$.  For each $\bZ_p$-local system $\bL$ on $X_\ket$ such that $\bL|_U$ is \emph{de Rham}, the composition of the cup product pairing with \Refeq{\ref{eq-thm-trace-dR}} induces a perfect pairing
    \begin{equation}\label{eq-thm-trace-dR-pairing}
        H_{\dR, \Sc}^i\bigl(U_\an, \DdR(\bL)\bigr) \times H_{\dR, \Snc}^{2d - i}\bigl(U_\an, \DdR\bigl(\dual{\bL}(d)\bigr)\bigr) \to k,
    \end{equation}
    which is compatible \Pth{by taking graded pieces} with the perfect pairing
    \begin{equation}\label{eq-thm-trace-Hdg-pairing}
        H_{\Hdg, \Sc}^{a, b}\bigl(U_\an, \DdR(\bL)\bigr) \times H_{\Hdg, \Snc}^{d - a, d - b}\bigl(U_\an, \DdR\bigl(\dual{\bL}(d)\bigr)\bigr) \to k
    \end{equation}
    defined by the composition of the cup product pairings with \Refeq{\ref{eq-thm-trace-Hdg}}.
\end{thm}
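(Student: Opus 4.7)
The plan is to construct $t_\dR$ and $t_\Hdg$ by transporting the Serre trace $t_\coh$ of \Refeq{\ref{eq-trace-coh}} through the canonical identifications in \Refeq{\ref{eq-diff-top-deg-coh}}, and then to deduce perfectness of the pairings \Refeq{\ref{eq-thm-trace-dR-pairing}} and \Refeq{\ref{eq-thm-trace-Hdg-pairing}} from the already-established Higgs pairing of Theorem \ref{thm-Higgs-pairing}.  The $d = 0$ assertion will then be immediate from the fact that $t_\coh: H^0(X_\an, \cO_X) \Mi k$ is the canonical isomorphism when $X$ is geometrically connected, as recorded in Theorem \ref{thm-Serre-duality}.

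To build the cup product pairings, I would first observe that $D^\Sc + D^\Snc = D$ as effective divisors (since $I^\Snc = I - I^\Sc$), so $\cO_X(-D^\Sc) \otimes_{\cO_X} \cO_X(-D^\Snc) \cong \cO_X(-D)$.  Combining this with the evaluation pairing $\Ddl(\bL) \otimes_{\cO_X} \Ddl(\dual{\bL}(d)) \to \cO_X(d)$, which makes sense because $\bL|_U$ is de Rham and by Proposition \ref{prop-RHl-Hl-Ddl-cpt}, and with the wedge product on $\Omega_X^{\log, \bullet}$, one obtains a pairing of filtered log de Rham complexes landing in $\Omega_X^{\log, \bullet}(-D)(d) \cong \Omega_X^\bullet(d)$.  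Applying $H^{2d}$ and post-composing with $t_\coh$ yields \Refeq{\ref{eq-thm-trace-dR-pairing}}; taking the appropriate associated graded with respect to the Hodge filtration yields \Refeq{\ref{eq-thm-trace-Hdg-pairing}}.

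For the Hodge pairing, the key input will be Lemma \ref{lem-L-!-coh-comp-arith}: the canonical isomorphism \Refeq{\ref{eq-lem-L-!-coh-comp-arith-Hdg}} identifies the Higgs cohomology as a direct sum of Hodge pieces, Tate-twisted and base-changed to $K$.  Under this identification, the Higgs cup product of Theorem \ref{thm-Higgs-pairing} should become the $K$-base change of the direct sum over $(a, b)$ with $a + b = i$ of the Hodge pairings \Refeq{\ref{eq-thm-trace-Hdg-pairing}}, once the Tate twists are matched up using the twist $(d)$ in $\dual{\bL}(d)$.  By faithfully flat descent along $k \to K$, each Hodge pairing is then perfect over $k$.

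Finally, since the Hodge--de Rham spectral sequence \Refeq{\ref{eq-thm-L-!-coh-comp-spec-seq}} degenerates at $E_1$ by Theorem \ref{thm-L-!-coh-comp}, the Hodge filtration on $H_{\dR, \Sc}^i\bigl(U_\an, \DdR(\bL)\bigr)$ has graded pieces $H_{\Hdg, \Sc}^{a, b}\bigl(U_\an, \DdR(\bL)\bigr)$, and similarly on the dual side.  Griffiths transversality ensures that the de Rham pairing constructed above is strictly compatible with these filtrations, and by construction its graded pieces recover the Hodge pairing.  A standard filtered-duality argument then upgrades perfectness on graded pieces to perfectness of the filtered pairing itself.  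The main obstacle I expect will be in the third step: checking that the sheaf-level duality $\RHl\bigl(\dual{\bL}(d)\bigr) \Mi \dual{\bigl(\RHl(\bL)(-d)\bigr)}$ used in Theorem \ref{thm-Higgs-pairing}, when restricted to $\gr^0$, induces on cohomology exactly the $K$-base change of the Hodge pairing constructed above; this is ultimately a matter of tracing definitions through the comparisons in Lemma \ref{lem-L-!-coh-comp-arith}, but the bookkeeping of Tate twists and the compatibility with cup products on both sides will require some care.
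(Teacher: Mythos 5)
Your proposal follows essentially the same route as the paper's proof: $t_\dR$ and $t_\Hdg$ are obtained from $t_\coh$ via the identification \Refeq{\ref{eq-diff-top-deg-coh}}, perfectness of the Hodge pairing \Refeq{\ref{eq-thm-trace-Hdg-pairing}} is deduced from the Higgs pairing of Theorem \ref{thm-Higgs-pairing} together with the comparison isomorphism \Refeq{\ref{eq-lem-L-!-coh-comp-arith-Hdg}} and descent of perfectness along the field extension $k \to K$, and the de Rham pairing \Refeq{\ref{eq-thm-trace-dR-pairing}} is then obtained from the compatibility of cup products with the Hodge--de Rham spectral sequences and their $E_1$-degeneration from Theorem \ref{thm-L-!-coh-comp}, exactly as in the paper.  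The only slip is cosmetic: $\Omega_X^{\log, \bullet}(-D)$ is merely contained in $\Omega_X^\bullet$ termwise rather than isomorphic to it, with equality only in top degree $\Omega_X^{\log, d}(-D) \cong \Omega_X^d$, which is all your construction of the pairing actually uses.
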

\begin{proof}
    The first two assertions are clear.  As for the third one, suppose that $\bL|_U$ is de Rham.  Since $K$ is a field extension of $k$, we obtain the desired perfect pairing \Refeq{\ref{eq-thm-trace-Hdg-pairing}}, by Theorem \ref{thm-Higgs-pairing} and comparison isomorphisms as in \Refeq{\ref{eq-lem-L-!-coh-comp-arith-Hdg}}.  Since the formation of cup products is compatible with the formation of the $E_1$ pages of the Hodge--de Rham spectral sequences for $H_{\dR, \Sc}^i\bigl(U_\an, \DdR(\bL)\bigr)$ and $H_{\dR, \Snc}^{2d - i}\bigl(U_\an, \DdR\bigl(\dual{\bL}(d)\bigr)\bigr)$, and since these spectral sequences degenerate on the $E_1$ pages by Theorem \ref{thm-L-!-coh-comp}, we also obtain the desired perfect pairing \Refeq{\ref{eq-thm-trace-dR-pairing}}.
\end{proof}

\begin{lem}\label{lem-trace-dR}
    The formation of $t_\dR$ in Theorem \ref{thm-trace-dR-Hdg} is compatible with restrictions to open rigid analytic subvarieties of the form $U - X_0 = X - D - X_0$ for some closed rigid analytic subvarieties $X_0$ of $X$.  In particular, it is also compatible with any morphism between proper smooth rigid analytic varieties that is an isomorphism over some open dense rigid analytic subvariety \Pth{\eg, any blowup, as in \cite[\aDef 4.1.1]{Conrad:2006-rarg}, at closed rigid analytic subvarieties}.
\end{lem}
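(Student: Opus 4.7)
The approach is to reduce to Lemma \ref{lem-property-trace}\Refenum{\ref{lem-property-trace-exc}} by choosing a common smooth compactification with normal crossings boundary. Given a closed rigid analytic subvariety $X_0 \subset X$, we invoke Hironaka--Temkin desingularization (available for rigid analytic varieties in characteristic zero) to construct a proper smooth rigid analytic variety $X''$ together with a proper morphism $f : X'' \to X$ that is an isomorphism over $U' := X - D - X_0$ and such that $D'' := f^{-1}(D \cup X_0)^{\red}$ is a normal crossings divisor in $X''$ with $U' = X'' - D''$.  The framework of Section \ref{sec-trace-dR} applied to $(X'', D'', U')$ furnishes a trace morphism $t_\dR^{U'} : H_{\dR, \cpt}^{2d}(U'_\an, \cO_{U'}(d)) \cong H^d(X''_\an, \Omega_{X''}^d) \xrightarrow{t_\coh^{X''}} k$ via the $(d, d)$-term of the log de Rham complex, just as in the proof of Theorem \ref{thm-trace-dR-Hdg}; independence of the chosen compactification $X''$ follows by taking a common refinement of any two choices and applying Lemma \ref{lem-property-trace}\Refenum{\ref{lem-property-trace-exc}}.

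The content of the lemma is the commutativity $t_\dR^U \circ \rho = t_\dR^{U'}$, where $\rho : H_{\dR, \cpt}^{2d}(U'_\an, \cO_{U'}(d)) \to H_{\dR, \cpt}^{2d}(U_\an, \cO_U(d))$ is the natural extension-by-zero morphism attached to the open inclusion $U' \hookrightarrow U$ on compactly supported cohomology.  By Lemma \ref{lem-property-trace}\Refenum{\ref{lem-property-trace-exc}}, the canonical morphism $H^d(X_\an, \Omega_X^d) \to H^d(X''_\an, \Omega_{X''}^d)$ induced by $f^*\Omega_X^d \to \Omega_{X''}^d$ is already an isomorphism intertwining $t_\coh^X$ with $t_\coh^{X''}$; the issue is therefore to identify $\rho$, under the top-degree identifications in Theorem \ref{thm-trace-dR-Hdg} (using $\Omega_X^{\log, d}(-D) = \Omega_X^d$, and similarly on $X''$), with the inverse of this canonical isomorphism on coherent cohomology.

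The main obstacle is precisely this last identification of $\rho$ at the level of coherent top-degree cohomology.  We plan to bypass a direct sheaf-level construction of $\rho$ by using the characterization of $t_\coh$ (and hence $t_\dR$) as the unique $k$-linear morphism whose pre-composition with the local-to-global morphism $H^d_{\ideal{m}_y}(\Omega_{A / k}^d) \to H^d(X_\an, \Omega_X^d)$ at any classical point $y$ recovers the residue morphism of Lemma \ref{lem-Poin-res} (as established in Theorem \ref{thm-Serre-duality}).  Picking any classical point $y \in U'$, the fact that $f$ is an isomorphism on an open neighborhood of $y$ canonically identifies the local cohomology $H^d_{\ideal{m}_y}(\Omega_{A / k}^d)$ computed on $X''$ and on $X$, and identifies the two local-to-global morphisms through $\rho$; hence $t_\dR^U \circ \rho$ recovers the residue $\res_y$ at $y$, and by the uniqueness characterization it agrees with $t_\dR^{U'}$.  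The \Qtn{in particular} clause then follows by applying the above with $(X, D, U, X_0) = (Y, \emptyset, Y, Y - V)$, so that the given morphism $Y' \to Y$ (possibly composed with a further resolution of singularities to make the preimage of $Y - V$ a normal crossings divisor) plays the role of $X''$.
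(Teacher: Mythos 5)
Your proposal is correct and follows essentially the same route as the paper: resolve singularities to get a proper morphism $X'' \to X$ that is an isomorphism over $U' = U - X_0$ with normal crossings boundary, identify the top-degree compactly supported de Rham cohomology with $H^d(\,\cdot\,, \Omega^d)$ via \Refeq{\ref{eq-diff-top-deg-coh}}, and conclude by the compatibility of coherent trace morphisms in Lemma \ref{lem-property-trace}\Refenum{\ref{lem-property-trace-exc}}. Your additional residue-at-a-classical-point argument identifying the restriction map with the inverse of the coherent pullback isomorphism is just a re-run of the proof of that same lemma, so it adds detail but no new ingredient.
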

\begin{proof}
    By using resolution of singularities \Pth{as in \cite{Bierstone/Milman:1997-cdbml}}, there exists a proper morphism $\pi: X' \to X$ such that $\bigl(\pi^{-1}(D \cup X_0)\bigr)_\red$ \Pth{with its canonical reduced closed subspace structure} is a simple normal crossings divisor, and such that $\pi$ is an isomorphism over $U' = U - X_0$.  Thus, by \Refeq{\ref{eq-diff-top-deg-coh}}, it suffices to note that, by Lemma \ref{lem-property-trace}\Refenum{\ref{lem-property-trace-exc}}, the canonical morphism $H^d(X_\an, \Omega_X^d) \to H^d(X'_\an, \Omega_{X'}^d)$ is compatible with the trace morphisms for coherent cohomology as in \Refeq{\ref{eq-trace-coh}}.
\end{proof}

\subsection{Excision and Gysin isomorphisms}\label{sec-exc-Gysin}

In order to deduce the Poincar\'e duality for \Pth{rational} \'etale cohomology from the Poincar\'e duality for de Rham and Higgs cohomology, we shall establish in this subsection some compatibilities between the comparison isomorphisms and the excision and Gysin isomorphisms defined by complements of smooth divisors.

Recall that $U = X - D$ with $D = \cup_{j \in I} \, D_j$.  Let us begin with the excision isomorphisms between top-degree cohomology.

\begin{lem}\label{lem-exc-isom}
    Let $Z = D_{j_0}$ for some $j_0 \in I$, that $D' := \cup_{j \in I - \{ j_0 \}} \, D_j$, and that $U' := X - D'$, so that $U = U' - W$ for some smooth closed rigid analytic subvariety $W = U' \cap Z$ of $U'$.  Let $\jmath_U: U \to U'$ and $\imath_W: W \to U'$ denote the canonical open immersions and closed immersions \Pth{of underlying adic spaces, without log structures}, respectively.  Then we have the excision short exact sequence
    \begin{equation}\label{eq-lem-exc-isom-ex-seq}
        0 \to \jmath_{U, \et, !}\bigl(\bZ_p(d)\bigr) \to \bZ_p(d) \to \imath_{W, \et, *}\bigl(\bZ_p(d)\bigr) \to 0
    \end{equation}
    over $U'_{K, \et}$, which induces an isomorphism
    \begin{equation}\label{eq-lem-exc-isom}
        H_{\et, \cpt}^{2d}\bigl(U_K, \bZ_p(d)\bigr) \Mi H_{\et, \cpt}^{2d}\bigl(U'_K, \bZ_p(d)\bigr).
    \end{equation}
    By composing such isomorphisms, we have $H_{\et, \cpt}^{2d}\bigl(U_K, \bZ_p(d)\bigr) \Mi H_\et^{2d}\bigl(X_K, \bZ_p(d)\bigr)$.
\end{lem}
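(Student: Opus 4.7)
The plan is to deduce the isomorphism \eqref{eq-lem-exc-isom} by pushing the excision sequence \eqref{eq-lem-exc-isom-ex-seq} forward to $X$ and using a dimension bound to show that the boundary contribution vanishes in the relevant degrees; the composite to $H^{2d}_\et(X_K, \bZ_p(d))$ then follows by iterating the argument over the irreducible components of $D$.

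First, I would verify \eqref{eq-lem-exc-isom-ex-seq}. At each torsion level $\bZ / p^n(d)$, this is the standard excision sequence of Lemma \ref{lem-exc-ex-seq} on $U'_{K, \et}$ for the closed immersion $\imath_W$ with open complement $\jmath_U$; passing to the inverse system \Pth{where $\jmath_{U, \et, !}$ and $\imath_{W, \et, *}$ are applied level-wise as in Definition \ref{def-adic-formalism}} yields the claimed short exact sequence of $\bZ_p$-sheaves. Writing $\jmath = \jmath' \circ \jmath_U$, where $\jmath': U' \to X$ is the canonical open immersion, and applying the exact functor $\jmath'_{\et, !}$, I would obtain a short exact sequence
\[
    0 \to \jmath_{\et, !}\bigl(\bZ_p(d)|_U\bigr) \to \jmath'_{\et, !}\bigl(\bZ_p(d)|_{U'}\bigr) \to \jmath'_{\et, !} \, \imath_{W, \et, *}\bigl(\bZ_p(d)|_W\bigr) \to 0
\]
on $X_{K, \et}$, whose first two cohomology groups compute $H^{2d}_{\et, \cpt}(U_K, \bZ_p(d))$ and $H^{2d}_{\et, \cpt}(U'_K, \bZ_p(d))$, respectively, via Lemma \ref{lem-L-!}.

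To analyze the boundary term, I would factor $\jmath' \circ \imath_W = \imath_Z \circ \widetilde{\jmath}_W$, where $\imath_Z: Z \to X$ is the canonical closed immersion and $\widetilde{\jmath}_W: W \to Z$ is the open immersion of $W = Z \cap U'$ into $Z = D_{j_0}$, whose complement in $Z$ is the normal crossings divisor $Z \cap D'$ \Pth{by the hypothesis that the intersections of the $D_j$'s are smooth, so the boundary of $W$ inside $Z$ inherits the normal crossings pattern from $D$}. Since closed immersions satisfy $\imath_! = \imath_*$, this gives $\jmath'_{\et, !} \, \imath_{W, \et, *} = \imath_{Z, \et, *} \, \widetilde{\jmath}_{W, \et, !}$, and hence
\[
    H^i\bigl(X_{K, \et}, \jmath'_{\et, !} \, \imath_{W, \et, *}(\bZ_p(d)|_W)\bigr) \cong H^i\bigl(Z_{K, \et}, \widetilde{\jmath}_{W, \et, !}(\bZ_p(d)|_W)\bigr) \cong H^i_{\et, \cpt}(W_K, \bZ_p(d))
\]
\Pth{with the latter understood via the compactification $Z$ of $W$, as in Lemma \ref{lem-L-!}}. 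Since $\dim W = d - 1$ and $Z$ is proper smooth, Theorem \ref{thm-L-!-prim-comp} applied to $\bF_p$-coefficients yields vanishing for $i > 2(d - 1) = 2d - 2$; by induction on $n$ using the short exact sequences $0 \to p \, \bZ / p^n \to \bZ / p^n \to \bF_p \to 0$, followed by the $\bZ_p$-limit in Lemma \ref{lem-def-H-c-fin-Z-p}, the same vanishing holds for $\bZ_p(d)$. In particular, the boundary term vanishes in degrees $2d - 1$ and $2d$, and the long exact sequence associated to the displayed short exact sequence yields \eqref{eq-lem-exc-isom}.

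For the composite isomorphism, I would iterate: starting from $I^\Sc = I$, the excision step above reduces $I^\Sc$ to $I^\Sc - \{j_0\}$ while inducing an isomorphism on $H^{2d}_{\et, \cpt}$; at each subsequent stage the residual partial boundary is still a normal crossings divisor with smooth intersection strata \Pth{by restriction from the original stratification on $X$}, so the lemma applies again. After $|I|$ such reductions the index becomes $I^\Sc = \emptyset$, yielding $H^{2d}_{\et, \cpt}(U_K, \bZ_p(d)) \Mi H^{2d}(X_{K, \ket}, \bZ_p(d)) \cong H^{2d}_\et(X_K, \bZ_p(d))$ \Pth{by \cite[\aLem \logadiclemkettoetconst]{Diao/Lan/Liu/Zhu:laske}}. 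The only substantive content invoked throughout is the dimension-based vanishing of $H^i_{\et, \cpt}(W_K, \bZ_p(d))$ for $i > 2(d - 1)$, which is the main obstacle and rests essentially on the primitive comparison theorem \Pth{Theorem \ref{thm-L-!-prim-comp}} together with the $\bZ_p$-limit formalism of Lemma \ref{lem-def-H-c-fin-Z-p}; everything else is bookkeeping.
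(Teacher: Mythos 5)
Your proof is correct and follows essentially the same route as the paper: take the long exact sequence of compactly supported cohomology associated with the excision sequence \Refeq{\ref{eq-lem-exc-isom-ex-seq}} and kill the boundary term using the vanishing $H^i_{\et, \cpt}\bigl(W_K, \bZ_p(d)\bigr) = 0$ for $i > 2d - 2$, which comes from Theorem \ref{thm-L-!-prim-comp} together with the standard d\'evissage from $\bF_p$ to $\bZ_p$. The paper's proof is just a condensed version of this; your extra steps (pushing forward to $X$ via $\jmath'_{\et, !}$, factoring the locally closed immersion of $W$, and iterating over the components of $D$) are exactly the bookkeeping it leaves implicit.
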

\begin{proof}
    This is because $H_{\et, \cpt}^i\bigl(W_K, \bZ_p(d)\bigr) = 0$ for $i > 2\dim(W) = 2d - 2$ in the long exact sequence associated with \Refeq{\ref{eq-lem-exc-isom-ex-seq}}, by Theorem \ref{thm-L-!-prim-comp} \Pth{which implies the analogous vanishing result for $\bZ_p$-local systems, by standard arguments}.
\end{proof}

\begin{prop}\label{prop-exc-dR-comp}
    With the same setting as in Lemma \ref{lem-exc-isom}, the isomorphism \Refeq{\ref{eq-lem-exc-isom}} extends to a commutative diagram
    \begin{equation}\label{eq-prop-exc-dR-comp}
        \xymatrix{ {H_{\et, \cpt}^{2d}\bigl(U_K, \bZ_p(d)\bigr)} \ar[r]^-\sim \ar@{^(->}[d] & {H_{\et, \cpt}^{2d}\bigl(U'_K, \bZ_p(d)\bigr)} \ar@{^(->}[d] \\
        {H_{\et, \cpt}^{2d}\bigl(U_K, \bZ_p(d)\bigr) \otimes_{\bZ_p} \BdR} \ar[r]^-\sim \ar[d] & {H_{\et, \cpt}^{2d}\bigl(U'_K, \bZ_p(d)\bigr) \otimes_{\bZ_p} \BdR} \ar[d] \\
        {H_{\dR, \cpt}^{2d}\bigl(U_\an, \cO_U(d)\bigr) \otimes_k \BdR} \ar[r]^-\sim & {H_{\dR, \cpt}^{2d}\bigl(U'_\an, \cO_{U'}(d)\bigr) \otimes_k \BdR} \\
        {H_{\dR, \cpt}^{2d}\bigl(U_\an, \cO_U(d)\bigr)} \ar[r]^-\sim \ar@{^(->}[u] & {H_{\dR, \cpt}^{2d}\bigl(U'_\an, \cO_{U'}(d)\bigr)} \ar@{^(->}[u] }
    \end{equation}
    Here the fourth row \Pth{at the bottom} is the excision isomorphism induced by the long exact sequence associated with the excision short exact sequence
    \begin{equation}\label{eq-prop-exc-dR-comp-ex-seq-dR}
    \begin{split}
        0 & \to \Omega_X^\bullet(\log D)(-D) \to \Omega_X^\bullet(\log D')(-D') \\
        & \to \imath_{Z, \an, *}\bigl(\Omega_Z^\bullet\bigl(\log (D'|_Z)\bigr)(-D'|_Z)\bigr) \to 0
    \end{split}
    \end{equation}
    for de Rham complexes over $X_\an$, where $\imath_Z: Z \to X$ denotes the canonical closed immersion of adic spaces, and where $D'|_Z := D' \cap Z = \cup_{j \in I - \{ j_0 \}} \, (D_j \cap Z)$, which is an isomorphism because
    \[
        H_{\dR, \cpt}^i\bigl(W_\an, \cO_W(d)\bigr) \cong H^i\bigl(Z_\an, \Omega_Z^\bullet\bigl(\log (D'|_Z)\bigr)(d)\bigr) = 0
    \]
    for $i > 2\dim(W) = 2\dim(Z) = 2d - 2$.  Moreover, this isomorphism at the bottom row is compatible with the trace morphisms, by Lemma \ref{lem-trace-dR}.
\end{prop}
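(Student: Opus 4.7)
The outer squares of \Refeq{\ref{eq-prop-exc-dR-comp}} (rows 1--2 and rows 3--4) commute automatically: their vertical arrows are the canonical inclusions into $\BdR$-base changes, and the isomorphisms on rows 1 and 4 yield those on rows 2 and 3 by flatness of $\BdR$ over $\bZ_p$ and over $k$, respectively.  The content is thus the commutativity of the middle square, whose vertical arrows are the comparison isomorphism of Theorem \ref{thm-L-!-coh-comp} applied to $\bL = \bZ_p(d)$.

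The plan is to deduce this commutativity from the functoriality, in the support index $I^\Sc$, of the construction of the comparison isomorphism.  In the framework of Section \ref{sec-dR-comp-cpt}, we have $H_{\et, \cpt}^{2d}(U_K, \bZ_p(d)) = H_{\et, \Sc}^{2d}(U_K, \bZ_p(d))$ with $I^\Sc = I$, while $H_{\et, \cpt}^{2d}(U'_K, \bZ_p(d))$ can be identified with $H_{\et, \Scalt}^{2d}(U_K, \bZ_p(d))$ for $I^{\Scalt} = I \setminus \{j_0\}$; analogous identifications hold on the de Rham side.  First I would verify that, under these identifications, the \'etale excision isomorphism of Lemma \ref{lem-exc-isom} coincides with the canonical change-of-index morphism $H_{\et, \Sc}^{2d} \to H_{\et, \Scalt}^{2d}$ of Remark \ref{rem-def-H-c-alt}.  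This is achieved by pushing the short exact sequence \Refeq{\ref{eq-lem-exc-isom-ex-seq}} from $U'_{K, \et}$ forward to $X_\ket$ via the exact, fully faithful $\jmath^\Snc_{\et, !}$ of Lemma \ref{lem-exc-ex-seq} and recognizing the result as the standard excision sequence relating the two support conditions on $X_\ket$.  The analogous identification on the de Rham side uses \Refeq{\ref{eq-prop-exc-dR-comp-ex-seq-dR}} and Remark \ref{rem-def-dR-Hi-Hdg-coh-cpt-alt}.

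With this reformulation in place, it remains to establish the naturality in $I^\Sc$ of the construction underlying Theorem \ref{thm-L-!-coh-comp}.  The comparison factors through the primitive comparison identifying $H^i_{\et, \Sc}(U_K, \bL) \otimes \BdR$ with $H^i(X_{K, \proket}, \widehat{\bL} \otimes_{\widehat{\bZ}_p} \BBdR^{\Sc, +}[\tfrac{1}{\xi}])$ (Propositions \ref{prop-coh-AAinf-cpt} and \ref{prop-L-!-coh-comp-proket}), then through the Poincar\'e lemma (Proposition \ref{prop-Poin-lem}) and the pushforward $R\mu'_*$ to $X_\an$.  By Definitions \ref{def-BBdRp-cpt} and \ref{def-OBdRp-cpt}, the inclusion $I^{\Scalt} \subset I^\Sc$ induces canonical morphisms $\BBdR^{\Sc, +} \to \BBdR^{\Scalt, +}$ and $\OBdl^{\Sc, +} \to \OBdl^{\Scalt, +}$ that are compatible with the Poincar\'e lemma and with $R\mu'_*$; this is manifest from their description as kernels built from the boundary stratification.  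Hence the entire construction is functorial in $I^\Sc$, yielding the middle square.

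The main obstacle I anticipate is the compatibility check of the second paragraph: identifying the \'etale and de Rham excision maps with the change-of-index morphisms requires careful bookkeeping with closed immersions equipped with their correct log structures, in particular the pullback log structure on $Z^\partial$ introduced in Section \ref{sec-log-str-bd}, and matching the resulting sheaves with those featuring in Definitions \ref{def-BBdRp-cpt} and \ref{def-OBdRp-cpt}.  Once this matching is achieved, the remainder is purely formal from the built-in functoriality of the period sheaves and the comparison machinery.
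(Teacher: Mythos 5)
Your reduction in the second paragraph does not work, and the gap is essential.  With $I^\Scalt = I - \{j_0\}$, the group $H_{\et, \Scalt}^{2d}(U_K, \bZ_p(d))$ is, by Lemma \ref{lem-L-!}, isomorphic to $H^{2d}_\cpt\bigl(U'_{K, \et}, R\jmath_{\Scalt, \et, *}(\bZ_p(d)|_U)\bigr)$, \ie, the compactly supported cohomology of $U'$ with coefficients in the full derived pushforward from $U$, computed on the \emph{fixed} log adic space $X$ with log poles along all of $D$; whereas the right-hand column of \Refeq{\ref{eq-prop-exc-dR-comp}} involves $H_{\et, \cpt}^{2d}\bigl(U'_K, \bZ_p(d)\bigr)$ with \emph{constant} coefficients, \ie, the comparison theorem applied to a \emph{different} log adic space $X'$ (same underlying space, log structure defined by $D'$).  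These do not agree, even in top degree: take $X = \bP^1$, $D = D_{j_0}$ a $k$-point, so $U' = X$; then $H_{\et, \cpt}^2\bigl(U'_K, \bQ_p(1)\bigr) \cong \bQ_p$, while $H_{\et, \Scalt}^2\bigl(U_K, \bQ_p(1)\bigr) = H_\et^2\bigl(U_K, \bQ_p(1)\bigr) = 0$, since its de Rham counterpart is $H^2\bigl(X_\an, \Omega_X^\bullet(\log D)\bigr) = 0$, whereas $H_{\dR, \cpt}^2\bigl(U'_\an, \cO_{U'}(1)\bigr) \cong H^1(X_\an, \Omega_X^1) \cong k$.  For the same reason $\Omega_X^\bullet(\log D)(-D')$ and $\Omega_X^\bullet(\log D')(-D')$ are not quasi-isomorphic, so the proposed de Rham identification via Remark \ref{rem-def-dR-Hi-Hdg-coh-cpt-alt} fails as well.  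Consequently the excision isomorphism of Lemma \ref{lem-exc-isom} is \emph{not} a change-of-support-index morphism on the fixed log space $X$, and the functoriality in $I^\Sc$ you establish in your third paragraph (which is essentially the content of Theorem \ref{thm-int-coh-comp}) proves a different compatibility and cannot produce the middle square of \Refeq{\ref{eq-prop-exc-dR-comp}}.

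What is missing is the change of log structure.  The paper's proof introduces $X'$ and the morphism $\varepsilon: X \to X'$, rewrites the excision sequence \Refeq{\ref{eq-lem-exc-isom-ex-seq}} as the sequence \Refeq{\ref{eq-prop-exc-dR-comp-ex-seq-ket}} over $X'_{K, \ket}$, and uses purity (Lemma \ref{lem-L-!} and Remark \ref{rem-def-H-c-conv}) to identify $\jmath'_{U, \ket, !}\bigl(\bZ_p(d)\bigr) \cong R\varepsilon_{\ket, *}\,\jmath_{U, \ket, !}\bigl(\bZ_p(d)\bigr)$.  It then checks---working modulo $(p^m, [\varpi^n])$, taking derived limits and inverting $p$, and using the projection formula together with the Poincar\'e lemma (Proposition \ref{prop-Poin-lem}), Proposition \ref{prop-RHl-Hl-Ddl-cpt}, and $R\mu'_*$---that the entire period-sheaf construction on $X$ pushes forward along $\varepsilon$ to the one on $X'$, producing a morphism $R\varepsilon_{\proket, *}\bigl(\widehat{\bZ}_p(d) \otimes_{\widehat{\bZ}_p} \BBdRX{X}^\Sc\bigr) \to \widehat{\bZ}_p(d) \otimes_{\widehat{\bZ}_p} \BBdRX{X'}^\Scalt$ whose image under $R\mu'_{X', *}$ is exactly the map $\bigl(\Omega_X^\bullet(\log D)(-D)(d)\bigr) \ho_k \BdR \to \bigl(\Omega_X^\bullet(\log D')(-D')(d)\bigr) \ho_k \BdR$ from \Refeq{\ref{eq-prop-exc-dR-comp-ex-seq-dR}}.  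This compatibility of the comparison machinery with $R\varepsilon_{\proket, *}$ across the two different log structures is the real content of the proposition, and your proposal does not address it.
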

\begin{proof}
    Let $X'$ denote the log adic space with the same underlying space as $X$, but with the log structure induced by the normal crossings divisor $D'$ as in \cite[\aEx \logRHexlogadicspncd]{Diao/Lan/Liu/Zhu:lrhrv}.  Let $Z$ be equipped with the log structure induced by $D'|_Z$, so that we have a canonical closed immersion of log adic spaces $\imath_Z': Z \to X'$.  For simplicity, we shall still write $\imath_{Z, \an}: Z_\an \to X_\an$ instead of $\imath_{Z, \an}': Z_\an \to X'_\an$.  Let $\varepsilon: X \to X'$ denote the canonical morphism.  Let $\jmath_U: U \to X$, $\jmath_U': U \to X'$, $\jmath_{U'}: U' \to X'$, and $\jmath_W: W \to Z$ denote the canonical open immersions.  Then the short exact sequence \Refeq{\ref{eq-lem-exc-isom-ex-seq}} induces \Pth{and is induced by} a short exact sequence
    \begin{equation}\label{eq-prop-exc-dR-comp-ex-seq-ket}
        0 \to \jmath'_{U, \ket, !}\bigl(\bZ_p(d)\bigr) \to \jmath_{U', \ket, !}\bigl(\bZ_p(d)\bigr) \to \imath_{Z, \ket, *}' \, \jmath_{W, \ket, !}\bigl(\bZ_p(d)\bigr) \to 0
    \end{equation}
    over $X'_{K, \ket}$, which induces \Refeq{\ref{eq-lem-exc-isom}} and the first row of \Refeq{\ref{eq-prop-exc-dR-comp}}; and we have
    \begin{equation}\label{eq-prop-exc-dR-comp-ex-seq-ket-ch}
        \jmath'_{U, \ket, !}\bigl(\bZ_p(d)\bigr) \Mi R\varepsilon_{\ket, *} \, \jmath_{U, \ket, !}\bigl(\bZ_p(d)\bigr) \cong \varepsilon_{\ket, *} \, \jmath_{U, \ket, !}\bigl(\bZ_p(d)\bigr),
    \end{equation}
    by the definitions of these sheaves.

    Let $\varpi \in K^{\flat+}$ be such that $\varpi^\sharp = p$.  By Lemma \ref{lem-from-ket-to-proket} and \cite[\aProp \logadicpropproketvsketadj]{Diao/Lan/Liu/Zhu:lasfr}, the morphism
    \[
        \jmath'_{U, \ket, !}\bigl((\bZ / p^m)(d)\bigr) \to \jmath_{U', \ket, !}\bigl((\bZ / p^m)(d)\bigr)
    \]
    \Pth{induced by \Refeq{\ref{eq-prop-exc-dR-comp-ex-seq-ket}}} is the pushforward of the morphism
    \begin{equation}\label{eq-prop-exc-dR-comp-mor-proket}
        \upsilon_{X'}^{-1} \, \jmath'_{U, \ket, !}\bigl((\bZ / p^m)(d)\bigr) \to \upsilon_{X'}^{-1} \, \jmath_{U', \ket, !}\bigl((\bZ / p^m)(d)\bigr)
    \end{equation}
    over $X'_{K, \proket}$, which admits compatible morphisms to a morphism
    \[
    \begin{split}
        & \Bigl(\upsilon_{X'}^{-1} \, \jmath'_{U, \ket, !}\bigl((\bZ / p^m)(d)\bigr)\Bigr) \otimes_{\widehat{\bZ}_p} \bigl(\AAinfX{X'} / (p^m, [\varpi^n])\bigr) \\
        & \to \Bigl(\upsilon_{X'}^{-1} \, \jmath_{U', \ket, !}\bigl((\bZ / p^m)(d)\bigr)\Bigr) \otimes_{\widehat{\bZ}_p} \bigl(\AAinfX{X'} / (p^m, [\varpi^n])\bigr)
    \end{split}
    \]
    over $X'_{K, \proket}$, for each $m \geq 1$ and each $n \geq 1$, and we have
    \[
    \begin{split}
        & \Bigl(\upsilon_{X'}^{-1} \, \jmath'_{U, \ket, !}\bigl((\bZ / p^m)(d)\bigr)\Bigr) \otimes_{\widehat{\bZ}_p} \bigl(\AAinfX{X'} / (p^m, [\varpi^n])\bigr) \\
        & \Mi \Bigl(R\varepsilon_{\proket, *} \, \upsilon_X^{-1} \, \jmath_{U, \ket, !}\bigl((\bZ / p^m)(d)\bigr)\Bigr) \otimes_{\widehat{\bZ}_p} \bigl(\AAinfX{X'} / (p^m, [\varpi^n])\bigr) \\
        & \Mi R\varepsilon_{\proket, *}\Bigl(\upsilon_X^{-1} \, \jmath_{U, \ket, !}\bigl((\bZ / p^m)(d)\bigr) \otimes_{\widehat{\bZ}_p} \bigl(\AAinfX{X} / (p^m, [\varpi^n])\bigr)\Bigr)
    \end{split}
    \]
    over $X'_{K, \proket}$, by induction on $m$ and $n$ based on \cite[\aLem \logadiclemketmorOplusp]{Diao/Lan/Liu/Zhu:lasfr}.  Therefore, by Lemmas \ref{lem-cplx-AAinf} and \ref{lem-BBdRp-cpt}, and by taking derived limits and inverting $p$, we see that the derived limit of \Refeq{\ref{eq-prop-exc-dR-comp-mor-proket}} also admits compatible morphisms to a morphism
    \begin{equation}\label{eq-prop-exc-dR-comp-ex-seq-proket-BBdR}
        R\varepsilon_{\proket, *}\bigl(\widehat{\bZ}_p(d) \otimes_{\widehat{\bZ}_p} \BBdRX{X}^\Sc\bigr) \to \widehat{\bZ}_p(d) \otimes_{\widehat{\bZ}_p} \BBdRX{X'}^\Scalt,
    \end{equation}
    where $\BBdRX{X}^\Sc$ and $\BBdRX{X'}^\Scalt$ are as in Definition \ref{def-BBdRp-cpt}, with $I^\Sc = I$ and $I^\Scalt = I - \{ j_0 \}$, respectively.  Then \Refeq{\ref{eq-prop-exc-dR-comp-ex-seq-proket-BBdR}} induces the second row of \Refeq{\ref{eq-prop-exc-dR-comp}}, by Proposition \ref{prop-L-!-coh-comp-proket}, which is an isomorphism because the first row is.

    Let $\mu'_X: {X_\proket}_{X_K} \to X_\an$ and $\mu'_{X'}: {X'_\proket}_{X'_K} \to X'_\an \cong X_\an$ denote the canonical morphisms of sites, so that $R\mu'_{X, *} \cong R\mu'_{X', *} \, R\varepsilon_{\proket, *}$.  By Proposition \ref{prop-Poin-lem} and the projection formula, by applying $R\mu'_{X', *}$  to \Refeq{\ref{eq-prop-exc-dR-comp-ex-seq-proket-BBdR}}, and by Proposition \ref{prop-RHl-Hl-Ddl-cpt} and Remark \ref{rem-unip-twist}, we obtain the canonical morphism
    \begin{equation}\label{eq-prop-exc-dR-comp-mor-dR-BdR}
        \bigl(\Omega_X^\bullet(\log D)(-D)(d)\bigr) \ho_k \BdR \to \bigl(\Omega_X^\bullet(\log D')(-D')(d)\bigr) \ho_k \BdR
    \end{equation}
    of complexes over $\cX$.  Note that, by construction, this is part of the pullback of \Refeq{\ref{eq-prop-exc-dR-comp-ex-seq-dR}}.  Therefore, this \Refeq{\ref{eq-prop-exc-dR-comp-mor-dR-BdR}} in turn induces the third row of \Refeq{\ref{eq-prop-exc-dR-comp}}, which can be compatibly identified with the second row by the comparison isomorphisms in Theorem \ref{thm-L-!-coh-comp}; and the whole diagram \Refeq{\ref{eq-prop-exc-dR-comp}} is commutative, with the fourth row given by the excision isomorphism for de Rham cohomology, as desired.
\end{proof}

Next, let us consider the Gysin isomorphisms between top-degree cohomology.

\begin{rk}\label{rem-Gysin-seq}
    Suppose that $I = \{ j_0 \}$ is a singleton, so that $D = D_{j_0}$ is an irreducible smooth divisor by assumption.  Suppose moreover that $X_K$ and $D_K$ are connected, in which case $X$ and $D$ are both geometrically connected.  Let $\jmath_U: U \to X$ and $\imath_D: D \to X$ denote the canonical open and closed immersions \Pth{of underlying adic spaces, without log structures}.  Consider the canonical distinguished triangle
    \begin{equation}\label{eq-rem-Gysin-seq-pre}
        \tau_{\leq 0} \, R\jmath_{U, \et, *}\bigl(\bZ_p(d)\bigr) \to R\jmath_{U, \et, *}\bigl(\bZ_p(d)\bigr) \to \tau_{\geq 1} \, R\jmath_{U, \et, *}\bigl(\bZ_p(d)\bigr) \Mapn{+1}
    \end{equation}
    over $X_{K, \et}$.  By canonically identifying the two truncations in \Refeq{\ref{eq-rem-Gysin-seq-pre}} using \cite[\aLem \logadiclemkettoetconst]{Diao/Lan/Liu/Zhu:lasfr}, we obtain a canonical distinguished triangle
    \begin{equation}\label{eq-rem-Gysin-seq}
        \bZ_p(d) \to R\jmath_{U, \et, *}\bigl(\bZ_p(d)\bigr) \to \imath_{D, \et, *}\bigl(\bZ_p(d - 1)\bigr)[-1] \Mapn{+1}.
    \end{equation}
    Note that, because of the proof of \cite[\aLem \logadiclemkettoetconst]{Diao/Lan/Liu/Zhu:lasfr}, this is compatible \Pth{via \cite[\aProp 2.1.4 and \aThm 3.8.1]{Huber:1996-ERA}} with the algebraic construction in \cite[\aSec 4]{Faltings:2002-aee}.  This is also consistent with the results in \cite[\aSec 3.9]{Huber:1996-ERA}.
\end{rk}

\begin{lem}\label{lem-Gysin-isom}
    With the same setting as in Remark \ref{rem-Gysin-seq}, the distinguished triangle \Refeq{\ref{eq-rem-Gysin-seq}} induces the Gysin isomorphism
    \begin{equation}\label{eq-lem-Gysin-isom}
        H_\et^{2d - 2}\bigl(D_K, \bQ_p(d - 1)\bigr) \Mi H_\et^{2d}\bigl(X_K, \bQ_p(d)\bigr).
    \end{equation}
\end{lem}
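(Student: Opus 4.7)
The plan is to take the long exact sequence of cohomology associated with the distinguished triangle \Refeq{\ref{eq-rem-Gysin-seq}}.  Since $D_K$ has pure dimension $d - 1$, one has $H^i_\et(D_K, \bQ_p(d - 1)) = 0$ for $i > 2d - 2$, so in the relevant range this long exact sequence reduces to
\[
    \cdots \to H^{2d - 1}_\et\bigl(U_K, \bQ_p(d)\bigr) \to H^{2d - 2}_\et\bigl(D_K, \bQ_p(d - 1)\bigr) \Mapn{\delta} H^{2d}_\et\bigl(X_K, \bQ_p(d)\bigr) \to H^{2d}_\et\bigl(U_K, \bQ_p(d)\bigr) \to 0.
\]
It suffices to show that $H^{2d}_\et(U_K, \bQ_p(d)) = 0$, so that $\delta$ is surjective, and that both source and target of $\delta$ are one-dimensional over $\bQ_p$, which would then force $\delta$ to be an isomorphism.

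For the vanishing, I would apply Theorem \ref{thm-L-!-coh-comp} with $\bL = \bZ_p$ and $I^\Sc = \emptyset$ to obtain an isomorphism $H^{2d}_\et(U_K, \bQ_p) \otimes_{\bQ_p} \BdR \cong H^{2d}_\dR(U_\an, \cO_U) \otimes_k \BdR$, reducing to the vanishing of $H^{2d}_\dR(U_\an, \cO_U)$.  Since $\Omega_X^a(\log D) = 0$ for $a > d$ and the coherent cohomology of the proper rigid analytic variety $X$ of pure dimension $d$ vanishes above degree $d$, only the $(a, b) = (d, d)$ term of the log Hodge--de Rham spectral sequence contributes in total degree $2d$, giving
\[
    H^{2d}_\dR(U_\an, \cO_U) \cong H^d\bigl(X_\an, \Omega_X^d(\log D)\bigr) \cong H^d\bigl(X_\an, \Omega_X^d(D)\bigr).
\]
By Serre duality (Theorem \ref{thm-Serre-duality}), the last group is dual to $H^0(X_\an, \cO_X(-D))$, which vanishes because $X$ is geometrically connected (as $X_K$ is) and $D$ is a nonempty effective divisor.

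For the one-dimensionality, the same strategy applies to the proper smooth cases.  For $X$ itself, Serre duality gives $H^d(X_\an, \Omega_X^d) \cong H^0(X_\an, \cO_X)^\vee \cong k$, which is the only nonzero Hodge piece in total degree $2d$ by the same vanishing as above, so $H^{2d}_\dR(X_\an) \cong k$; Theorem \ref{thm-L-!-coh-comp} applied to $X$ then yields $\dim_{\bQ_p} H^{2d}_\et(X_K, \bQ_p(d)) = 1$.  An identical argument applied to the proper smooth geometrically connected $D$ of pure dimension $d - 1$ gives $\dim_{\bQ_p} H^{2d - 2}_\et(D_K, \bQ_p(d - 1)) = 1$.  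Combining with the vanishing above concludes that $\delta$ is an isomorphism.  There is no serious obstacle here: once the de Rham comparison of Theorem \ref{thm-L-!-coh-comp} and the Serre duality of Theorem \ref{thm-Serre-duality} are in hand, the argument is a short dimension count, with the only minor care being the verification that no Hodge piece other than $(d, d)$ contributes in total degree $2d$, which is immediate from $\dim X = d$.
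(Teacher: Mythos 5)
Your argument is correct, and its skeleton is the same as the paper's: take the long exact sequence of \Refeq{\ref{eq-rem-Gysin-seq}}, show that the two outer groups are one-dimensional and that $H_\et^{2d}\bigl(U_K, \bQ_p(d)\bigr)$ vanishes, and conclude by a dimension count, with Theorem \ref{thm-L-!-coh-comp} used to transfer all dimension computations to the de Rham/Hodge side.  The only real difference is where the duality is applied.  The paper computes $H^0(X_\an, \cO_X) \cong k$, $H^0(D_\an, \cO_D) \cong k$, and $H^0\bigl(X_\an, \cO_X(-D)\bigr) = 0$ directly, and then dualizes these degree-zero statements using its own Poincar\'e duality pairing \Refeq{\ref{eq-thm-trace-dR-pairing}} for de Rham cohomology (Theorem \ref{thm-trace-dR-Hdg}) to get the needed facts in degree $2d$; you instead isolate the single possibly nonzero Hodge piece $H^d\bigl(X_\an, \Omega_X^d(\log D)\bigr) \cong H^d\bigl(X_\an, \Omega_X^d(D)\bigr)$ in total degree $2d$ and apply coherent Serre duality (Theorem \ref{thm-Serre-duality}) to reduce to the same vanishing $H^0\bigl(X_\an, \cO_X(-D)\bigr) = 0$.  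Your route is marginally more self-contained in that it does not invoke Theorem \ref{thm-trace-dR-Hdg} (hence not Theorem \ref{thm-Higgs-pairing} either), at the cost of two small points you should make explicit: the vanishing of coherent cohomology of $X$ above degree $d$ is itself a consequence of the perfect pairing in Theorem \ref{thm-Serre-duality} (the relevant $\Ext$ group sits in negative degree), and passing from \Qtn{only the $(d,d)$ Hodge piece is nonzero} to $\dim_k H_\dR^{2d}\bigl(X_\an, \cO_X(d)\bigr) = 1$ requires the $E_1$-degeneration of the Hodge--de Rham spectral sequence (the incoming differential from $H^d(X_\an, \Omega_X^{d - 1})$ must vanish); this degeneration is part of Theorem \ref{thm-L-!-coh-comp}, which you are already citing---or one can bypass it by reading the dimensions off the Hodge--Tate decomposition \Refeq{\ref{eq-thm-L-!-coh-comp-Hdg}} directly---so there is no gap, only a step worth spelling out.  (For the vanishing of $H_\et^{2d}\bigl(U_K, \bQ_p(d)\bigr)$ no degeneration is needed, since all $E_1$-terms in total degree $2d$ vanish, as you note.)
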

\begin{proof}
    Since $X$ and $D$ are geometrically connected, we have $H^0(X_\an, \cO_X) \cong k$ and $H^0(D_\an, \cO_D) \cong k$.  Moreover, we have a long exact sequence
    \[
        0 \to H^0\bigl(X_\an, \cO_X(-D)\bigr) \to H^0(X_\an, \cO_X) \to H^0(D_\an, \cO_D) \to \cdots,
    \]
    which forces $H^0\bigl(X_\an, \cO_X(-D)\bigr) = 0$ because $H^0(X_\an, \cO_X) \to H^0(D_\an, \cO_D)$ maps $1$ to $1$ by definition.  This shows that $H_\dR^0(X_\an, \cO_X) \cong k$, $H_\dR^0(D_\an, \cO_D) \cong k$, and $H_{\dR, \cpt}^0(U_\an, \cO_U) = 0$.  By using the perfect Poincar\'e duality pairing \Refeq{\ref{eq-thm-trace-dR-pairing}}, we obtain $H_\dR^{2d}\bigl(X_\an, \cO_X(d)\bigr) \cong k$, $H_\dR^{2d - 2}\bigl(D_\an, \cO_D(d - 1)\bigr) \cong k$, and $H_\dR^{2d}\bigl(U_\an, \cO_U(d)\bigr) = 0$.  By Theorem \ref{thm-L-!-coh-comp}, we obtain $H_\et^{2d}\bigl(X_K, \bQ_p(d)\bigr) \cong \bQ_p$, $H_\et^{2d - 2}\bigl(D_K, \bQ_p(d - 1)\bigr) \cong \bQ_p$, and $H_\et^{2d}\bigl(U_K, \bQ_p(d)\bigr) = 0$.  Now the desired isomorphism \Refeq{\ref{eq-lem-Gysin-isom}} is just a connecting morphism in the long exact sequence associated with \Refeq{\ref{eq-rem-Gysin-seq}} \Pth{and with $p$ inverted}, which is an isomorphism by comparison of dimensions over $\bQ_p$.
\end{proof}

\begin{prop}\label{prop-Gysin-dR-comp}
    With the same setting as in Remark \ref{rem-Gysin-seq} and Lemma \ref{lem-Gysin-isom}, the isomorphism \Refeq{\ref{eq-lem-Gysin-isom}} extends to a commutative diagram
    \begin{equation}\label{eq-prop-Gysin-dR-comp}
        \xymatrix{ {H_\et^{2d - 2}\bigl(D_K, \bQ_p(d - 1)\bigr)} \ar[r]^-\sim \ar@{^(->}[d] & {H_\et^{2d}\bigl(X_K, \bQ_p(d)\bigr)} \ar@{^(->}[d] \\
        {H_\et^{2d - 2}\bigl(D_K, \bQ_p(d - 1)\bigr) \otimes_{\bQ_p} \BdR} \ar[r]^-\sim \ar[d] & {H_\et^{2d}\bigl(X_K, \bQ_p(d)\bigr) \otimes_{\bQ_p} \BdR} \ar[d] \\
        {H_\dR^{2d - 2}\bigl(D_\an, \cO_D(d - 1)\bigr) \otimes_k \BdR} \ar[r]^-\sim & {H_\dR^{2d}\bigl(X_\an, \cO_X(d)\bigr) \otimes_k \BdR} \\
        {H_\dR^{2d - 2}\bigl(D_\an, \cO_D(d - 1)\bigr)} \ar[r]^-\sim \ar@{^(->}[u] & {H_\dR^{2d}\bigl(X_\an, \cO_X(d)\bigr)} \ar@{^(->}[u] }
    \end{equation}
    Here the fourth row \Pth{at the bottom} is the Gysin isomorphism induced by the long exact sequence associated with the adjunction exact sequence
    \begin{equation}\label{eq-prop-Gysin-dR-comp-ex-seq-dR}
        0 \to \Omega_X^\bullet(d) \to \Omega_X^\bullet(\log D)(d) \to \imath_{D, \an, *}\bigl(\Omega_D^\bullet(d - 1)\bigr)[-1] \to 0
    \end{equation}
    for de Rham complexes over $X_\an$, which is an isomorphism, as explained in the proof of Lemma \ref{lem-Gysin-isom}, because $H_\dR^{2d}\bigl(X_\an, \cO_X(d)\bigr) \cong k$, $H_\dR^{2d - 2}\bigl(D_\an, \cO_D(d - 1)\bigr) \cong k$, and $H_{\dR, \cpt}^{2d}\bigl(U_\an, \cO_U(d)\bigr) \cong H^{2d}\bigl(X_\an, \Omega_X^\bullet(\log D)(d)\bigr) = 0$.  Moreover, this isomorphism at the bottom row is compatible with the trace morphisms.
\end{prop}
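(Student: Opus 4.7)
The plan is to decompose \Refeq{\ref{eq-prop-Gysin-dR-comp}} into three horizontally stacked squares.  The top and bottom squares commute tautologically (as base change to $\BdR$), so the content lies in the middle square---the compatibility of the comparison isomorphisms with the two Gysin morphisms---and the trace compatibility asserted for the bottom row.

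For the middle square, I will mimic the strategy of Proposition \ref{prop-exc-dR-comp}, replacing the excision short exact sequence with the Gysin distinguished triangle \Refeq{\ref{eq-rem-Gysin-seq}}.  By \cite[\aThm \logadicthmpurity{} and \aLem \logadiclemkettoetconst]{Diao/Lan/Liu/Zhu:laske}, the étale triangle \Refeq{\ref{eq-rem-Gysin-seq}} may be identified, level-wise on $\bZ / p^m$, with the Leray filtration triangle for the canonical morphism $\varepsilon: X_\ket \to X_\et$ (where $X$ carries the log structure defined by $D$).  I will then lift this triangle to $X_{K, \proket}$, tensor it with the sheaves $\AAinfX{X} / (p^m, [\varpi^n])$, and pass to the derived inverse limit as in the proof of Proposition \ref{prop-exc-dR-comp} (via Lemmas \ref{lem-coh-Ainf}, \ref{lem-cplx-AAinf}, and \ref{lem-BBdRp-cpt}, and Proposition \ref{prop-L-!-coh-comp-proket}).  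Taking $R\Gamma$ will produce the second row of \Refeq{\ref{eq-prop-Gysin-dR-comp}}; applying $R\mu'_*$ instead, and invoking the Poincaré lemma (Proposition \ref{prop-Poin-lem} together with \cite[\aCor \logRHcorlogdRcplx]{Diao/Lan/Liu/Zhu:lrhrv}) to identify the terms with log de Rham complexes tensored with $\BdR$, will yield the third row.  The commutativity of the middle square will then follow from the functoriality of the composition $R\Gamma \circ R\mu'_*$ applied to this period-sheaf Gysin triangle.

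For the trace compatibility at the bottom row, I will exploit the geometrically connected setting of Lemma \ref{lem-Gysin-isom}: both $H_\dR^{2d}(X_\an, \cO_X(d))$ and $H_\dR^{2d-2}(D_\an, \cO_D(d-1))$ are one-dimensional over $k$ and, by the Hodge-to-de Rham degeneration in Theorem \ref{thm-L-!-coh-comp}, concentrated in their top Hodge filtered pieces.  Under the resulting canonical identifications $H_\dR^{2d}(X_\an, \cO_X(d)) \cong H^d(X_\an, \Omega_X^d)$ and $H_\dR^{2d-2}(D_\an, \cO_D(d-1)) \cong H^{d-1}(D_\an, \Omega_D^{d-1})$, the de Rham Gysin isomorphism restricts on the top Hodge piece to the connecting morphism from the coherent adjunction sequence $0 \to \Omega_X^d \to \Omega_X^d(D) \to \imath_{D, \an, *}(\Omega_D^{d-1}) \to 0$, using the standard identification $\Omega_X^d(\log D) \cong \Omega_X^d(D)$ to match the top-degree piece of \Refeq{\ref{eq-prop-Gysin-dR-comp-ex-seq-dR}}.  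By Lemma \ref{lem-property-trace}\Refenum{\ref{lem-property-trace-Gysin}}, this coherent connecting morphism is compatible with the coherent trace morphism $t_\coh$; combined with \Refeq{\ref{eq-diff-top-deg-coh}} and \Refeq{\ref{eq-thm-trace-dR}}, which define $t_\dR$ via $t_\coh$, this will yield the desired trace compatibility.

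The principal technical obstacle will be the lifting step, specifically pinning down the Tate twist by $\bZ_p(-1)$ on the residue term at the level of period sheaves.  This twist reflects the Poincaré residue structure on the de Rham side and must match the cocharacter Tate twist appearing on the étale side via \cite[\aLem \logadiclemkettoetconst]{Diao/Lan/Liu/Zhu:laske}, while simultaneously producing the correct residue morphism in the Poincaré lemma on the de Rham side.  This will require careful bookkeeping of the log structure on $D^\partial$ (as in Section \ref{sec-log-str-bd}) and the compatibilities in Corollary \ref{cor-OBdRp-cplx-bd-strict}.
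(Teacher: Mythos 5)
Your overall route is the same as the paper's: push the Gysin triangle (\ref{eq-rem-Gysin-seq}) from $X^\times_{K,\et}$ up to the pro-Kummer \'etale site, tensor with $\AAinfX{X}/(p^m,[\varpi^n])$, take derived limits and invert $p$ to get a triangle of $\BBdR$-sheaves, then apply $R\mu'_*$ and the Poincar\'e lemma.  But there is a genuine gap at exactly the point you dismiss as bookkeeping.  Functoriality of $R\Gamma \circ R\mu'_*$ only shows that the \'etale Gysin map corresponds, under the comparison isomorphisms, to \emph{whatever} morphism $H_\dR^{2d-2}\bigl(D_\an,\cO_D(d-1)\bigr)\otimes_k\BdR \to H_\dR^{2d}\bigl(X_\an,\cO_X(d)\bigr)\otimes_k\BdR$ is induced by pushing forward the period-sheaf triangle.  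To obtain the diagram (\ref{eq-prop-Gysin-dR-comp}) you must further identify this induced morphism with the $\BdR$-linear extension of the de Rham Gysin map coming from the adjunction sequence (\ref{eq-prop-Gysin-dR-comp-ex-seq-dR}); equivalently, your claim that the bottom square commutes ``tautologically'' presupposes precisely this identification.  Unlike in the excision case (Proposition \ref{prop-exc-dR-comp}), where the de Rham-level morphism is by construction part of the pullback of the excision sequence, here the second (residue) morphism of the pushed-forward triangle is not visibly the adjunction morphism $\imath_D^*\bigl(\Omega_X^\bullet(\log D)(d)\bigr)\to\Omega_D^\bullet(d-1)[-1]$, and one must also rule out a nonzero third (connecting) morphism.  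The paper settles this by a genuinely local computation: choosing a toric chart adapted to $D$, decomposing $\Gamma_\geom\cong(\widehat{\bZ}(1))^n$ as $\overline{\Gamma}_\geom$ and $\Gamma^\partial\cong\widehat{\bZ}(1)$, computing $R\varepsilon^\partial_{D_K,\proket,*}$ via the two-term complex $L\xrightarrow{\gamma_n-1}L$, and using the explicit description of $\OBdlp$ (with the connection sending $W_j$ to $t^{-1}\,dT_j/T_j$) to see that the residue map literally extracts the factor $dT_n/T_n$, hence agrees with the classical adjunction morphism; only then does the third morphism vanish and the triangle identify with the pullback of (\ref{eq-prop-Gysin-dR-comp-ex-seq-dR}).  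Your proposal names the Tate twist on the residue term as the obstacle, but the issue is not merely matching a twist through Corollary \ref{cor-OBdRp-cplx-bd-strict}: it is proving that the period-theoretic boundary map \emph{is} the Poincar\'e residue, which is the heart of the proof and is absent from your plan.

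Your treatment of the trace compatibility at the bottom row is fine and matches the paper's intent: by Hodge--de Rham degeneration the top-degree classes live in the top Hodge graded pieces, the Gysin map there is the connecting map of $0\to\Omega_X^d\to\Omega_X^d(D)\to\imath_{D,\an,*}(\Omega_D^{d-1})\to 0$, and Lemma \ref{lem-property-trace}(\ref{lem-property-trace-Gysin}) gives compatibility with $t_\coh$, hence with $t_\dR$ via (\ref{eq-diff-top-deg-coh}).
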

\begin{proof}
    Let $X^\times$ denote the log adic space with the same underlying space as $X$, but equipped with the trivial log structure.  On the contrary, let $D^\partial := X_{\{ j_0 \}}^\partial$, as in Section \ref{sec-log-str-bd}, and let $D$ be equipped with the trivial log structure.  Let $\jmath_U: U \to X$, $\jmath_U^\times: U \to X^\times$, $\imath_D^\partial: D^\partial \to X$, $\imath_D: D \to X^\times$, $\varepsilon: X \to X^\times$, $\varepsilon_D^\partial: D^\partial \to D$ denote the canonical morphisms of log adic spaces.  Since
    \[
        R\jmath_{U, \ket, *}^\times\bigl(\bZ_p(d)\bigr) \cong R\varepsilon_{\ket, *} \, R\jmath_{U, \ket, *}\bigl(\bZ_p(d)\bigr) \cong R\varepsilon_{\ket, *}\bigl(\bZ_p(d)\bigr)
    \]
    and
    \[
        \imath_{D, \ket, *} \, R\varepsilon_{D, \ket, *}^\partial\bigl(\bZ_p(d - 1)\bigr) \cong R\varepsilon_{\ket, *} \, \imath_{D, \ket, *}^\partial\bigl(\bZ_p(d - 1)\bigr),
    \]
    by \cite[\aLem \logadiclemclimmketmor, \aThm \logadicthmpurity, and \aCor \logadiccorpuritylisse]{Diao/Lan/Liu/Zhu:lasfr}, \Refeq{\ref{eq-rem-Gysin-seq}} induces \Pth{and is induced by} a distinguished triangle
    \begin{equation}\label{eq-prop-Gysin-dR-comp-seq-ket}
        \bZ_p(d) \to R\varepsilon_{\ket, *}\bigl(\bZ_p(d)\bigr) \to \imath_{D, \ket, *}\bigl(\bZ_p(d - 1)\bigr)[-1] \Mapn{+1}
    \end{equation}
    over $X^\times_{K, \ket} \cong X_{K, \et}$, which induces \Refeq{\ref{eq-lem-Gysin-isom}} and the first row of \Refeq{\ref{eq-prop-Gysin-dR-comp}}.  Let $\varpi \in K^{\flat+}$ be such that $\varpi^\sharp = p$.  By \cite[\aProp \logadicpropproketvsketadj]{Diao/Lan/Liu/Zhu:lasfr}, the distinguished triangle
    \[
        (\bZ / p^m)(d) \to R\varepsilon_{\ket, *}\bigl((\bZ / p^m)(d)\bigr) \to
        \imath_{D, \ket, *}\bigl((\bZ / p^m)(d - 1)\bigr)[-1] \Mapn{+1}
    \]
    \Pth{induced by \Refeq{\ref{eq-prop-Gysin-dR-comp-seq-ket}}} is the pushforward of the distinguished triangle
    \begin{equation}\label{eq-prop-Gysin-dR-comp-seq-proket}
    \begin{split}
        & \upsilon_{X^\times}^{-1}\bigl((\bZ / p^m)(d)\bigr) \to \upsilon_{X^\times}^{-1} \, R\varepsilon_{\ket, *}\bigl((\bZ / p^m)(d)\bigr) \\
        & \to
        \upsilon_{X^\times}^{-1} \, \imath_{D, \ket, *}\bigl((\bZ / p^m)(d - 1)\bigr)[-1] \Mapn{+1}
    \end{split}
    \end{equation}
    over $X^\times_{K, \proket}$, which admits a morphism to the distinguished triangle
    \[
    \begin{split}
        & \Bigl(\upsilon_{X^\times}^{-1} \, \bigl((\bZ / p^m)(d)\bigr)\Bigr) \otimes_{\widehat{\bZ}_p} \bigl(\AAinfX{X^\times} / (p^m, [\varpi^n])\bigr) \\
        & \to \Bigl(\upsilon_{X^\times}^{-1} \, R\varepsilon_{\ket, *}\bigl((\bZ / p^m)(d)\bigr)\Bigr) \otimes_{\widehat{\bZ}_p} \bigl(\AAinfX{X^\times} / (p^m, [\varpi^n])\bigr) \\
        & \to \Bigl(\upsilon_{X^\times}^{-1} \, \imath_{D, \ket, *}\bigl((\bZ / p^m)(d - 1)\bigr)[-1]\Bigr) \otimes_{\widehat{\bZ}_p} \bigl(\AAinfX{X^\times} / (p^m, [\varpi^n])\bigr) \Mapn{+1}
    \end{split}
    \]
    over $X^\times_{K, \proket}$, for each $m \geq 1$ and each $n \geq 1$.  Therefore, by induction on $m$ and $n$ based on \cite[\aLems \logadiclemketmorOplusp{} and \logadiclemclimmOplusp]{Diao/Lan/Liu/Zhu:lasfr}, and by taking derived limits and inverting $p$, we see that the derived limit of \Refeq{\ref{eq-prop-Gysin-dR-comp-seq-proket}} also admits a morphism to a distinguished triangle
    \begin{equation}\label{eq-prop-Gysin-dR-comp-seq-proket-BBdR}
    \begin{split}
        & \bigl(\widehat{\bZ}_p(d)\bigr) \otimes_{\widehat{\bZ}_p} \BBdRX{X^\times} \to R\varepsilon_{\proket, *}\Bigl(\bigl(\widehat{\bZ}_p(d)\bigr) \otimes_{\widehat{\bZ}_p} \BBdRX{X}\Bigr) \to \\
        & \imath_{D, \proket, *}\Bigl(\bigl(\widehat{\bZ}_p(d - 1)\bigr)[-1] \otimes_{\widehat{\bZ}_p} \BBdRX{D}\Bigr) \Mapn{+1}
    \end{split}
    \end{equation}
    over $X^\times_{K, \proket}$.  Then \Refeq{\ref{eq-prop-Gysin-dR-comp-seq-proket-BBdR}} induces the second row of \Refeq{\ref{eq-prop-Gysin-dR-comp}}, by Proposition \ref{prop-L-!-coh-comp-proket} \Pth{or \cite[\aThm 8.4]{Scholze:2013-phtra}}, which is an isomorphism because the first row is.

    Let $\mu_X': {X_\proket}_{/X_K} \to X_\an$ and $\mu_{X^\times}': {X^\times_\proket}_{/X^\times_K} \to X^\times_\an \cong X_\an$ denote the canonical morphisms of sites, so that $R\mu_{X, *}' = R\mu_{X^\times, *}' \, R\varepsilon_{\proket, *}$.  By Proposition \ref{prop-Poin-lem} and the projection formula, by applying $R\mu_{X^\times, *}'$ to \Refeq{\ref{eq-prop-Gysin-dR-comp-seq-proket-BBdR}}, and by Proposition \ref{prop-RHl-Hl-Ddl-cpt} and Remark \ref{rem-unip-twist}, we obtain a distinguished triangle
    \begin{equation}\label{eq-prop-Gysin-dR-comp-seq-dR-BdR}
    \begin{split}
        & \bigl(\Omega_X^\bullet(d)\bigr) \ho_k \BdR \to \bigl(\Omega_X^\bullet(\log D)(d)\bigr) \ho_k \BdR \to \\
        & \Bigl(\imath_{D, \an, *}\bigl(\Omega_D^\bullet(d - 1)\bigr)[-1]\Bigr) \ho_k \BdR \Mapn{+1}
    \end{split}
    \end{equation}
    of complexes over $\cX^\times \cong \cX$, and we have
    \[
        \Bigl(\imath_{D, \an, *}\bigl(\Omega_D^\bullet(d - 1)\bigr)[-1]\Bigr) \ho_k \BdR \Mi \imath_{\cD, *}\Bigl(\bigl(\Omega_D^\bullet(d - 1)\bigr) \ho_k \BdR\Bigr)[-1].
    \]
    This \Refeq{\ref{eq-prop-Gysin-dR-comp-seq-dR-BdR}} induces the third row of \Refeq{\ref{eq-prop-Gysin-dR-comp}}, which can be compatibly identified with the second row by the comparison isomorphisms in Theorem \ref{thm-L-!-coh-comp} \Pth{or rather \cite[\aThm 8.4]{Scholze:2013-phtra}}.

    We claim that \Refeq{\ref{eq-prop-Gysin-dR-comp-seq-dR-BdR}} is canonically isomorphic to the pullback of \Refeq{\ref{eq-prop-Gysin-dR-comp-ex-seq-dR}}.  It is clear that the first morphism in \Refeq{\ref{eq-prop-Gysin-dR-comp-seq-dR-BdR}} is the canonical one and hence coincides with the pullback of the first morphism in \Refeq{\ref{eq-prop-Gysin-dR-comp-ex-seq-dR}}.  As for the second morphism, it suffices to show that it induces the pullback to $\cD$ of the canonical morphism $\imath_D^*\bigl(\Omega_X^\bullet(\log D)(d)\bigr) \to \Omega_D^\bullet(d - 1)[-1]$ induced by adjunction.  Once this is known, the third morphism must be zero, and the claim would follow.

    We shall first show this \'etale locally, by adapting the arguments in \cite[\aSec \logRHseccoh]{Diao/Lan/Liu/Zhu:lrhrv}.  Suppose that $X = \Spa(R, R^+)$ is affinoid and admits a strictly \'etale morphism
    \[
        X \to \bE := \bT^{n - 1} \times \bD := \Spa(k\Talg{T_1^{\pm 1}, \ldots, T_{n - 1}^{\pm 1}, T_n}, k^+\Talg{T_1^{\pm 1}, \ldots, T_{n - 1}^{\pm 1}, T_n}),
    \]
    and that the underlying adic space $D$ is the pullback of $\{ T_n = 0 \}$, in which case $D = \Spa(\overline{R}, \overline{R}^+)$ with $\overline{R} := R / (T_n)$.  Recall that $K = \widehat{\AC{k}}$.  Let us take finite extensions $k_m$ of $k$ in $\AC{k}$ such that $k_m$ contains all $m$-th roots of unity in $\AC{k}$, for each $m \geq 1$, and such that $\AC{k} = \cup_m \, k_m$.  For each $m \geq 1$, let
    \[
        \bE_m := \bT^{n - 1}_m \times \bD_m := \Spa(k_m\Talg{T_1^{\pm \frac{1}{m}}, \ldots, T_{n - 1}^{\pm \frac{1}{m}}, T_n^{\frac{1}{m}}}, k_m^+\Talg{T_1^{\pm \frac{1}{m}}, \ldots, T_{n - 1}^{\pm \frac{1}{m}}, T_n^{\frac{1}{m}}}),
    \]
    and let $X_m := X \times_\bE \bE_m$ and $D^\partial_m := D^\partial \times_\bE \bE_m$.  Then $\widetilde{X} := \varprojlim_m X_m \to X_K$ and $\widetilde{D}^\partial := \varprojlim_m D^\partial_m \to D^\partial_K$ are Galois pro-Kummer \'etale covers with Galois group $\Gamma_\geom \cong (\widehat{\bZ}(1))^n$, and we have $\widetilde{D}^\partial \cong \widetilde{X} \times_X D^\partial$.  Similarly, we have a strictly \'etale morphism $D \to \bT^{n - 1}$ \Pth{compatible with the above $D^\partial \to \bE = \bT^{n - 1} \times \bD$}, with Kummer \'etale covers $\bT^{n - 1}_m \to \bT^{n - 1}$ inducing $D_m := X \times_{\bD^{n - 1}} \bD^{n - 1}_m$, and with a Galois pro-Kummer \'etale cover $\widetilde{D} := \varprojlim_m D_m \to D_K$ with Galois group $\overline{\Gamma}_\geom \cong (\widehat{\bZ}(1))^{n - 1}$.  As explained in \cite[\aSec \logadicsectoricchart]{Diao/Lan/Liu/Zhu:lasfr}, $\widetilde{X}$ and $\widetilde{D}$ are log affinoid perfectoid objects in $X_\proket$ and $D_\proket$, respectively.  By \cite[\aLem \logadiclemlogaffperfclimm]{Diao/Lan/Liu/Zhu:lasfr}, $\widetilde{D}^\partial$ is also a log affinoid perfectoid object of $D^\partial_\proket$.  By construction, the induced morphism $\widetilde{D}^\partial \to \widetilde{D}$ is Galois with Galois group
    \[
        \Gamma^\partial := \ker(\Gamma_\geom \to \overline{\Gamma}_\geom) \cong \widehat{\bZ}(1).
    \]
    Therefore, the higher direct images along the canonical morphisms of sites $\nu_X': {X_\proket}_{/X_K} \to X_\et$, $\nu_{D^\partial}': {D^\partial_\proket}_{/D^\partial_K} \to D^\partial_\et \cong D_\et$, $\varepsilon^\partial_{D_K, \proket}: {D^\partial_\proket}_{/D^\partial_K} \to {D_\proket}_{/D_K}$, and $\nu_D': {D_\proket}_{/D_K} \to D_\et$, when computed using the \v{C}ech cohomology of the pro-Kummer \'etale covers $\widetilde{X}^\partial \to X_K$, $\widetilde{D}^\partial \to D^\partial_K$, $\widetilde{D}^\partial \to \widetilde{D}_K$, and $\widetilde{D} \to D_K$, correspond to the group cohomology of $\Gamma_\geom$, $\Gamma_\geom$, $\Gamma^\partial$, and $\overline{\Gamma}_\geom$, respectively.  Thus, as in the proof of Proposition \ref{prop-RHl-Hl-Ddl-cpt}, by the same arguments as in the proofs of \cite[\aThm 2.1(iii)]{Liu/Zhu:2017-rrhpl} and \cite[\aProp \logRHpropLOCl]{Diao/Lan/Liu/Zhu:lrhrv}, we may compute $\imath_D^*\bigl(\Omega_X^\bullet(\log D)(d)\bigr)$ by working with $\nu_{D^\partial}'$ instead of $\nu_X'$.

    Let $\gamma_j \in \Gamma_\geom$ be topological generators such that $\gamma_j \, T_{j'}^{\frac{1}{m}} = \zeta_m^{\delta_{jj'}} T_{j'}^{\frac{1}{m}}$, as in \cite[(\logRHeqdefgammaj)]{Diao/Lan/Liu/Zhu:lrhrv}, for all $j, j' = 1, \ldots, n$, so that $\overline{\Gamma}_\geom$ \Pth{\resp $\Gamma^\partial$} is topologically generated by $\gamma_1, \ldots, \gamma_{n - 1}$ \Pth{\resp $\gamma_n$}.  These depend on some compatible choices of roots of unity, as in \cite[(\logRHeqzeta)]{Diao/Lan/Liu/Zhu:lrhrv}, which are equivalent to the choice of an isomorphism $\widehat{\bZ}(1) \Mi \widehat{\bZ}$, and we will use the same choices to trivialize $\widehat{\bZ}(1)$ and $\bZ_p(1)$ in the following.  Moreover, as in \cite[(\logRHeqchoicet)]{Diao/Lan/Liu/Zhu:lrhrv}, the chosen $\bZ_p(1) \Mi \bZ_p$ canonically defines an element $t \in \BdR$.  By sending the preimage of $1 \in \bZ_p$ to $t \in \BdR$, we obtain a canonical $\Gal(\AC{k} / k)$-equivariant morphism $\bZ_p(1) \to \BdR$ of $\bZ_p$-modules, which is \Pth{by definition} independent of the choice of $\bZ_p(1) \Mi \bZ_p$.

    As usual, given any topological $\Gamma^\partial \rtimes \Gal(\AC{k} / k)$-module $L$, with the above choices, its group cohomology with respect to the subgroup $\Gamma^\partial \cong \widehat{\bZ}(1)$ can be computed by the two-term complex $L \Mapn{\gamma_n - 1} L(-1)$.  Consequently, $\bigl(R\varepsilon^\partial_{D_K, \proket, *}\bigl(\widehat{\bZ}_p(d)\bigr)\bigr)(\widetilde{D})$ can be represented by the complex $\bZ_p(d) \Mapn{\gamma_n - 1} \bZ_p(d - 1)$, where $\gamma_n - 1$ acts by zero and hence the complex just splits \Pth{\Refcf{} \cite[\aLem \logadiclemclimmketmor]{Diao/Lan/Liu/Zhu:lrhrv}}; and the pullback of the second morphism in \Refeq{\ref{eq-prop-Gysin-dR-comp-seq-ket}} corresponds to the canonical morphism
    \begin{equation}\label{eq-prop-Gysin-dR-comp-mor-Z-p-cplx}
        [\bZ_p(d) \Mapn{0} \bZ_p(d - 1)] \to \bZ_p(d - 1)[-1]
    \end{equation}
    given by the identity morphisms on $\bZ_p(d - 1)[-1]$.  By the explicit descriptions in \cite[\aSec \logRHsecOBdlexplicit]{Diao/Lan/Liu/Zhu:lrhrv}, the canonical morphism $\BBdRX{D}(\widetilde{D}) \to \BBdRX{D^\partial}(\widetilde{D}^\partial)$ is an isomorphism, and $\gamma_n - 1$ acts by zero on $\BBdRX{D^\partial}^\partial(\widetilde{D}^\partial)$.  Let us introduce
    \begin{equation}\label{eq-prop-Gysin-dR-comp-def-B}
        \mathbf{B} := \BBdRX{D}(\widetilde{D}) \cong \BBdRX{D^\partial}(\widetilde{D}^\partial),
    \end{equation}
    for simplicity of notation.  Therefore, in a way consistent with \Refeq{\ref{eq-prop-Gysin-dR-comp-mor-Z-p-cplx}} and \Refeq{\ref{eq-prop-Gysin-dR-comp-def-B}}, the pullback of the second morphism of \Refeq{\ref{eq-prop-Gysin-dR-comp-seq-proket-BBdR}} induces the canonical morphism
    \begin{equation}\label{eq-prop-Gysin-dR-comp-mor-BdR-cplx}
        [\mathbf{B}(d) \Mapn{0} \mathbf{B}(d - 1)] \to \mathbf{B}(d - 1)[-1]
    \end{equation}
    given by the identity morphism on $\mathbf{B}(d - 1)[-1]$.  Again, for simplicity of notation, let $\mathbf{\Omega}^\bullet := \Omega_D^\bullet(D)$ and $\mathbf{\Omega}^{\log, \bullet} := \Omega_{D^\partial}^{\log, \bullet}(D)$, where $(D)$ denotes the evaluation on the whole affinoid $D$.  Then $\mathbf{\Omega}^1 \cong \oplus_{j = 1}^{n - 1} (\overline{R} \, dT_j)$ and $\mathbf{\Omega}^{\log, 1} \cong \mathbf{\Omega}^1 \oplus (\overline{R} \, \frac{dT_n}{T_n})$, and we have $\mathbf{\Omega}^{\log, \bullet} \cong \mathbf{\Omega}^\bullet \oplus (\mathbf{\Omega}^\bullet[-1] \wedge \frac{dT_n}{T_n})$.  By Lemma \ref{lem-OBdRp-bd-loc} and Corollary \ref{cor-OBdRp-cplx-bd}, and by \cite[\aCor \logRHcorOBdlplocgr]{Diao/Lan/Liu/Zhu:lrhrv}, the values of the log de Rham complexes for $\OBdlX{D}$ and $\OBdlX{D^\partial}$ on $\widetilde{D}$ and $\widetilde{D}^\partial$, respectively, are given by the complexes $\mathbf{B}\{W_1, \ldots, W_{n - 1}\} \otimes_R \mathbf{\Omega}^\bullet$ and $\mathbf{B}\{W_1, \ldots, W_n\} \otimes_R \mathbf{\Omega}^{\log, \bullet}$, where the differentials are defined by mapping $W_j$ to $t^{-1} \frac{dT_j}{T_j}$, for each $j$, as in \cite[(\logRHeqconnWi)]{Diao/Lan/Liu/Zhu:lrhrv}.  Since $\gamma_n$ acts trivially on $\mathbf{B}\{W_1, \ldots, W_{n - 1}\}$ and $\gamma_n W_n = W_n - 1$ \Pth{\Refcf{} the proof of \cite[\aLem \logRHlemGammageominv]{Diao/Lan/Liu/Zhu:lrhrv}}, we have $H^i(\Gamma^\partial, \mathbf{B}\{W_1, \ldots, W_n\}) = 0$, for all $i > 0$; and the $\Gamma^\partial$-invariants in $\mathbf{B}\{W_1, \ldots, W_n\} \otimes_R \mathbf{\Omega}^{\log, \bullet}$ form the filtered subcomplex
    \[
        \mathbf{B}\{W_1, \ldots, W_{n - 1}\} \otimes_R \mathbf{\Omega}^{\log, \bullet} \cong \mathbf{B}\{W_1, \ldots, W_{n - 1}\} \otimes_R \bigl(\mathbf{\Omega}^\bullet \oplus (\mathbf{\Omega}^\bullet[-1] \wedge \tfrac{dT_n}{T_n})\bigr).
    \]
    By explicit computations as in the proof of \cite[\aCor \logRHcorlogdRcplx]{Diao/Lan/Liu/Zhu:lrhrv}, we have following:
    \begin{enumerate}
        \item The canonical morphism of filtered complexes
            \[
                \mathbf{B}(d) \to (\mathbf{B}\{W_1, \ldots, W_{n - 1}\})(d) \otimes_R \mathbf{\Omega}^\bullet,
            \]
            mapping $\mathbf{B}(d)$ to $\mathbf{B}(d) \otimes 1$ in degree zero via the identity morphism on $\mathbf{B}(d)$, is a filtered quasi-isomorphism.

        \item The canonical morphism of filtered complexes
            \[
            \begin{split}
                [\mathbf{B}(d) \Mapn{0} \mathbf{B}(d - 1)] \to & \bigl((\mathbf{B}\{W_1, \ldots, W_{n - 1}\})(d) \otimes_R \mathbf{\Omega}^\bullet\bigr) \\
                & \quad \oplus \bigl((\mathbf{B}\{W_1, \ldots, W_{n - 1}\})(d - 1) \otimes_R (\mathbf{\Omega}^\bullet[-1] \wedge \tfrac{dT_n}{T_n})\bigr)
            \end{split}
            \]
            mapping the first term $\mathbf{B}(d)$ to $\mathbf{B}(d) \otimes 1$ in degree zero and the second term $\mathbf{B}(d - 1)$ to $\mathbf{B}(d - 1) \otimes \frac{dT_n}{T_n}$ in degree one via the identity morphisms on $\mathbf{B}(d)$ and $\mathbf{B}(d - 1)$, respectively, is a filtered quasi-isomorphism.  \Pth{Note that the filtration on the second term of $[\mathbf{B}(d) \Mapn{0} \mathbf{B}(d - 1)]$ is shifted by one.}

        \item Via the above two quasi-isomorphisms, the morphism \Refeq{\ref{eq-prop-Gysin-dR-comp-mor-BdR-cplx}} is quasi-isomorphic to the morphism
            \[
            \begin{split}
                & \bigl((\mathbf{B}\{W_1, \ldots, W_{n - 1}\})(d) \otimes_R \mathbf{\Omega}^\bullet\bigr) \\
                & \quad \oplus \bigl((\mathbf{B}\{W_1, \ldots, W_{n - 1}\})(d - 1) \otimes_R (\mathbf{\Omega}^\bullet[-1] \wedge \tfrac{dT_n}{T_n})\bigr) \\
                & \to (\mathbf{B}\{W_1, \ldots, W_{n - 1}\})(d - 1) \otimes_R \mathbf{\Omega}^\bullet[-1]
            \end{split}
            \]
            defined by extracting the factor $\frac{dT_n}{T_n}$.
    \end{enumerate}
    By taking $\overline{\Gamma}_\geom$-invariants, which computes the direct images along $\widetilde{D} \to D_K$ \Pth{with vanishing higher direct images, as explained in \cite[\aSec \logRHseccoh]{Diao/Lan/Liu/Zhu:lrhrv}}, and by canonically identifying Tate twists of $\BdR$-modules using the above morphism $\bZ_p(1) \to \BdR$, the last morphism induces the canonical morphism
    \[
        \bigl(\mathbf{\Omega}^{\log, \bullet}(d)\bigr) \ho_k \BdR \to \bigl(\mathbf{\Omega}^\bullet(d - 1)[-1]\bigr) \ho_k \BdR
    \]
    extracting the factor $\frac{dT_n}{T_n}$, which is the same morphism defined by the pullback to $\cD$ of the adjunction morphism $\imath_D^*\bigl(\Omega_X^\bullet(\log D)(d)\bigr) \to \Omega_D^\bullet(d - 1)[-1]$.  \Pth{Note that Tate twists on log de Rham complexes are only shifts of Hodge filtrations.}  Since all the above identifications are canonical, they globalize and the claim follows.

    Thus, the whole diagram \Refeq{\ref{eq-prop-Gysin-dR-comp}} is commutative, with the fourth row given by the Gysin isomorphism for de Rham cohomology, which is compatible with the trace morphisms \Pth{for de Rham cohomology} by Lemma \ref{lem-property-trace}\Refenum{\ref{lem-property-trace-Gysin}}, as desired.
\end{proof}

\subsection{Poincar\'e duality for \'etale cohomology}\label{sec-trace-et}

\begin{thm}\label{thm-trace-et}
    There exists a unique morphism
    \begin{equation}\label{eq-trace-et}
        t_\et: H_{\et, \cpt}^{2d}\bigl(U_K, \bQ_p(d)\bigr) \to \bQ_p,
    \end{equation}
    which we shall call the \emph{trace morphism}, satisfying the following requirements:
    \begin{enumerate}
        \item\label{thm-trace-et-res}  The formation of $t_\et$ is compatible with restrictions to open rigid analytic subvarieties of the form $U - Z = X - D - Z$ for some closed rigid analytic subvarieties $Z$ of $X$.  \Pth{Such open rigid analytic subvarieties are allowed in our setting by resolution of singularities, as in \cite{Bierstone/Milman:1997-cdbml}, by the independence of the choice of compactifications in the definition of cohomology with compact support, based on Lemmas \ref{lem-L-!} and \ref{lem-def-H-c-fin-Z-p}, and on Remark \ref{rem-def-H-c-conv}.}

        \item\label{thm-trace-et-comp}  Suppose that $U_K$ is connected.  Then $t_\et$ and $t_\dR$ are both isomorphisms, and the comparison isomorphism
            \[
                H_{\et, \cpt}^{2d}\bigl(U_K, \bQ_p(d)\bigr) \otimes_{\bQ_p} \BdR \cong H_{\dR, \cpt}^{2d}\bigl(U, \cO_U(d)\bigr) \otimes_k \BdR
            \]
            \Pth{see Theorem \ref{thm-L-!-coh-comp}} maps $\bigl(t_\et^{-1}(1)\bigr) \otimes 1$ to $\bigl(t_\dR^{-1}(1)\bigr) \otimes 1$.  Consequently, the formation of $t_\et$ is compatible with the replacement of $k$ with a finite extension in $\AC{k}$ and with the Gysin isomorphism in the top row of \Refeq{\ref{eq-prop-Gysin-dR-comp}} \Pth{by Proposition \ref{prop-Gysin-dR-comp}, because the formation of $t_\dR$ is compatible with the Gysin isomorphism in the bottom row of \Refeq{\ref{eq-prop-Gysin-dR-comp}}}; and $t_\et$ is the trivial isomorphism $H_\et^0(U_K, \bQ_p) \cong \bQ_p$ when $d = \dim(U) = \dim(X) = 0$.
    \end{enumerate}
    Moreover, such a morphism \Refeq{\ref{eq-trace-et}} satisfies the following properties:
    \begin{enumerate}[resume]
        \item\label{thm-trace-et-pairing}  By pre-composition with the canonical cup product pairing, $t_\et$ induces a perfect pairing
            \begin{equation}\label{eq-thm-trace-et-pairing}
                H_{\et, \Sc}^i(U_K, \bL_{\bQ_p}) \times H_{\et, \Snc}^{2d - i}\bigl(U_K, \dual{\bL}_{\bQ_p}(d)\bigr) \to \bQ_p,
            \end{equation}
            for each $\bZ_p$-local system $\bL$ on $X_\ket$ \Pth{even when $\bL|_U$ is not de Rham}.

        \item\label{thm-trace-et-pairing-dR} When $\bL|_U$ is \emph{de Rham}, we also have a commutative diagram
            \begin{equation}\label{eq-thm-trace-et-pairing-dR-diag}
                \xymatrix{ {H_{\et, \Sc}^i(U_K, \bL_{\bQ_p})} \ar[r]^-\sim \ar@{^(->}[d] & {\Hom_{\bQ_p}\bigl(H_{\et, \Snc}^{2d - i}\bigl(U_K, \dual{\bL}_{\bQ_p}(d)\bigr), \bQ_p\bigr)} \ar@{^(->}[d] \\
                {H_{\et, \Sc}^i(U_K, \bL_{\bQ_p}) \otimes_{\bQ_p} \BdR} \ar[r]^-\sim \ar[d]_-\wr & {\Hom_{\bQ_p}\bigl(H_{\et, \Snc}^{2d - i}\bigl(U_K, \dual{\bL}_{\bQ_p}(d)\bigr), \BdR\bigr)} \ar[d]^-\wr \\
                {H_{\dR, \Sc}^i\bigl(U_\an, \DdR(\bL_{\bQ_p})\bigr) \otimes_k \BdR} \ar[r]^-\sim & {\Hom_k\bigl(H_{\dR, \Snc}^{2d - i}\bigl(U_\an, \DdR\bigl(\dual{\bL}_{\bQ_p}(d)\bigr)\bigr), \BdR\bigr)} \\
                {H_{\dR, \Sc}^i\bigl(U_\an, \DdR(\bL_{\bQ_p})\bigr)} \ar[r]^-\sim \ar@{^(->}[u] & {\Hom_k\bigl(H_{\dR, \Snc}^{2d - i}\bigl(U_\an, \DdR\bigl(\dual{\bL}_{\bQ_p}(d)\bigr)\bigr), k\bigr)} \ar@{^(->}[u] }
            \end{equation}
            in which the top \Pth{\resp bottom} two rows are induced by $t_\et$ \Pth{\resp $t_\dR$}.
    \end{enumerate}
\end{thm}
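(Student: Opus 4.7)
The plan is to first reduce to the case where $U_K$ is connected, then construct $t_\et$ and verify the two characterizing requirements by induction on $d = \dim U$, and finally deduce the Poincar\'e duality pairing \Refenum{\ref{thm-trace-et-pairing}} and its compatibility \Refenum{\ref{thm-trace-et-pairing-dR}} with the de Rham duality.

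First, decomposing $U$ into connected components, we may assume $U_K$ is connected.  By requirement \Refenum{\ref{thm-trace-et-res}} and Lemma \ref{lem-exc-isom}, the cohomology $H_{\et, \cpt}^{2d}(U_K, \bQ_p(d))$ is canonically identified with $H_\et^{2d}(X_K, \bQ_p(d))$ for a proper smooth compactification $X$ of $U$; a parallel excision holds on the de Rham side by Proposition \ref{prop-exc-dR-comp}.  Both spaces are then one-dimensional: the de Rham side equals $H^d(X_\an, \Omega_X^d) \cong k$ by Serre duality (Theorem \ref{thm-Serre-duality}), and the \'etale side matches via Theorem \ref{thm-L-!-coh-comp}.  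Hence, constructing $t_\et$ subject to requirement \Refenum{\ref{thm-trace-et-comp}} reduces to showing that the preimage of $t_\dR^{-1}(1) \otimes 1$ under the comparison, a priori an element of $H_\et^{2d}(X_K, \bQ_p(d)) \otimes_{\bQ_p} \BdR$, actually lies in $H_\et^{2d}(X_K, \bQ_p(d)) \otimes 1$; once this is established, $t_\et$ is defined to send this element to $1$, which forces $t_\et$ to be an isomorphism.

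To establish this existence assertion, we induct on $d$.  The base case $d = 0$ is immediate: with $X = \Spa(k, k^+)$, both $t_\et$ and $t_\dR$ are the identity maps on $\bQ_p$ and $k$ respectively, and the comparison $\bQ_p \otimes_{\bQ_p} \BdR \Mi k \otimes_k \BdR$ identifies $1 \otimes 1$ with $1 \otimes 1$.  For the inductive step with $d \geq 1$, requirement \Refenum{\ref{thm-trace-et-res}} combined with the excision long exact sequences allows us to freely remove from $U$ any closed subspace of $X$ of dimension $< d$.  Invoking resolution of singularities \cite{Bierstone/Milman:1997-cdbml}, we may therefore replace $X$ by a proper birational modification admitting a smooth divisor $Z \hookrightarrow X$ with $Z_K$ connected.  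Then Proposition \ref{prop-Gysin-dR-comp} and Lemma \ref{lem-property-trace}\Refenum{\ref{lem-property-trace-Gysin}} show that the \'etale and de Rham Gysin isomorphisms from $H^{2d - 2}$ of $Z_K$ to $H^{2d}$ of $X_K$ are matched by the comparison and that the de Rham Gysin is compatible with $t_\dR$.  The inductive hypothesis applied to $Z$ yields $v_Z \in H_\et^{2d - 2}(Z_K, \bQ_p(d - 1))$ with $v_Z \otimes 1$ corresponding to $t_\dR^{-1}(1) \otimes 1$ on $Z$; its image $v_X$ under the \'etale Gysin then supplies the required preimage on $X$.  Independence from the choice of modification follows from Lemma \ref{lem-trace-dR} for $t_\dR$ together with the functoriality of \'etale Gysin for $t_\et$.

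The main obstacle in this induction is the geometric step: ensuring the existence of a smooth divisor $Z \hookrightarrow X$ with $Z_K$ connected after a suitable modification, for general proper smooth rigid analytic $X$.  This is handled by combining resolution of singularities with Bertini-type arguments on a projective modification (or by working with algebraic models via GAGA when available).  Once $t_\et$ is constructed, property \Refenum{\ref{thm-trace-et-pairing}} follows by base changing to $K$ and comparing the \'etale cup product pairing with the perfect Higgs pairing of Theorem \ref{thm-Higgs-pairing}: the Hodge--Tate comparison \Refeq{\ref{eq-thm-L-!-coh-comp-Hi}} identifies the two pairings (both cup products and trace morphisms are preserved by this comparison, by construction), so perfectness of the Higgs pairing---which holds for arbitrary $\bZ_p$-local systems, not only de Rham ones---implies that of the \'etale pairing after inverting $p$.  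For property \Refenum{\ref{thm-trace-et-pairing-dR}} when $\bL|_U$ is de Rham, the commutativity of the middle square of \Refeq{\ref{eq-thm-trace-et-pairing-dR-diag}} reduces to the compatibility of the de Rham comparison with cup products, which is inherent in the period sheaf construction; the top and bottom squares commute by the defining property \Refenum{\ref{thm-trace-et-comp}} of $t_\et$ together with the analogous property for $t_\dR$ provided by Theorem \ref{thm-trace-dR-Hdg}.
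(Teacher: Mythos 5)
There is a genuine gap at exactly the point you flag yourself: the production of a smooth divisor with geometrically connected special fiber on which to run the induction. Your proposed fixes---\Qtn{Bertini-type arguments on a projective modification} or \Qtn{working with algebraic models via GAGA when available}---are not available in this setting: a proper smooth rigid analytic variety over $k$ need not be projective, algebraic, or admit a projective modification, so there is no Bertini theorem to invoke, and GAGA does not apply. Since requirement \Refenum{\ref{thm-trace-et-res}} (which you also lean on during the construction) must hold for \emph{all} such $U \subset X$, the induction cannot be salvaged by restricting attention to the algebraic case. The paper's key idea, which is missing from your proposal, is to blow up a $k$-point $Z$ of $X$ (such a point exists after replacing $k$ by a finite extension in $\AC{k}$, which is harmless by the compatibility statements): the exceptional divisor $E$ of the blowup $Y \to X$ is automatically a smooth, geometrically connected divisor of dimension $d-1$, with no projectivity hypothesis on $X$ needed. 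One then checks that $H_\et^{2d}\bigl(X_K, \bZ_p(d)\bigr) \to H_\et^{2d}\bigl(Y_K, \bZ_p(d)\bigr)$ is an isomorphism (via excision over $X - Z \cong Y - E$ and the vanishing of $H_\et^i$ of $Z_K$ and $E_K$ for $i > 2d-2$), assembles the commutative diagram relating the \'etale and de Rham comparison isomorphisms for $X$ and $Y$ (using Lemma \ref{lem-trace-dR} for the bottom row), and applies the induction hypothesis to $E$ together with the Gysin compatibility of Proposition \ref{prop-Gysin-dR-comp} to see that $\bigl(t_\dR^{-1}(1)\bigr) \otimes 1$ lies in the image of $H_\et^{2d}\bigl(X_K, \bQ_p(d)\bigr)$. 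The same blowup-at-a-point device is what the paper uses to verify requirement \Refenum{\ref{thm-trace-et-res}}, where your proposal again appeals to an unspecified modification.

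Apart from this, your outline agrees with the paper: the reduction to $U = X$, $D = \emptyset$ via Lemma \ref{lem-trace-dR} and Proposition \ref{prop-exc-dR-comp}, the base case $d = 0$, the deduction of the perfect pairing \Refeq{\ref{eq-thm-trace-et-pairing}} from Theorem \ref{thm-Higgs-pairing} via the comparison \Refeq{\ref{eq-thm-L-!-coh-comp-Hi}}, and the commutativity of \Refeq{\ref{eq-thm-trace-et-pairing-dR-diag}} through the identification of the middle rows with a single cup product pairing on period-sheaf cohomology are all essentially the paper's arguments.
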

\begin{proof}
    For our purpose, we may replace $k$ with a finite extension over which the connected components of $X$ are geometrically connected, and replace $X$ with its geometric connected components.  Then we may assume that $X$ is geometrically connected.  \Pth{Then the trace morphism to be constructed will be isomorphisms.}  We may also assume that $X$ contains a $k$-point $\Spa(k, k^+) \cong S \Em X$.  Let us proceed by induction on $d = \dim(U) = \dim(X)$.

    If $d = 0$, then $U_K$ is a single $K$-point, and $H_{\et, \cpt}^0(U_K, \bQ_p) \cong H_\et^0(X_K, \bQ_p)$ has a canonical element given by the identity section, which defines the trace isomorphism $t_\et: H_{\et, \cpt}^0(U_K, \bQ_p) \Mi \bQ_p$.  The same identity section induces the identity section of $H^0(X_{K, \proket}, \BBdR)$, which is also induced by the identity section of $H_\dR^0(X_\an, \cO_X) \cong H^0(X_\an, \cO_X)$.  Hence, $t_\et$ satisfies the requirement \Refenum{\ref{thm-trace-et-comp}}.  It is straightforward that it also satisfies \Refenum{\ref{thm-trace-et-res}}, \Refenum{\ref{thm-trace-et-pairing}}, and \Refenum{\ref{thm-trace-et-pairing-dR}}.

    If $d > 0$, we first construct a trace morphism $t_\et: H_{\et, \cpt}^{2d}\bigl(U_K, \bQ_p(d)\bigr) \to \bQ_p$ satisfying the requirement \Refenum{\ref{thm-trace-et-comp}}.  By Lemma \ref{lem-trace-dR} and Proposition \ref{prop-exc-dR-comp}, we are reduced to the case where $U = X$ and $D = \emptyset$.  Let $Y$ denote the blowup of $X$ along the $k$-point $S$ \Pth{\Refcf{} \cite[\aDef 4.1.1]{Conrad:2006-rarg}}, and let $E$ denote the exceptional divisor.  Since $S$ is a $k$-point, both $Y$ and $E$ are smooth and geometrically connected.  \Pth{Since $Y$ is \'etale locally isomorphic to $\bD_k^n$ for some $n$, this can be seen by an explicit local construction.}  Then we have a commutative diagram of canonical morphisms
    \[
        \xymatrix{ {H_{\et, \cpt}^{2d}\bigl((X - S)_K, \bZ_p(d)\bigr)} \ar[d]_-\wr \ar[r]^-\sim & {H_{\et, \cpt}^{2d}\bigl((Y - E)_K, \bZ_p(d)\bigr)} \ar[d]^-\wr \\
        {H_\et^{2d}\bigl(X_K, \bZ_p(d)\bigr)} \ar[r] & {H_\et^{2d}\bigl(Y_K, \bZ_p(d)\bigr)} }
    \]
    in which the two vertical morphisms are isomorphisms because $H_\et^i\bigl(S_K, \bZ_p(d)\bigr)$ and $H_\et^i\bigl(E_K, \bZ_p(d)\bigr)$ are zero for $i > 2d - 2$, since both $S$ and $E$ are proper smooth of dimensions no greater than $d - 1$, forcing the bottom row in the diagram to be also an isomorphism.  Then we have a commutative diagram of canonical morphisms
    \begin{equation}\label{eq-thm-trace-et-blowup}
        \xymatrix{ {H_\et^{2d}\bigl(X_K, \bQ_p(d)\bigr)} \ar@{^(->}[d] \ar[r]^-\sim & {H_\et^{2d}\bigl(Y_K, \bQ_p(d)\bigr)} \ar@{^(->}[d] \\
        {H_\et^{2d}\bigl(X_K, \bQ_p(d)\bigr) \otimes_{\bQ_p} \BdR} \ar[d]_-\wr \ar[r]^-\sim & {H_\et^{2d}\bigl(Y_K, \bQ_p(d)\bigr) \otimes_{\bQ_p} \BdR} \ar[d]^-\wr \\
        {H_\dR^{2d}\bigl(X_\an, \cO_X(d)\bigr) \otimes_k \BdR} \ar[r]^-\sim & {H_\dR^{2d}\bigl(Y_\an, \cO_Y(d)\bigr) \otimes_k \BdR} \\
        {H_\dR^{2d}\bigl(X_\an, \cO_X(d)\bigr)} \ar@{^(->}[u] \ar[r]^-\sim & {H_\dR^{2d}\bigl(Y_\an, \cO_Y(d)\bigr)} \ar@{^(->}[u] }
    \end{equation}
    in which the bottom row is an isomorphism by Lemma \ref{lem-trace-dR}, and in which the middle square is commutative because both of the middle two rows are induced by the canonical morphism
    \[
        H^{2d}\bigl(X_{K, \proket}, \bigl(\widehat{\bZ}_p(d)\bigr) \otimes_{\widehat{\bZ}_p} \BBdRX{X}\bigr) \to H^{2d}\bigl(Y_{K, \proket}, \bigl(\widehat{\bZ}_p(d)\bigr) \otimes_{\widehat{\bZ}_p} \BBdRX{Y}\bigr).
    \]
    In order to construct $t_\et: H_\et^{2d}\bigl(X_K, \bQ_p(d)\bigr) \Mi \bQ_p$ satisfying the requirement \Refenum{\ref{thm-trace-et-comp}}, it suffices to show that $\bigl(t_\dR^{-1}(1)\bigr) \otimes 1 \in H_\dR^{2d}\bigl(X_\an, \cO_X(d)\bigr) \otimes_k \BdR$ lies in the image of $H_\et^{2d}\bigl(X_K, \bQ_p(d)\bigr)$, so that we can define $t_\et^{-1}(1)$ to be the preimage of $\bigl(t_\dR^{-1}(1)\bigr) \otimes 1$.  \Pth{Note that this does not involve the choice of $S$, and the compatibility with the replacement of $k$ with a finite extension in $\AC{k}$ is clear.}  By using the commutative diagrams \Refeq{\ref{eq-thm-trace-et-blowup}} and \Refeq{\ref{eq-prop-Gysin-dR-comp}}, it suffices to note that, by the induction hypothesis, the analogous assertion holds for $\bigl(t_\dR^{-1}(1)\bigr) \otimes 1 \in H_\dR^{2d - 2}\bigl(E_\an, \cO_E(d - 1)\bigr) \otimes_k \BdR$.

    Such a $t_\et: H_\et^{2d}\bigl(X_K, \bQ_p(d)\bigr) \Mi \bQ_p$ satisfies the requirement \Refenum{\ref{thm-trace-et-res}} because, in the setting of Lemma \ref{lem-trace-dR}, we can choose to blowup at some $k$-point $S$ of $U' \subset U$ \Pth{which exists up to replacing $k$ with a finite extension in $\AC{k}$}, so that we have canonical isomorphisms $H_{\et, \cpt}^{2d}\bigl(U'_K, \bQ_p(d)\bigr) \cong H_{\et, \cpt}^{2d}\bigl(X_K, \bQ_p(d)\bigr) \Mi H_{\et, \cpt}^{2d}\bigl(X'_K, \bQ_p(d)\bigr) \cong H_{\et, \cpt}^{2d}\bigl(U'_K, \bQ_p(d)\bigr)$ because they are all isomorphic to $H_\et^{2d - 2}\bigl(E_K, \bQ_p(d - 1)\bigr)$ via compatible canonical morphisms, and these canonical isomorphisms extend to a commutative diagram \Pth{as in \Refeq{\ref{eq-thm-trace-et-blowup}} and \Refeq{\ref{eq-prop-Gysin-dR-comp}}} involving also their de Rham counterparts and their tensor products with $\BdR$.

    Finally, let us verify the properties \Refenum{\ref{thm-trace-et-pairing}} and \Refenum{\ref{thm-trace-et-pairing-dR}}.  Since $K$ is a field extension of $\bQ_p$, and since the duality pairings are defined by composition with cup product pairings, which are compatible with Higgs comparison isomorphisms as in \Refeq{\ref{eq-thm-L-!-coh-comp-Hi}}, the desired perfect pairing \Refeq{\ref{eq-thm-trace-et-pairing}} for \'etale cohomology follows from the perfect pairing \Refeq{\ref{eq-thm-Higgs-pairing}} for Higgs cohomology.  When $\bL|_U$ is \emph{de Rham}, again since the duality pairings are defined by composition with cup product pairings, and since the de Rham comparison isomorphisms are compatible with the Higgs ones by construction, we have the desired commutative diagram \Refeq{\ref{eq-thm-trace-et-pairing-dR-diag}}, in which the middle square is commutative because both of the middle two rows are induced by the same cup product pairing $H^i(X_{K, \proket}, \widehat{\bL} \otimes_{\widehat{\bZ}_p} \BBdRX{X}^\Sc) \otimes_{\BdR} H^{2d - i}\bigl(X_{K, \proket}, \bigl(\dual{\widehat{\bL}}(d)\bigr) \otimes_{\widehat{\bZ}_p} \BBdRX{X}^\Snc\bigr) \to H^{2d}\bigl(X_{K, \proket}, \bigl(\widehat{\bZ}_p(d)\bigr) \otimes_{\widehat{\bZ}_p} \BBdRX{X}^\cpt\bigr)$, where $\BBdRX{X}^\cpt$ is the analogue of $\BBdRX{X}^\Sc$ when $I^\Sc$ is replaced with $I$.  \Pth{See Definition \ref{def-BBdRp-cpt}.  Note that, since $I = I^\Sc \cup I^\Snc$, the multiplication morphism $\BBdRX{X}^\Sc \otimes_{\widehat{\bZ}_p} \BBdRX{X}^\Snc \to \BBdRX{X}$ factors through $\BBdRX{X}^\cpt$.}
\end{proof}

\subsection{De Rham comparison for generalized interior cohomology}\label{sec-int-coh}

\begin{defn}\label{def-int-coh}
    For any $I^\Scalt \subset I^\Sc \subset I$, we consider the \emph{generalized interior cohomology} \Pth{\Refcf{} Definitions \ref{def-H-c} and \ref{def-dR-Hi-Hdg-coh-cpt}}
    \begin{equation}\label{eq-def-int-coh-et}
        H_{\et, \Sc \to \Scalt}^i(U_K, \bL) := \Image\bigl(H_{\et, \Sc}^i(U_K, \bL) \to H_{\et, \Scalt}^i(U_K, \bL)\bigr),
    \end{equation}
    \begin{equation}\label{eq-def-int-coh-dR}
    \begin{split}
        & H_{\dR, \Sc \to \Scalt}^i\bigl(U_\an, \DdR(\bL)\bigr) \\
        & := \Image\Bigl(H_{\dR, \Sc}^i\bigl(U_\an, \DdR(\bL)\bigr) \to H_{\dR, \Scalt}^i\bigl(U_\an, \DdR(\bL)\bigr)\Bigr),
    \end{split}
    \end{equation}
    and
    \begin{equation}\label{eq-def-int-coh-Hdg}
    \begin{split}
        & H_{\Hdg, \Sc \to \Scalt}^{a, i - a}\bigl(U_\an, \DdR(\bL)\bigr) \\
        & := \Image\Bigl(H_{\Hdg, \Sc}^{a, i - a}\bigl(U_\an, \DdR(\bL)\bigr) \to H_{\Hdg, \Scalt}^{a, i - a}\bigl(U_\an, \DdR(\bL)\bigr)\Bigr),
    \end{split}
    \end{equation}
    for all $i \geq 0$ and $a \in \bZ$.  When $I^\Sc = I$ and $I^\Scalt = \emptyset$, we shall denote the objects with subscripts \Qtn{$\intcoh$} instead of \Qtn{$\Sc \to \Scalt$}, and call them the \emph{interior cohomology}.
\end{defn}

\begin{lem}\label{lem-int-coh-pairing-compat}
    Suppose that $I^\Scalt \subset I^\Sc \subset I$.  The Poincar\'e duality pairings
    \begin{equation}\label{eq-lem-int-coh-pairing-compat-c-nc}
        H_{\et, \Sc}^i(U_K, \bL_{\bQ_p}) \times H_{\et, \Snc}^{2d - i}\bigl(U_K, \dual{\bL}_{\bQ_p}(d)\bigr) \to \bQ_p
    \end{equation}
    and
    \begin{equation}\label{eq-lem-int-coh-pairing-compat-c-alt-nc-alt}
        H_{\et, \Scalt}^i(U_K, \bL_{\bQ_p}) \times H_{\et, \Sncalt}^{2d - i}\bigl(U_K, \dual{\bL}_{\bQ_p}(d)\bigr) \to \bQ_p
    \end{equation}
    induce the same pairing
    \begin{equation}\label{eq-lem-int-coh-pairing-compat-c-nc-alt}
        H_{\et, \Sc}^i(U_K, \bL_{\bQ_p}) \times H_{\et, \Sncalt}^{2d - i}\bigl(U_K, \dual{\bL}_{\bQ_p}(d)\bigr) \to \bQ_p
    \end{equation}
    \Pth{which is defined because $I^\Sc \cup I^\Sncalt = I$ under the condition $I^\Scalt \subset I^\Sc$}.  Consequently, if $x_\Sc \in H_{\et, \Sc}^i(U_K, \bL_{\bQ_p})$ is mapped to $x_\Scalt \in H_{\et, \Scalt}^i(U_K, \bL_{\bQ_p})$, and if $y_\Sncalt \in H_{\et, \Sncalt}^{2d - i}\bigl(U_K, \dual{\bL}_{\bQ_p}(d)\bigr)$ is mapped to $y_\Snc \in H_{\et, \Snc}^{2d - i}\bigl(U_K, \dual{\bL}_{\bQ_p}(d)\bigr)$, then we have
    \[
        \langle x_\Sc, y_\Snc \rangle = \langle x_\Sc, y_\Sncalt \rangle = \langle x_\Scalt, y_\Sncalt \rangle.
    \]
    The analogous assertion for the Poincar\'e duality pairings on the de Rham cohomology of $\DdR(\bL)$ and $\DdR\bigl(\dual{\bL}(d)\bigr)$ is also true.
\end{lem}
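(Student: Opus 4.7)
The plan is to reduce both equalities to the commutativity of two elementary squares of sheaves on $X_\ket$, exploiting the fact that each Poincar\'e duality pairing in Theorem \ref{thm-trace-et} is, by construction, the cup product followed by the trace morphism $t_\et$. Since $I^\Scalt \subset I^\Sc$ is equivalent to $I^\Snc \subset I^\Sncalt$, and since both $I^\Sc \cup I^\Sncalt$ and $I^\Scalt \cup I^\Sncalt$ equal $I$, the intersections $U^\Sc \cap U^\Sncalt$ and $U^\Scalt \cap U^\Sncalt$ both coincide with $U$; consequently, there is a natural sheaf-level cup product
\begin{equation*}
    \jmath^\Sc_{\ket, !}(\bL|_{U^\Sc_\ket}) \otimes \jmath^\Sncalt_{\ket, !}\bigl(\dual{\bL}(d)|_{U^\Sncalt_\ket}\bigr) \to \jmath_{\ket, !}\bigl(\bQ_p(d)|_{U_\ket}\bigr)
\end{equation*}
on $X_\ket$, whose composition with $R\Gamma$ and $t_\et$ defines the intermediate pairing \eqref{eq-lem-int-coh-pairing-compat-c-nc-alt}.

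Next, I would verify the two compatibility squares at the sheaf level. For the equality $\langle x_\Sc, y_\Snc\rangle = \langle x_\Sc, y_\Sncalt\rangle$, the diagram
\begin{equation*}
    \xymatrix{ {\jmath^\Sc_{\ket, !}(\bL|_{U^\Sc_\ket}) \otimes \jmath^\Sncalt_{\ket, !}\bigl(\dual{\bL}(d)|_{U^\Sncalt_\ket}\bigr)} \ar[r] \ar[d] & {\jmath_{\ket, !}\bigl(\bQ_p(d)|_{U_\ket}\bigr)} \ar@{=}[d] \\ {\jmath^\Sc_{\ket, !}(\bL|_{U^\Sc_\ket}) \otimes \jmath^\Snc_{\ket, !}\bigl(\dual{\bL}(d)|_{U^\Snc_\ket}\bigr)} \ar[r] & {\jmath_{\ket, !}\bigl(\bQ_p(d)|_{U_\ket}\bigr)} }
\end{equation*}
commutes tautologically: the left vertical arrow is induced by the canonical forgetful morphism $\jmath^\Sncalt_{\ket, !} \to \jmath^\Snc_{\ket, !}$ from Remark \ref{rem-def-H-c-alt}, and both horizontal cup products factor through restriction to the common intersection $U$ and are induced there by the evaluation $\bL \otimes \dual{\bL}(d) \to \bQ_p(d)$.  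The analogous diagram for the second equality $\langle x_\Sc, y_\Sncalt\rangle = \langle x_\Scalt, y_\Sncalt\rangle$ is identical, with the forgetful morphism $\jmath^\Sc_{\ket, !} \to \jmath^\Scalt_{\ket, !}$ acting on the first tensor factor instead.  Applying $R\Gamma$ and composing with $t_\et$ then produces the stated equalities of pairings.

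Finally, the assertion for the de Rham pairings proceeds identically: the sheaf-level cup product is now the wedge product of log de Rham complexes tensored with the invertible ideal sheaves $\cO_X(-D^?)$, and $t_\dR$ replaces $t_\et$.  The two analogous squares reduce to the transitivity of the canonical inclusions $\cO_X(-D^\Sc) \hookrightarrow \cO_X(-D^\Scalt)$ and $\cO_X(-D^\Sncalt) \hookrightarrow \cO_X(-D^\Snc)$, which is formal.  No serious obstacle is expected; the content of the lemma is essentially bookkeeping to ensure that all the pairings under consideration are induced by a single underlying sheaf-level cup product.
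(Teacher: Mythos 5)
Your proposal is correct and follows essentially the same route as the paper: the paper's proof simply observes that both pairings \Refeq{\ref{eq-lem-int-coh-pairing-compat-c-nc}} and \Refeq{\ref{eq-lem-int-coh-pairing-compat-c-alt-nc-alt}} are compatible with the cup product pairing $H_{\et, \Sc}^i \times H_{\et, \Sncalt}^{2d-i} \to H_{\et, \cpt}^{2d}$ that induces \Refeq{\ref{eq-lem-int-coh-pairing-compat-c-nc-alt}}, followed by $t_\et$ \Pth{and similarly for de Rham}. Your sheaf-level diagrams on $X_\ket$ are just an explicit verification of that compatibility, so the argument matches the paper's.
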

\begin{proof}
    This is because the pairings \Refeq{\ref{eq-lem-int-coh-pairing-compat-c-nc}} and \Refeq{\ref{eq-lem-int-coh-pairing-compat-c-alt-nc-alt}} are both compatible with the cup product pairing $H_{\et, \Sc}^i(U_K, \bL_{\bQ_p}) \times H_{\et, \Sncalt}^{2d - i}\bigl(U_K, \dual{\bL}_{\bQ_p}(d)\bigr) \to H_{\et, \cpt}^{2d}\bigl(U_K, \bQ_p(d)\bigr)$ inducing \Refeq{\ref{eq-lem-int-coh-pairing-compat-c-nc-alt}}.  \Pth{The assertion for de Rham cohomology is similar.}
\end{proof}

\begin{prop}\label{prop-int-coh-pairing-perf}
    For any $I^\Scalt \subset I^\Sc \subset I$, the Poincar\'e duality pairing \Refeq{\ref{eq-lem-int-coh-pairing-compat-c-nc}} \Pth{based on \Refeq{\ref{eq-thm-trace-et-pairing}}} induces a canonical prefect pairing
    \begin{equation}\label{eq-prop-int-coh-pairing}
        H_{\et, \Sc \to \Scalt}^i(U_K, \bL_{\bQ_p}) \times H_{\et, \Sncalt \to \Snc}^{2d - i}\bigl(U_K, \dual{\bL}_{\bQ_p}(d)\bigr) \to \bQ_p,
    \end{equation}
    which we also call the Poincar\'e duality pairing, by setting
    \begin{equation}\label{eq-prop-int-coh-pairing-def}
        \langle x, y \rangle = \langle \widetilde{x}, y \rangle
    \end{equation}
    for $x \in H_{\et, \Sc \to \Scalt}^i(U_K, \bL_{\bQ_p})$ and $y \in H_{\et, \Sncalt \to \Snc}^{2d - i}\bigl(U_K, \dual{\bL}_{\bQ_p}(d)\bigr)$, if $x$ is the image of some $\widetilde{x} \in H_{\et, \Sc}^i(U_K, \bL_{\bQ_p})$.  When $I^\Sc = I$ and $I^\Scalt = \emptyset$, in which case $I^\Snc = \emptyset$ and $I^\Sncalt = I$, this defines the Poincar\'e duality pairing
    \begin{equation}\label{eq-prop-int-coh-pairing-spec}
        H_{\et, \intcoh}^i(U_K, \bL_{\bQ_p}) \times H_{\et, \intcoh}^{2d - i}\bigl(U_K, \dual{\bL}_{\bQ_p}(d)\bigr) \to \bQ_p
    \end{equation}
    for interior cohomology.  This pairing \Refeq{\ref{eq-prop-int-coh-pairing}} is well defined.  When $\bL|_U$ is \emph{de Rham}, the analogous assertion for the Poincar\'e duality pairings on the de Rham cohomology of $\DdR(\bL)$ and $\DdR\bigl(\dual{\bL}(d)\bigr)$ is also true.
\end{prop}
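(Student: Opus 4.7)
The plan is to deduce perfectness purely from linear algebra, once we recognize that the canonical maps between cohomology with varying support conditions are \emph{transpose} to each other under the perfect Poincar\'e duality pairings supplied by Theorem \ref{thm-trace-et}. Let us abbreviate
\[
    A := H_{\et, \Sc}^i(U_K, \bL_{\bQ_p}), \quad B := H_{\et, \Scalt}^i(U_K, \bL_{\bQ_p}),
\]
\[
    C := H_{\et, \Snc}^{2d-i}\bigl(U_K, \dual{\bL}_{\bQ_p}(d)\bigr), \quad D := H_{\et, \Sncalt}^{2d-i}\bigl(U_K, \dual{\bL}_{\bQ_p}(d)\bigr),
\]
and write $\pi_1: A \to B$ and $\pi_2: D \to C$ for the canonical morphisms (induced by $I^\Scalt \subset I^\Sc$ and the corresponding inclusion $I^\Snc \subset I^\Sncalt$). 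By Theorem \ref{thm-trace-et} we have perfect pairings $\langle\,,\,\rangle_{A,C}: A \times C \to \bQ_p$ and $\langle\,,\,\rangle_{B,D}: B \times D \to \bQ_p$, and by Lemma \ref{lem-int-coh-pairing-compat} an auxiliary pairing $\langle\,,\,\rangle_{A,D}: A \times D \to \bQ_p$ satisfying the compatibilities
\[
    \langle \pi_1(a), d \rangle_{B,D} \;=\; \langle a, d \rangle_{A,D} \;=\; \langle a, \pi_2(d) \rangle_{A,C}
\]
for all $a \in A$ and $d \in D$.

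First I would verify that \Refeq{\ref{eq-prop-int-coh-pairing-def}} is well defined. If $\widetilde{x}, \widetilde{x}' \in A$ have the same image $x \in B$, and if $y = \pi_2(\widetilde{y})$ for some $\widetilde{y} \in D$, then the above compatibility gives
\[
    \langle \widetilde{x}, y \rangle_{A,C} \;=\; \langle \widetilde{x}, \widetilde{y} \rangle_{A,D} \;=\; \langle \pi_1(\widetilde{x}), \widetilde{y} \rangle_{B,D} \;=\; \langle \pi_1(\widetilde{x}'), \widetilde{y} \rangle_{B,D} \;=\; \langle \widetilde{x}', y \rangle_{A,C},
\]
so the value depends only on $x = \pi_1(\widetilde{x})$ and on $y$. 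An identical argument shows independence from the choice of lift $\widetilde{y}$ of $y$, so \Refeq{\ref{eq-prop-int-coh-pairing}} is unambiguous.

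Next, for perfectness, the compatibility above says precisely that $\pi_2: D \to C$ is the transpose of $\pi_1: A \to B$ under the identifications $A^* \cong C$ and $B^* \cong D$ afforded by the perfect pairings. From standard linear algebra of finite-dimensional $\bQ_p$-vector spaces, this yields
\[
    \ker(\pi_1) = \Image(\pi_2)^{\perp} \subset A \quad\text{and}\quad \ker(\pi_2) = \Image(\pi_1)^{\perp} \subset D,
\]
and dually
\[
    \Image(\pi_1) = \ker(\pi_2)^{\perp} \subset B, \quad \Image(\pi_2) = \ker(\pi_1)^{\perp} \subset C.
\]
The perfect pairing $\langle\,,\,\rangle_{A,C}$ therefore descends to a perfect pairing
\[
    A/\ker(\pi_1) \;\times\; \ker(\pi_1)^{\perp} \;\longrightarrow\; \bQ_p,
\]
which under the identifications $A/\ker(\pi_1) \cong \Image(\pi_1) = H_{\et, \Sc \to \Scalt}^i$ and $\ker(\pi_1)^{\perp} = \Image(\pi_2) = H_{\et, \Sncalt \to \Snc}^{2d-i}$ is exactly \Refeq{\ref{eq-prop-int-coh-pairing}}. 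The de Rham case, under the hypothesis that $\bL|_U$ is de Rham, follows by the verbatim same argument using the perfect pairings and compatibility given by Theorem \ref{thm-trace-dR-Hdg} and (the de Rham version of) Lemma \ref{lem-int-coh-pairing-compat}.

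No substantive obstacle is expected: once the transposition identity is read off from Lemma \ref{lem-int-coh-pairing-compat}, the entire argument is a formal consequence of the nondegeneracy in Theorem \ref{thm-trace-et}. The only subtlety worth flagging is keeping track of the fact that the condition $I^\Scalt \subset I^\Sc$ forces $I^\Snc \subset I^\Sncalt$, so that the arrows on the two sides of the pairing go in opposite directions --- a fact that is exactly what makes the transposition picture work.
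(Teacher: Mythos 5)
Your proposal is correct and takes essentially the same route as the paper: the well-definedness argument is the identical computation using Lemma \ref{lem-int-coh-pairing-compat}, and the perfectness rests on the same key observation that the canonical maps between the two support conditions are transposes of each other under the perfect pairings of Theorem \ref{thm-trace-et}, so that everything reduces to finite-dimensional linear algebra (which the paper carries out with an explicit dual basis, while you phrase it coordinate-freely via annihilators and the quotient $A/\ker(\pi_1)$). The de Rham case is dispatched in both treatments by the same argument using Theorem \ref{thm-trace-dR-Hdg}.
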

\begin{proof}
    To show that the pairing \Refeq{\ref{eq-prop-int-coh-pairing}} is well defined, suppose $x$ is lifted to another element $\widetilde{x}' \in H_{\et, \Sc}^i(U_K, \bL_{\bQ_p})$.  By definition, $y$ is the image of some $\widetilde{y} \in H_{\et, \Sncalt}^{2d - i}\bigl(U_K, \dual{\bL}_{\bQ_p}(d)\bigr)$.  Then we have $\langle \widetilde{x} - \widetilde{x}', y \rangle = \langle 0, \widetilde{y} \rangle = 0$, by Lemma \ref{lem-int-coh-pairing-compat}, showing that we still have $\langle \widetilde{x}, y \rangle = \langle \widetilde{x}', y \rangle$.

    To show that the pairing \Refeq{\ref{eq-prop-int-coh-pairing}} is perfect, let $\{ e_1, \ldots, e_r \}$ be any $\bQ_p$-basis of $H_{\et, \Sc \to \Scalt}^i(U_K, \bL_{\bQ_p})$, which can be extended to some $\bQ_p$-basis $\{ e_1, ..., e_s \}$ of $H_{\et, \Scalt}^i(U_K, \bL_{\bQ_p})$.  Let $\{ \widetilde{f}_1, \ldots, \widetilde{f}_s \}$ denote the dual $\bQ_p$-basis of $H_{\et, \Sncalt}^{2d - i}\bigl(U_K, \dual{\bL}_{\bQ_p}(d)\bigr)$ under the perfect pairing \Refeq{\ref{eq-lem-int-coh-pairing-compat-c-nc}}.  For each $j = 1, \ldots, s$, let $f_j$ denote the image of $\widetilde{f}_j$ in $H_{\et, \Snc}^{2d - i}\bigl(U_K, \dual{\bL}_{\bQ_p}(d)\bigr)$.  For each $j = 1, \ldots, r$, let $\widetilde{e}_j$ denote some element of $H_{\et, \Sc}^i(U_K, \bL_{\bQ_p})$ lifting $e_j$.  For each $j = r + 1, \ldots, s$, if $f_j \neq 0$, then there exists some $\widetilde{e}$ in $H_{\et, \Sc}^i(U_K, \bL_{\bQ_p})$, with image $e$ in $H_{\et, \Sc \to \Scalt}^i(U_K, \bL_{\bQ_p})$, such that $1 = \langle \widetilde{e}, f_j \rangle$, by the perfectness of \Refeq{\ref{eq-lem-int-coh-pairing-compat-c-nc}}.  But this contradicts $\langle \widetilde{e}, f_j \rangle = \langle e, \widetilde{f}_j \rangle = 0$, and hence $f_j = 0$ for all $j > r$.  If $\sum_{j = 1}^r a_j f_j = 0$ in $H_{\et, \Snc}^{2d - i}\bigl(U_K, \dual{\bL}_{\bQ_p}(d)\bigr)$, then $a_{j_0} = \langle e_{j_0}, \sum_{j = 1}^r a_j \widetilde{f}_j \rangle = \langle \widetilde{e}_{j_0}, \sum_{j = 1}^r a_j f_j \rangle = 0$, for all $j_0 = 1, \ldots, r$.  It follows that $\{ f_1, \ldots, f_r \}$ is a $\bQ_p$-basis of $H_{\et, \Sncalt \to \Snc}^{2d - i}\bigl(U_K, \dual{\bL}_{\bQ_p}(d)\bigr)$, which is dual to the $\bQ_p$-basis $\{ e_1, \ldots, e_r \}$ of $H_{\et, \Sc \to \Scalt}^i(U_K, \bL_{\bQ_p})$ under the induced pairing \Refeq{\ref{eq-prop-int-coh-pairing}}, as desired.  \Pth{The assertion for de Rham cohomology is similar.}
\end{proof}

\begin{lem}\label{lem-fil-mor}
    Let $(F_1, \Fil_{F_1}^\bullet)$ and $(F_2, \Fil_{F_2}^\bullet)$ be two filtered vector spaces \Pth{over some fixed base field, which we shall omit}, with a map $F_1 \to F_2$ compatible with filtrations such that $F_3 := \Image(F_1 \to F_2)$ is finite-dimensional.  Suppose that $\dim\bigl(\Image(\gr_{F_1} \to \gr_{F_2})\bigr) = \dim(F_3)$.  Then $\Fil_{F_1}^\bullet$ and $\Fil_{F_2}^\bullet$ are \emph{strictly compatible} in the sense that $\Image(\Fil_{F_1}^\bullet \to F_3)$ and $\Fil_{F_3}^\bullet := \Fil_{F_2}^\bullet \cap F_3$ coincide as filtrations on $F_3$, and we have an induced isomorphism $\Image(\gr_{F_1} \to \gr_{F_2}) \Mi \gr_{F_3}$.
\end{lem}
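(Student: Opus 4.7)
The strategy is to compare two candidate filtrations on $F_3 = \Image(F_1 \to F_2)$: the \emph{image filtration} with terms $\Image(\Fil_{F_1}^a \to F_3)$ inherited from $F_1$, and the \emph{subspace filtration} $\Fil_{F_3}^a = \Fil_{F_2}^a \cap F_3$ inherited from $F_2$. Compatibility of $F_1 \to F_2$ with the filtrations immediately yields the inclusion of the image filtration into the subspace filtration, so the substance of the strict compatibility assertion is the reverse inclusion.

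The first step is to set up a natural chain of inclusions
\begin{equation*}
    \Image(\gr_{F_1}^a \to \gr_{F_2}^a) \hookrightarrow \gr_{F_3}^a \hookrightarrow \gr_{F_2}^a
\end{equation*}
for each $a$, where $\gr_{F_3}$ is computed with respect to the subspace filtration, the second inclusion follows from the identity $\Fil_{F_3}^a \cap \Fil_{F_2}^{a+1} = \Fil_{F_3}^{a+1}$, and the first holds because the image of $\Fil_{F_1}^a$ already lies in $\Fil_{F_2}^a \cap F_3 = \Fil_{F_3}^a$. Summing dimensions over $a$ produces the chain
\begin{equation*}
    \dim(F_3) = \dim\bigl(\Image(\gr_{F_1} \to \gr_{F_2})\bigr) \leq \sum_a \dim(\gr_{F_3}^a) \leq \dim(F_3),
\end{equation*}
where the first equality is the hypothesis, and the last inequality is the standard fact that the total graded dimension of a filtered finite-dimensional space is bounded by its dimension.

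Forcing all inequalities to be equalities simultaneously gives two payoffs: the inclusion $\Image(\gr_{F_1}^a \to \gr_{F_2}^a) \hookrightarrow \gr_{F_3}^a$ is an equality for each $a$, producing the desired graded isomorphism; and the subspace filtration on $F_3$ becomes exhaustive and Hausdorff, with only finitely many distinct terms. The pointwise equality on graded pieces also means that the composition $\Image(\Fil_{F_1}^a \to F_3) \hookrightarrow \Fil_{F_3}^a \twoheadrightarrow \gr_{F_3}^a$ is surjective, whence $\Fil_{F_3}^a = \Image(\Fil_{F_1}^a \to F_3) + \Fil_{F_3}^{a+1}$ for every $a$.

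The proof finishes by downward induction on $a$: for $a$ sufficiently large the subspace filtration $\Fil_{F_3}^a$ vanishes, and each induction step replaces $\Fil_{F_3}^{a+1}$ in the identity above by the image filtration at level $a+1$ (already known to agree), yielding $\Fil_{F_3}^a = \Image(\Fil_{F_1}^a \to F_3)$ at every level. I do not anticipate a genuine obstacle here; the only point requiring care is the identification of $\gr_{F_3}^a$ as an honest subspace of $\gr_{F_2}^a$ through which the map from $\gr_{F_1}^a$ factors, which is exactly what makes the dimension count meaningful.
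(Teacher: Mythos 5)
Your proof is correct and follows essentially the same route as the paper: factoring $\gr_{F_1}^a \to \gr_{F_2}^a$ through $\gr_{F_3}^a$ (for the subspace filtration on $F_3$) and forcing equalities by the dimension count $\dim\bigl(\Image(\gr_{F_1} \to \gr_{F_2})\bigr) = \dim(F_3) \geq \sum_a \dim(\gr_{F_3}^a)$. You merely spell out what the paper leaves implicit, namely that the equality forces the filtration on $F_3$ to be exhaustive and separated and that the identity $\Fil_{F_3}^a = \Image(\Fil_{F_1}^a \to F_3) + \Fil_{F_3}^{a+1}$ upgrades to $\Fil_{F_3}^a = \Image(\Fil_{F_1}^a \to F_3)$ by downward induction.
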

\begin{proof}
    For each $a \in \bZ$, the map $\gr_{F_1}^a = \Fil_{F_1}^a / \Fil_{F_1}^{a + 1} \to \gr_{F_2}^a = \Fil_{F_2}^a / \Fil_{F_2}^{a + 1}$ factors through $\gr_{F_3}^a = \Fil_{F_3}^a / \Fil_{F_3}^{a + 1}$ with image $\Image(\Fil_{F_1}^a \to F_3) / \Fil_{F_3}^{a + 1}$.  Hence, the assumption that $\dim\bigl(\Image(\gr_{F_1} \to \gr_{F_2})\bigr) = \dim(F_3) = \dim(\gr_{F_3})$ implies the strict compatibility $\Image(\Fil_{F_1}^\bullet \to F_3) = \Fil_{F_3}^\bullet$ and induces $\Image(\gr_{F_1} \to \gr_{F_2}) \Mi \gr_{F_3}$.
\end{proof}

\begin{thm}\label{thm-int-coh-comp}
    When $\bL|_U$ is \emph{de Rham}, the comparison isomorphisms in Theorem \ref{thm-L-!-coh-comp} are compatible with the canonical morphisms induced by any inclusions $I^\Scalt \subset I^\Sc \subset I$, and hence with the comparison isomorphisms in \cite[\aThm \logRHthmlogRHarith(\logRHthmlogRHarithcomp)]{Diao/Lan/Liu/Zhu:lrhrv} \Pth{corresponding to $I^\Scalt = \emptyset$; \Refcf{} the notation in Definition \ref{def-dR-Hi-Hdg-coh-cpt}}, in the sense that we have $\Gal(\AC{k} / k)$-equivariant commutative diagrams
    \begin{equation}\label{eq-thm-int-coh-comp-diag-dR}
        \xymatrix{ {H_{\et, \Sc}^i(U_K, \bL) \otimes_{\bZ_p} \BdR} \ar[r]^-\sim \ar[d] & {H_{\dR, \Sc}^i\bigl(U_\an, \DdR(\bL)\bigr) \otimes_k \BdR} \ar[d] \\
        {H_{\et, \Scalt}^i(U_K, \bL) \otimes_{\bZ_p} \BdR} \ar[r]^-\sim & {H_{\dR, \Scalt}^i\bigl(U_\an, \DdR(\bL)\bigr) \otimes_k \BdR} }
    \end{equation}
    and
    \begin{equation}\label{eq-thm-int-coh-comp-diag-Hdg}
        \xymatrix@C=3ex{ {H_{\et, \Sc}^i(U_K, \bL) \otimes_{\bZ_p} K} \ar[r]^-\sim \ar[d] & {\oplus_{a + b = i} \Bigl(H_{\Hdg, \Sc}^{a, b}\bigl(U_\an, \DdR(\bL)\bigr) \otimes_k K(-a)\Bigr)} \ar[d] \\
        {H_{\et, \Scalt}^i(U_K, \bL) \otimes_{\bZ_p} K} \ar[r]^-\sim & {\oplus_{a + b = i} \Bigl(H_{\Hdg, \Scalt}^{a, b}\bigl(U_\an, \DdR(\bL)\bigr) \otimes_k K(-a)\Bigr)} }
    \end{equation}
    of canonical morphisms, which are compatible with the Hodge--de Rham spectral sequences, for each integer $i \geq 0$.  Hence, we have $\Gal(\AC{k} / k)$-equivariant isomorphisms
    \begin{equation}\label{eq-thm-int-coh-comp-isom-dR}
        H_{\et, \Sc \to \Scalt}^i(U_K, \bL) \otimes_{\bZ_p} \BdR \Mi H_{\dR, \Sc \to \Scalt}^i\bigl(U_\an, \DdR(\bL)\bigr) \otimes_k \BdR
    \end{equation}
    and
    \begin{equation}\label{eq-thm-int-coh-comp-isom-Hdg}
        H_{\et, \Sc \to \Scalt}^i(U_K, \bL) \otimes_{\bZ_p} K \Mi \oplus_{a + b = i} \Bigl(H_{\Hdg, \Sc \to \Scalt}^{a, b}\bigl(U_\an, \DdR(\bL)\bigr) \otimes_k K(-a)\Bigr)
    \end{equation}
    which are compatible with the prefect Poincar\'e duality pairings on both sides.  Moreover, the Hodge filtrations on $H_{\dR, \Sc}^i\bigl(U_\an, \DdR(\bL)\bigr)$ and $H_{\dR, \Scalt}^i\bigl(U_\an, \DdR(\bL)\bigr)$ are \emph{strictly compatible} in the sense \Pth{as in Lemma \ref{lem-fil-mor}} that they induce the same filtration on $H_{\dR, \Sc \to \Scalt}^i\bigl(U_\an, \DdR(\bL)\bigr)$, which we shall still call the \emph{Hodge filtration}, and we have a canonical graded isomorphism
    \begin{equation}\label{eq-thm-int-coh-comp-gr-isom}
        \gr H_{\dR, \Sc \to \Scalt}^i\bigl(U_\an, \DdR(\bL)\bigr) \cong \oplus_{a + b = i} \, H_{\Hdg, \Sc \to \Scalt}^{a, b}\bigl(U_\an, \DdR(\bL)\bigr),
    \end{equation}
    \Pth{matching $\gr^a H_{\dR, \Sc \to \Scalt}^i\bigl(U_\an, \DdR(\bL)\bigr)$ with $H_{\Hdg, \Sc \to \Scalt}^{a, i - a}\bigl(U_\an, \DdR(\bL)\bigr)$}.
\end{thm}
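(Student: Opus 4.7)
The approach is to argue by naturality of the constructions in Section~\ref{sec-dR-comp-cpt} with respect to the inclusion $I^\Scalt \subset I^\Sc \subset I$, derive the interior cohomology isomorphisms by taking images, handle Poincar\'e duality compatibility via Proposition~\ref{prop-int-coh-pairing-perf} and Theorem~\ref{thm-trace-et}\Refenum{\ref{thm-trace-et-pairing-dR}}, and finally deduce strict compatibility of Hodge filtrations from a dimension count using the \'etale comparison combined with Lemma~\ref{lem-fil-mor}.

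First, I would verify that every step in the construction of the comparison isomorphism in Theorem~\ref{thm-L-!-coh-comp} is functorial with respect to $I^\Scalt \subset I^\Sc$. The inclusion $D^\Scalt \subset D^\Sc$ induces natural inclusions $\AAinf^\Sc \Em \AAinf^\Scalt$, $\BBinf^\Sc \Em \BBinf^\Scalt$, $\BBdR^{\Sc, +} \Em \BBdR^{\Scalt, +}$, $\BBdR^\Sc \Em \BBdR^\Scalt$, $\OBdl^{\Sc, +} \Em \OBdl^{\Scalt, +}$, $\OBdl^\Sc \Em \OBdl^\Scalt$, and $\OCl^\Sc \Em \OCl^\Scalt$, all strictly compatible with filtrations by Corollary~\ref{cor-OBdRp-cplx-bd-strict}; on the \'etale side there is a natural morphism $\jmath_{\ket, !}^\Sc(\bL|_{U^\Sc_\ket}) \to \jmath_{\ket, !}^\Scalt(\bL|_{U^\Scalt_\ket})$; and on the $\cX$/$X$ side the defining kernels in Definition~\ref{def-unip-twist} yield filtered morphisms $\RHl^\Sc(\bL) \to \RHl^\Scalt(\bL)$, $\Ddl^\Sc(\bL) \to \Ddl^\Scalt(\bL)$, and $\Hl^\Sc(\bL) \to \Hl^\Scalt(\bL)$. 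The primitive comparison (Proposition~\ref{prop-coh-AAinf-cpt}), the $\BdR$-comparison (Proposition~\ref{prop-L-!-coh-comp-proket}), the Poincar\'e lemma (Proposition~\ref{prop-Poin-lem}), and the identification of Proposition~\ref{prop-RHl-Hl-Ddl-cpt} are all functorial for these morphisms; chasing through the construction yields the commutativity of \Refeq{\ref{eq-thm-int-coh-comp-diag-dR}} and, by passing to $\gr^0$ of the filtration by powers of $\xi$, of \Refeq{\ref{eq-thm-int-coh-comp-diag-Hdg}}. The same argument, preserving filtrations at every stage, gives the claimed compatibility with the Hodge--de Rham spectral sequences.

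Second, since the vertical arrows in \Refeq{\ref{eq-thm-int-coh-comp-diag-dR}} \Pth{\resp \Refeq{\ref{eq-thm-int-coh-comp-diag-Hdg}}} are obtained by flat base change from $\bQ_p$ and $k$ to $\BdR$ \Pth{\resp $K$}, and taking images commutes with flat base change, the horizontal isomorphisms restrict to the claimed isomorphisms \Refeq{\ref{eq-thm-int-coh-comp-isom-dR}} and \Refeq{\ref{eq-thm-int-coh-comp-isom-Hdg}}. Compatibility of these isomorphisms with the Poincar\'e duality pairings is immediate from the construction in Proposition~\ref{prop-int-coh-pairing-perf}, in which the pairing on interior cohomology is defined by lifting to the full cohomology, where Theorem~\ref{thm-trace-et}\Refenum{\ref{thm-trace-et-pairing-dR}} provides the compatibility of the \'etale and de Rham pairings under the comparison isomorphism.

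Finally, to obtain strict compatibility of Hodge filtrations, I will apply Lemma~\ref{lem-fil-mor} to the morphism $H_{\dR, \Sc}^i(U_\an, \DdR(\bL)) \to H_{\dR, \Scalt}^i(U_\an, \DdR(\bL))$. Degeneration of the Hodge--de Rham spectral sequence (Theorem~\ref{thm-L-!-coh-comp}) gives $\gr H_{\dR, ?}^i \cong \oplus_{a + b = i} \, H_{\Hdg, ?}^{a, b}$ for $? = \Sc$ or $\Scalt$, hence $\Image(\gr H_{\dR, \Sc}^i \to \gr H_{\dR, \Scalt}^i) \cong \oplus_{a + b = i} \, H_{\Hdg, \Sc \to \Scalt}^{a, b}$. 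The isomorphisms \Refeq{\ref{eq-thm-int-coh-comp-isom-dR}} and \Refeq{\ref{eq-thm-int-coh-comp-isom-Hdg}} established above identify the respective $k$-dimensions of $H_{\dR, \Sc \to \Scalt}^i$ and $\oplus_{a + b = i} \, H_{\Hdg, \Sc \to \Scalt}^{a, b}$ with $\dim_{\bQ_p} H_{\et, \Sc \to \Scalt}^i$, so these two dimensions agree, verifying the hypothesis of Lemma~\ref{lem-fil-mor} and producing the graded isomorphism \Refeq{\ref{eq-thm-int-coh-comp-gr-isom}}. The principal obstacle is this final dimension count: strict compatibility of Hodge filtrations under an arbitrary filtered morphism of de Rham cohomology is not automatic, and here it relies essentially on the \'etale comparison to force the relevant dimensions to agree.
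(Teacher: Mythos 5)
Your proposal is correct and follows essentially the same route as the paper: the paper likewise identifies both columns of the diagrams with the morphisms induced by the canonical maps $\BBdR^\Sc \to \BBdR^\Scalt$ and $\widehat{\cO}^\Sc \to \widehat{\cO}^\Scalt$ of period sheaves (via Proposition \ref{prop-L-!-coh-comp-proket} and the proof of Theorem \ref{thm-L-!-coh-comp}), deduces the interior-cohomology isomorphisms and duality compatibility from the construction of the induced pairings, and obtains strict compatibility of Hodge filtrations from the $E_1$-degeneration plus the dimension count $\sum_{a+b=i}\dim_k H_{\Hdg,\Sc\to\Scalt}^{a,b} = \dim_{\bQ_p} H_{\et,\Sc\to\Scalt}^i = \dim_k H_{\dR,\Sc\to\Scalt}^i$ fed into Lemma \ref{lem-fil-mor}. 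The only cosmetic difference is your citation of Corollary \ref{cor-OBdRp-cplx-bd-strict} for strictness of the maps between the $\Sc$- and $\Scalt$-versions of the period sheaves, which is not quite what that corollary states, but this does not affect the argument.
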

\begin{proof}
    We have the commutative diagram \Refeq{\ref{eq-thm-int-coh-comp-diag-dR}} because, by Proposition \ref{prop-L-!-coh-comp-proket} and the proof of Theorem \ref{thm-L-!-coh-comp}, the morphism in both columns can be identified with the morphism
    \[
        H^i\bigl(X_{K, \proket}, \widehat{\bL} \otimes_{\widehat{\bZ}_p} \BBdR^\Sc\bigr) \to H^i\bigl(X_{K, \proket}, \widehat{\bL} \otimes_{\widehat{\bZ}_p} \BBdR^\Scalt\bigr)
    \]
    induced by the canonical morphism $\BBdR^\Sc \to \BBdR^\Scalt$ \Pth{which exists by the very construction of these sheaves in Definition \ref{def-BBdRp-cpt}}.  Similarly, we have the commutative diagram \Refeq{\ref{eq-thm-int-coh-comp-diag-Hdg}} because, also by Proposition \ref{prop-L-!-coh-comp-proket} and the proof of Theorem \ref{thm-L-!-coh-comp}, the morphism in both columns can be identified with the morphism
    \[
        H^i\bigl(X_{K, \proket}, \widehat{\bL} \otimes_{\widehat{\bZ}_p} \widehat{\cO}_{X_{K, \proket}}^\Sc\bigr) \to H^i\bigl(X_{K, \proket}, \widehat{\bL} \otimes_{\widehat{\bZ}_p} \widehat{\cO}_{X_{K, \proket}}^\Scalt\bigr)
    \]
    induced by the canonical morphism $\widehat{\cO}_{X_{K, \proket}}^\Sc \to \widehat{\cO}_{X_{K, \proket}}^\Scalt$.  The commutative diagrams \Refeq{\ref{eq-thm-int-coh-comp-diag-dR}} and \Refeq{\ref{eq-thm-int-coh-comp-diag-Hdg}} are compatible with the Hodge--de Rham spectral sequences by Proposition \ref{prop-Poin-lem}\Refenum{\ref{prop-Poin-lem-4}}, and the comparison isomorphisms \Refeq{\ref{eq-thm-int-coh-comp-isom-dR}} and \Refeq{\ref{eq-thm-int-coh-comp-isom-Hdg}} thus obtained are compatible with the Poincar\'e duality pairings on generalized interior cohomology because they are induced by comparison isomorphisms respecting the original Poincar\'e duality pairings.  Since the Hodge--de Rham spectral sequences for $H_{\dR, \Sc}^i\bigl(U_\an, \DdR(\bL)\bigr)$ and $H_{\dR, \Scalt}^i\bigl(U_\an, \DdR(\bL)\bigr)$ degenerate on the $E_1$ pages by Theorem \ref{thm-L-!-coh-comp}, and since \Refeq{\ref{eq-thm-int-coh-comp-isom-dR}} and \Refeq{\ref{eq-thm-int-coh-comp-isom-Hdg}} imply that
    \[
    \begin{split}
        & \sum_{a + b = i} \dim_k\bigl(H_{\Hdg, \Sc \to \Scalt}^{a, b}\bigl(U_\an, \DdR(\bL)\bigr)\bigr) \\
        & = \dim_{\bQ_p}\bigl(H_{\et, \Sc \to \Scalt}^i\bigl(U_K, \bL\bigr)\bigr) = \dim_k\bigr(H_{\dR, \Sc \to \Scalt}^i\bigl(U_\an, \DdR(\bL)\bigr)\bigr),
    \end{split}
    \]
    the last assertion of the proposition follows from Lemma \ref{lem-fil-mor}.
\end{proof}

\begin{cor}\label{cor-et-qis}
    Let $I^+_\arith$ be as in Lemma \ref{lem-dR-qis-arith}, and let $I^\Scalt$ be as in \Refeq{\ref{eq-cor-dR-qis-arith-cond}}.  Then we have a canonical isomorphism $H_{\et, \Sc}^i(U_K, \bL_{\bQ_p}) \cong H_{\et, \Scalt}^i(U_K, \bL_{\bQ_p})$, for each $i \geq 0$ and each $a \in \bZ$, which is compatible with \Refeq{\ref{eq-cor-dR-qis-arith}} and \Refeq{\ref{eq-cor-dR-qis-arith-Hdg}} via the comparison isomorphisms as in \Refeq{\ref{eq-thm-L-!-coh-comp-dR}} and \Refeq{\ref{eq-thm-L-!-coh-comp-Hdg}}.
\end{cor}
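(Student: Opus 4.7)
The plan is to reduce the \'etale assertion to the analogous de Rham assertion already proved as Corollary \ref{cor-dR-qis-arith}, by combining the de Rham comparison isomorphism of Theorem \ref{thm-L-!-coh-comp} with the functoriality supplied by Theorem \ref{thm-int-coh-comp}. Note that the phrasing of the corollary, referring to compatibility via \Refeq{\ref{eq-thm-L-!-coh-comp-dR}} and \Refeq{\ref{eq-thm-L-!-coh-comp-Hdg}}, implicitly requires $\bL|_U$ to be a de Rham $\bZ_p$-local system, so that those comparison isomorphisms are available for both $? = \Sc$ and $? = \Scalt$.

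First, following the reduction in the proof of Corollary \ref{cor-dR-qis-arith}, I would assume without loss of generality that $I^\Scalt = I^\Sc - I^+_\arith$, so that $I^\Scalt \subset I^\Sc$ and we have a canonical morphism $H^i_{\et, \Sc}(U_K, \bL_{\bQ_p}) \to H^i_{\et, \Scalt}(U_K, \bL_{\bQ_p})$ to show is an isomorphism. Next, Theorem \ref{thm-int-coh-comp} produces the commutative square \Refeq{\ref{eq-thm-int-coh-comp-diag-dR}}, whose rows identify this morphism after tensoring with $\BdR$ over $\bQ_p$ with the corresponding morphism of de Rham cohomology groups tensored with $\BdR$ over $k$. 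By Corollary \ref{cor-dR-qis-arith}, the isomorphism \Refeq{\ref{eq-cor-dR-qis-arith}} makes the right column of that square an isomorphism; hence, so is the left column. Faithful flatness of $\BdR$ over $\bQ_p$ then forces the original canonical morphism $H^i_{\et, \Sc}(U_K, \bL_{\bQ_p}) \to H^i_{\et, \Scalt}(U_K, \bL_{\bQ_p})$ to be an isomorphism.

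For the compatibility with \Refeq{\ref{eq-cor-dR-qis-arith-Hdg}} via \Refeq{\ref{eq-thm-L-!-coh-comp-Hdg}}, I would repeat the same argument with the commutative square \Refeq{\ref{eq-thm-int-coh-comp-diag-Hdg}} in place of \Refeq{\ref{eq-thm-int-coh-comp-diag-dR}}, using the Hodge-theoretic conclusion \Refeq{\ref{eq-cor-dR-qis-arith-Hdg}} of Corollary \ref{cor-dR-qis-arith} together with faithful flatness of $K$ over $\bQ_p$. In both cases, the asserted compatibility with the comparison isomorphisms is automatic because the isomorphism of \'etale cohomology groups is, by construction, obtained by inserting the result of Corollary \ref{cor-dR-qis-arith} into the commutative diagrams of Theorem \ref{thm-int-coh-comp}.

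There is no serious obstacle, and the argument is entirely formal once the two main inputs (Theorem \ref{thm-int-coh-comp} and Corollary \ref{cor-dR-qis-arith}) are available. The only conceptual point worth emphasizing is that the de Rham comparison isomorphism of Theorem \ref{thm-L-!-coh-comp} is natural with respect to the canonical morphisms induced by an inclusion $I^\Scalt \subset I^\Sc$ of support data, which is precisely the content of Theorem \ref{thm-int-coh-comp}; without this naturality the reduction from the de Rham to the \'etale side would not make sense.
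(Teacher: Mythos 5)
Your argument is correct and is essentially the paper's own proof: reduce to $I^\Scalt = I^\Sc - I^+_\arith$, transfer the de Rham isomorphism of Corollary \ref{cor-dR-qis-arith} through the commutative diagrams of Theorem \ref{thm-int-coh-comp}, and conclude by faithful flatness of $\BdR$ \Pth{\resp $K$} over $\bQ_p$. Your remark that the de Rham hypothesis on $\bL|_U$ is implicitly required matches the paper's reliance on Theorem \ref{thm-int-coh-comp}.
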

\begin{proof}
    We may assume that $I^\Scalt = I^\Sc - I^+_\arith \subset I^\Sc$, as in the proof of Corollary \ref{cor-dR-qis-arith}.  Since $\BdR$ is a field extension of $\bQ_p$, and since we have compatible canonical isomorphisms $H_{\et, ?}^i(U_K, \bL) \otimes_{\bZ_p} \BdR \cong H_{\et, ?}^i(U_K, \bL_{\bQ_p}) \otimes_{\bQ_p} \BdR$, for $? = \Sc$ and $\Scalt$, this corollary follows from Theorem \ref{thm-int-coh-comp} and Corollary \ref{cor-dR-qis-arith}.
\end{proof}

\numberwithin{equation}{section}

\section{Comparison theorems for smooth algebraic varieties}\label{sec-comp-alg}

In this section, we let $U$ denote a smooth algebraic variety over a $p$-adic field $k$.  Since $\chr(k) = 0$, by \cite{Nagata:1962-iavcv, Hironaka:1964-rsavz-1, Hironaka:1964-rsavz-2}, there exists a smooth compactification $X$ of $U$ such that the boundary $D = X - U$ \Pth{with its reduced subscheme structure} is a normal crossings divisor, and we may assume that the intersections of the irreducible components of $D$ are all smooth.  We shall denote the analytification of these schemes, viewed as adic spaces over $\Spa(k, \cO_k)$, with superscripts \Qtn{$\an$}, as usual.  Then the analytifications of $U$, $X$, and $D$ satisfy the same setup as in Section \ref{sec-log-str-bd}, and we shall inherit most of the notation from there, the main difference being that objects and morphisms with no superscript \Qtn{$\an$} \Pth{\resp with superscripts \Qtn{$\an$}} are the algebraic \Pth{\resp analytic} ones.  For any $I^\star \subset I$, we shall also consider $D^\Sc := \cup_{j \in I^\star} \, D_j$ and $D^\Snc := \cup_{j \in I - I^\star} \, D_j$ \Pth{with their canonical reduced closed subscheme structures}, and the objects they define.

As in \cite[\aSec \logRHsecDdRalg]{Diao/Lan/Liu/Zhu:lrhrv}, for each $\bZ_p$-local system $\bL$ on $U_\et$, we denote by $\bL^\an$ its analytification on $U^\an_\et$ as usual, and we write $\overline{\bL}^\an := \jmath_{\ket, *}^\an(\bL^\an)\cong R \jmath_{\ket, *}^\an(\bL^\an)$ \Pth{without introducing $\overline{\bL}$}, which is a $\bZ_p$-local system on $X^\an_\ket$.  We also consider $\jmath_{\et, !}^\Sc \, R\jmath_{\Sc, \et, *}(\bL)$ \Pth{\resp $\jmath_{\et, !}^{\Sc, \an} \, R\jmath_{\Sc, \et, *}^\an(\bL^\an)$} on $X_\et$ \Pth{\resp $X^\an_\et$}, and define
\[
\begin{split}
    & H_{\et, \Sc}^i\bigl(U_{\AC{k}}, \bL / p^m\bigr) := H^i\bigl(X_{\AC{k}, \ket}, \jmath_{\ket, !}^\Sc(\overline{\bL} / p^m)\bigr) \\
    & \cong H^i\bigl(X_{\AC{k}, \et}, \jmath_{\et, !}^\Sc \, R\jmath_{\Sc, \et, *}(\bL / p^m)\bigr) \cong H_\cpt^i\bigl(U^\Sc_{\AC{k}, \et}, R\jmath_{\Sc, \et, *}(\bL / p^m)\bigr)
\end{split}
\]
and
\[
    H_{\et, \Sc}^i(U_{\AC{k}}, \bL) := \varprojlim_m H_\Sc^i(U_{\AC{k}, \et}, \bL / p^m)
\]
\Pth{\Refcf{} Definitions \ref{def-H-c-torsion} and \ref{def-H-c}, Lemmas \ref{lem-L-!} and \ref{lem-def-H-c-fin-Z-p}, and Remark \ref{rem-def-H-c-conv}}.

\begin{lem}\label{lem-comp-def-H-c}
    There are canonical isomorphisms
    \begin{equation}\label{eq-lem-comp-def-H-c-sh}
    \begin{split}
        & \bigl(\jmath_{\et, !}^\Sc \, R\jmath_{\Sc, \et, *}(\bL)\bigr)^\an \cong \bigl(\varprojlim_m \, \jmath_{\et, !}^\Sc \, R\jmath_{\Sc, \et, *}(\bL / p^m)\bigr)^\an \\
        & \cong \varprojlim_m \, \jmath_{\et, !}^{\Sc, \an} \, R\jmath_{\Sc, \et, *}^\an(\bL^\an / p^m) \cong \jmath_{\et, !}^{\Sc, \an} \, R\jmath_{\Sc, \et, *}^\an(\bL^\an) \\
        & \cong \varprojlim_m \, R\varepsilon_{\et, *} \, \jmath_{\ket, !}^{\Sc, \an}\big((\overline{\bL}^\an / p^m)|_{U^{\Sc, \an}_\ket}\bigr) \cong R\varepsilon_{\et, *} \, \jmath_{\ket, !}^{\Sc, \an}(\overline{\bL}^\an|_{U^{\Sc, \an}_\ket}).
    \end{split}
    \end{equation}
    For all $i \geq 0$, we have
    \begin{equation}\label{eq-lem-comp-def-H-c}
    \begin{split}
        & H_{\et, \Sc}^i(U_{\AC{k}}, \bL) = \varprojlim_m H_{\et, \Sc}^i(U_{\AC{k}}, \bL / p^m) \\
        & \cong H_{\et, \Sc}^i(U^\an_{\AC{k}}, \bL) \cong \varprojlim_m H_{\et, \Sc}^i(U^\an_{\AC{k}}, \bL^\an / p^m)
    \end{split}
    \end{equation}
    \Pth{see Definition \ref{def-H-c} and Lemma \ref{lem-def-H-c-fin-Z-p}}, which can be identified with
    \[
    \begin{split}
        & H^i\bigl(X_{\AC{k}, \et}, \jmath_{\et, !}^\Sc \, R\jmath_{\Sc, \et, *}(\bL)\big) := \varprojlim_m H^i\bigl(X_{\AC{k}, \et}, \jmath_{\et, !}^\Sc \, R\jmath_{\Sc, \et, *}(\bL / p^m)\bigr) \\
        & \cong H^i\bigl(X^\an_{\AC{k}, \et}, \jmath_{\et, !}^{\Sc, \an} \, R\jmath_{\Sc, \et, *}^\an(\bL^\an)\bigr) \cong \varprojlim_m H^i\bigl(X^\an_{\AC{k}, \et}, \jmath_{\et, !}^{\Sc, \an} \, R\jmath_{\Sc, \et, *}^\an(\bL^\an / p^m)\bigr) \\
        & \cong H^i\bigl(X^\an_{\AC{k}, \ket}, \jmath_{\ket, !}^{\Sc, \an}(\overline{\bL}^\an|_{U^{\Sc, \an}_\ket})\bigr) \cong \varprojlim_m H^i\bigl(X^\an_{\AC{k}, \ket}, \jmath_{\ket, !}^{\Sc, \an}\bigl((\overline{\bL}^\an / p^m)|_{U^{\Sc, \an}_\ket}\bigr)\bigr)
    \end{split}
    \]
    \Pth{see Lemma \ref{lem-L-!}}.  For any $I^\Scalt \subset I^\Sc \subset I$, the isomorphisms in \Refeq{\ref{eq-lem-comp-def-H-c-sh}} and \Refeq{\ref{eq-lem-comp-def-H-c}} are compatible with the canonical morphisms from the analogous objects with subscripts \Qtn{$\cpt$} to those with subscripts \Qtn{$\Sc$}; from those with subscripts \Qtn{$\Sc$} to those with subscripts \Qtn{$\Scalt$}; and from those with subscripts \Qtn{$\Scalt$} to those without any of these subscripts \Pth{\Refcf{} Remark \ref{rem-def-H-c-alt}}.
\end{lem}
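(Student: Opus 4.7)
The plan is to establish the sheaf-level chain \Refeq{\ref{eq-lem-comp-def-H-c-sh}} first, and then pass to cohomology; both stages proceed through the mod $p^m$ approximations before taking inverse limits in $m$.

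First, I would fix $m \geq 1$ and prove the torsion analogue of \Refeq{\ref{eq-lem-comp-def-H-c-sh}}, namely,
\[
    \bigl(\jmath_{\et, !}^\Sc R\jmath_{\Sc, \et, *}(\bL / p^m)\bigr)^\an \cong \jmath_{\et, !}^{\Sc, \an} R\jmath_{\Sc, \et, *}^\an(\bL^\an / p^m) \cong R\varepsilon_{\et, *} \jmath_{\ket, !}^{\Sc, \an}(\overline{\bL}^\an / p^m).
\]
The left isomorphism combines the \Pth{formal} compatibility of analytification with $\jmath_!$ and Huber's comparison of analytification with $R\jmath_*$ for torsion constructible \'etale sheaves on separated finite type $k$-schemes, see \cite[\aThm 3.7.2]{Huber:1996-ERA}.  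The right isomorphism is the analytic analogue of Lemma \ref{lem-L-!}, which follows from \cite[\aThm \logadicthmpurity]{Diao/Lan/Liu/Zhu:laske} combined with \cite[\aLem \logadiclemkettoetconst]{Diao/Lan/Liu/Zhu:laske}.

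Next, I would pass to the inverse limit in $m$.  Each tower $\{ \bL / p^m \}_m$ has surjective transition morphisms, and this surjectivity is preserved by $\jmath_{\Sc, \et, *}$, $R\jmath_{\Sc, \et, *}$, $\jmath_{\et, !}^\Sc$, analytification, $\jmath_{\ket, !}^{\Sc, \an}$, and $R\varepsilon_{\et, *}$ \Pth{the last via the surjectivity established in Proposition \ref{prop-L-J-dir-im-et}}.  Hence each of the six towers in \Refeq{\ref{eq-lem-comp-def-H-c-sh}} satisfies the Mittag-Leffler condition on its cohomology sheaves, so that $R^j\varprojlim_m$ vanishes for $j > 0$, and the termwise isomorphisms upgrade to the limits, giving \Refeq{\ref{eq-lem-comp-def-H-c-sh}}.

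For the cohomology comparison \Refeq{\ref{eq-lem-comp-def-H-c}}, I would apply Huber's \'etale comparison theorem to the proper $k$-scheme $X$ and to the constructible torsion sheaves $\jmath_{\et, !}^\Sc R\jmath_{\Sc, \et, *}(\bL / p^m)$, to obtain
\[
    H^i\bigl(X_{\AC{k}, \et}, \jmath_{\et, !}^\Sc R\jmath_{\Sc, \et, *}(\bL / p^m)\bigr) \cong H^i\bigl(X^\an_{\AC{k}, \et}, \jmath_{\et, !}^{\Sc, \an} R\jmath_{\Sc, \et, *}^\an(\bL^\an / p^m)\bigr),
\]
which matches $H^i\bigl(X^\an_{\AC{k}, \ket}, \jmath_{\ket, !}^{\Sc, \an}(\overline{\bL}^\an / p^m)\bigr)$ by the sheaf-level chain already proved.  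These groups are finite by Theorem \ref{thm-L-!-prim-comp} and its algebraic counterpart, so the transition maps are Mittag-Leffler, $\varprojlim_m$ commutes with taking cohomology, and one obtains \Refeq{\ref{eq-lem-comp-def-H-c}}.

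Finally, compatibility with the canonical morphisms induced by $I^\Scalt \subset I^\Sc \subset I$ is automatic from naturality: every isomorphism constructed is obtained by applying the exact functors $(\,\cdot\,)^\an$, $\jmath_{\et, !}^?$, $\jmath_{\ket, !}^{?, \an}$, and $R\varepsilon_{\et, *}$ to the canonical morphisms among the sheaves $\jmath^{?}_{!} R\jmath_{?, *}(\bL / p^m)$, all of which respect the inclusion $I^\Scalt \subset I^\Sc$.  The principal technical obstacle will be the first step: verifying precisely that analytification commutes with $R\jmath_{\Sc, \et, *}$ for the possibly non-proper open immersion $\jmath_\Sc : U \to U^\Sc$, from which the remaining steps reduce to Mittag-Leffler and GAGA bookkeeping.
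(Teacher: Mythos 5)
Your proposal is correct and follows essentially the same route as the paper: the termwise torsion comparison via Huber's results, the identification with the Kummer \'etale description via (the proof of) Lemma \ref{lem-L-!}, finiteness from Theorem \ref{thm-L-!-prim-comp} and Lemma \ref{lem-def-H-c-fin-Z-p} to pass to the limit in $m$, and naturality for the compatibilities with $I^\Scalt \subset I^\Sc$.  The only difference is your sheaf-level Mittag--Leffler step, which the paper does not need, since the $\varprojlim_m$ expressions are handled through the inverse-system ($\bZ_p$-sheaf) formalism and the limit is only taken on the finite cohomology groups.
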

\begin{proof}
    These are based on the various definitions and Lemmas \ref{lem-L-!} and \ref{lem-def-H-c-fin-Z-p}, and the compatible isomorphisms $\bigl(\jmath_{\et, !}^\Sc \, R\jmath_{\Sc, \et, *}(\bL / p^m)\bigr)^\an \cong \jmath_{\et, !}^{\Sc, \an} \, R\jmath_{\Sc, \et, *}^\an(\bL^\an / p^m)$ and $H_\cpt^i\bigl(U^\Sc_{\AC{k}, \et}, R\jmath_{\Sc, \et, *}(\bL / p^m)\bigr) \cong H_\cpt^i\bigl(U^{\Sc, \an}_{\AC{k}, \et}, R\jmath_{\Sc, \et, *}^\an(\bL^\an / p^m)\bigr)$, for all $m \geq 1$, by \cite[\aProp 2.1.4, and \aThms 3.8.1 and 5.7.2]{Huber:1996-ERA}.
\end{proof}

\begin{lem}\label{lem-comp-def-H-c-trace}
    The usual algebraic trace morphism $t_\dR: H_{\dR, \cpt}^{2d}\bigl(U, \cO_U(d)\bigr) \to k$ is compatible with the analytic trace morphism $t_\dR^\an: H_{\dR, \cpt}^{2d}\bigl(U^\an_\an, \cO_{U^\an}(d)\bigr) \to k$ defined in Theorem \ref{thm-trace-dR-Hdg} via GAGA \cite{Kopf:1974-efava}.  Similarly, the usual algebraic trace morphism $t_\et: H_{\et, \cpt}^{2d}\bigl(U_{\AC{k}}, \bQ_p(d)\bigr) \to \bQ_p$ is compatible with the analytic trace morphism $t_\et^\an: H_{\et, \cpt}^{2d}\bigl(U^\an_{\AC{k}}, \bQ_p(d)\bigr) \to \bQ_p$ defined in Theorem \ref{thm-trace-et} under the canonical isomorphisms given by Lemma \ref{lem-comp-def-H-c}.
\end{lem}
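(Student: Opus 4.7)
The plan is to handle the two compatibilities separately.  For the de Rham trace, I will reduce via the identification \Refeq{\ref{eq-diff-top-deg-coh}} to a statement about Serre duality on coherent cohomology, and then invoke GAGA together with a local characterization by residues.  For the \'etale trace, I will exploit the uniqueness portion of Theorem \ref{thm-trace-et} and reduce to the de Rham case combined with the algebraic/analytic compatibility of de Rham comparison isomorphisms.

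For the de Rham statement, I would first observe that \Refeq{\ref{eq-diff-top-deg-coh}} applies both in the algebraic setting (by Grothendieck's algebraic de Rham theorem together with Hodge--de Rham degeneration for proper smooth varieties in characteristic zero) and in the analytic setting (by Theorem \ref{thm-L-!-coh-comp} with $\bL = \bZ_p(d)$), giving compatible canonical isomorphisms
\[
    H_{\dR, \cpt}^{2d}\bigl(U, \cO_U(d)\bigr) \cong H^d(X, \Omega_X^d) \quad \text{and} \quad H_{\dR, \cpt}^{2d}\bigl(U^\an_\an, \cO_{U^\an}(d)\bigr) \cong H^d(X^\an_\an, \Omega_{X^\an}^d).
\]
Under GAGA \cite{Kopf:1974-efava}, the two coherent cohomology groups on the right are canonically identified.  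It then suffices to show that the algebraic and analytic Serre duality traces agree.  For this I would use the local characterization: the analytic trace is the unique morphism whose pre-composition with \Refeq{\ref{eq-lem-alg-to-an-loc-coh}} at every classical point $y$ yields the residue morphism \Refeq{\ref{eq-lem-Poin-res}} (Theorem \ref{thm-Serre-duality} and Lemma \ref{lem-Poin-res}), and the algebraic trace admits the literally identical description using Grothendieck local duality at each closed point.  Since the completion $(A, \ideal{m})$ of $(\cO_{X, y}, \ideal{m}_y)$ is intrinsic and does not distinguish between the algebraic and analytic structures, the two local residue morphisms coincide, and the two traces must agree.

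For the \'etale statement, I will transport $t_\et$ to a morphism $H_{\et, \cpt}^{2d}\bigl(U^\an_{\AC{k}}, \bQ_p(d)\bigr) \to \bQ_p$ via the canonical isomorphism of Lemma \ref{lem-comp-def-H-c}, and verify that it satisfies the characterizing properties \Refenum{\ref{thm-trace-et-res}} and \Refenum{\ref{thm-trace-et-comp}} of Theorem \ref{thm-trace-et}; by uniqueness, this forces it to coincide with $t_\et^\an$.  Compatibility with restriction to opens $U - Z$ is standard functoriality of the algebraic \'etale trace, matched with the analytic restriction via the base-change isomorphisms $\jmath_{\et, !}^{\Sc, \an} \cong (\jmath_{\et, !}^\Sc)^\an$ of Lemma \ref{lem-comp-def-H-c}.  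The harder condition, that the algebraic comparison isomorphism sends $\bigl(t_\et^{-1}(1)\bigr) \otimes 1$ to $\bigl(t_\dR^{-1}(1)\bigr) \otimes 1$ when $U_{\AC{k}}$ is connected, will be reduced to the de Rham case already handled, together with the compatibility between the algebraic and analytic $\BdR$-comparison isomorphisms; this compatibility is precisely what underlies the deduction of Theorem \ref{thm-intro} from its rigid analytic counterpart, and rests on GAGA \cite{Kopf:1974-efava} and Huber's comparison \cite{Huber:1996-ERA} applied at each finite level $\bZ/p^m$ before passing to the limit.

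The main obstacle will be the last step, namely matching the classical SGA-style algebraic \'etale trace (constructed at torsion level and then passed to the limit) with the analytically defined $t_\et^\an$ of Theorem \ref{thm-trace-et} (constructed via blowups, Gysin isomorphisms, and the de Rham comparison).  The routes to the two trace maps are genuinely different, and bridging them requires tracking the compatibility of algebraic and analytic cup products and Gysin morphisms at each level $\bZ/p^m$, all within the GAGA plus Huber framework.  Once this compatibility is in hand, however, the uniqueness in Theorem \ref{thm-trace-et} closes the argument.
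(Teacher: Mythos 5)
Your de Rham half is a legitimate alternative to the paper's argument: the paper does not use the residue characterization at all, but instead runs a single induction (restriction-compatibility as in Lemma \ref{lem-trace-dR} to reduce to $U = X$ connected, then blowups at $k$-points and the Gysin compatibilities of Lemma \ref{lem-property-trace} and Proposition \ref{prop-Gysin-dR-comp}) that reduces both the de Rham and the \'etale statements simultaneously to the case where $X_K$ is a single point, where both traces are the canonical identifications.  Your route through Theorem \ref{thm-Serre-duality} and GAGA can be made to work, but you should say explicitly that you are also using: that the usual algebraic $t_\dR$ is induced by the coherent Grothendieck--Serre trace under the algebraic analogue of \Refeq{\ref{eq-diff-top-deg-coh}}; that the algebraic coherent trace satisfies the residue formula at closed points; and that the local-to-global maps of Lemma \ref{lem-alg-to-an-loc-coh} are compatible with their algebraic counterparts under GAGA.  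These are standard but are genuine inputs.

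The \'etale half has a genuine gap.  To invoke the uniqueness in Theorem \ref{thm-trace-et} you must verify property \Refenum{\ref{thm-trace-et-comp}} for the transported algebraic trace, i.e.\ that the comparison isomorphism sends the transported $\bigl(t_\et^{-1}(1)\bigr) \otimes 1$ to $\bigl(t_\dR^{-1}(1)\bigr) \otimes 1$.  You propose to deduce this from ``the compatibility between the algebraic and analytic $\BdR$-comparison isomorphisms,'' but that compatibility holds essentially by the construction of Theorem \ref{thm-comp-alg-dR-cpt} and only identifies the two comparison maps; it says nothing about how either of them interacts with the classical algebraic \'etale trace.  What you actually need is the compatibility of the comparison isomorphism with algebraic Poincar\'e duality/trace, and (given your de Rham half) that statement is equivalent to the lemma itself; it is not citable and is not a formal consequence of GAGA and Huber's theorems, which match algebraic and analytic cup products and Gysin maps but are silent about the period-ring comparison---indeed the introduction of the paper stresses that exactly this kind of compatibility ``would not be clear.''  The paper supplies it by induction on dimension: after reducing to $U = X$ connected, blow up a $k$-point and use the Gysin compatibilities (Proposition \ref{prop-Gysin-dR-comp} and Theorem \ref{thm-trace-et}\Refenum{\ref{thm-trace-et-comp}} on the analytic side, the standard algebraic analogues matched via Huber's comparison on the other) to drop the dimension by one, until both traces become the canonical isomorphisms in dimension zero.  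Your closing paragraph gestures at precisely this bookkeeping but leaves it as an acknowledged obstacle, so as written the argument defers its key step rather than proving it.  (A smaller point: property \Refenum{\ref{thm-trace-et-res}} of Theorem \ref{thm-trace-et} concerns arbitrary closed rigid analytic $Z \subset X$, which algebraic functoriality alone does not give; this is harmless only because property \Refenum{\ref{thm-trace-et-comp}}, applied on each geometric connected component after a finite extension of $k$, already pins down the trace.)
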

\begin{proof}
    By Lemma \ref{lem-trace-dR}, Theorem \ref{thm-trace-et}\Refenum{\ref{thm-trace-et-res}}, and the corresponding facts for algebraic trace morphisms, up to replacing $k$ with a finite extension in $\AC{k}$, we may assume that $U_K = X_K$ is connected.  By the compatibility with Gysin isomorphisms in Proposition \ref{prop-Gysin-dR-comp} and Theorem \ref{thm-trace-et}\Refenum{\ref{thm-trace-et-comp}}, by the corresponding facts for algebraic trace morphisms, and by considering smooth divisors defined by blowing up $k$-points \Pth{which exists up to further replacing $k$ with a finite extension in $\AC{k}$} as in the proof of Theorem \ref{thm-trace-et}, we can proceed by induction and reduce to the case where $X_K$ is just a single $K$-point.  In this case, the algebraic and analytic trace isomorphisms for de Rham cohomology are the canonical isomorphisms $H^0(X, \cO_X) \cong k$ and $H^0(X^\an, \cO_{X^\an}) \cong k$, respectively, which are compatible via the canonical morphism $H^0(X, \cO_X) \to H^0(X^\an, \cO_{X^\an})$.  Also, the algebraic and analytic trace isomorphisms for \'etale cohomology are the canonical isomorphisms $H_\et^0(X_K, \bQ_p) \cong \bQ_p$ and $H_\et^0(X_K^\an, \bQ_p) \cong \bQ_p$, respectively, which are compatible via the canonical morphism $H_\et^0(X_K, \bQ_p) \to H_\et^0(X_K^\an, \bQ_p)$, as desired.
\end{proof}

\begin{thm}\label{thm-comp-alg-dR-cpt}
    Suppose that $\bL$ is de Rham.  Then we have canonical $\Gal(\AC{k} / k)$-equivariant filtered isomorphisms
    \begin{equation}\label{eq-comp-alg-dR-cpt}
        H_{\et, \Sc}^i(U_{\AC{k}}, \bL) \otimes_{\bZ_p} \BdR \cong H_{\dR, \Sc}^i\bigl(U, \DdRalg(\bL)\bigr) \otimes_k \BdR
    \end{equation}
    and
    \begin{equation}\label{eq-comp-alg-Hdg-cpt}
        H_{\et, \Sc}^i(U_{\AC{k}}, \bL) \otimes_{\bZ_p} \widehat{\AC{k}} \cong \oplus_{a + b = i} \, \Bigl(H_{\Hdg, \Sc}^i\bigl(U, \DdRalg(\bL)\bigr) \otimes_k \widehat{\AC{k}}(-a) \Bigr),
    \end{equation}
    where
    \begin{equation}\label{eq-def-alg-dR-coh-cpt}
        H_{\dR, \Sc}^i\bigl(U, \DdRalg(\bL)\bigr) := H^i\bigl(X, \DRl\bigl(\bigl(\Ddlalg(\bL)\bigr)(-D^\Sc)\bigr)\bigr)
    \end{equation}
    and
    \begin{equation}\label{eq-def-alg-Hdg-coh-cpt}
        H_{\Hdg, \Sc}^i\bigl(U, \DdRalg(\bL)\bigr) := H^i\bigl(X, \gr \DRl\bigl(\bigl(\Ddlalg(\bL)\bigr)(-D^\Sc)\bigr)\bigr)
    \end{equation}
    are abusively defined as in Definition \ref{def-dR-Hi-Hdg-coh-cpt} \Pth{for the filtered log connection $\Ddlalg(\bL)$ introduced in \cite[\aSec \logRHsecDdRalg]{Diao/Lan/Liu/Zhu:lrhrv}}.  Moreover, the Hodge--de Rham spectral sequence for $H_{\dR, \Sc}^i\bigl(U, \DdRalg(\bL)\bigr)$, as in \Refeq{\ref{eq-def-Hdg-dR-coh-cpt-spec-seq}}, degenerates on the $E_1$ page, and are compatible with the above comparison isomorphisms in the sense that \Refeq{\ref{eq-comp-alg-dR-cpt}} induces \Refeq{\ref{eq-comp-alg-Hdg-cpt}} by taking $\gr^0$.
\end{thm}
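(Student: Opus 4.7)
The strategy is to reduce everything to the rigid analytic case, where Theorem \ref{thm-L-!-coh-comp} and Theorem \ref{thm-int-coh-comp} already give the comparison isomorphisms, the Hodge filtration compatibility, and the degeneration of the Hodge--de Rham spectral sequence. The three bridges needed are: (a) on the \'etale side, the identification in Lemma \ref{lem-comp-def-H-c} of algebraic and analytic cohomology with partial compact support; (b) on the de Rham/Hodge side, an analytic--algebraic GAGA comparison for the cohomology of the twisted log de Rham complexes $\DRl\bigl(\Ddlalg(\bL)(-D^\Sc)\bigr)$; (c) the compatibility of the algebraic filtered log connection $\Ddlalg(\bL)$ on $(X,D)$ with its analytic counterpart $\Ddl(\bL^\an)$ on $(X^\an,D^\an)$.

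First I would assemble (c). By construction in \cite[\aSec \logRHsecDdRalg]{Diao/Lan/Liu/Zhu:lrhrv}, $\Ddlalg(\bL)$ is a filtered vector bundle on $X$ with log connection along $D$, and its analytification is canonically identified with $\Ddl(\bL^\an)$ as a filtered log connection on $(X^\an,D^\an)$. Tensoring with the algebraic invertible ideal of $D^\Sc \subset X$ and taking its analytification yields $\Ddl(\bL^\an)(-D^{\Sc,\an})$, and the same holds term-by-term for each graded piece $\gr^a\Ddlalg(\bL)(-D^\Sc)$, which is a coherent sheaf on $X$ whose analytification is $\gr^a\Ddl(\bL^\an)(-D^{\Sc,\an})$. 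Thus the complex on the right-hand side of \Refeq{\ref{eq-def-alg-dR-coh-cpt}} analytifies to $\DRl\bigl(\Ddl(\bL^\an)(-D^{\Sc,\an})\bigr)$, and similarly for the graded pieces.

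Next I would invoke (b) via GAGA for proper rigid analytic varieties \cite{Kopf:1974-efava}: since $X$ is proper smooth over $k$, the analytification functor induces isomorphisms on cohomology of any bounded complex of coherent sheaves. Applied term-by-term to the log de Rham complex (a bounded complex of coherent $\cO_X$-modules), together with the hypercohomology spectral sequence, this gives canonical $k$-linear isomorphisms
\[
    H_{\dR,\Sc}^i\bigl(U,\DdRalg(\bL)\bigr) \Mi H_{\dR,\Sc}^i\bigl(U^\an,\DdR(\bL^\an)\bigr),
\]
and analogously for the Hodge cohomology $H_{\Hdg,\Sc}^{a,b}$, compatibly with the Hodge filtration and the $E_1$ pages of the Hodge--de Rham spectral sequences.

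Finally, combining (a), (b), (c), the analytic Theorem \ref{thm-L-!-coh-comp} applied to $\bL^\an$ and the pair $(X^\an,D^\an)$ transports to the desired algebraic isomorphisms \Refeq{\ref{eq-comp-alg-dR-cpt}} and \Refeq{\ref{eq-comp-alg-Hdg-cpt}}, compatibly with Galois actions, filtrations, and $\gr^0$. The degeneration of the Hodge--de Rham spectral sequence for $H_{\dR,\Sc}^i\bigl(U,\DdRalg(\bL)\bigr)$ on the $E_1$ page then follows from the corresponding analytic degeneration, since GAGA is a morphism of $E_1$ spectral sequences. The main obstacle is ensuring that the twist by $(-D^\Sc)$ and the identification of $\Ddlalg(\bL)$ with $\Ddl(\bL^\an)$ is truly functorial and strictly compatible with both the algebraic Hodge filtration (constructed in \cite[\aSec \logRHsecDdRalg]{Diao/Lan/Liu/Zhu:lrhrv}) and the analytic one used throughout Section \ref{sec-dR-comp-cpt}; once this filtered identification is in hand, GAGA plus Lemma \ref{lem-comp-def-H-c} mechanically produces the rest.
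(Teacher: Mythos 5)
Your proposal is correct and follows the same route as the paper: the paper's proof is precisely the combination of Lemma \ref{lem-comp-def-H-c} on the \'etale side, the rigid analytic comparison of Theorem \ref{thm-L-!-coh-comp}, and GAGA \cite{Kopf:1974-efava} on the coherent side, with the identification of $\Ddlalg(\bL)$ with the analytic $\Ddl(\bL)$ coming from its construction in \cite[\aSec \logRHsecDdRalg]{Diao/Lan/Liu/Zhu:lrhrv}. The extra detail you supply about the twist by $(-D^\Sc)$ and the filtered compatibility under analytification is exactly what the paper leaves implicit, and the $E_1$-degeneration is, as you say, inherited from the analytic statement.
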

\begin{proof}
    These follow from Lemma \ref{lem-comp-def-H-c}, Theorem \ref{thm-L-!-coh-comp}, and GAGA \cite{Kopf:1974-efava}.
\end{proof}

\begin{thm}\label{thm-comp-alg-dR-int}
    The comparison isomorphisms \Refeq{\ref{eq-comp-alg-dR-cpt}} and \Refeq{\ref{eq-comp-alg-Hdg-cpt}} are compatible with the canonical morphisms induced by any inclusions $I^\Scalt \subset I^\Sc \subset I$, and hence with the comparison isomorphisms in \cite[\aThms \logRHthmintromain{} and \logRHthmHTdegencomp]{Diao/Lan/Liu/Zhu:lrhrv} \Pth{corresponding to $I^\Scalt = \emptyset$; \Refcf{} the notation in Definition \ref{def-dR-Hi-Hdg-coh-cpt}}, via the canonical morphisms among them, and also via Poincar\'e duality.  Consequently, we obtain $\Gal(\AC{k} / k)$-equivariant filtered isomorphisms
    \begin{equation}\label{eq-comp-alg-dR-int}
        H_{\et, \Sc \to \Scalt}^i(U_{\AC{k}}, \bL) \otimes_{\bZ_p} \BdR \cong H_{\dR, \Sc \to \Scalt}^i\bigl(U, \DdRalg(\bL)\bigr) \otimes_k \BdR
    \end{equation}
    and
    \begin{equation}\label{eq-comp-alg-Hdg-int}
        H_{\et, \Sc \to \Scalt}^i(U_{\AC{k}}, \bL) \otimes_{\bZ_p} \widehat{\AC{k}} \cong \oplus_{a + b = i} \, \Bigl(H_{\Hdg, \Sc \to \Scalt}^i\bigl(U, \DdRalg(\bL)\bigr) \otimes_k \widehat{\AC{k}}(-a) \Bigr),
    \end{equation}
    where each generalized interior cohomology is defined as the image of the corresponding cohomology with partial compact support along $D^\Sc$ in the corresponding cohomology with partial compact support along $D^\Scalt$, as before, which are compatible with Poincar\'e duality.  Moreover, the canonical morphism $H_{\dR, \Sc}^i\bigl(U, \DdRalg(\bL)\bigr) \to H_{\dR, \Scalt}^i\bigl(U, \DdRalg(\bL)\bigr)$ is strictly compatible with the Hodge filtrations on both sides, which induces a Hodge filtration on $H_{\dR, \Sc \to \Scalt}^i\bigl(U, \DdRalg(\bL)\bigr)$, together with a canonical graded isomorphism
    \[
        \gr H_{\dR, \Sc \to \Scalt}^i\bigl(U, \DdRalg(\bL)\bigr) \cong \oplus_{a + b = i} \, H_{\Hdg, \Sc \to \Scalt}^i\bigl(U, \DdRalg(\bL)\bigr).
    \]
    When $\emptyset = I^\Scalt \subset I^\Sc = I$, we obtain results for the usual interior cohomology, with subscripts \Qtn{$\intcoh$} replacing \Qtn{$\Sc \to \Scalt$} in all of the above.
\end{thm}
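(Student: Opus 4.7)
The plan is to deduce Theorem \ref{thm-comp-alg-dR-int} from its rigid analytic counterpart Theorem \ref{thm-int-coh-comp} by a GAGA argument, reusing the infrastructure already set up in Theorem \ref{thm-comp-alg-dR-cpt} and Lemmas \ref{lem-comp-def-H-c} and \ref{lem-comp-def-H-c-trace}. First I would note that, by the construction of the algebraic log Riemann--Hilbert functor $\Ddlalg$ and of the sheaves $\jmath_{\et, !}^\Sc \, R\jmath_{\Sc, \et, *}$ via the open embeddings $\jmath^\Sc: U^\Sc \to X$, the canonical morphisms
\[
    H_{\et, \Sc}^i(U_{\AC{k}}, \bL) \to H_{\et, \Scalt}^i(U_{\AC{k}}, \bL)
    \quad \text{and} \quad
    H_{\dR, \Sc}^i\bigl(U, \DdRalg(\bL)\bigr) \to H_{\dR, \Scalt}^i\bigl(U, \DdRalg(\bL)\bigr)
\]
induced by an inclusion $I^\Scalt \subset I^\Sc \subset I$ can be identified, via Lemma \ref{lem-comp-def-H-c} and GAGA \cite{Kopf:1974-efava}, with the corresponding canonical morphisms in the rigid analytic setting (where one uses $\Ddl(\overline{\bL}^\an)$ in place of $\Ddlalg(\bL)$). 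The analogous identification holds on the Hodge side, because the Hodge--de Rham spectral sequences in the algebraic and analytic settings are matched by GAGA as well.

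Given this identification, the commutative diagrams \Refeq{\ref{eq-thm-int-coh-comp-diag-dR}} and \Refeq{\ref{eq-thm-int-coh-comp-diag-Hdg}} from Theorem \ref{thm-int-coh-comp}, applied to $\overline{\bL}^\an$, immediately yield the commutativity of the corresponding algebraic diagrams, and hence the compatibility of \Refeq{\ref{eq-comp-alg-dR-cpt}} and \Refeq{\ref{eq-comp-alg-Hdg-cpt}} with the canonical morphisms along inclusions $I^\Scalt \subset I^\Sc \subset I$; specializing to $I^\Scalt = \emptyset$ recovers compatibility with \cite[\aThms \logRHthmintromain{} and \logRHthmHTdegencomp]{Diao/Lan/Liu/Zhu:lrhrv}. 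By restricting the comparison isomorphisms to the images of these canonical morphisms, one obtains the desired filtered isomorphisms \Refeq{\ref{eq-comp-alg-dR-int}} and \Refeq{\ref{eq-comp-alg-Hdg-int}} for the generalized interior cohomology. Compatibility with Poincar\'e duality then follows: on the \'etale side, the algebraic trace isomorphism $H_{\et, \cpt}^{2d}(U_{\AC{k}}, \bQ_p(d)) \to \bQ_p$ matches the analytic one of Theorem \ref{thm-trace-et} by Lemma \ref{lem-comp-def-H-c-trace}, and likewise for the de Rham trace via Theorem \ref{thm-trace-dR-Hdg}, so the algebraic Poincar\'e pairing for generalized interior cohomology inherits all the compatibilities established in the proof of Proposition \ref{prop-int-coh-pairing-perf} and Theorem \ref{thm-int-coh-comp}.

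Finally, for the strict compatibility of Hodge filtrations, I would apply Lemma \ref{lem-fil-mor} to the filtered map
\[
    H_{\dR, \Sc}^i\bigl(U, \DdRalg(\bL)\bigr) \to H_{\dR, \Scalt}^i\bigl(U, \DdRalg(\bL)\bigr),
\]
whose image is, by definition, $H_{\dR, \Sc \to \Scalt}^i\bigl(U, \DdRalg(\bL)\bigr)$. The Hodge--de Rham spectral sequences for both source and target degenerate on the $E_1$ page (Theorem \ref{thm-comp-alg-dR-cpt}), so the $k$-dimensions of $\gr$ agree with those of the original spaces; together with the already established isomorphisms \Refeq{\ref{eq-comp-alg-dR-int}} and \Refeq{\ref{eq-comp-alg-Hdg-int}}, this yields
\[
    \sum_{a + b = i} \dim_k H_{\Hdg, \Sc \to \Scalt}^{a, b}\bigl(U, \DdRalg(\bL)\bigr)
    = \dim_{\bQ_p} H_{\et, \Sc \to \Scalt}^i(U_{\AC{k}}, \bL)
    = \dim_k H_{\dR, \Sc \to \Scalt}^i\bigl(U, \DdRalg(\bL)\bigr),
\]
which is exactly the hypothesis of Lemma \ref{lem-fil-mor}, giving strict compatibility of the Hodge filtrations and the desired graded isomorphism. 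The case $\emptyset = I^\Scalt \subset I^\Sc = I$ then gives the statement for the usual interior cohomology.

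The only potential obstacle is the bookkeeping at the first step, namely checking that the GAGA identifications in Lemma \ref{lem-comp-def-H-c} are natural with respect to the canonical morphisms indexed by inclusions $I^\Scalt \subset I^\Sc$; but since these morphisms are induced by the inclusions of open subvarieties $U^\Sc \subset U^\Scalt$ and by the corresponding closed subscheme structures of $D^\Sc$ and $D^\Scalt$ (all of which analytify compatibly), this naturality is built into the constructions and presents no genuine difficulty.
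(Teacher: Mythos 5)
Your proposal is correct and follows essentially the same route as the paper, whose (very terse) proof cites exactly the ingredients you use: Huber's algebraic--analytic comparison results underlying Lemma \ref{lem-comp-def-H-c}, Lemma \ref{lem-comp-def-H-c-trace} for the trace morphisms, the rigid analytic Theorem \ref{thm-int-coh-comp}, and GAGA. Your explicit rerun of the Lemma \ref{lem-fil-mor} dimension-count argument for strict compatibility of Hodge filtrations in the algebraic setting is just the transferred version of the argument already used in the analytic case, so it introduces no new content or gap.
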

\begin{proof}
    These follow from \cite[\aProp 2.1.4, and \aThms 3.8.1 and 5.7.2]{Huber:1996-ERA}; Lemmas \ref{lem-comp-def-H-c} and \ref{lem-comp-def-H-c-trace}; Theorem \ref{thm-int-coh-comp}; and GAGA \cite{Kopf:1974-efava}.
\end{proof}

\numberwithin{equation}{subsection}

\section{Cohomology of Shimura varieties and Hodge--Tate weights}\label{sec-Sh-var}

In this section, we will freely use the notation and definitions of \cite[\aSec \logRHsecShvar]{Diao/Lan/Liu/Zhu:lrhrv}.  Nevertheless, let us mention the following choices:  We shall fix a Shimura datum $(\Grp{G}, \Shdom)$ and a neat open compact subgroup $\levcp \subset \Grp{G}(\bAi)$, which define a Shimura variety $\Model_\levcp = \Sh_\levcp(\Grp{G}, \Shdom)$ over the reflex field $\ReFl = \ReFl(\Grp{G}, \Shdom)$.  We shall also fix the choices of an algebraic closure $\AC{\bQ}_p$ of $\bQ_p$ and an isomorphism $\ACMap: \AC{\bQ}_p \Mi \bC$.

\subsection{Coherent cohomology and dual BGG decompositions}\label{sec-dual-BGG}

The goal of this subsection is to review the so-called \emph{dual BGG complexes} introduced by Faltings in \cite{Faltings:1983-clshs}, and apply them to the de Rham and Hodge cohomology \Pth{with support conditions} of automorphic vector bundles.  \Pth{The abbreviation BGG refers to Bernstein, Gelfand, and Gelfand, because of their seminal work \cite{Bernstein/Gelfand/Gelfand:1975-doagm}.}  In particular, we shall obtain a refined version of the Hodge--Tate decomposition, whose terms are given by the coherent cohomology of automorphic vector bundles.

Let us fix the choice of some $\hc_\hd$ as in \cite[(\logRHeqmuh)]{Diao/Lan/Liu/Zhu:lrhrv} which is induced by some homomorphism $\Gm{\AC{\bQ}} \to \Grp{G}_{\AC{\bQ}}$, which we abusively denote by the same symbols.  Let $\Grp{P}^c$ denote the parabolic subgroup of $\Grp{G}^c_{\AC{\bQ}}$ defined by the choice of $\hc_\hd$ \Pth{\Refcf{} \cite[\aProp \logRHproplocsystinfty]{Diao/Lan/Liu/Zhu:lrhrv}}.  Let $\Grp{M}^c$ denote the Levi subgroup of $\Grp{P}^c$ given by the centralizer of the image of $\hc_\hd$.  As in the case of $\Grp{G}^c$, for any field $F$ over $\AC{\bQ}$, let us denote by $\Rep_F(\Grp{P}^c)$ \Pth{\resp $\Rep_F(\Grp{M}^c)$} the category of finite-dimensional algebraic representations of $\Grp{P}^c$ \Pth{\resp $\Grp{M}^c$} over $F$.  We shall also view the representations of $\Grp{M}^c$ as representations of $\Grp{P}^c$ by pullback via the canonical homomorphism $\Grp{P}^c \to \Grp{M}^c$.

As explained in \cite[\aSec 3]{Harris:1985-avafs-1} \Pth{or \cite[\aSec 2.2]{Lan:2016-vtcac}}, there is a tensor functor assigning to each $\repalt \in \Rep_{\AC{\bQ}}(\Grp{P}^c)$ a vector bundle $\cohSh{\repalt}_\bC$ on $\Model_{\levcp, \bC}$, which is canonically isomorphic to $\dRSh{\rep}_\bC$ when $\repalt_\bC \cong \rep_\bC$ for some $\rep \in \Rep_{\AC{\bQ}}(\Grp{G}^c)$.  We call $\cohSh{\repalt}_\bC$ the \emph{automorphic vector bundle} associated with $\repalt_\bC$.  Moreover, as explained in \cite[\aSec 4]{Harris:1989-ftcls}, this tensor functor canonically extends to a tensor functor assigning to each $\repalt \in \Rep_{\AC{\bQ}}(\Grp{P}^c)$ a vector bundle $\cohSh{\repalt}^\canext_\bC$ on $\Torcpt{\Model}_{\levcp, \bC}$, called the \emph{canonical extension} of $\cohSh{\repalt}$, which is canonically isomorphic to $\dRSh{\rep}^\canext_\bC$ when $\repalt_\bC \cong \rep_\bC$ for some $\rep \in \Rep_{\AC{\bQ}}(\Grp{G}^c)$.  For $\repalt \in \Rep_{\AC{\bQ}}(\Grp{M}^c)$, this $\cohSh{\repalt}^\canext_\bC$ is canonical isomorphic to the canonical extensions defined as in \cite[Main \aThm 3.1]{Mumford:1977-hptnc}.

Consider $\NCD = \Torcpt{\Model}_{\levcp, \bC} - \Model_{\levcp, \bC}$ \Pth{with its reduced subscheme structure}, which is a normal crossings divisor.  We shall also write $\NCD = \cup_{j \in I} \, \NCD_j$, where $\{ \NCD_j \}_{j \in I}$ are the irreducible components of $\NCD$, so that we can also consider $\NCD^\Scalt \subset \NCD^\Sc \subset \NCD$ for any $I^\Scalt \subset I^\Sc \subset I$.  \Pth{The results below will be for the cohomology with compact support along $D^\Sc$ and for the generalized interior cohomology defined by $I^\Scalt \subset I^\Sc \subset I$, which specialize to results for the cohomology with compact support and the interior cohomology when $I^\Sc = I$ and $I^\Scalt = \emptyset$.}

\begin{defn}\label{def-int-coh-coh}
    For each $\repalt \in \Rep_{\AC{\bQ}}(\Grp{P}^c)$, we define the \emph{subcanonical extension}
    \begin{equation}\label{eq-def-subext}
        \cohSh{\repalt}_\bC^\subext := \cohSh{\repalt}_\bC^\canext(-\NCD)
    \end{equation}
    \Pth{as in \cite[\aSec 2]{Harris:1990-afcvs}} and the \emph{interior coherent cohomology}
    \begin{equation}\label{eq-def-int-coh-coh}
    \begin{split}
        & H_\intcoh^i(\Torcpt{\Model}_{\levcp, \bC}, \cohSh{\repalt}_\bC^\canext) := \Image\bigl(H^i(\Torcpt{\Model}_{\levcp, \bC}, \cohSh{\repalt}_\bC^\subext) \to H^i(\Torcpt{\Model}_{\levcp, \bC}, \cohSh{\repalt}_\bC^\canext)\bigr).
    \end{split}
    \end{equation}
    More generally, for any $I^\Scalt \subset I^\Sc \subset I$, by abuse of notation, we define
    \begin{equation}\label{eq-def-coh-coh-gen}
        H_\Sc^i(\Torcpt{\Model}_{\levcp, \bC}, \cohSh{\repalt}_\bC^\canext) := H^i\bigl(\Torcpt{\Model}_{\levcp, \bC}, \cohSh{\repalt}_\bC^\canext(-\NCD^\Sc)\bigr)
    \end{equation}
    and
    \begin{equation}\label{eq-def-int-coh-coh-gen}
    \begin{split}
        & H_{\Sc \to \Scalt}^i(\Torcpt{\Model}_{\levcp, \bC}, \cohSh{\repalt}_\bC^\canext) \\
        & := \Image\bigl(H_\Sc^i(\Torcpt{\Model}_{\levcp, \bC}, \cohSh{\repalt}_\bC^\canext) \to H_\Scalt^i(\Torcpt{\Model}_{\levcp, \bC}, \cohSh{\repalt}_\bC^\canext)\bigr),
    \end{split}
    \end{equation}
    the latter of which gives \Refeq{\ref{eq-def-int-coh-coh}} when $I^\Sc = I$ and $I^\Scalt = \emptyset$.  We similarly define objects with $\bC$ replaced with $\AC{\bQ}_p$ or any field extension of $\ReFl$ over which $\repalt$ has a model, or with $\repalt$ replaced with $\dual{\repalt}$, or both.
\end{defn}

Let us fix the choice of a maximal torus $\Grp{T}^c$ of $\Grp{M}^c$, which is also a maximal torus of $\Grp{G}^c_{\AC{\bQ}}$.  With this choice, let us denote by $\RT_{\Grp{G}^c_{\AC{\bQ}}}$, $\RT_{\Grp{M}^c}$, \etc the roots of $\Grp{G}^c_{\AC{\bQ}}$, $\Grp{M}^c$, \etc, respectively; and by $\WT_{\Grp{G}^c_{\AC{\bQ}}}$, $\WT_{\Grp{M}^c}$, \etc the weights of $\Grp{G}^c_{\AC{\bQ}}$, $\Grp{M}^c$, \etc, respectively.  Then we have naturally $\RT_{\Grp{M}^c} \subset \RT_{\Grp{G}^c_{\AC{\bQ}}}$ and $\WT_{\Grp{G}^c_{\AC{\bQ}}} = \WT_{\Grp{M}^c}$.  Let us denote by $H$ the coweight induced by $\hc_\hd$.  Let us also make compatible choices of positive roots $\RT_{\Grp{G}^c_{\AC{\bQ}}}^+$ and $\RT_{\Grp{M}^c}^+$, and of dominant weights $\WT_{\Grp{G}^c_{\AC{\bQ}}}^+$ and $\WT_{\Grp{M}^c}^+$, so that $\RT_{\Grp{M}^c}^+ \subset \RT_{\Grp{G}^c_{\AC{\bQ}}}^+$ and $\WT_{\Grp{G}^c_{\AC{\bQ}}}^+ \subset \WT_{\Grp{M}^c}^+$.  For an irreducible representation $\rep$ of highest weight $\wt \in \WT_{\Grp{G}^c_{\AC{\bQ}}}^+$, we write $\rep = \rep_\wt$, $\rep_\bC = \rep_{\wt, \bC}$, $\dRSh{\rep}_\bC = \dRSh{\rep}_{\wtalt, \bC}$, \etc.  Similarly, for an irreducible representation $\repalt$ of highest weight $\wtalt \in \WT_\Grp{M}^+$, we write $\repalt = \repalt_\wtalt$, $\repalt_\bC = \repalt_{\wtalt, \bC}$, $\cohSh{\repalt}_\bC = \cohSh{\repalt}_{\wtalt, \bC}$, \etc.  Let $\hsum_{\Grp{G}^c_{\AC{\bQ}}} := \frac{1}{2} \sum_{\wt \in \RT_{\Grp{G}^c_{\AC{\bQ}}}^+} \wt$ and $\hsum_{\Grp{M}^c} := \frac{1}{2} \sum_{\wtalt \in \RT_{\Grp{M}^c}^+} \wtalt$ denote the usual half-sums of positive roots, and let $\hsum^{\Grp{M}^c} := \hsum_{\Grp{G}^c_{\AC{\bQ}}} - \hsum_{\Grp{M}^c}$.  Let $\WG_{\Grp{G}^c_{\AC{\bQ}}}$ and $\WG_{\Grp{M}^c}$ denote the Weyl groups of $\Grp{G}^c_{\AC{\bQ}}$ and $\Grp{M}^c$ with respect to the common maximal torus $\Grp{T}^c$.  Then we can naturally identify $\WG_{\Grp{M}^c}$ as a subgroup of $\WG_{\Grp{G}^c_{\AC{\bQ}}}$.  Given any element $w$ in the above Weyl groups, we shall denote its length by $\wl(w)$.  In addition to the natural action of $\WG_{\Grp{G}^c_{\AC{\bQ}}}$ on $\WT_{\Grp{G}^c_{\AC{\bQ}}}$, there is also the dot action $w \cdot \wt = w( \wt + \hsum_{\Grp{G}^c_{\AC{\bQ}}} ) - \hsum_{\Grp{G}^c_{\AC{\bQ}}}$, for all $w \in \WG_{\Grp{G}^c_{\AC{\bQ}}}$ and $\wt \in \WT_{\Grp{G}^c_{\AC{\bQ}}}$.  Let $\WG^{\Grp{M}^c}$ denote the subset of $\WG_{\Grp{G}^c_{\AC{\bQ}}}$ consisting of elements mapping $\WT_{\Grp{G}^c_{\AC{\bQ}}}^+$ into $\WT_{\Grp{M}^c}^+$, which are the minimal length representatives of $\WG_{\Grp{M}^c} \Lquot \WG_{\Grp{G}^c_{\AC{\bQ}}}$.

As in Definition \ref{def-dR-Hi-Hdg-coh-cpt} and Theorem \ref{thm-comp-alg-dR-cpt}, consider the log de Rham complex,
\[
\begin{split}
    \DRl\bigl(\dRSh{\rep}^\canext_\bC(-\NCD^\Sc)\bigr) & := \bigl(\dRSh{\rep}^\canext_\bC(-\NCD^\Sc) \otimes_{\cO_{\Torcpt{\Model}_{\levcp, \bC}}} \Omega^\bullet_{\Torcpt{\Model}_{\levcp, \bC}}(\log \NCD_\bC), \nabla\bigr) \\
    & \cong \DRl(\dRSh{\rep}^\canext_\bC) \otimes_{\cO_{\Torcpt{\Model}_{\levcp, \bC}}} \cO_{\Torcpt{\Model}_{\levcp, \bC}}(-\NCD^\Sc),
\end{split}
\]
and consider the Hodge cohomology with partial compact support along $\NCD^\Sc$,
\begin{equation}\label{eq-def-aut-bdl-Hodge-coh-cpt}
    H^{a, b}_{\Hdg, \Sc}(\Model_{\levcp, \bC}, \dRSh{\rep}_\bC) := H^{a + b}\bigl(\Torcpt{\Model}_{\levcp, \bC}, \gr^a \DRl\bigl(\dRSh{\rep}^\canext_\bC(-\NCD^\Sc)\bigr)\bigr).
\end{equation}
As in Remark \ref{rem-def-dR-Hi-Hdg-coh-cpt-conv}, when $I^\Sc = \emptyset$, this give the usual Hodge cohomology; and when $I^\Sc = I$, this gives the usual Hodge cohomology with compact support.

While it is difficult to compute hypercohomology in general, the miracle is that $\gr^a \DRl(\dRSh{\rep}^\canext_\bC)$ has a quasi-isomorphic direct summand, called the \emph{graded dual BGG complex}, whose differentials are \emph{zero} and whose terms are direct sums of $\cohSh{\repalt}^\canext_\bC$ for some representations $\repalt$ determined explicitly by $\rep$.  Then the hypercohomology of this graded dual BGG complex is just a direct sum of usual coherent cohomology of $\cohSh{\repalt}^\canext_\bC$ up to degree shifting.  More precisely, we have the following:
\begin{thm}[\emph{dual BGG complexes}; Faltings]\label{thm-dual-BGG}
    There is a canonical filtered quasi-isomorphic direct summand $\BGGl(\dRSh{\rep}^\canext_\bC)$ of $\DRl(\dRSh{\rep}^\canext_\bC)$ \Pth{in the category of complexes of abelian sheaves on $\Torcpt{\Model}_{\levcp, \bC}$ whose terms are coherent sheaves and whose differentials are differential operators} satisfying the following properties:
    \begin{enumerate}
        \item The formation of $\BGGl(\dRSh{\rep}^\canext_\bC)$ is functorial and exact in $\rep_\bC$.

        \item The differentials on $\gr \BGGl(\dRSh{\rep}^\canext_\bC)$ are all zero.

        \item Suppose that $\rep \cong \dual{\rep}_\wt$ for some $\wt \in \WT_{\Grp{G}^c_{\AC{\bQ}}}^+$.  Then, for each $i\geq 0$ and each $a \in \bZ$, the $i$-th term of $\gr^a \BGGl(\dRSh{\rep}^\canext_\bC)$ is given by
            \begin{equation}\label{eq-thm-dual-BGG}
                \gr^a \BGGl^i(\dRSh{\rep}^\canext_\bC) \cong \oplus_{w \in \WG^{\Grp{M}^c}, \; \wl(w) = i, \; (w \cdot \wt)(H) = -a} \; \bigl( \cohSh{\repalt}_{w \cdot \wt, \bC}^\dualsign \bigr)^\canext
            \end{equation}
    \end{enumerate}
\end{thm}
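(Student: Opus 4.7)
The plan is to invoke Faltings' construction of the dual BGG complex from \cite{Faltings:1983-clshs}, adapted to the logarithmic setting over the toroidal compactification.  Recall that automorphic vector bundles arise via an associated-bundle construction from a canonical principal $\Grp{P}^c$-bundle on $\Model_{\levcp, \bC}$, and this principal bundle extends canonically to $\Torcpt{\Model}_{\levcp, \bC}$ (as is already implicit in the construction of the canonical extensions $\cohSh{\repalt}^\canext_\bC$ recalled above).  Under this dictionary, the canonical extension $\dRSh{\rep}^\canext_\bC$ corresponds to $\rep_\bC$ pulled back along $\Grp{P}^c \to \Grp{G}^c_{\AC{\bQ}}$, and the log de Rham complex $\DRl(\dRSh{\rep}^\canext_\bC)$ is identified with the associated-bundle sheafification of a relative Chevalley--Eilenberg-type complex $\rep_\bC \otimes \Ex^\bullet((\ideal{g}^c/\ideal{p}^c)^\dualsign)$, equipped with its Gauss--Manin log connection whose residues are nilpotent along each boundary divisor.

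The key representation-theoretic input is the parabolic BGG resolution: for $\wt \in \WT^+_{\Grp{G}^c_{\AC{\bQ}}}$, there is an exact sequence of $(\ideal{g}^c, \ideal{p}^c)$-modules
\[
    0 \to \rep_{\wt, \bC} \to M^0(\wt) \to M^1(\wt) \to \cdots,
\]
where each $M^i(\wt)$ is the direct sum, over $w \in \WG^{\Grp{M}^c}$ with $\wl(w) = i$, of generalized Verma modules induced from the irreducible $\Grp{M}^c$-modules of highest weight $w \cdot \wt$.  Dualizing this resolution and applying the associated-bundle construction produces a complex of coherent sheaves on $\Torcpt{\Model}_{\levcp, \bC}$, which I take as the candidate $\BGGl(\dRSh{\rep}^\canext_\bC)$; when $\rep \cong \dual{\rep}_\wt$, its graded pieces are given by \Refeq{\ref{eq-thm-dual-BGG}}.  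The essential assertion that this is a filtered quasi-isomorphic direct summand of $\DRl(\dRSh{\rep}^\canext_\bC)$ with vanishing graded differentials follows from Kostant's theorem, which identifies $H^\bullet(\ideal{u}^c, \rep_{\wt, \bC})$ as a multiplicity-free $\Grp{M}^c$-module whose irreducible summands have highest weights $w \cdot \wt$ and appear in degrees $\wl(w)$.  By semisimplicity of $\Grp{M}^c$, the corresponding $w \cdot \wt$-isotypic components split canonically from each graded piece of the Chevalley--Eilenberg complex; and by $\Grp{M}^c$-equivariance of the Hodge filtration, this splitting lifts to the filtered level.  Functoriality and exactness in $\rep_\bC$ then follow from the functoriality of parabolic Verma modules and the associated-bundle construction.

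The main obstacle will be the careful extension of Faltings' construction over the boundary $\NCD_\bC$ with log poles.  Two compatibility statements are required: (i) that the principal $\Grp{P}^c$-bundle on $\Model_{\levcp, \bC}$ extends to a principal $\Grp{P}^c$-bundle on $\Torcpt{\Model}_{\levcp, \bC}$ compatibly with the log structure defined by $\NCD_\bC$; and (ii) that the Gauss--Manin log connection on this extended bundle has nilpotent residues along each boundary divisor, so that the Chevalley--Eilenberg-type identification remains valid globally.  Both are established by local analysis at the boundary (via mixed Shimura data, or equivalently via the toric local models of the toroidal compactification) in \cite{Faltings:1983-clshs}.  Since the Kostant splitting is purely $\Grp{M}^c$-equivariant and the boundary log structure respects the $\Grp{M}^c$-action, it extends uniquely across $\NCD_\bC$, yielding the desired filtered direct summand $\BGGl(\dRSh{\rep}^\canext_\bC) \subset \DRl(\dRSh{\rep}^\canext_\bC)$ satisfying all three listed properties.
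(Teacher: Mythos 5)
The paper offers no independent argument for this theorem: its proof is a citation to \cite[\aSecs 3 and 7]{Faltings:1983-clshs}, \cite[\aCh VI, \aSec 5]{Faltings/Chai:1990-DAV}, and \cite[\aThm 5.9]{Lan/Polo:2018-bggab}, with the remark that those constructions generalize.  Your outline reproduces the architecture of those references --- the parabolic BGG resolution by generalized Verma modules, the dictionary between morphisms of such modules and differential operators between the associated automorphic bundles, Kostant's computation of $H^\bullet(\ideal{u}^c, \rep_{\wt, \bC})$, and canonical extensions over $\Torcpt{\Model}_{\levcp, \bC}$ --- so at the level of strategy it is consistent with what the paper appeals to.

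However, the step you give for the crucial assertion --- that $\BGGl(\dRSh{\rep}^\canext_\bC)$ is a \emph{filtered} quasi-isomorphic direct summand of $\DRl(\dRSh{\rep}^\canext_\bC)$ --- does not work as stated.  Kostant's theorem together with the reductivity of $\Grp{M}^c$ only splits the \emph{graded} pieces, $\cO$-linearly, into $\Grp{M}^c$-isotypic components; the de Rham differential is a first-order differential operator, not $\cO$-linear, and it does not respect the $\Grp{M}^c$-isotypic decomposition of the graded pieces, so \Qtn{$\Grp{M}^c$-equivariance of the Hodge filtration} cannot lift the Kostant splitting from $\gr$ to the filtered complex.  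In the cited references the splitting is produced by a different device: one decomposes the standard Koszul (Chevalley--Eilenberg) resolution --- equivalently the de Rham complex, viewed through the jet/Verma-module dictionary --- according to generalized infinitesimal characters of the center $Z\bigl(U(\ideal{g}^c)\bigr)$, and $\BGGl(\dRSh{\rep}^\canext_\bC)$ is the summand on which the center acts through the character of $\wt$; this decomposition is compatible with the Hodge filtration and with differential-operator morphisms, and Kostant's theorem then identifies its graded pieces as in \Refeq{\ref{eq-thm-dual-BGG}} and forces the graded differentials to vanish.  Without this central-character argument (or the equivalent mechanism in \cite[\aThm 5.9]{Lan/Polo:2018-bggab}), your claimed direct-summand decomposition is unjustified.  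Two smaller points: $\Grp{M}^c$ is reductive rather than semisimple (harmless, since its representation category is semisimple in characteristic zero), and since property (1) requires functoriality and exactness in arbitrary $\rep_\bC$, the construction must be given for non-irreducible $\rep$ as well, e.g.\ by carrying out the central-character splitting uniformly rather than only for $\rep \cong \dual{\rep}_\wt$.
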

\begin{proof}
    See \cite[\aSecs 3 and 7]{Faltings:1983-clshs}, \cite[\aCh VI, \aSec 5]{Faltings/Chai:1990-DAV}, and \cite[\aThm 5.9]{Lan/Polo:2018-bggab}.  \Pth{Although these references were written in less general settings, the methods of the constructions still generalize to our setting here.}
\end{proof}

\begin{rk}\label{rem-dual-BGG-descent}
    The various automorphic vector bundles $\dRSh{\rep}_{\wt, \bC}$, $\Fil^\bullet(\dRSh{\rep}_{\wt, \bC})$, and $\cohSh{\repalt}_{w \cdot \wt, \bC}$ \Pth{and their canonical extensions} in Theorem \ref{thm-dual-BGG} have models over $\AC{\bQ}$, or even over a finite extension $\ReFl'$ of $\ReFl$ \Pth{depending on $\wt$} over which $\rep_\wt$, $\Fil^\bullet \, \rep_\wt$, and $\repalt_{w \cdot \wt}$ all have models.  \Pth{The cases of $\dRSh{\rep}_{\wt, \bC}$ and its canonical extension follow from \cite[\aCh III, \aThm 5.1, and \aCh V, \aThm 6.2]{Milne:1990-cmsab}; and the cases of $\Fil^\bullet \, \dRSh{\rep}_{\wt, \bC}$, $\cohSh{\repalt}_{w \cdot \wt, \bC}$, and their canonical extensions follow from the same argument as in \cite[\aRem \logRHrempartialflag]{Diao/Lan/Liu/Zhu:lrhrv}, based on the models of the associated partial flag varieties over $\ReFl$.  Note that $\Fil^\bullet(\dRSh{\rep}^\canext_{\wt, \bC})$ did appear in Theorem \ref{thm-dual-BGG}, when we said there is a canonical filtered quasi-isomorphic direct summand $\BGGl(\dRSh{\rep}^\canext_\bC)$ of $\DRl(\dRSh{\rep}^\canext_\bC)$.}  Then the statements of Theorem \ref{thm-dual-BGG} remain true if we replace $\bC$ with $\AC{\bQ}$ or $\ReFl'$, by the same descent argument as in \cite[\aSec 6]{Lan/Polo:2018-bggab}.
\end{rk}

\begin{thm}\label{thm-dual-BGG-cpt-int}
    Suppose that $\rep \cong \dual{\rep}_\wt$ for some $\wt \in \WT_{\Grp{G}^c_{\AC{\bQ}}}^+$.  Then, given any $a, b \in \bZ$ such that $a + b \geq 0$, we have the \emph{dual BGG decomposition}
    \begin{equation}\label{eq-thm-dual-BGG-cpt}
    \begin{split}
        & H^{a, b}_{\Hdg, \Sc}(\Model_{\levcp, \bC}, \dRSh{\rep}_\bC) = H^{a + b}\bigl(\Torcpt{\Model}_{\levcp, \bC}, \gr^a \DRl\bigl(\dRSh{\rep}^\canext_\bC(-\NCD^\Sc)\bigr)\bigr) \\
        & \cong \oplus_{w \in \WG^{\Grp{M}^c}, \; (w \cdot \wt)(H) = -a} \; H_\Sc^{a + b - \wl(w)}\bigl(\Torcpt{\Model}_{\levcp, \bC}, (\cohSh{\repalt}_{w \cdot \wt, \bC}^\dualsign)^\canext\bigr),
    \end{split}
    \end{equation}
    which is compatible with the canonical morphisms induced by any inclusions $I^\Scalt \subset I^\Sc \subset I$ and induces a similar dual BGG decomposition
    \begin{equation}\label{eq-thm-dual-BGG-int}
    \begin{split}
        & H^{a, b}_{\Hdg, \Sc \to \Scalt}(\Model_{\levcp, \bC}, \dRSh{\rep}_\bC) \\
        & \cong \oplus_{w \in \WG^{\Grp{M}^c}, \; (w \cdot \wt)(H) = -a} \; H_{\Sc \to \Scalt}^{a + b - \wl(w)}\bigl(\Torcpt{\Model}_{\levcp, \bC}, (\cohSh{\repalt}_{w \cdot \wt, \bC}^\dualsign)^\canext\bigr).
    \end{split}
    \end{equation}
    Moreover, the Hodge--de Rham spectral sequence for $H^{a, b}_{\Hdg, \Sc}(\Model_{\levcp, \bC}, \dRSh{\rep}_\bC)$ induces the \emph{dual BGG spectral sequence}
    \begin{equation}\label{eq-thm-dual-BGG-cpt-spec-seq}
    \begin{split}
        & E_1^{a, b} = \oplus_{w \in \WG^{\Grp{M}^c}, \, (w \cdot \wt)(H) = -a} \; H_\Sc^{a + b - \wl(w)}\bigl(\Torcpt{\Model}_{\levcp, \bC}, (\cohSh{\repalt}_{w \cdot \wt, \bC}^\dualsign)^\canext\bigr) \\
        & \Rightarrow H^{a + b}_{\dR, \Sc}(\Model_{\levcp, \bC}, \dRSh{\rep}_\bC),
    \end{split}
    \end{equation}
    which degenerates on the $E_1$ page, and induces a \emph{dual BGG decomposition}
    \begin{equation}\label{eq-thm-dual-BGG-spec-decomp-cpt}
        \gr H^i_{\dR, \Sc}(\Model_{\levcp, \bC}, \dRSh{\rep}_\bC)
        \cong \oplus_{w \in \WG^{\Grp{M}^c}} \; H_\Sc^{i - \wl(w)}\bigl(\Torcpt{\Model}_{\levcp, \bC}, (\cohSh{\repalt}_{w \cdot \wt, \bC}^\dualsign)^\canext\bigr),
    \end{equation}
    for each $i \geq 0$, which is \Pth{strictly} compatible with the analogous decomposition for $I^\Scalt \subset I^\Sc \subset I$ and therefore induces a dual BGG decomposition
    \begin{equation}\label{eq-thm-dual-BGG-spec-decomp-int}
    \begin{split}
        & \gr H^i_{\dR, \Sc \to \Scalt}(\Model_{\levcp, \bC}, \dRSh{\rep}_\bC) \\
        & \cong \oplus_{w \in \WG^{\Grp{M}^c}} \; H_{\Sc \to \Scalt}^{i - \wl(w)}\bigl(\Torcpt{\Model}_{\levcp, \bC}, (\cohSh{\repalt}_{w \cdot \wt, \bC}^\dualsign)^\canext\bigr).
    \end{split}
    \end{equation}
\end{thm}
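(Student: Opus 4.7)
The plan is to deduce all four assertions from Theorem~\ref{thm-dual-BGG} by tensoring the dual BGG quasi-isomorphism with $\cO(-\NCD^\Sc)$ and then invoking the $E_1$-degeneration of the Hodge--de Rham spectral sequence from Section~\ref{sec-comp-alg}. To begin, since $\NCD^\Sc$ is a union of irreducible components of $\NCD$, the log connection on $\DRl(\dRSh{\rep}^\canext_\bC)$ preserves the submodule $\dRSh{\rep}^\canext_\bC(-\NCD^\Sc)$ by the Leibniz rule, so tensoring the filtered quasi-isomorphism of Theorem~\ref{thm-dual-BGG} with the invertible sheaf $\cO(-\NCD^\Sc)$ produces a filtered quasi-isomorphism
\[
\BGGl(\dRSh{\rep}^\canext_\bC) \otimes_\cO \cO(-\NCD^\Sc) \to \DRl\bigl(\dRSh{\rep}^\canext_\bC(-\NCD^\Sc)\bigr).
\]
Because the splitting of Theorem~\ref{thm-dual-BGG} is $\cO$-linear on individual terms, this remains a filtered direct summand whose graded differentials still vanish.

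Next I would take $\gr^a$ of the tensored complex. Since its graded differentials vanish, its hypercohomology decomposes as a direct sum of the ordinary coherent cohomology groups of its individual terms, shifted by the degrees in which those terms sit. Substituting the explicit description~\Refeq{\ref{eq-thm-dual-BGG}} gives exactly the dual BGG decomposition~\Refeq{\ref{eq-thm-dual-BGG-cpt}}. Compatibility with any inclusion $I^\Scalt \subset I^\Sc$ is immediate from the naturality of tensoring with $\cO(-\NCD^\Sc) \Em \cO(-\NCD^\Scalt)$; taking images then yields~\Refeq{\ref{eq-thm-dual-BGG-int}}.

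For the dual BGG spectral sequence~\Refeq{\ref{eq-thm-dual-BGG-cpt-spec-seq}}, I would identify it with the direct summand, cut out by the BGG splitting, of the Hodge--de Rham spectral sequence computing $H^{a+b}_{\dR, \Sc}(\Model_{\levcp, \bC}, \dRSh{\rep}_\bC)$. To obtain $E_1$-degeneration, I would descend all the data to a finite extension $\ReFl'$ of $\ReFl$ as in Remark~\ref{rem-dual-BGG-descent}, embed $\ReFl'$ into a finite extension of $\bQ_p$ via $\ACMap^{-1}$, and recognize $\dRSh{\rep}$ as the de Rham side of a de Rham $\bZ_p$-local system attached to $\rep$ via the canonical construction in \cite[\aSec \logRHseclocsystconstr]{Diao/Lan/Liu/Zhu:lrhrv}. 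Theorem~\ref{thm-comp-alg-dR-cpt} then supplies the degeneration for cohomology with partial compact support along $\NCD^\Sc$, and the graded decomposition~\Refeq{\ref{eq-thm-dual-BGG-spec-decomp-cpt}} follows. The interior version~\Refeq{\ref{eq-thm-dual-BGG-spec-decomp-int}} follows from the strict compatibility of Hodge filtrations asserted in Theorem~\ref{thm-comp-alg-dR-int}, which ensures that taking graded pieces commutes with forming images.

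The main obstacle will be verifying that the dual BGG splitting of Theorem~\ref{thm-dual-BGG}, whose differentials are genuine differential operators rather than $\cO$-linear maps, remains a filtered direct summand after tensoring with the invertible twist $\cO(-\NCD^\Sc)$, and that the resulting identification with the de Rham side of an automorphic $\bZ_p$-local system over a $p$-adic base is compatible with the Hodge filtrations and the BGG splitting. Once these compatibilities are in place, the degeneration of the Hodge--de Rham spectral sequence with partial compact support from Theorem~\ref{thm-comp-alg-dR-cpt}, together with the termwise vanishing of differentials in $\gr\BGGl$, yields everything needed.
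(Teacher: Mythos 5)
Your proposal is correct and follows essentially the same route as the paper: twist the dual BGG (quasi-)isomorphism of Theorem \ref{thm-dual-BGG} by $\cO(-\NCD^\Sc)$, take graded pieces where the differentials vanish to get \Refeq{\ref{eq-thm-dual-BGG-cpt}} and, by naturality in $I^\Sc$, \Refeq{\ref{eq-thm-dual-BGG-int}}, and then obtain the $E_1$-degeneration and the decompositions \Refeq{\ref{eq-thm-dual-BGG-spec-decomp-cpt}}--\Refeq{\ref{eq-thm-dual-BGG-spec-decomp-int}} by descending/comparing with the objects over $\AC{\bQ}_p$ and invoking Theorems \ref{thm-comp-alg-dR-cpt} and \ref{thm-comp-alg-dR-int}. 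The only cosmetic difference is that the paper sidesteps your ``main obstacle'' by twisting only the graded quasi-isomorphism, where all maps are $\cO$-linear, rather than the full filtered complex with differential-operator differentials.
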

\begin{proof}
    By tensoring the graded quasi-isomorphism between the log de Rham complex $\DRl(\dRSh{\rep}^\canext_\bC)$ and the \Pth{log} dual BGG complex $\BGGl(\dRSh{\rep}^\canext_\bC)$ in Theorem \ref{thm-dual-BGG} with the invertible $\cO_{\Torcpt{\Model}_{\levcp, \bC}}$-ideal $\cO_{\Torcpt{\Model}_{\levcp, \bC}}(-\NCD^\Sc)$, and by taking graded pieces, we obtain a quasi-isomorphism between $\gr^a \DRl\bigl(\dRSh{\rep}^\canext_\bC(-\NCD^\Sc)\bigr)$ and $\bigl(\gr^a \BGGl(\dRSh{\rep}^\canext_\bC)\bigr)(-\NCD^\Sc)$, and the differentials of the last complex are still all zero.  Hence, by comparing the \Pth{algebraic} objects over $\bC$ with their analogues over $\AC{\bQ}_p$ as in \cite[\aRem \logRHremlocsystHodgedegen]{Diao/Lan/Liu/Zhu:lrhrv}, the desired isomorphism \Refeq{\ref{eq-thm-dual-BGG-cpt}} follows from \Refeq{\ref{eq-def-alg-Hdg-coh-cpt}}, and the remaining assertions follow from Theorems \ref{thm-comp-alg-dR-cpt} and \ref{thm-comp-alg-dR-int}.
\end{proof}

\begin{rk}\label{rem-dual-BGG-spec-seq}
    Faltings first introduced the dual BGG spectral sequence associated with the \emph{stupid \Pth{\Qtn{b\^ete}} filtration} in \cite[\aSec 4, \apage 76, and \aSec 7, \aThm 11]{Faltings:1983-clshs}, whose degeneration on the $E_1$ page nevertheless implies \Pth{by comparison of total dimensions} the degeneration of the spectral sequence \Refeq{\ref{eq-thm-dual-BGG-cpt-spec-seq}} associated with the \emph{Hodge filtration}.  The degeneracy on the $E_1$ page was first proved by Faltings himself in the compact case \Pth{see \cite[\aSec 4, \aThm 4]{Faltings:1983-clshs}}, later in the case of Siegel modular varieties by Faltings and Chai \Pth{see \cite[\aCh VI, \aThm 5.5]{Faltings/Chai:1990-DAV}} by reducing to the case of trivial coefficients over some toroidal compactifications of self-fiber products of universal abelian schemes \Pth{and this method can be generalized to the case of all PEL-type and Hodge-type Shimura varieties using \cite{Lan:2012-aatcs}, \cite{Lan:2012-tckf}, and \cite[\aSec 4.5]{Lan/Stroh:2018-ncaes}}, and in general by Harris and Zucker \Pth{see \cite[\aCor 4.2.3]{Harris/Zucker:2001-BCS-3}} using Saito's theory of mixed Hodge modules \Pth{see \cite[\aThm 2.14]{Saito:1990-mhm}}.  Even when $I^\Sc = \emptyset$, our proof of the degeneration of the dual BGG spectral sequence \ref{eq-thm-dual-BGG-cpt-spec-seq}, which can be alternatively based on \cite[\aThms \logRHthmHTdegencomp{} and \logRHthmlocsystcomp]{Diao/Lan/Liu/Zhu:lrhrv}, is a new one.
\end{rk}

\subsection{Hodge--Tate weights}\label{sec-Sh-var-HT-wts}

The goal of this subsection is to describe the Hodge--Tate weights of $H^i_{\et, \Sc}(\Model_{\levcp, \AC{\bQ}_p}, \etSh{\rep}_{\AC{\bQ}_p})$ and $H^i_{\et, \Sc \to \Scalt}(\Model_{\levcp, \AC{\bQ}_p}, \etSh{\rep}_{\AC{\bQ}_p})$ in terms of the dimensions of the dual BGG pieces at the right-hand sides of \Refeq{\ref{eq-thm-dual-BGG-spec-decomp-cpt}} and \Refeq{\ref{eq-thm-dual-BGG-spec-decomp-int}}, respectively.

We first need to provide a definition for the Hodge--Tate weights of the cohomology of \'etale local systems over the infinite extension $\AC{\bQ}_p$ of $\bQ_p$.  Let $\bC_p$ denote the $p$-adic completion of $\AC{\bQ}_p$ as usual.  Let $(\dRSh{\rep}_{\AC{\bQ}_p}, \nabla, \Fil^\bullet)$ denote the pullback of $(\dRSh{\rep}_\bC, \nabla, \Fil^\bullet)$ under $\ACMap^{-1}: \bC \Mi \AC{\bQ}_p$.  As in \cite[\aSec \logRHseclocsystconstr]{Diao/Lan/Liu/Zhu:lrhrv}, let $\Coef$ be a finite extension of $\bQ_p$ in $\AC{\bQ}_p$ such that $\rep_{\AC{\bQ}_p}$ has a model $\rep_\Coef$ over $\Coef$, and let $\etSh{\rep}_\Coef$ be as in \cite[(\logRHeqlocsystetcoefbasech)]{Diao/Lan/Liu/Zhu:lrhrv}.  Let $\BFp$ be a finite extension of the composite of $\ReFl$ and $\Coef$ in $\AC{\bQ}_p$, so that we have $\ReFl \Emn{\can} \AC{\bQ} \Emn{~~\ACMap^{-1}} \AC{\bQ}_p$, and let $\CoefMap: \Coef \otimes_{\bQ_p} \BFp \to \BFp$ be the multiplication homomorphism $a \otimes b \mapsto ab$, as in \cite[(\logRHeqcoefproj)]{Diao/Lan/Liu/Zhu:lrhrv}.  By Theorem \ref{thm-comp-alg-dR-cpt}, $H^i_{\et, \Sc}(\Model_{\levcp, \AC{\bQ}_p}, \etSh{\rep}_\Coef)$ is a de Rham representations of $\Gal(\AC{\bQ}_p / \BFp)$, and we have a canonical $\Gal(\AC{\bQ}_p / \BFp)$-equivariant Hecke-equivariant isomorphism
\begin{equation}\label{eq-Sh-var-arith-log-RH-comp-dR-cpt}
    H^i_{\et, \Sc}(\Model_{\levcp, \AC{\bQ}_p}, \etSh{\rep}_\Coef) \otimes_{\bQ_p} \BdR \cong H^i_{\dR, \Sc}\bigl(\Model_{\levcp, \BFp}, \DdRalg(\etSh{\rep}_\Coef)\bigr) \otimes_\BFp \BdR,
\end{equation}
which is compatible with the filtrations on both sides, whose $0$-th graded piece is a canonical $\Gal(\AC{\bQ}_p / \BFp)$-equivariant Hecke-equivariant isomorphism
\begin{equation}\label{eq-Sh-var-arith-log-RH-comp-HT-cpt}
\begin{split}
    & H^i_{\et, \Sc}(\Model_{\levcp, \AC{\bQ}_p}, \etSh{\rep}_\Coef) \otimes_{\bQ_p} \bC_p \\
    & \cong \oplus_{a + b = i} \, \Bigl(H^{a, b}_{\Hdg, \Sc}\bigl(\Model_{\levcp, \AC{\bQ}_p}, \DdRalg(\etSh{\rep}_\Coef)\bigr) \otimes_k \bC_p(-a)\Bigr).
\end{split}
\end{equation}
By pushing out \Refeq{\ref{eq-Sh-var-arith-log-RH-comp-dR-cpt}} and \Refeq{\ref{eq-Sh-var-arith-log-RH-comp-HT-cpt}} via the homomorphism $\CoefMap$, by \cite[\aThm \logRHthmlocsystcomp]{Diao/Lan/Liu/Zhu:lrhrv}, and by Theorem \ref{thm-comp-alg-dR-int}, we obtain the following:
\begin{thm}\label{thm-Sh-var-comp-dR-HT}
    Suppose that $\rep \in \Rep_{\AC{\bQ}}(\Grp{G}^c)$, and that $\BFp$ is a finite extension of the image of $\ReFl \Emn{\can} \AC{\bQ} \Emn{~~\ACMap^{-1}} \AC{\bQ}_p$ over which $\rep_{\AC{\bQ}_p}$ has a model.  Then there is a canonical $\Gal(\AC{\bQ}_p / \BFp)$-equivariant Hecke-equivariant \emph{de Rham comparison} isomorphism
    \begin{equation}\label{eq-Sh-var-comp-dR-cpt}
        H_{\et, \Sc}^i(\Model_{\levcp, \AC{\bQ}_p}, \etSh{\rep}_{\AC{\bQ}_p}) \otimes_{\AC{\bQ}_p} \BdR \cong H_{\dR, \Sc}^i(\Model_{\levcp, \AC{\bQ}_p}, \dRSh{\rep}_{\AC{\bQ}_p}) \otimes_{\AC{\bQ}_p} \BdR,
    \end{equation}
    compatible with the filtrations on both sides, whose $0$-th graded piece is a canonical $\Gal(\AC{\bQ}_p / \BFp)$-equivariant Hecke-equivariant \emph{Hodge--Tate comparison} isomorphism
    \begin{equation}\label{eq-Sh-var-comp-HT-cpt}
    \begin{split}
        & H_{\et, \Sc}^i(\Model_{\levcp, \AC{\bQ}_p}, \etSh{\rep}_{\AC{\bQ}_p}) \otimes_{\AC{\bQ}_p} \bC_p \\
        & \cong \oplus_{a + b = i} \, \bigl(H^{a, b}_{\Hdg, \Sc}(\Model_{\levcp, \AC{\bQ}_p}, \dRSh{\rep}_{\AC{\bQ}_p}) \otimes_{\AC{\bQ}_p} \bC_p(-a)\bigr).
    \end{split}
    \end{equation}
    Moreover, \Refeq{\ref{eq-Sh-var-comp-dR-cpt}} and \Refeq{\ref{eq-Sh-var-comp-HT-cpt}} are compatible with the canonical morphisms defined by inclusions $I^\Scalt \subset I^\Sc \subset I$, and also with Poincar\'e and Serre duality, which induce a canonical $\Gal(\AC{\bQ}_p / \BFp)$-equivariant Hecke-equivariant \emph{de Rham comparison} isomorphism
    \begin{equation}\label{eq-Sh-var-comp-dR-int}
    \begin{split}
        & H_{\et, \Sc \to \Scalt}^i(\Model_{\levcp, \AC{\bQ}_p}, \etSh{\rep}_{\AC{\bQ}_p}) \otimes_{\AC{\bQ}_p} \BdR \\
        & \cong H_{\dR, \Sc \to \Scalt}^i(\Model_{\levcp, \AC{\bQ}_p}, \dRSh{\rep}_{\AC{\bQ}_p}) \otimes_{\AC{\bQ}_p} \BdR,
    \end{split}
    \end{equation}
    which is compatible with the filtrations on both sides and with Poincar\'e duality, whose $0$-th graded piece is a canonical $\Gal(\AC{\bQ}_p / \BFp)$-equivariant Hecke-equivariant \emph{Hodge--Tate comparison} isomorphism
    \begin{equation}\label{eq-Sh-var-comp-HT-int}
    \begin{split}
        & H_{\et, \Sc \to \Scalt}^i(\Model_{\levcp, \AC{\bQ}_p}, \etSh{\rep}_{\AC{\bQ}_p}) \otimes_{\AC{\bQ}_p} \bC_p \\
        & \cong \oplus_{a + b = i} \, \bigl(H^{a, b}_{\Hdg, \Sc \to \Scalt}(\Model_{\levcp, \AC{\bQ}_p}, \dRSh{\rep}_{\AC{\bQ}_p}) \otimes_{\AC{\bQ}_p} \bC_p(-a)\bigr),
    \end{split}
    \end{equation}
    which is compatible with Poincar\'e and Serre duality.
\end{thm}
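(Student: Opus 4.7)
The plan is to derive everything from the general comparison theorems of the previous sections by specializing to the Shimura variety setting and descending coefficients from $\Coef$ to $\AC{\bQ}_p$. First, I would apply Theorem \ref{thm-comp-alg-dR-cpt} to the smooth algebraic variety $\Model_{\levcp, \BFp}$ with the normal crossings boundary coming from a fixed toroidal compactification $\Torcpt{\Model}_{\levcp, \BFp}$, together with the subdivisor $\NCD^\Sc$ indexed by $I^\Sc$, taking as $\bZ_p$-local system the $\bZ_p$-lattice in $\etSh{\rep}_\Coef$ used in \cite[\aSec \logRHseclocsystconstr]{Diao/Lan/Liu/Zhu:lrhrv}. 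This produces the isomorphisms \eqref{eq-Sh-var-arith-log-RH-comp-dR-cpt} and \eqref{eq-Sh-var-arith-log-RH-comp-HT-cpt}, together with the $E_1$-degeneration of the Hodge--de Rham spectral sequence. Hecke-equivariance is automatic, since everything in sight is functorial under the finite \'etale transition maps defining the Hecke correspondences, and Theorems \ref{thm-comp-alg-dR-cpt} and \ref{thm-comp-alg-dR-int} are also functorial under such maps.

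Next, I would pass from coefficients in $\Coef$ to coefficients in $\AC{\bQ}_p$ by pushing out along the multiplication map $\CoefMap : \Coef \otimes_{\bQ_p} \BFp \to \BFp$, exactly as in \cite[\aSec \logRHseclocsystconstr]{Diao/Lan/Liu/Zhu:lrhrv}. On the \'etale side this replaces $\etSh{\rep}_\Coef$ by the pullback of $\etSh{\rep}_{\AC{\bQ}_p}$, producing the left-hand sides of \eqref{eq-Sh-var-comp-dR-cpt} and \eqref{eq-Sh-var-comp-HT-cpt} after a further base change to $\AC{\bQ}_p$ and $\bC_p$ respectively. On the de Rham / Hodge side, the identification $\DdRalg(\etSh{\rep}_\Coef) \otimes_{\Coef, \CoefMap} \BFp \cong \dRSh{\rep}_\BFp$ as filtered vector bundles with log connection on $\Torcpt{\Model}_{\levcp, \BFp}$ is precisely what \cite[\aThm \logRHthmlocsystcomp]{Diao/Lan/Liu/Zhu:lrhrv} gives, and combined with the definitions \eqref{eq-def-alg-dR-coh-cpt}--\eqref{eq-def-alg-Hdg-coh-cpt} this identifies the right-hand sides with the de Rham and Hodge cohomology of the automorphic bundle $\dRSh{\rep}_{\AC{\bQ}_p}$ with partial compact support along $\NCD^\Sc$.

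For the generalized interior versions \eqref{eq-Sh-var-comp-dR-int} and \eqref{eq-Sh-var-comp-HT-int}, I would simply take images along the canonical morphisms induced by the inclusions $I^\Scalt \subset I^\Sc$, invoking Theorem \ref{thm-comp-alg-dR-int} to see that the comparison isomorphisms are compatible with these canonical morphisms and that the resulting induced isomorphisms on images are well defined, filtered, and strict. Compatibility with Poincar\'e and Serre duality reduces, via Theorem \ref{thm-int-coh-comp} and Proposition \ref{prop-int-coh-pairing-perf}, to the compatibility of the trace morphisms $t_\et$ and $t_\dR$ established in Theorems \ref{thm-trace-et} and \ref{thm-trace-dR-Hdg} (together with Lemma \ref{lem-comp-def-H-c-trace} in the algebraic setting), and to the compatibility of Serre duality with the dual BGG formalism---but the latter is automatic because the duality we need is the Serre duality of Theorem \ref{thm-Serre-duality} applied termwise to $\dRSh{\rep}_{\AC{\bQ}_p}$ and $\dual{\dRSh{\rep}_{\AC{\bQ}_p}}$.

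The main obstacle I anticipate is purely bookkeeping: checking that the identification $\DdRalg(\etSh{\rep}_\Coef) \otimes_{\Coef, \CoefMap} \BFp \cong \dRSh{\rep}_\BFp$ from \cite[\aThm \logRHthmlocsystcomp]{Diao/Lan/Liu/Zhu:lrhrv} is compatible with the twist by $\cO(-\NCD^\Sc)$, so that pushing out actually transforms $H_{\dR, \Sc}^i\bigl(\Model_{\levcp, \BFp}, \DdRalg(\etSh{\rep}_\Coef)\bigr) \otimes_{\Coef, \CoefMap} \BFp$ into $H_{\dR, \Sc}^i(\Model_{\levcp, \BFp}, \dRSh{\rep}_\BFp)$. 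Since $\cO(-\NCD^\Sc)$ is a $\Coef$-flat locally free $\cO$-module pulled back from the base, and $\CoefMap$ is a flat ring map, this interaction is harmless and reduces to the identification already made in the case $I^\Sc = \emptyset$; the same remark handles the taking of graded pieces needed for the Hodge--Tate statements.
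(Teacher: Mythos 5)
Your proposal matches the paper's own derivation: the paper likewise obtains \eqref{eq-Sh-var-comp-dR-cpt}--\eqref{eq-Sh-var-comp-HT-int} by applying Theorem \ref{thm-comp-alg-dR-cpt} to $\etSh{\rep}_\Coef$ over $\Model_{\levcp, \BFp}$ (giving \eqref{eq-Sh-var-arith-log-RH-comp-dR-cpt} and \eqref{eq-Sh-var-arith-log-RH-comp-HT-cpt}), then pushing out along $\CoefMap$ and invoking \cite[\aThm \logRHthmlocsystcomp]{Diao/Lan/Liu/Zhu:lrhrv} to identify the de Rham side with the automorphic bundle $\dRSh{\rep}_{\AC{\bQ}_p}$, and Theorem \ref{thm-comp-alg-dR-int} for the generalized interior versions and the duality compatibilities. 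Your extra remark on the harmlessness of the twist by $\cO(-\NCD^\Sc)$ under the flat pushout is a correct piece of bookkeeping that the paper leaves implicit.
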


\begin{defn}\label{def-Sh-var-HT-wts}
    For $? = \Sc$ or $\Sc \to \Scalt$, we abusively define the multiset of \emph{Hodge--Tate weights} of $H_{\et, ?}^i(\Model_{\levcp, \AC{\bQ}_p}, \etSh{\rep}_{\AC{\bQ}_p})$ to be the multiset of integers in which each $a \in \bZ$ has multiplicity $\dim_{\AC{\bQ}_p}\bigl(H_{\Hdg, ?}^{a, i - a}(\Model_{\levcp, \AC{\bQ}_p}, \dRSh{\rep}_{\AC{\bQ}_p})\bigr)$.  We naturally extend the definition to $\AC{\bQ}_p$-subspaces of $H_{\et, ?}^i(\Model_{\levcp, \AC{\bQ}_p}, \etSh{\rep}_{\AC{\bQ}_p})$ cut out by $\AC{\bQ}_p$-valued Hecke operators by replacing $H_{\Hdg, ?}^{a, i - a}(\Model_{\levcp, \AC{\bQ}_p}, \dRSh{\rep}_{\AC{\bQ}_p})$ with their corresponding $\AC{\bQ}_p$-subspaces cut out by the same $\AC{\bQ}_p$-valued Hecke operators.
\end{defn}

\begin{thm}\label{thm-HT-wts}
    With the same setting as in Theorem \ref{thm-Sh-var-comp-dR-HT}, suppose $\rep \cong \dual{\rep}_\wt$ for some $\wt \in \WT_{\Grp{G}^c_{\AC{\bQ}}}^+$.  For any $\repalt$ in $\Rep_{\AC{\bQ}}(\Grp{M}^c)$, let $\cohSh{\repalt}^\canext_{\AC{\bQ}_p}$ be the pullback of $\cohSh{\repalt}^\canext_\bC$ under $\ACMap^{-1}: \bC \Mi \AC{\bQ}_p$.  Then we have a canonical $\Gal(\AC{\bQ}_p / \BFp)$-equivariant Hecke-equivariant isomorphism
    \begin{equation}\label{eq-Sh-var-HT-decomp-dual-BGG-cpt}
    \begin{split}
        & H_{\et, \Sc}^i(\Model_{\levcp, \AC{\bQ}_p}, \etSh{\rep}_{\AC{\bQ}_p}) \otimes_{\AC{\bQ}_p} \bC_p \\
        & \cong \oplus_{w\in \WG^{\Grp{M}^c}} \; \Bigl(H_\Sc^{i - \wl(w)}\bigl(\Torcpt{\Model}_{\levcp, \AC{\bQ}_p}, (\cohSh{\repalt}_{w \cdot \wt, \AC{\bQ}_p}^\dualsign)^\canext\bigr) \otimes_{\AC{\bQ}_p} \bC_p\bigl((w \cdot \wt)(H)\bigr)\Bigr),
    \end{split}
    \end{equation}
    which is the dual BGG version of the Hodge--Tate decomposition \Refeq{\ref{eq-Sh-var-comp-HT-cpt}}.  This isomorphism \Refeq{\ref{eq-Sh-var-HT-decomp-dual-BGG-cpt}} is compatible with the canonical morphisms defined by any inclusions $I^\Scalt \subset I^\Sc \subset I$, and with Poincar\'e and Serre duality; and induces a canonical $\Gal(\AC{\bQ}_p / \BFp)$-equivariant Hecke-equivariant isomorphism
    \begin{equation}\label{eq-Sh-var-HT-decomp-dual-BGG-int}
    \begin{split}
        & H_{\et, \Sc \to \Scalt}^i(\Model_{\levcp, \AC{\bQ}_p}, \etSh{\rep}_{\AC{\bQ}_p}) \otimes_{\AC{\bQ}_p} \bC_p \\
        & \cong \oplus_{w\in \WG^{\Grp{M}^c}} \; \Bigl(H_{\Sc \to \Scalt}^{i - \wl(w)}\bigl(\Torcpt{\Model}_{\levcp, \AC{\bQ}_p}, (\cohSh{\repalt}_{w \cdot \wt, \AC{\bQ}_p}^\dualsign)^\canext\bigr) \otimes_{\AC{\bQ}_p} \bC_p\bigl((w \cdot \wt)(H)\bigr)\Bigr),
    \end{split}
    \end{equation}
    compatible with Poincar\'e and Serre duality.  The multiset of Hodge--Tate weights of any Hecke-invariant $\AC{\bQ}_p$-subspace of $H_{\et, \Sc}^i(\Model_{\levcp, \AC{\bQ}_p}, \etSh{\rep}_{\AC{\bQ}_p})$ cut out by some $\AC{\bQ}_p$-valued Hecke operator \Pth{as in Definition \ref{def-Sh-var-HT-wts}} contains each $a \in \bZ$ with multiplicity given by the $\bC$-dimension of the corresponding Hecke-invariant $\bC$-subspace of
    \begin{equation}\label{eq-Sh-var-HT-decomp-dual-BGG-wt}
        \oplus_{w\in \WG^{\Grp{M}^c}, \; (w \cdot \wt)(H) = -a} \; H_\Sc^{i - \wl(w)}\bigl(\Torcpt{\Model}_{\levcp, \bC}, (\cohSh{\repalt}_{w \cdot \wt, \bC}^\dualsign)^\canext\bigr)
    \end{equation}
    cut out by the pullback of the same $\AC{\bQ}_p$-valued Hecke operator under $\ACMap: \AC{\bQ}_p \Mi \bC$.  The same holds with $H_{\et, \Sc}^i(\Model_{\levcp, \AC{\bQ}_p}, \etSh{\rep}_{\AC{\bQ}_p})$ \Pth{\resp $H_\Sc^{i - \wl(w)}\bigl(\Torcpt{\Model}_{\levcp, \bC}, (\cohSh{\repalt}_{w \cdot \wt, \bC}^\dualsign)^\canext\bigr)$} replaced with $H_{\et, \Sc \to \Scalt}^i(\Model_{\levcp, \AC{\bQ}_p}, \etSh{\rep}_{\AC{\bQ}_p})$ \Pth{\resp $H_{\Sc \to \Scalt}^{i - \wl(w)}\bigl(\Torcpt{\Model}_{\levcp, \bC}, (\cohSh{\repalt}_{w \cdot \wt, \bC}^\dualsign)^\canext\bigr)$}.
\end{thm}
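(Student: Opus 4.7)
The plan is to combine the Hodge--Tate comparison isomorphism from Theorem \ref{thm-Sh-var-comp-dR-HT} with the dual BGG decomposition from Theorem \ref{thm-dual-BGG-cpt-int}, after transporting the latter from $\bC$ to $\AC{\bQ}_p$.  First, I would invoke Theorem \ref{thm-Sh-var-comp-dR-HT} to obtain, for each integer $i \geq 0$, a canonical $\Gal(\AC{\bQ}_p / \BFp)$-equivariant Hecke-equivariant isomorphism
\[
    H_{\et, \Sc}^i(\Model_{\levcp, \AC{\bQ}_p}, \etSh{\rep}_{\AC{\bQ}_p}) \otimes_{\AC{\bQ}_p} \bC_p \cong \oplus_{a + b = i} \, \bigl(H^{a, b}_{\Hdg, \Sc}(\Model_{\levcp, \AC{\bQ}_p}, \dRSh{\rep}_{\AC{\bQ}_p}) \otimes_{\AC{\bQ}_p} \bC_p(-a)\bigr).
\]
Second, I would pull back the dual BGG decomposition \Refeq{\ref{eq-thm-dual-BGG-cpt}} from $\bC$ to $\AC{\bQ}_p$ via the isomorphism $\ACMap^{-1}: \bC \Mi \AC{\bQ}_p$, which is justified by Remark \ref{rem-dual-BGG-descent} (the dual BGG complex and the relevant automorphic bundles all descend to a sufficiently large finite extension of $\ReFl$).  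This yields a canonical Hecke-equivariant identification
\[
    H^{a, b}_{\Hdg, \Sc}(\Model_{\levcp, \AC{\bQ}_p}, \dRSh{\rep}_{\AC{\bQ}_p}) \cong \oplus_{w \in \WG^{\Grp{M}^c}, \; (w \cdot \wt)(H) = -a} \; H_\Sc^{a + b - \wl(w)}\bigl(\Torcpt{\Model}_{\levcp, \AC{\bQ}_p}, (\cohSh{\repalt}_{w \cdot \wt, \AC{\bQ}_p}^\dualsign)^\canext\bigr).
\]
Substituting this into the Hodge--Tate decomposition and re-indexing the direct sum over $w \in \WG^{\Grp{M}^c}$ (so that each $w$ with $(w \cdot \wt)(H) = -a$ contributes a term with Tate twist $\bC_p(-a) = \bC_p((w \cdot \wt)(H))$), I obtain the desired isomorphism \Refeq{\ref{eq-Sh-var-HT-decomp-dual-BGG-cpt}}.

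Next, I would verify the two compatibility assertions.  Compatibility with the canonical morphisms induced by inclusions $I^\Scalt \subset I^\Sc \subset I$ follows because both the Hodge--Tate comparison (by Theorem \ref{thm-Sh-var-comp-dR-HT}) and the dual BGG isomorphism (by Theorem \ref{thm-dual-BGG-cpt-int}) are compatible with such morphisms.  Consequently, by passing to images, I obtain the analogous decomposition \Refeq{\ref{eq-Sh-var-HT-decomp-dual-BGG-int}} for generalized interior cohomology.  Compatibility with Poincar\'e and Serre duality comes from the corresponding compatibility in Theorem \ref{thm-Sh-var-comp-dR-HT}, together with the fact that the dual BGG complex for $\dual{\rep}_\wt$ pairs with that for $\rep_\wt$ in the expected manner (through the action of the longest Weyl element on $\WG^{\Grp{M}^c}$, matching the pairings on the coherent cohomology pieces via Serre duality on $\Torcpt{\Model}_{\levcp, \AC{\bQ}_p}$).

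Finally, for the statement about multiplicities of Hodge--Tate weights in Hecke-invariant subspaces, I would note that the isomorphism \Refeq{\ref{eq-Sh-var-HT-decomp-dual-BGG-cpt}} is Hecke-equivariant, so that for any $\AC{\bQ}_p$-valued Hecke operator $T$, the $T$-eigenspace (or more generally, the $\AC{\bQ}_p$-subspace cut out by a polynomial in $T$) decomposes compatibly.  The multiplicity of the Hodge--Tate weight $a$ in such a subspace is then the $\AC{\bQ}_p$-dimension of the corresponding subspace of
\[
    \oplus_{w \in \WG^{\Grp{M}^c}, \; (w \cdot \wt)(H) = -a} \; H_\Sc^{i - \wl(w)}\bigl(\Torcpt{\Model}_{\levcp, \AC{\bQ}_p}, (\cohSh{\repalt}_{w \cdot \wt, \AC{\bQ}_p}^\dualsign)^\canext\bigr),
\]
and pulling back via $\ACMap: \AC{\bQ}_p \Mi \bC$ (which preserves dimensions and is compatible with Hecke actions) identifies this with the corresponding subspace of the complex-analytic expression \Refeq{\ref{eq-Sh-var-HT-decomp-dual-BGG-wt}}.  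The same argument, applied to the generalized interior cohomology using \Refeq{\ref{eq-Sh-var-HT-decomp-dual-BGG-int}} and \Refeq{\ref{eq-thm-dual-BGG-spec-decomp-int}}, yields the analogous statement with subscripts \Qtn{$\Sc \to \Scalt$}.  The main subtlety, which is handled entirely by Remark \ref{rem-dual-BGG-descent}, is ensuring that the dual BGG decomposition is genuinely canonical over $\AC{\bQ}_p$ (not merely over $\bC$ after choosing $\ACMap$), so that the comparison with the Hodge--Tate decomposition---which is intrinsic to the $p$-adic setting---is Hecke-equivariant; everything else reduces to bookkeeping between the two indexings.
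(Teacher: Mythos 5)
Your proposal is correct and follows essentially the same route as the paper, whose proof simply combines Theorem \ref{thm-Sh-var-comp-dR-HT} with Theorem \ref{thm-dual-BGG-cpt-int} and the fact that coherent hypercohomology of qcqs schemes is compatible with arbitrary base field extensions. Your explicit transport of the dual BGG decomposition via $\ACMap^{-1}$ together with Remark \ref{rem-dual-BGG-descent} is just a more detailed spelling-out of that base-change step, and the rest (re-indexing over $w \in \WG^{\Grp{M}^c}$, Hecke-equivariance, and passage to images for the \Qtn{$\Sc \to \Scalt$} case) matches the intended argument.
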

\begin{proof}
    These follow from Theorems \ref{thm-Sh-var-comp-dR-HT} and \ref{thm-dual-BGG-cpt-int}, and from the fact \Pth{which we have implicitly used several times} that the formation of coherent hypercohomology of qcqs schemes is compatible with arbitrary base field extensions.
\end{proof}

\begin{rk}\label{rem-HT-wts-Siegel}
    All previously known special cases of \Refeq{\ref{eq-Sh-var-HT-decomp-dual-BGG-cpt}} \Pth{see, for example, \cite[\aThm 6.2]{Faltings/Chai:1990-DAV} and \cite[\aSec III.2]{Harris/Taylor:2001-GCS}} were proved using the Hodge--Tate comparison for the cohomology with trivial coefficients of some families of abelian varieties \Pth{and their smooth toroidal compactifications, in the noncompact case}.  The novelty in Theorem \ref{thm-HT-wts} is that we can deal with nontrivial coefficients that are not at all related to families of abelian varieties.
\end{rk}

\begin{rk}\label{rem-HT-wts-compute}
    As in \cite[\aEx 4.6]{Harris:1990-afcvs}, we can often compute the dimension of $H_\intcoh^{i - \wl(w)}\bigl(\Torcpt{\Model}_{\levcp, \bC}, (\cohSh{\repalt}_{w \cdot \wt, \bC}^\dualsign)^\canext\bigr)$ and its Hecke-invariant $\bC$-subspaces cut out by $\bC$-valued Hecke operators in terms of relative Lie algebra cohomology.  Thanks to the recent work \cite{Su:2018-ccsva}, it might also be possible to compute the dimension of $H_\Sc^{i - \wl(w)}\bigl(\Torcpt{\Model}_{\levcp, \bC}, (\cohSh{\repalt}_{w \cdot \wt, \bC}^\dualsign)^\canext\bigr)$ and its Hecke-invariant $\bC$-subspaces cut out by $\bC$-valued Hecke operators in terms of relative Lie algebra cohomology when the image of $\NCD^\Sc$ in the minimal compactification $\Mincpt{\Model}_{\levcp, \bC}$ of $\Model_{\levcp, \bC}$ \Pth{as in \cite{Pink:1989-Ph-D-Thesis}} is stable under the Hecke action of $\Grp{G}(\bAi)$.
\end{rk}

\begin{rk}\label{ex-HT-wts-indep}
    In the special \Pth{but still common} case where $\rep_{\AC{\bQ}_p}$ has a model $\rep_{\bQ_p}$ over $\bQ_p$, we can take $\Coef = \bQ_p$ in the above, and the choice of $\ACMap: \AC{\bQ}_p \Mi \bC$ corresponds to the choice of places $v$ of $\ReFl$ above $p$.  Then $H^i_{\et, ?}(\Model_{\levcp, \AC{\bQ}_p}, \etSh{\rep}_{\bQ_p})$ is a \emph{de Rham} representation of $\Gal(\AC{\bQ}_p / \BFp)$, for $? = \Sc$ or $\Sc \to \Scalt$, and the de Rham comparison isomorphisms \Refeq{\ref{eq-Sh-var-comp-dR-cpt}} and \Refeq{\ref{eq-Sh-var-comp-dR-int}} can be rewritten as
    \[
        H^i_{\et, ?}(\Model_{\levcp, \AC{\bQ}_p}, \etSh{\rep}_{\bQ_p}) \otimes_{\bQ_p} \BdR \cong H^i_{\dR, ?}(\Model_{\levcp, \BFp}, \dRSh{\rep}_\BFp) \otimes_\BFp \BdR
    \]
    \Pth{\Refcf{} \Refeq{\ref{eq-Sh-var-arith-log-RH-comp-dR-cpt}}}.  Moreover, the assertion in Theorem \ref{thm-HT-wts} that the Hodge--Tate weights of $H^i_{\et, ?}(\Model_{\levcp, \AC{\bQ}_p}, \etSh{\rep}_{\AC{\bQ}_p})$ \Pth{as in Definition \ref{def-Sh-var-HT-wts}} depend only on the $\bC$-dimension of \Refeq{\ref{eq-Sh-var-HT-decomp-dual-BGG-wt}}, but not on the choice of $v$, implies that the \Pth{usual} Hodge--Tate weights of $H^i_{\et, ?}(\Model_{\levcp, \AC{\bQ}_p}, \etSh{\rep}_{\bQ_p})$ \Pth{as a representation of $\Gal(\AC{\bQ}_p / \BFp)$} are also independent of $v$.
\end{rk}

\subsection{Some application to intersection cohomology}\label{sec-Sh-var-IH}

In this final subsection, let us discuss an important special case where we can apply our results to the \emph{intersection cohomology} of Shimura varieties with nontrivial coefficients, simply because it coincides with the interior cohomology.

Let us begin with some review of definitions.  Consider the interior cohomology
\begin{equation}\label{eq-def-int-coh-B}
    H_\intcoh^i(\Model_{\levcp, \bC}^\an, \BSh{\rep}_\bC) := \Image\bigl(H_\cpt^i(\Model_{\levcp, \bC}^\an, \BSh{\rep}_\bC) \to H^i(\Model_{\levcp, \bC}^\an, \BSh{\rep}_\bC)\bigr),
\end{equation}
as usual.  By \cite[XI, 4.4, and XVII, 5.3.3 and 5.3.5]{SGA:4} and \cite[\aSec 6]{Beilinson/Bernstein/Deligne/Gabber:2018-FP(2)}, for $? = \emptyset$, $\cpt$, and $\intcoh$, we have canonical Hecke-equivariant isomorphisms
\begin{equation}\label{eq-B-et-comp}
    H_?^i(\Model_{\levcp, \bC}^\an, \BSh{\rep}_\bC) \cong H_{\et, ?}^i(\Model_{\levcp, \AC{\bQ}_p}, \etSh{\rep}_{\AC{\bQ}_p}) \otimes_{\AC{\bQ}_p, \ACMap} \bC
\end{equation}
compatible with each other and with Poincar\'e duality.  Also, by \cite[II, 6]{Deligne:1970-EDR}, \cite[\aSec 2.11 and \aCor 2.12]{Esnault/Viehweg:1992-LVT-B}, and GAGA \cite{Serre:1955-1956-gaga}, for $? = \emptyset$, $\cpt$, and $\intcoh$, we have canonical Hecke-equivariant isomorphisms
\begin{equation}\label{eq-B-dR-comp}
    H_?^i(\Model_{\levcp, \bC}^\an, \BSh{\rep}_\bC) \cong H_{\dR, ?}^i(\Model_{\levcp, \bC}^\an, \dRSh{\rep}_\bC^\an) \cong H_{\dR, ?}^i(\Model_{\levcp, \bC}, \dRSh{\rep}_\bC)
\end{equation}
compatible with each other and with Poincar\'e duality.  By the same argument as in the proof of Theorem \ref{thm-int-coh-comp}, by using the degeneration of Hodge--de Rham spectral sequences on the $E_1$ pages, the Hodge filtrations on $H_{\dR, \cpt}^i(\Model_{\levcp, \bC}, \dRSh{\rep}_\bC)$ and $H_\dR^i(\Model_{\levcp, \bC}, \dRSh{\rep}_\bC)$ are strictly compatible with the canonical morphism between them, and induce the same Hodge filtration on $H_{\dR, \intcoh}^i(\Model_{\levcp, \bC}, \dRSh{\rep}_\bC)$.

Let $\Mincpt{\Model}_\levcp$ denote the minimal compactification of $\Model_\levcp$ over $\ReFl$, as in \cite{Pink:1989-Ph-D-Thesis}, and let $\Mincpt{\Model}_{\levcp, \bC}$ and $\Mincpt{\Model}_{\levcp, \AC{\bQ}_p}$ denote the pullbacks of $\Mincpt{\Model}_\levcp$ to $\bC$ and $\AC{\bQ}_p$, respectively.  Let $\Mincpt{\jmath}: \Model_\levcp \to \Mincpt{\Model}_\levcp$ denote the canonical open immersion, whose pullbacks to $\bC$ and $\AC{\bQ}_p$ we shall denote by the same symbols, for simplicity.  Let $d := \dim(\Model_\levcp)$.  For each $\rep \in \Rep_{\AC{\bQ}}(\Grp{G}^c)$, by abuse of notation, consider the \emph{intersection cohomology}
\begin{equation}\label{eq-IC-B}
    \IH^i\bigl(\Model_{\levcp, \bC}^{\Min, \an}, \BSh{\rep}_\bC\bigr) := H^{i - d}\bigl(\Model_{\levcp, \bC}^{\Min, \an}, \jmath_{!*}^{\Min, \an}(\BSh{\rep}_\bC[d])\bigr)
\end{equation}
and
\begin{equation}\label{eq-IC-et}
    \IH_\et^i\bigl(\Mincpt{\Model}_{\levcp, \AC{\bQ}_p}, \etSh{\rep}_{\AC{\bQ}_p}\bigr) := H^{i - d}\bigl(\Mincpt{\Model}_{\levcp, \AC{\bQ}_p}, \Mincpt{\jmath}_{\et, !*}(\etSh{\rep}_{\AC{\bQ}_p}[d])\bigr).
\end{equation}
By \cite[\aSec 6]{Beilinson/Bernstein/Deligne/Gabber:2018-FP(2)}, we have a canonical Hecke-equivariant isomorphism
\begin{equation}\label{eq-IC-B-et-comp}
    \IH^i\bigl(\Model_{\levcp, \bC}^{\Min, \an}, \BSh{\rep}_\bC\bigr) \cong \IH_\et^i\bigl(\Mincpt{\Model}_{\levcp, \AC{\bQ}_p}, \etSh{\rep}_{\AC{\bQ}_p}\bigr) \otimes_{\AC{\bQ}_p, \ACMap} \bC,
\end{equation}
which is compatible with \Refeq{\ref{eq-B-et-comp}} via canonical morphisms, and with Poincar\'e duality.  By Zucker's conjecture \cite{Zucker:1982-lcwpa}, which has been proved \Pth{independently} by Looijenga \cite{Looijenga:1988-lclsv}; Saper and Stern \cite{Saper/Stern:1990-l2cav}; and Looijenga and Rapoport \cite{Looijenga/Rapoport:1991-wlcbc}, we have a canonical Hecke-equivariant isomorphism
\begin{equation}\label{eq-Zucker}
    \IH^i(\Model_{\levcp, \bC}^{\Min, \an}, \BSh{\rep}_\bC) \cong H_{(2)}^i(\Model_{\levcp, \bC}^\an, \BSh{\rep}_\bC),
\end{equation}
where $H_{(2)}^i\big(\Model_{\levcp, \bC}^\an, \BSh{\rep}_\bC\big)$ denotes the \emph{$L^2$-cohomology}, as in \cite[\aCh XIV, \aSec 3]{Borel/Wallach:2000-CDR(2)}, which is compatible with \Refeq{\ref{eq-B-dR-comp}} via canonical morphisms.

The left-hand side of \Refeq{\ref{eq-Zucker}} is equipped with the Hodge structure given by Saito's theory of Hodge modules \Pth{see \cite{Saito:1988-mhp}}, while the right-hand side of \Refeq{\ref{eq-Zucker}} is equipped with the Hodge structure given by $L^2$ harmonic forms \Pth{which can be refined by a double dual BGG decomposition, as in \cite[\aSec 6]{Faltings:1983-clshs}}.  But it is unclear whether these two Hodge structures are compatible under the isomorphism \Refeq{\ref{eq-Zucker}} \Pth{\Refcf{} \cite[\aConj 5.3]{Harris/Zucker:2001-BCS-3}}.  Nevertheless, the following is known:
\begin{thm}[{Harris and Zucker; see \cite[\aThm 5.4]{Harris/Zucker:2001-BCS-3}}]\label{thm-Harris-Zucker}
    The canonical morphisms from both sides of \Refeq{\ref{eq-Zucker}} to $H_\dR^i(\Model_{\levcp, \bC}, \dRSh{\rep}_\bC)$ are strictly compatible with Hodge filtrations.  In particular, the Hodge filtrations on both sides of \Refeq{\ref{eq-Zucker}} induce the same Hodge structure on their common image in $H_\dR^i(\Model_{\levcp, \bC}, \dRSh{\rep}_\bC)$.
\end{thm}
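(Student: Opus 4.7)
The plan has three steps. First, I would observe that both canonical morphisms $\IH^i(\Model_{\levcp, \bC}^\an, \BSh{\rep}_\bC) \to H_\dR^i(\Model_{\levcp, \bC}, \dRSh{\rep}_\bC)$ and $H_{(2)}^i(\Model_{\levcp, \bC}^\an, \BSh{\rep}_\bC) \to H_\dR^i(\Model_{\levcp, \bC}, \dRSh{\rep}_\bC)$ land inside the interior cohomology $H_\intcoh^i$, because in each case the composition $H^i_\cpt \to \IH^i \to H^i$ \Pth{\resp $H^i_\cpt \to H_{(2)}^i \to H^i$} agrees with the canonical morphism, so the image is exactly $H^i_\intcoh$. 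By Theorem \ref{thm-int-coh-comp} applied in the Shimura setting \Pth{together with the isomorphisms \Refeq{\ref{eq-B-dR-comp}}}, $H^i_\intcoh$ carries a canonical \emph{de Rham} Hodge filtration, arising as the common filtration strictly induced from $H^i_{\dR, \cpt}$ and $H^i_\dR$.  The theorem thus reduces to showing that Saito's and the $L^2$-harmonic Hodge filtrations each descend strictly to this same filtration on $H^i_\intcoh$; the second assertion about Hodge structures is then immediate.

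For Saito's side, I would use that the intermediate extension $\Mincpt{\jmath}_{!*}(\BSh{\rep}[d])$ is a pure Hodge module on $\Mincpt{\Model}_{\levcp, \bC}$ and that the adjunction $\Mincpt{\jmath}_{!*}(\BSh{\rep}[d]) \to R\Mincpt{\jmath}_*(\BSh{\rep}[d])$ is a morphism in Saito's derived category of mixed Hodge modules.  Applying the de Rham functor and taking hypercohomology, Saito's strictness theorem \Ref{see \cite{Saito:1990-mhm}} yields a strictly filtered morphism $\IH^i \to H^i(\Model_{\levcp, \bC}^\an, \BSh{\rep}_\bC)$.  To descend to $H^i_\dR$, I would use that the de Rham realization of $R\Mincpt{\jmath}_*(\BSh{\rep}[d])$ can be computed by the log de Rham complex $\DRl(\dRSh{\rep}^\canext_\bC)$ on any toroidal compactification $\Torcpt{\Model}_{\levcp, \bC}$, with the Hodge filtration matching the one in Section \ref{sec-dual-BGG}.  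Strictness then passes to $H^i_\intcoh$, and comparison of graded pieces via the dual BGG decomposition in Theorem \ref{thm-dual-BGG-cpt-int} shows that the induced filtration on $H^i_\intcoh$ is indeed the de Rham one.

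For the $L^2$ side, I would use that $H_{(2)}^i$ is canonically represented by the space of $L^2$-harmonic forms on $\Model_{\levcp, \bC}^\an$ with coefficients in the local system, equipped with a Hodge decomposition by $(p, q)$-types coming from the K\"ahler structure.  Via the representation-theoretic description of $L^2$-automorphic forms \Ref{see \cite{Borel/Wallach:2000-CDR(2)}} and the identification of the $(p, q)$-pieces with coherent cohomology of subcanonical extensions on $\Torcpt{\Model}_{\levcp, \bC}$ as developed in \cite{Harris:1990-afcvs}, the canonical morphism $H_{(2)}^i \to H^i_\dR$ is identified, at the level of the dual BGG decomposition, with the morphism induced by the inclusions $\cohSh{\repalt}^\subext \Em \cohSh{\repalt}^\canext$.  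Strict compatibility then follows from the degeneration of the dual BGG spectral sequence \Refeq{\ref{eq-thm-dual-BGG-cpt-spec-seq}} of Theorem \ref{thm-dual-BGG-cpt-int}.  The main obstacle is the compatibility of the $(p, q)$-type decomposition of $L^2$-harmonic forms with the coherent sheaf picture on $\Torcpt{\Model}_{\levcp, \bC}$ near the boundary; this is the analytical heart of Harris--Zucker and rests on delicate decay and regularity results for $L^2$-forms on arithmetic quotients.
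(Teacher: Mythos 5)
First, a point of orientation: the paper does not prove this statement at all.  It is imported as a theorem of Harris and Zucker, cited as \cite[\aThm 5.4]{Harris/Zucker:2001-BCS-3}, and then used as an external input (in Corollary \ref{cor-IH}).  So there is no internal argument to compare yours against; within this paper the faithful treatment of Theorem \ref{thm-Harris-Zucker} is simply the citation.

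Judged on its own merits, your outline reproduces the general shape of the known strategy but has genuine gaps.  First, the opening reduction is unjustified: the factorizations $H^i_\cpt \to \IH^i \to H^i$ and $H^i_\cpt \to H^i_{(2)} \to H^i$ only show that the images of $\IH^i$ and $H^i_{(2)}$ in $H^i$ \emph{contain} $H_\intcoh^i$; equality requires a purity/weight argument (or the regular-weight results of Schwermer and Li--Schwermer, which are invoked only later and are not available for the general $\wt$ for which Theorem \ref{thm-Harris-Zucker} is stated), and in any case identifying the image does not address strictness of the two filtered maps, which is the actual assertion.  Second, on the intersection cohomology side, strictness of $\IH^i \to H^i(\Model_{\levcp, \bC}^\an, \BSh{\rep}_\bC)$ as a morphism of mixed Hodge structures is formal, but you still need the nontrivial compatibility between Saito's Hodge filtration on $H^i(\Model_{\levcp, \bC}^\an, \BSh{\rep}_\bC)$ and the filtration the paper actually uses on $H_\dR^i(\Model_{\levcp, \bC}, \dRSh{\rep}_\bC)$, namely the one defined by $\Fil^\bullet \DRl(\dRSh{\rep}^\canext_\bC)$ on a toroidal compactification; this comparison (unipotent monodromy at the boundary, agreement of the mixed Hodge module construction with the Deligne-type construction for VHS coefficients) is itself a theorem that must be proved or cited, not something that \Qtn{passes} automatically.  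Third, and most importantly, your treatment of the $L^2$ side assumes what is to be proved: the identification of $H_{(2)}^i \to H_\dR^i$ with the dual BGG maps induced by $\cohSh{\repalt}_\bC^\subext \Em \cohSh{\repalt}_\bC^\canext$, compatibly with the $(p, q)$-decomposition by $L^2$-harmonic forms, is precisely the content of Harris--Zucker's theorem, and you explicitly defer it as \Qtn{the analytical heart}.  Deferring that step means the proposal is a plan rather than a proof; the correct move here is to quote \cite[\aThm 5.4]{Harris/Zucker:2001-BCS-3}, as the paper does.
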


In general, we have Hecke-equivariant inclusions
\begin{equation}\label{eq-H-cusp-H-int-H-2}
    H_{\Utext{cusp}}^i(\Model_{\levcp, \bC}^\an, \BSh{\rep}_\bC) \subset H_\intcoh^i(\Model_{\levcp, \bC}^\an, \BSh{\rep}_\bC) \subset H_{(2)}^i(\Model_{\levcp, \bC}^\an, \BSh{\rep}_\bC),
\end{equation}
where $H_{\Utext{cusp}}^i(\Model_{\levcp, \bC}^\an, \BSh{\rep}_\bC)$ denote the \emph{cuspidal cohomology} \Pth{see \cite{Borel:1974-srcag, Borel:1981-srcag-2}}, which are compatible with \Refeq{\ref{eq-B-dR-comp}} and \Refeq{\ref{eq-Zucker}} via canonical morphisms.

We have the following useful results:
\begin{thm}[{Schwermer; see \cite[\aCor 2.3]{Schwermer:1994-escag}}]\label{thm-Schwermer}
    Suppose that $\rep \cong \dual{\rep}_\wt$ for some $\wt \in \WT_{\Grp{G}^c_{\AC{\bQ}}}^+$ that is \emph{regular} in the sense that $(\wt, \cort) > 0$ for every simple root $\rt \in \RT_{\Grp{G}^c_{\AC{\bQ}}}^+$.  Then all the containments in \Refeq{\ref{eq-H-cusp-H-int-H-2}} are equalities.
\end{thm}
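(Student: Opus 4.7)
The plan is to establish the strongest equality $H_{(2)}^i = H_{\Utext{cusp}}^i$, from which the intermediate equality $H_{\Utext{cusp}}^i = H_\intcoh^i$ follows automatically by sandwiching via \Refeq{\ref{eq-H-cusp-H-int-H-2}}. Following the strategy of \cite{Schwermer:1994-escag}, I would combine the spectral decomposition of $L^2$-automorphic forms on $\Grp{G}(\bQ) \Lquot \Grp{G}(\bA)$ with the identification of $H_{(2)}^i$ as $(\mathfrak{g}, K_\infty)$-cohomology of the $L^2$-automorphic spectrum (after Borel--Garland and Borel--Casselman, see \cite[\aCh XIV, \aSec 3]{Borel/Wallach:2000-CDR(2)}).

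First, the discrete part of the $L^2$-spectrum decomposes orthogonally as the cuspidal part plus the residual part, the latter spanned by iterated residues of Eisenstein series attached to proper parabolics $\Grp{P}_0 \subsetneq \Grp{G}$. Taking $(\mathfrak{g}, K_\infty)$-cohomology with coefficients in $\rep_\wt$ yields
\[
    H_{(2)}^i(\Model_{\levcp, \bC}^\an, \BSh{\rep}_\bC) = H_{\Utext{cusp}}^i(\Model_{\levcp, \bC}^\an, \BSh{\rep}_\bC) \oplus H_{\Utext{res}}^i(\Model_{\levcp, \bC}^\an, \BSh{\rep}_\bC),
\]
reducing the problem to showing $H_{\Utext{res}}^i = 0$. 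By the Vogan--Zuckerman classification together with Kostant's theorem, any irreducible unitary $(\mathfrak{g}, K_\infty)$-module contributing nontrivially to the cohomology with coefficients in $\rep_\wt$ must have the same infinitesimal character as $\rep_\wt$, namely that of $\wt + \hsum_{\Grp{G}^c_{\AC{\bQ}}}$. The hypothesis that $(\wt, \cort) > 0$ for every simple coroot $\cort$ makes this infinitesimal character strictly regular and integral.

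The heart of the argument is then to show that no residual automorphic representation can carry a regular integral infinitesimal character. An archimedean component of a residual representation arises as a Langlands quotient of a representation parabolically induced from a proper Levi $\Grp{M}_0$; its infinitesimal character is a translate of a cuspidal infinitesimal character on $\Grp{M}_0$ by a nonzero real element of $\mathfrak{a}_{\Grp{M}_0}^*$ dictated by the location of a pole of the relevant intertwining operator. A standard lemma going back to Langlands' description of the residual spectrum forces such a translate to lie on a root hyperplane, hence to be singular. This contradicts the regularity established in the previous paragraph, so $H_{\Utext{res}}^i = 0$ and the chain of inclusions in \Refeq{\ref{eq-H-cusp-H-int-H-2}} becomes a chain of equalities.

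The main obstacle is precisely the last step: tracking which infinitesimal characters can occur in the residual spectrum requires Langlands' theory of Eisenstein series together with a careful analysis of the singularities of intertwining operators. Since this entire analysis is already carried out in \cite{Schwermer:1994-escag}, in practice we simply cite that reference for the result; the outline above is meant only to indicate the structural shape of the proof and to record where the regularity hypothesis on $\wt$ enters essentially.
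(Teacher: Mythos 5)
Your ultimate step --- citing \cite[\aCor 2.3]{Schwermer:1994-escag} --- is exactly what the paper does: Theorem \ref{thm-Schwermer} is quoted there without proof, so your proposal and the paper's treatment coincide. One correction to your indicative sketch, though: the lemma you lean on, that \emph{no residual automorphic representation can carry a regular integral infinitesimal character}, is false as stated --- the trivial representation lies in the residual spectrum and has infinitesimal character $\hsum_{\Grp{G}^c_{\AC{\bQ}}}$, which is regular and integral. The regularity hypothesis in the theorem is on the highest weight $\wt$ itself, and the mechanism that actually kills the residual (and, more generally, non-cuspidal $L^2$) contribution is non-temperedness: archimedean components of residual representations are proper Langlands quotients with nonzero exponents, hence non-tempered, while a non-tempered unitary $(\mathfrak{g}, K_\infty)$-module has vanishing relative Lie algebra cohomology with coefficients in a finite-dimensional representation of \emph{regular} highest weight \Ref{\Refcf{} \cite[\aCh XIV]{Borel/Wallach:2000-CDR(2)}}; this, rather than a singularity statement about infinitesimal characters, is how the regularity of $\wt$ enters in Schwermer's argument.
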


\begin{thm}[{Li and Schwermer; see \cite[\aCor 5.6]{Li/Schwermer:2004-ecag}}]\label{thm-Li-Schwermer}
    Suppose that $\rep \cong \dual{\rep}_\wt$ for some \emph{regular} $\wt \in \WT_{\Grp{G}^c_{\AC{\bQ}}}^+$.  Then $H_\cpt^i(\Model_{\levcp, \bC}^\an, \BSh{\rep}_\bC) = 0$ for $i > d$; $H^i(\Model_{\levcp, \bC}^\an, \BSh{\rep}_\bC) = 0$ for $i < d$; and $H_\intcoh^i(\Model_{\levcp, \bC}^\an, \BSh{\rep}_\bC) = 0$ for $i \neq d$.
\end{thm}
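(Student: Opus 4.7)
The plan is to deduce all three vanishing assertions from Schwermer's theorem (Theorem \ref{thm-Schwermer}), Zucker's conjecture \Refeq{\ref{eq-Zucker}}, Poincar\'e duality, and the classical $(\mathfrak{g}, K)$-cohomology computations of Borel--Wallach and Vogan--Zuckerman. Since $\wt$ regular forces $\dual\wt := -w_0(\wt)$ to be regular as well, Poincar\'e duality for $\Model_{\levcp, \bC}^\an$ gives $H_\cpt^i(\Model_{\levcp, \bC}^\an, \BSh{\rep}_\bC) \cong \dual{H^{2d - i}(\Model_{\levcp, \bC}^\an, \BSh{\dual{\rep}_\wt}_\bC)}$, so the vanishing $H_\cpt^i = 0$ for $i > d$ is equivalent to the vanishing $H^i = 0$ for $i < d$ applied to the regular weight $\dual\wt$. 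It therefore suffices to establish the latter and the interior vanishing.

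I would first handle the interior cohomology. By Theorem \ref{thm-Schwermer}, for regular $\wt$, one has $H_\intcoh^i(\Model_{\levcp, \bC}^\an, \BSh{\rep}_\bC) = H_{(2)}^i(\Model_{\levcp, \bC}^\an, \BSh{\rep}_\bC)$. Matsushima's formula identifies the $L^2$-cohomology with a sum $\oplus_\pi m(\pi) \cdot H^i(\mathfrak{g}_\infty, K_\infty; \pi_\infty \otimes \rep_\bC)$ over square-integrable automorphic representations $\pi$ of $\Grp{G}(\bA)$, and the Borel--Wallach vanishing theorem (see \cite[\aCh II, \aThm 5.3, and \aCh III, \aThm 5.1]{Borel/Wallach:2000-CDR(2)}) shows that for $\wt$ regular these relative Lie algebra cohomology groups vanish unless $\pi_\infty$ is \Pth{the Langlands quotient of a representation induced from a} discrete series, and further, on such representations they are concentrated in the middle degree $d$. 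This yields $H_\intcoh^i = 0$ for $i \neq d$.

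To conclude that $H^i(\Model_{\levcp, \bC}^\an, \BSh{\rep}_\bC) = 0$ for $i < d$, I would invoke Franke's theorem \Pth{see \cite[\aThm 14]{Franke:1998-hag}} expressing this cohomology as the relative Lie algebra cohomology of the space of automorphic forms of moderate growth, together with its decomposition along cuspidal supports. The cuspidal part is controlled by Step 2. The Eisenstein contributions from each proper parabolic $\Grp{P}$ are computed, via Shapiro-type isomorphisms, in terms of $(\mathfrak{m}_\Grp{P}, K^\Grp{P}_\infty)$-cohomology of suitable induced automorphic forms on the Levi quotient, twisted by the cohomology of the unipotent radical (Kostant's theorem). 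Here regularity of $\wt$ forces each Kostant summand for $\Grp{P}$ to contribute only in degrees at least equal to half the real dimension of the Levi's symmetric space plus the length needed to reach $\wt$; a careful bookkeeping along the lines of \cite[\aThm 5.3]{Li/Schwermer:2004-ecag} shows that all such Eisenstein degrees are $\geq d$.

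The main obstacle will be the last step: controlling Eisenstein cohomology below the middle degree. The cuspidal vanishing is automatic from Borel--Wallach, but Eisenstein series constructed from non-tempered constituents of cuspidal representations on Levi subgroups can \emph{a priori} contribute in lower degrees; ruling this out requires the precise combinatorics of Kostant's theorem applied to $\wt + \hsum_{\Grp{G}^c_{\AC{\bQ}}}$ being strictly dominant, which is exactly where the regularity hypothesis is used, and this is the technical heart of the Li--Schwermer argument.
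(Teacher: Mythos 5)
The paper offers no proof of this statement at all---it is quoted directly from Li--Schwermer \cite[\aCor 5.6]{Li/Schwermer:2004-ecag}---and your outline (the Poincar\'e-duality reduction exchanging the two vanishing ranges, Theorem \ref{thm-Schwermer} together with Matsushima/Borel--Wallach for the interior part, and Franke's theorem with Kostant-theoretic bookkeeping for the Eisenstein part) is precisely the strategy of that reference. Since the one genuinely hard step, ruling out Eisenstein contributions below the middle degree for regular $\wt$, is deferred by you to the very same source the paper cites, your proposal is in substance the same treatment as the paper's and is fine as such.
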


\begin{cor}\label{cor-IH}
    Suppose that $\rep \cong \dual{\rep}_\wt$ for some \emph{regular} $\wt \in \WT_{\Grp{G}^c_{\AC{\bQ}}}^+$.  Then we have canonical Hecke-equivariant isomorphisms
    \begin{equation}\label{eq-cor-IH-B}
        \IH^i(\Model_{\levcp, \bC}^{\Min, \an}, \BSh{\rep}_\bC) \cong H_\intcoh^i(\Model_{\levcp, \bC}^\an, \BSh{\rep}_\bC) \cong H_{\dR, \intcoh}^i(\Model_{\levcp, \bC}, \dRSh{\rep}_\bC)
    \end{equation}
    compatible with Hodge filtrations and Poincar\'e duality, and also a canonical Hecke-equivariant isomorphism
    \begin{equation}\label{eq-cor-IH-et}
        \IH_\et^i(\Mincpt{\Model}_{\levcp, \bC}, \etSh{\rep}_{\AC{\bQ}_p}) \cong H_{\et, \intcoh}^i(\Model_{\levcp, \bC}, \etSh{\rep}_{\AC{\bQ}_p}).
    \end{equation}
    The cohomology in either \Refeq{\ref{eq-cor-IH-B}} and \Refeq{\ref{eq-cor-IH-et}} can be nonzero only when $i = d$.
\end{cor}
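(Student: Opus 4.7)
The plan is to assemble the claimed isomorphisms from the inputs Schwermer (Theorem \ref{thm-Schwermer}), Zucker's conjecture \Refeq{\ref{eq-Zucker}}, Harris--Zucker (Theorem \ref{thm-Harris-Zucker}), Li--Schwermer (Theorem \ref{thm-Li-Schwermer}), and the various comparison isomorphisms \Refeq{\ref{eq-B-et-comp}}, \Refeq{\ref{eq-B-dR-comp}}, \Refeq{\ref{eq-IC-B-et-comp}}. For the vanishing assertion, apply Theorem \ref{thm-Li-Schwermer} directly: regularity of $\wt$ forces $H_\cpt^i = 0$ for $i > d$ and $H^i = 0$ for $i < d$, so the intermediate objects $H_\intcoh^i$, $H_{(2)}^i$, $\IH^i$, and their de Rham and \'etale avatars must vanish outside degree $d$; the only point to check is that each of the other cohomology groups appearing in \Refeq{\ref{eq-cor-IH-B}} and \Refeq{\ref{eq-cor-IH-et}} will sit between (or is isomorphic to one sitting between) $H_\cpt^i$ and $H^i$ via canonical morphisms compatible with \Refeq{\ref{eq-B-et-comp}} and \Refeq{\ref{eq-B-dR-comp}}, which is exactly how each was defined.

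For \Refeq{\ref{eq-cor-IH-B}}, first combine Schwermer (which gives equalities in \Refeq{\ref{eq-H-cusp-H-int-H-2}} under regularity) with Zucker's isomorphism \Refeq{\ref{eq-Zucker}} to identify $\IH^i(\Model_{\levcp,\bC}^\an,\BSh{\rep}_\bC)$ with $H_\intcoh^i(\Model_{\levcp,\bC}^\an,\BSh{\rep}_\bC)$. Next, apply the Betti-to-algebraic de Rham comparison \Refeq{\ref{eq-B-dR-comp}} in degree $\intcoh$ to identify $H_\intcoh^i(\Model_{\levcp,\bC}^\an,\BSh{\rep}_\bC)$ with $H_{\dR,\intcoh}^i(\Model_{\levcp,\bC},\dRSh{\rep}_\bC)$; these identifications are functorial in the pair $(\cpt,\emptyset)$ so they are automatically compatible with the canonical maps defining the image. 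The Poincar\'e duality compatibility then follows because Poincar\'e duality on $\IH^i$, on $H_\intcoh^i$, and on $H_{\dR,\intcoh}^i$ are each induced by the cup product paired with the same trace (see \Refeq{\ref{eq-B-dR-comp}} and the compatibility recorded after it, and the construction of the induced perfect pairing on generalized interior cohomology in Proposition \ref{prop-int-coh-pairing-perf} transferred to the present algebraic setting via Theorem \ref{thm-comp-alg-dR-int} combined with analytification).

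For the Hodge filtration compatibility in \Refeq{\ref{eq-cor-IH-B}}, invoke Theorem \ref{thm-Harris-Zucker}: the Hodge filtrations on $\IH^i$ (coming from mixed Hodge modules) and on $H_{(2)}^i$ (coming from $L^2$ harmonic forms) both map strictly to the Hodge filtration on $H_\dR^i(\Model_{\levcp,\bC},\dRSh{\rep}_\bC)$ and induce the same filtration on the common image. Under regularity, Schwermer's equalities \Refeq{\ref{eq-H-cusp-H-int-H-2}} say that these maps land in, and indeed coincide with, $H_{\dR,\intcoh}^i(\Model_{\levcp,\bC},\dRSh{\rep}_\bC)$, which already carries its own Hodge filtration from the strict compatibility statement accompanying \Refeq{\ref{eq-B-dR-comp}} (parallel to the rigid statement established in Theorem \ref{thm-int-coh-comp}). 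Strict compatibility on all sides implies the three Hodge filtrations agree. This step is the main obstacle, since it is the only place one must reconcile \emph{a priori} unrelated Hodge structures; everything else is bookkeeping once Harris--Zucker is invoked.

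Finally, \Refeq{\ref{eq-cor-IH-et}} follows by combining \Refeq{\ref{eq-B-et-comp}} and \Refeq{\ref{eq-IC-B-et-comp}} with the equality $\IH^i \cong H_\intcoh^i$ established above: both sides of \Refeq{\ref{eq-cor-IH-et}}, after $\otimes_{\AC{\bQ}_p,\ACMap}\bC$, are identified with the same subspace of $H^i(\Model_{\levcp,\bC}^\an,\BSh{\rep}_\bC)$, and Hecke-equivariance plus the compatibility of \Refeq{\ref{eq-B-et-comp}} with canonical morphisms lets one descend the equality from $\bC$-coefficients to $\AC{\bQ}_p$-coefficients. The resulting isomorphism is Galois- and Hecke-equivariant, and the vanishing outside degree $d$ is inherited from the Betti case treated above.
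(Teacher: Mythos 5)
Your proposal is correct and follows essentially the same route as the paper, whose proof is precisely an assembly of \Refeq{\ref{eq-B-et-comp}}, \Refeq{\ref{eq-B-dR-comp}}, \Refeq{\ref{eq-Zucker}}, \Refeq{\ref{eq-IC-B-et-comp}}, Theorems \ref{thm-Harris-Zucker}, \ref{thm-Schwermer}, and \ref{thm-Li-Schwermer}, together with the compatibility of Poincar\'e duality on intersection cohomology with the usual one. Your write-up just spells out the bookkeeping (in particular the descent of $\IH_\et^i \cong H_{\et, \intcoh}^i$ from $\bC$- to $\AC{\bQ}_p$-coefficients) that the paper leaves implicit.
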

\begin{proof}
    These follow from \Refeq{\ref{eq-B-et-comp}} and \Refeq{\ref{eq-B-dR-comp}}; from Theorems \ref{thm-Harris-Zucker}, \ref{thm-Schwermer}, and \ref{thm-Li-Schwermer}; and from the compatibility of the Poincar\'e duality on intersection cohomology with the usual one.
\end{proof}

\begin{thm}\label{thm-Sh-var-IH}
    Suppose that $\rep \cong \dual{\rep}_\wt$ for some \emph{regular} $\wt \in \WT_{\Grp{G}^c_{\AC{\bQ}}}^+$.  Let $\BFp$ be a finite extension of the image of $\ReFl \Emn{\can} \AC{\bQ} \Emn{~~\ACMap^{-1}} \AC{\bQ}_p$ over which $\rep_{\AC{\bQ}_p}$ has a model.  Then we have a canonical $\Gal(\AC{\bQ}_p / \BFp)$-equivariant Hecke-equivariant isomorphism
    \begin{equation}\label{eq-Sh-var-comp-dR-IH}
        \IH_\et^d(\Mincpt{\Model}_{\levcp, \AC{\bQ}_p}, \etSh{\rep}_{\AC{\bQ}_p}) \otimes_{\AC{\bQ}_p} \BdR \cong H_{\dR, \intcoh}^d(\Model_{\levcp, \AC{\bQ}_p}, \dRSh{\rep}_{\AC{\bQ}_p}) \otimes_{\AC{\bQ}_p} \BdR,
    \end{equation}
    which is compatible with the filtrations on both sides and with Poincar\'e duality, whose $0$-th graded piece can be refined by a canonical $\Gal(\AC{\bQ}_p / \BFp)$-equivariant Hecke-equivariant \emph{dual BGG decomposition}
    \begin{equation}\label{eq-Sh-var-comp-HT-IH}
    \begin{split}
        & \IH_\et^d(\Mincpt{\Model}_{\levcp, \AC{\bQ}_p}, \etSh{\rep}_{\AC{\bQ}_p}) \otimes_{\AC{\bQ}_p} \bC_p \\
        & \cong \oplus_{w\in \WG^{\Grp{M}^c}} \, \Bigl(H_\intcoh^{d - \wl(w)}\bigl(\Torcpt{\Model}_{\levcp, \AC{\bQ}_p}, (\cohSh{\repalt}_{w \cdot \wt, \AC{\bQ}_p}^\dualsign)^\canext\bigr) \otimes_{\AC{\bQ}_p} \bC_p\bigl((w \cdot \wt)(H)\bigr)\Bigr),
    \end{split}
    \end{equation}
    compatible with Poincar\'e and Serre duality.  The multiset of Hodge--Tate weights of any Hecke-invariant $\AC{\bQ}_p$-subspace of $\IH_\et^d(\Mincpt{\Model}_{\levcp, \AC{\bQ}_p}, \etSh{\rep}_{\AC{\bQ}_p})$ cut out by some $\AC{\bQ}_p$-valued Hecke operator \Pth{as in Definition \ref{def-Sh-var-HT-wts}} contains each $a \in \bZ$ with multiplicity given by the $\bC$-dimension of the corresponding Hecke-invariant $\bC$-subspace of
    \begin{equation}\label{eq-Sh-var-comp-IH-HT-wt}
        \oplus_{w\in \WG^{\Grp{M}^c}, \; (w \cdot \wt)(H) = -a} \; H_\intcoh^{d - \wl(w)}\bigl(\Torcpt{\Model}_{\levcp, \bC}, (\cohSh{\repalt}_{w \cdot \wt, \bC}^\dualsign)^\canext\bigr)
    \end{equation}
    cut out by the pullback of the same $\AC{\bQ}_p$-valued Hecke operator under $\ACMap: \AC{\bQ}_p \Mi \bC$.
\end{thm}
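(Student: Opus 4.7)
The plan is to deduce this theorem essentially as a direct combination of Corollary \ref{cor-IH}, Theorem \ref{thm-Sh-var-comp-dR-HT}, and Theorem \ref{thm-HT-wts}, in the special case $I^\Sc = I$ and $I^\Scalt = \emptyset$ (so that the subscript ``$\Sc \to \Scalt$'' becomes ``$\intcoh$'').  First, I would invoke Corollary \ref{cor-IH} to identify, via Hecke-equivariant canonical isomorphisms, the \'etale intersection cohomology $\IH_\et^i(\Model_{\levcp, \AC{\bQ}_p}, \etSh{\rep}_{\AC{\bQ}_p})$ with the \'etale interior cohomology $H_{\et, \intcoh}^i(\Model_{\levcp, \AC{\bQ}_p}, \etSh{\rep}_{\AC{\bQ}_p})$, and the complex-analytic intersection cohomology $\IH^i(\Model_{\levcp, \bC}^\an, \BSh{\rep}_\bC)$ with $H_{\dR, \intcoh}^i(\Model_{\levcp, \bC}, \dRSh{\rep}_\bC)$; these isomorphisms are compatible with Poincar\'e duality, and the regularity of $\wt$ also forces the vanishing outside degree $i = d$.

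Next, I would apply Theorem \ref{thm-Sh-var-comp-dR-HT} to the case $I^\Sc = I$, $I^\Scalt = \emptyset$ to obtain the $\Gal(\AC{\bQ}_p / \BFp)$-equivariant Hecke-equivariant de Rham comparison isomorphism $H_{\et, \intcoh}^i(\Model_{\levcp, \AC{\bQ}_p}, \etSh{\rep}_{\AC{\bQ}_p}) \otimes_{\AC{\bQ}_p} \BdR \cong H_{\dR, \intcoh}^i(\Model_{\levcp, \AC{\bQ}_p}, \dRSh{\rep}_{\AC{\bQ}_p}) \otimes_{\AC{\bQ}_p} \BdR$ (which is \eqref{eq-Sh-var-comp-dR-int} with $\Sc = I$, $\Scalt = \emptyset$), strictly compatible with filtrations and with Poincar\'e duality.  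Combining with the identifications of the previous paragraph yields the desired isomorphism \eqref{eq-Sh-var-comp-dR-IH}.  By taking the $0$-th graded piece of this filtered isomorphism and then invoking Theorem \ref{thm-HT-wts} (more precisely, its assertion \eqref{eq-Sh-var-HT-decomp-dual-BGG-int} in the same $\Sc = I$, $\Scalt = \emptyset$ case), I obtain the dual BGG decomposition \eqref{eq-Sh-var-comp-HT-IH}, with all terms of degrees $d - \wl(w)$, and with compatibility with Poincar\'e and Serre duality inherited from Theorem \ref{thm-HT-wts}.

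Finally, the statement about Hodge--Tate weights of Hecke-invariant $\AC{\bQ}_p$-subspaces cut out by $\AC{\bQ}_p$-valued Hecke operators reduces, via Definition \ref{def-Sh-var-HT-wts} and the decomposition \eqref{eq-Sh-var-comp-HT-IH}, to computing the $\bC$-dimensions of the Hecke-invariant subspaces of $\oplus_{w\in \WG^{\Grp{M}^c}, \; (w \cdot \wt)(H) = -a} \; H_\intcoh^{d - \wl(w)}\bigl(\Torcpt{\Model}_{\levcp, \bC}, (\cohSh{\repalt}_{w \cdot \wt, \bC}^\dualsign)^\canext\bigr)$ obtained by pullback under $\ACMap$; this is exactly the specialization of the last assertion of Theorem \ref{thm-HT-wts}, using that the formation of interior coherent cohomology and the Hecke action is compatible with base change between algebraically closed fields (since we are dealing with qcqs schemes and finite-rank vector bundles).

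The proof is essentially a bookkeeping exercise stitching the previous theorems together; the only nontrivial conceptual input is the identification of intersection cohomology with interior cohomology (on both the \'etale and de Rham sides) provided by Corollary \ref{cor-IH}, which crucially uses the regularity of $\wt$ via Theorems \ref{thm-Schwermer}--\ref{thm-Li-Schwermer} of Schwermer and Li--Schwermer and Theorem \ref{thm-Harris-Zucker} of Harris--Zucker.  I do not anticipate any serious technical obstacle beyond ensuring that the Hecke-, Galois-, Poincar\'e-duality-, and filtration-compatibilities survive the chain of canonical isomorphisms; the most delicate point is the strict compatibility of Hodge filtrations needed to transport the filtration from $H_{\dR, \intcoh}^d$ across the identification with $H_{\dR, \intcoh}^d$ provided by Corollary \ref{cor-IH}, which is precisely what Theorem \ref{thm-Harris-Zucker} gives.
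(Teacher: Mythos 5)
Your proposal is correct and follows essentially the same route as the paper, whose proof is precisely the combination of Corollary \ref{cor-IH} (identifying intersection with interior cohomology, using the regularity of $\wt$), Theorem \ref{thm-Sh-var-comp-dR-HT} in the case $I^\Sc = I$, $I^\Scalt = \emptyset$, and Theorem \ref{thm-HT-wts}. Your additional remarks on transporting the Hecke-, Galois-, duality-, and filtration-compatibilities through the chain of canonical isomorphisms are consistent with how those compatibilities are already packaged in the cited results.
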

\begin{proof}
    These follow from Theorems \ref{thm-Sh-var-comp-dR-HT} and \ref{thm-HT-wts}, and from Corollary \ref{cor-IH}.
\end{proof}

\begin{rk}\label{rem-Sh-var-IH}
    We natural expect the de Rham comparison to work for the intersection cohomology in more generality, which will be an interesting topic for a future project.  But we would like to record the results in Theorem \ref{thm-Sh-var-IH} because regular weights already cover, depending on one's viewpoint, almost all weights.
\end{rk}



\providecommand{\bysame}{\leavevmode\hbox to3em{\hrulefill}\thinspace}
\providecommand{\MR}{\relax\ifhmode\unskip\space\fi MR }
\providecommand{\MRhref}[2]{%
  \href{http://www.ams.org/mathscinet-getitem?mr=#1}{#2}
}
\providecommand{\href}[2]{#2}

\end{document}